\documentclass[a4paper,11pt]{article}
\usepackage{bbm}
\usepackage{mathrsfs}
\usepackage{amsthm}
\usepackage{amsmath}
\usepackage{amssymb}
\usepackage{graphicx}
\usepackage{epsfig}
\usepackage{enumerate}
\usepackage{tikz,hyperref}
\usepackage{appendix}
\usepackage{comment}
\usepackage{extarrows}
\usepackage{CJK}
\numberwithin{equation}{section}
\newtheorem{theorem}{Theorem}[section]
\newtheorem{corollary}[theorem]{Corollary}

\newtheorem{lemma}[theorem]{Lemma}
\newtheorem{proposition}[theorem]{Proposition}

\theoremstyle{definition}
\newtheorem{definition}[theorem]{Definition}
\newtheorem{remark}[theorem]{Remark}

\newcommand{\R}{\mathbf{R}}
\newcommand{\C}{\mathbf{C}}
\newcommand{\N}{\mathbf{N}}
\newcommand{\Z}{\mathbf{Z}}
\newcommand{\B}{\mathcal{B}}

\DeclareMathOperator{\GL}{GL}
\DeclareMathOperator{\U}{U}
\DeclareMathOperator{\cl}{cl}
\DeclareMathOperator{\Cr}{Cr}
\DeclareMathOperator{\Sp}{Sp}
\DeclareMathOperator{\Mas}{Mas}
\DeclareMathOperator{\Graph}{Gr}
\DeclareMathOperator{\Lag}{Lag}
\DeclareMathOperator{\im}{im}
\DeclareMathOperator{\dom}{dom}
\DeclareMathOperator{\SF}{sf}
\DeclareMathOperator{\Id}{Id}
\DeclareMathOperator{\Lo}{Long}

\DeclareMathOperator{\Span}{span}
\DeclareMathOperator{\codim}{codim}
\DeclareMathOperator{\const}{constant}

\textwidth 155mm \textheight 220mm \voffset=-1cm \hoffset=-2cm
\parskip=4pt
\usepackage{float}

\begin{document}
 \title{The Rabinowitz minimal periodic solution conjecture on partially convex reversible Hamiltonian systems and brake subharmonics}
\author{ Yuting Zhou \thanks{This work was partially supported by the National Natural Science Foundation of China (Grant No. 12301232). }
	\\
 {\em School of Science, Tianjin University of  Technology, Tianjin 300384, P.R. China,}\\
{\em E-mail address: nkzhouyt@mail.nankai.edu.cn}
\\
}
\date{}
\maketitle

\noindent \hrulefill
\medskip
\begin{abstract}\maketitle\baselineskip=18pt
Under weaker regularity and compactness assumptions, we find the mountain-pass essential point, which is a novel  extension of the classical  Ambrosetti-Rabinowitz mountain pass theorem.
We study the reversible superquadratic autonomous Hamiltonian systems whose Hamiltonian $H(p,q)$ is strictly convex in the position $q\in\R^n$ and prove that for every $T>0$, the system has a $T$-periodic brake
solution $\bar x$ with minimal period $T$, provided the Hessian $H_{pp}(\bar x(t))\in\R^{n\times n}$ is semi-positive definite for $t\in\R$ or $n=1$.
For brake subharmonics of general reversible nonautonomous Hamiltonian systems, we also get some new results.
 \\
 \noindent{{\bf MSC: }  34C25, 58E05, 53D12, 37J99}
 \\
\noindent{{\bf Keywords: } Critical point theory, Maslov-type indices, Reversible Hamiltonian systems, Periodic brake solutions}
\end{abstract}

\noindent \hrulefill

\tableofcontents

\section{Introduction and Main Results}\label{s:introduction}
\subsection{Extension of the mountain-pass essential point}
In this paper we first give an extended critical point theorem of the improved Ambrosetti-Rabinowitz
mountain pass theorem in \cite{EkHo87}, which is a generalization of the existence of the mountain-pass essential point in loc. cit. and its ``localization" in \cite{GhoPre89}.
Next follows the novel dual action functional and the corresponding quadratic form.
Then we apply them to partially  strictly convex Hamiltonian systems with a certain symmetry, i.e., reversibility, including both autonomous and nonautonomous cases.
Together with new iteration inequalities for Maslov-type indices in Section \ref{s:Maslov-index}, we prove the Rabinowitz minimal period conjecture about
periodic brake solutions of partially convex reversible Hamiltonian systems.

In \cite[Theorem 2.1]{AmRa76}, Ambrosetti and Rabinowitz established  the celebrated mountain pass theory, which shows the existence of critical points for unbounded nonlinear functionals on Banach spaces and consequently can be used to  seek solutions for nonlinear differential equations by variational methods. From then on, many
mathematicians  gave stronger versions  and applications; see \cite{rabinowitz1978,Hofer84,Hofer85} and so on.
In \cite{GhoPre89}, Ghoussoub and Preiss used Ekeland's variational principle instead of the deformation lemma to prove a general mountain pass theory, which gives more information about the location of critical points.

To give our new critical point theorem,
 we first recall the concept of mountain-pass point.
\begin{definition}[cf. {\cite[Definition 1(ii)]{Hofer85}}]\label{d:mountain-pass-point}
	Let $E$ be a real Banach space and let $f\colon E\to \mathbf{R}$ be a continuous and G{\^a}teaux-differential function.
	Let $\bar u$ be a critical point of  $f$, i.e., $f'(\bar u)=0$. We say that $\bar u$ is a mountain-pass point if, for every open neighborhood $\mathcal{U}$ of $\bar u$ in $E$, the set
	\[\{u\in\mathcal{U}| f(u)<f(\bar u)\}\]
	is neither empty nor path connected.
\end{definition}
Denote the set of critical points of $f$ belonging to the critical value $d$ by \[\Cr(f,d)=\{u\in E; f(u)=d\ \  \text{and}\  \  f'(u)=0\}.\]
We also define $M(f,d)=\{u\in\Cr(f,d);u\ \text{is a local minimum for } f \}$.
Denote the closure of a set $S\subset E$ by $\cl(S)$.
Fix a real number $d$ and consider the sublevel set of $f$\[\dot f^d:=\{u\in E; f(u)<d\}.\]
If $e\in \dot f^d$ and $u\in \Cr(f,d)$, we write $e\xrightarrow[f]{} u$ provided $u\in \cl (A)$, where $A$ is the path component of $e$ in $\dot f^d$.

At first, the author found that using a
simple and useful observation in \cite{EkHo87},
we can prove Rabinowitz minimal periodic solution conjecture for strictly convex reversible  Hamiltonian systems via the celebrated mountain-pass essential point theorem in loc. cit. Nevertheless, it seems that the key lemma (\cite[Lemma 2]{Hofer85}) has not been given a complete proof in the literature. 
So the author read \cite[\S 4.1]{Ek90} again, then seeks out a way to prove a more general theorem  needing weaker regularity and compactness. Our  method combines  \cite{Ek90}'s $\S 4.1$ and \cite[Theorem 4.1]{EkHo87}. 
\begin{theorem}[The Generalized Mountain Pass Theorem]\label{t:critical-point-mp}
	Let $f\colon E\to \R$ be a continuous and G{\^a}teaux-differentiable function on a Banach space $E$ such that $f'\colon E\to E^*$
	is continuous from the norm topology of $E$ to the weak$^*$-topology of $E^*$.
	Take two distinct points $(u_0,u_1)$ in $E$, and define
	\begin{gather}
		\Gamma =\{c\in C([0,1];E)| c(0)=u_0,c(1)=u_1\} \nonumber \\
		d=\inf_{c\in \Gamma}\sup_{0\le t\le 1}f(c(t)).\label{e:mountain-pass}
	\end{gather}
	Assume $f$
	satisfies condition {\rm(C)} at the level $d$ (see Definition \ref{d:conditionC}). If
	\begin{equation}\label{e:d-bigger}
		d>\max\{f(u_0),f(u_1)\},
	\end{equation}
	then there is a $\bar u\in \Cr(f,d)$ such that
	\begin{enumerate}
		\item[(a)] $u_0\xrightarrow[f]{} \bar u$,
		\item[(b)] either  $\bar u$ is a mountain-pass point or $\bar u\in \cl(M)\setminus M$, where $M=M(f,d)$.
	\end{enumerate}
\end{theorem}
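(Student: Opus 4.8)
\section*{Proof proposal}

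The plan is to work on the complete metric space of paths $\Gamma$ endowed with the uniform distance $d_\Gamma(c,c')=\max_{0\le t\le1}\|c(t)-c'(t)\|$, and to substitute for the deformation lemma — unavailable here since $f'$ is only norm-to-weak${}^*$ continuous — Ekeland's variational principle applied to $\varphi\colon\Gamma\to\R$, $\varphi(c)=\max_{0\le t\le1}f(c(t))$. As a supremum of continuous functions $\varphi$ is lower semicontinuous, and by \eqref{e:mountain-pass} it is bounded below with $\inf_\Gamma\varphi=d$. First I would fix a sequence $\ep_n\downarrow0$ and apply Ekeland's principle to produce $c_n\in\Gamma$ with $d\le\varphi(c_n)\le d+\ep_n^2$ and
\[\varphi(c)\ \ge\ \varphi(c_n)-\ep_n\,d_\Gamma(c,c_n)\qquad\text{for every }c\in\Gamma.\]
Because of \eqref{e:d-bigger}, for $\eta$ small the open set $U_n=\{t\in[0,1]:f(c_n(t))>\varphi(c_n)-\eta_n\}$ (with a suitable $\eta_n\downarrow0$) contains the peak set and is compactly contained in $(0,1)$, so its closure misses the endpoints.

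Next I would extract from each near-minimal path a point that is almost a peak and almost critical. The claim is that there is $t_n\in U_n$ with $f(c_n(t_n))\to d$ and $\|f'(c_n(t_n))\|_{E^*}\to0$. I would argue by contradiction: if $\|f'(c_n(t))\|_{E^*}\ge 5\ep_n$ for all $t$ in (a compact subinterval-union exhausting) $U_n$, then for each such $t$ pick a unit vector $v_t$ with $\langle f'(c_n(t)),v_t\rangle<-5\ep_n$; norm-to-weak${}^*$ continuity of $f'$ together with continuity of $c_n$ makes $\langle f'(c_n(s)),v_t\rangle<-\tfrac52\ep_n$ persist on an open interval about $t$. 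A finite subcover, a subordinate partition of unity, and a cutoff supported in $U_n$ then give a continuous field $V\colon[0,1]\to E$ with $\|V(t)\|\le1$, $V$ vanishing near $0$ and $1$, $\langle f'(c_n(t)),V(t)\rangle\le0$ everywhere and $<-\tfrac52\ep_n$ on the peak region, while $V\equiv0$ only where $f(c_n(t))$ is already bounded away from $\varphi(c_n)$. The perturbed paths $c_n^s:=c_n+sV$ lie in $\Gamma$ and satisfy, for $s>0$ small, $\varphi(c_n^s)\le\varphi(c_n)-\ep_n s$ in fact with strictly better constant, while $d_\Gamma(c_n^s,c_n)\le s$; this contradicts the Ekeland inequality. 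Hence such $t_n$ exist, $u_n:=c_n(t_n)$ is a condition-(C) sequence at level $d$, and passing to a subsequence $u_n\to\bar u$ with $f(\bar u)=d$ and $f'(\bar u)=0$ by continuity of $f$ and norm-to-weak${}^*$ continuity of $f'$; in particular $\Cr(f,d)$ is compact under condition (C).

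It remains to arrange (a) and (b). For (a) I would refine the choice of the $c_n$ so that each initial segment $c_n|_{[0,t_n]}$, after a slight lowering, lies in $\dot f^d$ inside a neighbourhood of the path component $A$ of $u_0$; then $u_n\in\cl(A)$ and hence $\bar u\in\cl(A)$, i.e. $u_0\xrightarrow[f]{}\bar u$. A routing argument underlies this: if no critical point reachable from $u_0$ satisfied (a) one could detour around all of them and lower $\max$ below $d$, contradicting \eqref{e:mountain-pass}. For (b) I would argue in the spirit of \cite[Theorem 4.1]{EkHo87} and \cite[\S4.1]{Ek90}: set $K_d=\Cr(f,d)\cap\cl(A)$, which is compact by condition (C); if every $u\in K_d$ were neither a mountain-pass point nor a member of $\cl(M)\setminus M$, then each $u\in K_d$ has a neighbourhood in which $\dot f^d$ is empty or path connected and which contains no boundary point of $M$, and patching these local pictures over the compact set $K_d$ by partitions of unity lets one modify any near-minimal path into one with maximum strictly below $d$ — the desired contradiction. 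I expect the genuinely delicate point to be exactly this last local-to-global step: performing the deformation near $K_d$ using only the path-perturbation technique above (no pseudo-gradient flow is available) and only condition (C) for the required compactness, which is precisely where the combination of \cite[\S4.1]{Ek90} with \cite[Theorem 4.1]{EkHo87} must be made to work.
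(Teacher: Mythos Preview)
Your approach is correct in outline but takes a much harder route than the paper. You rebuild the entire Ghoussoub--Preiss machinery from scratch (Ekeland on path space, quantitative pseudo-gradient deformation, then a local-to-global patching near $\Cr(f,d)$), and you yourself flag the last step as delicate and leave it incomplete. The paper instead reduces everything to a single application of an already-known result, Proposition~\ref{p:critical-point-mp} (which is \cite[Theorem IV.1.12]{Ek90}), by making one clever choice: take $F=\partial\mathcal L$ where $\mathcal L$ is the path component of $u_0$ in $\dot f^d$. Two short paragraphs show $\partial\mathcal L$ is closed, $\partial\mathcal L\subset\{f=d\}$, and $\partial\mathcal L$ separates $u_0$ from $u_1$; Proposition~\ref{p:critical-point-mp} then hands you $\bar u\in\partial\mathcal L$ that is either a mountain-pass point or lies in $\cl(M)$. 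Property (a) is automatic because $\bar u\in\cl(\mathcal L)$, and $\bar u\notin M$ is immediate because $\bar u\in\cl(\dot f^d)$ cannot be a local minimum at level $d$.

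What the paper's route buys is brevity and rigor: all the hard analysis (Ekeland, condition (C), the weak${}^*$ continuity) is already packaged inside Proposition~\ref{p:critical-point-mp}, and the only new idea is the choice of separating set. What your route would buy, if completed, is a self-contained proof; but your steps for (a) and (b) are currently sketches rather than arguments, and the ``routing/detour'' reasoning you invoke for (a) and the local-to-global deformation for (b) are exactly what the boundary-of-component trick circumvents.
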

We recall the pioneering \cite[Definition 1.3]{EkHo87}.
\begin{definition}\label{d:mountain-pass-essential}
	The critical point $\bar u$ found in Theorem \ref{t:critical-point-mp} is called 
	\textit{mountain-pass essential}.
	\end{definition}
	
\subsection{Minimal periodic solution problem for autonomous Hamiltonian systems}
Then we apply our main  critical point theorem to study 
 brake solutions for partially convex reversible Hamiltonian systems. 
Classical tools to deal with  nonlinear Hamiltonian systems are critical points theory and  index theory.
For partially convex reversible systems (we will give the  precise definition soon), we use Theorem \ref{t:critical-point-mp} and new index theory combining the Morse index and Maslov-type indices.
For general systems, we follow the classical method to find brake solutions whose Maslov-type indices are controlled, then apply our new index iteration inequalities to get new results about brake subharmonic solutions.

Consider the Hamiltonian system
\begin{equation}\label{e:HS}
	\dot{x}=JH'(x),\ \ \ J=\left(
	\begin{array}{cc}
		0 & -I_n \\
		I_n & 0 \\
	\end{array}
	\right),
\end{equation}
where $x\in\R^{2n}$ and the Hamiltonian function $H\in C^1(\R^{2n},\R)$.
In the pioneering work \cite{rabinowitz1978} of 1977, P. Rabinowitz proved
the existence of periodic solutions for the superquadratic Hamiltonian systems.
Suppose $H\in C^1(\R^{2n},\R)$ satisfies
\begin{enumerate}
	\item [(H1)] $H\geq 0$,
	\item [(H2)] $H(x)=o(|x|^2)$ as $|x|\to 0$, and
	\item [(H3)] there are constants $\mu>2$ and $\bar r>0$ such that
	$0<\mu H(x)\le x\cdot H'(x)$ for all $|x|\ge \bar r$.
\end{enumerate}
Then, for any $T>0$, \eqref{e:HS} possesses a non-constant $T$-periodic solution.

Note that $T/k$-periodic solutions, $k\in\N$, are also $T$-periodic.
In the same paper \cite{rabinowitz1978},
P. Rabinowitz conjectured that, under the above same hypothesis,
there is a non-constant solution of \eqref{e:HS} having any prescribed minimal period.
The conjecture is still open.
The first result on this question was
given by Ambrosetti and Mancini in \cite{amb-V81} of 1981.
They proved minimality under the added assumptions that $H$ is strictly convex and
the Fenchel conjugate $H^*$ of $H$ satisfies another condition.
A more general breakthrough for strictly convex Hamiltonian systems was made
by Ekeland and Hofer in \cite{EkHo85} of 1985.
More precisely, assuming $H$ satisfies
\begin{enumerate}
	\item [(H0)$^+$:] $H\in C^2(\R^{2n},\R)$ and the Hessian $H''(x)$ is positive definite for any $x\in\R^{2n}\setminus\{0\}$,
\end{enumerate}
and (H1)--(H3); they prove that,
for any $T>0$, \eqref{e:HS} possesses a periodic solution with minimal
period $T$.
In the 1990s, Professor Long developed Maslov-type iteration theory and used the direct variational to reprove Ekeland-Hofer's result under a weaker condition than  (H0)$^+$ with his students, for which please see \cite[Chapters 10 and 13]{Lo}.

Let $N=\left(
\begin{array}{cc}
	-I_n & 0 \\
	0& I_n \end{array}
\right)$,
where $I_n$ is the $n\times n$ identity matrix.
In this article, we study reversible systems, i.e., 
\begin{enumerate}
	\item [(H4)] $H(Nx)=H(x)$ for any $x\in\R^{2n}$.
\end{enumerate}

Given $T>0$, we consider the following  
problem 
\begin{equation}\label{e:HS-T}
	\begin{cases}
		\dot x(t)=JH'(x(t)), \\
		x(-t)=Nx(t),\\
		x(t+T)=x(t),
	\end{cases}
\end{equation}
for all $t\in\R$.
A $T$-periodic solution $(x,T)$ of \eqref{e:HS-T} is called a \textit{brake solution}, which is just a brake orbit if Hamiltonians $H$ are
of the forms
\begin{equation*}
	H(p,q)=\frac{1}{2}|p|^2+V(q)\qquad
	V\in C^1(\R^n,\R).
\end{equation*}
At $t=0,\frac{T}{2}$, the momentum of $T$-periodic brake orbits is zero, the typical examples including 
a simple pendulum and oscillations of a spring.
Brake solutions were called symmetric periodic orbits in \cite{FravanKoe}, where they extend our setting the standard symplectic vector space $(\R^{2n},\omega_0)$ to any symplectic manifold $(M,\omega)$ with an antisymplectic involution of $M$. Note that  $N^2=I_{2n}$ and $N^TJN=-J$.
\subsubsection{Main results}
We will prove
\begin{theorem}\label{t:super-mini-brake}
	Suppose $H\in C^2(\R^{2n},\R)$ satisfies {\rm (H1)--(H4)} and 
	\begin{enumerate}
		\item [\rm (H0)$^{q+}$:] The $n\times n$ matrix $H_{qq}(x)=(H_{q_iq_j}(x))_{n\times n}$ is positive definite for any nonzero vector $x=\begin{pmatrix}
			p\\
			q
		\end{pmatrix}\in\R^{2n}$, where $p=(p_1,p_2,\cdots,p_n)^T\in \R^n,q=(q_1,q_2,\cdots,q_n)^T\in \R^n$.
	\end{enumerate}
	Then for any $T>0$, \eqref{e:HS-T} has a periodic brake solution $\bar x$ with minimal period $T$ provided
	this solution $\bar x$  further satisfies either 
	\begin{enumerate}  
		\item  [\rm (H0)$^{p\ge0}$:]The $n\times n$ matrix  $H_{pp}(\bar x(t))$ is semi-positive definite for $t\in \R$, 
		 where 
		  $H_{pp}(x)=(H_{p_ip_j}(x))_{n\times n}$  for
		   $x=\begin{pmatrix}
		 	p\\
		 	q
		 \end{pmatrix}\in\R^{2n}$,
		 \item[or] $n=1$.
	\end{enumerate}
\end{theorem}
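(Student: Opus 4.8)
The plan is to realise $\bar x$ as a mountain-pass essential critical point of a partial dual action functional, deduce an upper bound on its Maslov-type index from the mountain-pass essential property via Theorem~\ref{t:critical-point-mp}, and then exclude subharmonics using the index iteration inequalities of Section~\ref{s:Maslov-index}.

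First, since (H0)$^{q+}$ makes $H(p,\cdot)$ strictly convex in $q$ but imposes no convexity in $p$, the classical Clarke--Ekeland dual action is not available; instead I would pass to the partial Legendre transform $H^{\sharp}(p,\xi):=\sup_{q\in\R^{n}}\bigl(\langle\xi,q\rangle-H(p,q)\bigr)$, which by (H1)--(H3) is finite, $C^{1}$ and subquadratic, and work with the novel dual action functional $\Phi$ and its associated quadratic form built from $H^{\sharp}$. The reversibility (H4) forces the ansatz that $p$ be odd and $q$ even in $t$, so $\Phi$ is defined on a Hilbert space $\mathcal E$ of $N$-antisymmetric $T$-periodic Sobolev loops; subquadraticity of $H^{\sharp}$ makes $\Phi\in C^{1}(\mathcal E,\R)$ with $\Phi'$ continuous from the norm topology to the weak$^{*}$-topology, which is precisely the regularity required in Theorem~\ref{t:critical-point-mp}, and the standard duality correspondence identifies the nonzero critical points of $\Phi$ with the nonconstant $T$-periodic brake solutions of \eqref{e:HS-T}, the critical value encoding the Hamiltonian action up to sign and normalisation.

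Next, (H1) and (H2) make $0$ a local extremum with $\Phi(0)=0$, while (H3) yields a far-away point $u_{1}$ with $\Phi(u_{1})\le 0$, so the minimax value $d$ of \eqref{e:mountain-pass} (taking $u_{0}=0$) satisfies \eqref{e:d-bigger}; moreover (H3) gives condition (C) for $\Phi$ at level $d$ by the usual argument that Cerami sequences of the dual functional of a superquadratic Hamiltonian are bounded and, by the weak$^{*}$-continuity of $\Phi'$ and compactness of the relevant Sobolev embeddings, precompact. Theorem~\ref{t:critical-point-mp} then produces a mountain-pass essential $\bar u\in\Cr(\Phi,d)$ and a corresponding $T$-periodic brake solution $\bar x$, which is nonconstant since $d>0$. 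By part~(b) of that theorem, either $\bar u$ is a mountain-pass point, so Hofer's argument gives $m^{-}(\bar u)\le 1\le m^{-}(\bar u)+m^{0}(\bar u)$ for the Morse index $m^{-}(\bar u)$ and nullity $m^{0}(\bar u)$ of the quadratic form of $\Phi$ at $\bar u$, or $\bar u\in\cl(M)\setminus M$, in which case $\bar u$ is an accumulation point of local minima but not itself one, forcing $m^{-}(\bar u)=0$ and $m^{0}(\bar u)\ge 1$; either way these two inequalities hold. Combining them with the Morse-theoretic identification of $m^{-}(\bar u)$ and $m^{0}(\bar u)$ with a shifted $(L_{0},L_{1})$-Maslov-type index and nullity of the symplectic path $\gamma_{\bar x}$ of $\bar x$ on $[0,T/2]$ gives a sharp upper bound on $i_{L_{0}}(\bar x)$ together with a lower bound on $i_{L_{0}}(\bar x)+\nu_{L_{0}}(\bar x)$ (and their $L_{1}$-companions).

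Finally, suppose the minimal period of $\bar x$ were $T/k$ with $k\ge 2$. Then, viewed as a $T$-periodic brake solution, $\bar x$ is the $k$-th brake iterate of a $(T/k)$-periodic brake solution $y$, and $\gamma_{\bar x}$ is the $k$-th iterate of $\gamma_{y}$ in the brake sense, alternating the two Lagrangian boundary conditions at successive break points. The new iteration inequalities of Section~\ref{s:Maslov-index}, which under (H0)$^{q+}$ together with either (H0)$^{p\ge 0}$ along $\bar x$ or $n=1$ guarantee that $\gamma_{y}$ has strictly positive mean $(L_{0},L_{1})$-index, force $i_{L_{0}}(\bar x)$ to grow without bound in $k$ and hence to exceed the upper bound of the previous step as soon as $k\ge 2$ --- a contradiction. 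Therefore $k=1$, i.e. $\bar x$ has minimal period $T$. The heart of the argument, and the sole point where (H0)$^{p\ge 0}$ or the restriction $n=1$ is genuinely used, is exactly this iteration estimate: with only partial convexity the path $\gamma_{\bar x}$ need not be generated by a positive-definite Hamiltonian, so the Ekeland--Hofer--Long monotonicity of the Maslov index under iteration is not automatic, and one must control the rotation of $\gamma_{\bar x}$ in the possibly degenerate $p$-directions from the semi-definiteness of $H_{pp}$ along the orbit (or, when $n=1$, from the elementary structure of $\Sp(2,\R)$).
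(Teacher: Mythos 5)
Your proposed variational framework differs from the paper's. You want to use a \emph{partial} Legendre transform $H^{\sharp}(p,\xi)=\sup_{q}(\langle\xi,q\rangle-H(p,q))$ since only $H_{qq}$ is assumed positive definite. The paper instead adds a quadratic form $\tfrac12(\Lambda x,x)$ with $\Lambda=\left(\begin{smallmatrix}\lambda I_n&0\\0&0\end{smallmatrix}\right)$ to the truncated Hamiltonian $\hat H_K$, chooses $\lambda$ large enough (Lemma~\ref{l:positive-definite-lambda}) so that $F=\hat H_K+\tfrac12(\Lambda\cdot,\cdot)$ is \emph{fully} strictly convex, and then runs the standard Clarke--Ekeland duality on $F$, obtaining the reduced dual functional $\psi$ of \eqref{e:reduced-functional} on $\hat L^{\beta}_{\circ}$. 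That gives, with no partial-duality machinery, the Morse-index/Maslov-type index identification of Section~\ref{ss:Morse-Maslov-indices} and the $C^{1}$/weak$^{*}$-continuity needed for Theorem~\ref{t:critical-point-mp}. Your partial Legendre route is not obviously wrong, but you supply none of the corresponding analytic and Morse-theoretic infrastructure, so it is not a drop-in replacement.

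The genuine gap is in the last paragraph. You claim that the iteration inequalities of Section~\ref{s:Maslov-index}, under (H0)$^{q+}$ together with (H0)$^{p\ge0}$ or $n=1$, ``force $i_{L_0}(\bar x)$ to grow without bound in $k$ and hence to exceed the upper bound as soon as $k\ge 2$.'' This is false as stated and is precisely where the hard work of the theorem lies. With $i_{L_0}(\gamma_y)\ge 0$, $i_{L_0}(\bar x)\le 1$ and the $L_{1}$-bound $i_{L_1}(\gamma_y)+\nu_{L_1}(\gamma_y)\ge1$ --- which itself is \emph{not} a free consequence of (H0)$^{p\ge0}$ or $n=1$ but requires the relative Morse index argument of Lemma~\ref{l:i+nu-ge-1} together with the sign $\Phi(\bar x)<0$ of the critical value, or Proposition~\ref{p:C=0} in the $n=1$ case --- the iteration inequalities only yield $k\le 2$ (Proposition~\ref{p:index=0} and Corollary~\ref{c:index=1}). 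They do \emph{not} exclude $k=2$: this is the ``half period'' that defeated the earlier works of Liu and Zhang cited in the introduction. Eliminating $k=2$ requires the topological argument that has no counterpart in your proposal: when $m^{-}(\bar u)=1$, Theorem~\ref{t:two-path-components} shows the sublevel set $\dot\psi^{d}$ has exactly two path components $\mathscr P^{\pm}$ adherent to $\bar u$, and Proposition~\ref{p:morse-index--1} uses the $T/2$-time-shift invariance of $\psi$ (valid precisely when $\bar x$ is $T/2$-periodic) together with a negative eigenvector $e$ satisfying $e(t+T/2)=-e(t)$ to carry a path from $0$ to $\bar u+\eta e$ across to $\bar u-\eta e$, contradicting $\mathscr P^{+}\neq\mathscr P^{-}$. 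Without this step the proof establishes only minimal period $T$ or $T/2$, not the claim of the theorem.
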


As in the celebrated  \cite{EkHo85}, our proof also use dual 
action principle. Since at first we just assume partial strict convexity {\rm (H0)$^{q+}$} of $H$, we can not directly apply their proof to our case. The difficulty is that by adding a quadratic form to the Hamiltonian $H$, we can still prove the adjusted dual action functional satisfies the conditions of mountain-pass Theorem \ref{t:critical-point-mp}.
In \cite{liu2010}, C. Liu proved that
for any $T>0$, \eqref{e:HS-T} has a $T$-periodic brake solution whose
minimal period is $T$ or $\frac{T}{2}$ under the assumptions of  {\rm (H1)--(H4)} and {\rm (H0)$^+$}.
In \cite{zhang2015}, assuming both that $H''(x)$ is semi-positive definite and {\rm (H0)$^{q+}$} instead of {\rm (H0)$^+$}, D. Zhang got the same result.
In both  Liu and Zhang's proof, they used the well-known variational principle and the iteration inequalities of the Maslov-type indices. A Maslov-type index of a path of symplectic matrices  or a periodic solution with some boundary condition is an integer, 
the precise definition of which will be given in Section \ref{ss:Maslov-index-iteration}.
Both in  Liu-Zhang's and our proof, we always need  the Maslov-type indices with brake symmetric and periodic boundary conditions (cf. Definition  \ref{d:Maslov-type-index}) satisfy
\begin{equation}\label{e:index-big-0}
	i_{L_0}(\bar x)\ge 0, \quad \text{and} \quad i^{\Lo}_1(\gamma)+\nu_1(\gamma)\ge n+1,
\end{equation}
 where $L_0=\{0\}\times \R^n$ and $\gamma$ is the matrizant of the variational equation along the  trajectory $\bar x(t)$ of \eqref{e:HS-T}. 
In \cite{FZZZ22}, assuming {\rm (H1)--(H4)} and 
$H''(x)$ is semi-positive definite for $x\in\R^{2n}$, we 
got that for any $T>0$, the system \eqref{e:HS-T}  possesses a non-constant $T$-periodic  brake solution  with minimal period not less than  $\frac{T}{n+1}$.

However their methods can not eliminate $\frac{T}{2}$, half the period; please see the example about the Maslov-type indices of the special symplectic path in \cite{zhang2015}.

More naturally,  we give the reversible version of 
\cite[Threorem 2]{CZ-E-L90}
\begin{theorem}\label{t:potential-well1}
	Let $\Omega$ be an open subset of $\R^{2n}$ with the following symmetry property,
	that is, $N\Omega=\Omega$.
	Assume $H\in C^2(\R^{2n},\R)$ satisfies {\rm (H1)-(H2),(H4)}, {\rm (H0)$^{q+}$} and 
	\begin{enumerate}
		\item [\rm (H3)$'$] The smallest eigenvalue of $H''(x)$ goes to $+\infty$ as $|x|\to \infty$ or $x\to \partial \Omega$.
	\end{enumerate}
	Then, for any $T>0$, \eqref{e:HS-T} has a periodic brake solution $\bar x$ with minimal period $T$ provided
	this solution $\bar x$  further satisfies either {\rm (H0)$^{p\ge 0}$} or $n=1$.
\end{theorem}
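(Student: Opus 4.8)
The plan is to run the argument of Theorem~\ref{t:super-mini-brake} almost verbatim, with the superquadraticity hypothesis {\rm (H3)} replaced throughout by the Hessian-coercivity hypothesis {\rm (H3)$'$}, following the classical recipe that converts Rabinowitz-type existence statements into their ``potential well'' counterparts as in \cite{CZ-E-L90}. The only inputs of {\rm (H3)} actually used in the proof of Theorem~\ref{t:super-mini-brake} are (i) the mountain-pass geometry of the dual action functional, (ii) its condition {\rm (C)}, and (iii) an a priori bound confining the solution produced by the variational scheme; I will re-establish each of these from {\rm (H1)}, {\rm (H2)}, {\rm (H4)}, {\rm (H0)$^{q+}$} and {\rm (H3)$'$}.

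First I would set up the dual variational problem. Since {\rm (H0)$^{q+}$} gives strict convexity of $H$ only in the position variable $q$, I choose a symmetric matrix $B=B^T$ commuting with $N$ (for instance $B=\beta\,\mathrm{diag}(I_n,0)$ with $\beta>0$ large, so that the shift touches only the momentum block) and a modification $\wt H$ of $H$ that equals $H+\tfrac12\langle Bx,x\rangle$ on a large $N$-invariant region containing the relevant sublevel sets and is globally convex outside it, so that the brake solutions of \eqref{e:HS-T} correspond exactly to the solutions of the shifted system $\dot x=J\wt H'(x)-JBx$ lying in the unmodified region. Passing to the Fenchel conjugate $\wt H^*$, whose Hessian is small and coercive on the pertinent region by {\rm (H3)$'$}, one gets the dual action functional $\Psi(u)=\int_0^{T}\big(\tfrac12\langle Ku,u\rangle+\wt H^*(u)\big)\,dt$ on the closed $N$-symmetric subspace of $L^2(\R/T\Z;\R^{2n})$ cut out by $u(-t)=Nu(t)$, where $K$ is the compact self-adjoint operator built from the inverse of $\tfrac{d}{dt}-JB$ on that subspace (invertible once $\beta$ avoids a discrete resonant set). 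Using {\rm (H1)}--{\rm (H2)} near $0$ and {\rm (H3)$'$} away from it, $\Psi$ is continuous, G\^{a}teaux-differentiable with weak$^*$-continuous derivative, has the mountain-pass geometry \eqref{e:d-bigger}, and satisfies condition {\rm (C)} at the mountain-pass level $d$. The delicate point — the only place the proof genuinely departs from the superquadratic case — is the verification of condition {\rm (C)} under {\rm (H3)$'$} on the open set $\Omega$: one must show that a {\rm (C)}-sequence stays in a fixed compact subset of $\Omega$, which is forced by the blow-up of the Hessian near $\partial\Omega$ acting as an infinite barrier, and that $\Psi$ is coercive along such a sequence.

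With $\Psi$ in place, Theorem~\ref{t:critical-point-mp} yields a critical point $\bar u$ of $\Psi$ that is mountain-pass essential, i.e.\ either a mountain-pass point or a point of $\cl(M)\setminus M$. In either case a deformation/linking argument gives that the Morse index of $\Psi''(\bar u)$ is at most $1$ and its index plus nullity is at least $1$; an a priori estimate based on {\rm (H3)$'$} then shows the associated brake solution $\bar x$ of the shifted system lies in the region where $\wt H$ agrees with $H+\tfrac12\langle Bx,x\rangle$, hence solves \eqref{e:HS-T} itself. Transcribing the Morse data of $\Psi$ at $\bar u$ into the Maslov-type indices of $\bar x$ through the dual-action index dictionary — keeping track of the contribution of the shift $B$, which is an explicitly computable constant — yields exactly the bounds \eqref{e:index-big-0}, that is $i_{L_0}(\bar x)\ge 0$ and $i^{\Lo}_1(\gamma)+\nu_1(\gamma)\ge n+1$, where $\gamma$ is the matrizant of the linearized system along $\bar x$.

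Finally, to see that the minimal period of $\bar x$ is $T$, suppose it were $T/k$ for some integer $k\ge 2$. Under the extra hypothesis {\rm (H0)$^{p\ge 0}$} along $\bar x$, or in the scalar case $n=1$, the new iteration inequalities for Maslov-type indices with brake and periodic boundary conditions from Section~\ref{s:Maslov-index} force the indices of the $k$-fold iterate of the $T/k$-brake solution to exceed the bounds in \eqref{e:index-big-0} already at $k=2$; this is precisely where the sharpened index estimate, unavailable to Liu and Zhang, removes the spurious half-period $T/2$. Hence $\bar x$ has minimal period $T$, which proves the theorem. As indicated, the one substantive obstacle is the verification of condition {\rm (C)} for $\Psi$ under {\rm (H3)$'$}, in particular the confinement of {\rm (C)}-sequences inside $\Omega$; the rest is the proof of Theorem~\ref{t:super-mini-brake} with {\rm (H3)} deleted.
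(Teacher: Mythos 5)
Your proposal takes a genuinely different route from the paper, and it leaves the central difficulty unresolved. You plan to work directly with a dual action functional built from a potential-well Hamiltonian and verify condition (C) under (H3)$'$, treating the blow-up of the Hessian near $\partial\Omega$ as an ``infinite barrier'' for (C)-sequences, and then invoke an unspecified a priori bound to conclude the solution lies where the modification is inactive. That verification of (C) and that a priori bound are precisely the hard parts, and you flag them yourself without carrying them out. The paper circumvents both by a reduction-to-superquadratic argument that you do not use.

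Concretely, the paper's proof has two steps. In Step~1, for Hamiltonians satisfying (H1)--(H5) with (H0)$^{q+}$, the superquadratic machinery of Theorem~\ref{t:super-mini-brake} already produces a minimal-period-$T$ brake solution $\bar x$ with $m^-(\bar x)\le 1$. The new observation is a quadratic-form test: if one had $H''(\bar x(t))>\frac{4\pi}{T}I_{2n}$ for \emph{all} $t$, then $(F^*)''=H''(\bar x)^{-1}<\frac{T}{4\pi}I_{2n}$, and the two explicit eigenmodes $u_1, u_2$ (corresponding to the first two nonzero Fourier modes) already give $m^-(\bar x)\ge 2n\ge 2$, a contradiction. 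Hence $H''(\bar x(t_0))\le\frac{4\pi}{T}I_{2n}$ for some $t_0$. In Step~2, one uses (H3)$'$ to see that the set $\Omega_1=\{x\in\Omega;\,H''(x)\le\frac{4\pi}{T}I_{2n}\}$ is bounded, so $h_0=\sup_{\Omega_1}H$ is finite and $\Omega_0=\{x\in\Omega;\,H(x)\le h_0\}$ is a bounded, brake-symmetric set. One truncates $H$ to a superquadratic $\tilde H$ (as in \eqref{e:tilde-H}) agreeing with $H$ on $\Omega_0$ and with $\tilde H''>\frac{4\pi}{T}I_{2n}$ off $\Omega_0$, applies Step~1 to $\tilde H$, and gets a solution with $\tilde H''(\bar x(t_0))\le\frac{4\pi}{T}I_{2n}$ for some $t_0$, so $\bar x(t_0)\in\Omega_0$. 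Since the system is autonomous, $\tilde H(\bar x(\cdot))$ is conserved, hence $\tilde H(\bar x(t))\le h_0$ for all $t$; combined with $H\to+\infty$ at $\partial\Omega$ this pins $\bar x$ inside $\Omega_0$, where $\tilde H=H$, so $\bar x$ solves the original system. The index-driven confinement together with conservation of energy is exactly the a priori estimate you gesture at but do not supply, and it is not available in your framework because you never derive the eigenmode-count bound $m^-\ge 2n$ from a putative lower bound on $H''$ along $\bar x$. Without some substitute for that step, your (C)-verification strategy is unfinished and, in my view, considerably harder than the paper's reduction.

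A smaller remark: your choice of shift $B=\beta\,\mathrm{diag}(I_n,0)$ is the same device the paper uses (the matrix $\Lambda$) to upgrade (H0)$^{q+}$ to full convexity of $F=\hat H_K+\frac12(\Lambda x,x)$; but in the paper that shift is already built into the framework of Theorem~\ref{t:super-mini-brake}, so it does not need to be reinvented for the potential-well case.
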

\subsubsection{Methods and new observations}
Assuming $H_{qq}(p,q)\in \R^{n\times n}$ are positive definite, we define a modified dual functional $\psi$ and 
find a mountain-pass essential point $\bar u$ of it, which corresponds to a non-constant  $T$-periodic brake solution $\bar x$ of the Hamiltonian system.
Moreover the Morse index  and the Maslov-type index of $\bar x$ are the same, which is less than one.
Under the additional assumption of 
{\rm (H0)$^{p\ge0}$} or $n=1$, and by Maslov-type index
theory, we can prove that:
if the Morse index of $\bar u$ is zero, then $T$ is the minimal period of $\bar x$; if the Morse index of $\bar u$ is one, then  the minimal period of $T$-periodic solution $\bar x$ is $T$ or $\frac{T}{2}$.
For the second case, using the topological structure near $\bar u$ on the sublevel set of $\psi$, we prove half a period $\frac{T}{2}$ is impossible. 
Thus we get our result.

In 2021, the author finished the subharmonic part Theorem \ref{t:super-quadratic-1} without writing the remaining partially convex {\bf(SH7)} case, in which I found that time shift may change the Maslov-type indices with Lagrangian boundary conditions and used the Bott-type iteration formulae to deal with this problem. The author's that unpublished paper has also included the following  important iteration inequality \eqref{e:iteration-ineq} with its proof \eqref{e:iteration-inequality}, 
Corollary \ref{0-1}, and all the lines from Lemma \ref{l:shift-index} to Remark \ref{r:time-shift-change}.
In 2022, I found we can eliminate the half period in the minimal periodic problem on brake solutions of reversible convex Hamiltonian systems by using the method in \cite{EkHo87}, which used to tackling nonautomomous systems; and also constructed the convex $C^2$ truncated Hamiltonians $\hat H_K$ in \eqref{e:HS-T-K} and $\tilde H$ \eqref{e:tilde-H}.
At first, the author did not want to use Maslov-type index iteration theory; then found it is impossible to prove
the reversible version of \cite{EkHo85} just via the Morse index.
At the beginning of 2023, I wrote the complete proof for the case $m^-(\bar u)=1$ to get rid of half the period; in May of the same year, to solve the case $m^-(\bar u)=0$, I wrote the earlier version of Lemmas \ref{l:i-nu-L0-L1} and
\ref{l:i=0}, which was inspired by my Lemma \ref{l:shift-index} and  Corollary \ref{c:shift-two-times-index}.
Over the summer break of 2024, the author thought a new method to handle the partially convex case by the novel adjusted dual functional \eqref{e:reduced-functional}.

At the beginning of May 2025, I proposed the weaker condition {\rm (H0)$^{p\ge0}$} or $n=1$ and did my best to  to prove our main Theorem \ref{t:super-mini-brake}. 
For using iteration inequalities \eqref{e:iterarion-even}, \eqref{e:iteration-ineq} and \eqref{e:i-L0-L1}, we need
the following \eqref{e:i-L1+nu-L1-ge1} holds, i.e.,
\[i_{L_1}(\gamma)+\nu_{L_1}(\gamma)\ge 1,\]
which cannot be proved just by the additional new condition {\rm (H0)$^{p\ge0}$} or $n=1$.
Besides Maslov-type index and H\"{o}rmander index theory, 
the proof needs  more knowledge of the periodic solution $\bar x$.
For $n=1$, we will combine the index estimate \eqref{e:morse-index-less1-greater1} and Proposition \ref{p:C=0}.
For {\rm (H0)$^{p\ge0}$}, we will combine the relative Morse index, the sign of the critical value of the action functional and {\rm (H1)}. Note that the following Lemmas \ref{l:relative-morse-index} and \ref{l:ker-A-sB-constant}
about semi-positive definite $B(t)=H''(x(t))$  were first given by the author in the original submission of \cite{FZZZ22}. Using the two lemmas, the following $(L_0,L_1)$-concavity of $\gamma$ \eqref{e:i-nu-L0-L1} and 
the direct action functional 
\begin{equation}\label{e:phi-autonomous}
\Phi(x)=\int_{-\frac{T}{2}}^{\frac{T}{2}}\left[\frac{1}{2}(J\dot x,x)+H(x)\right]dt,
\end{equation}
we will prove the desired  \eqref{e:i-L1+nu-L1-ge1} in Lemma \ref{l:i+nu-ge-1}.
In fact, in our case, we do not assume 
$B_2(t)=H''(\bar x(t))$ is semi-positive definite.
However, we can decompose
\[B_2(t)=B_1(t)+[B_2(t)-B_1(t)],\] 
where $B_2-B_1$ is semi-positive definite and  both $B_1(t)$ and $B_2(t)-B_1(t)$ have the particular matrix forms which
can be handled.

\subsection{Multiple subharmonic solutions for nonautonomous Hamiltonian systems}
We also study 
  nonautonomous Hamiltonian systems
\begin{equation}\label{e:HS-nonauto}
	\dot{x}=JH'(t,x),
\end{equation}
where $x\in\R^{2n}$, $H$ is $T$-periodic in $t\in\R$ and $T>0$. By $H'(t,\cdot)$ we denote the partial gradient with respect to $x\in\R^{2n}$. Then it is nature to seek $T$-periodic solutions
of \eqref{e:HS-nonauto}. Since $H$ is $jT$ periodic for all $j\in\N$, one can also search for $jT$-periodic solutions, which are called \textit{subharmonics}:
\begin{equation*}
	\begin{split}
		&\dot x(t)=J H'(t,x(t)) \\
		&x(t+jT)=x(t)
	\end{split}
\end{equation*}
for all $t\in\R$.
Clearly, any $T$-periodic solution is $jT$-periodic for all $j\in\N$.
Thus an additional argument is required to show that these subharmonics  which we found are indeed distinct.

In \cite{Rabin80}, assuming the following conditions,
\begin{enumerate}
	\item[{\bf(SH1)}] $H(t,x)\ge 0$  for all $t\in\R$, $x\in \R^{2n}$;
	\item[{\bf(SH2)}]  $H(t,x)=o(|x|^2)$   as $|x|\to0$,
	\item [{\bf(SH3)}] there exist constants  $\mu>2$  and $\bar{r}>0$ such that
	\begin{equation*}
		0<\mu H(t,x)\le x\cdot  H'(t,x),\quad  \text{for all} \quad t\in\R, |x|\ge \bar{r}\/ ;
	\end{equation*}
	\item[{\bf(SH4)}] there is a $T>0$ such that $ H(t+T,x)= H(t,x)$ for all $t\in\R$, $x\in\R^{2n}$\/;
	\item[{\bf(SH5)}] there are constants $\theta,R_1>0$ such that $|H'(t,x)|\le \theta x\cdot  H'(t,x)$ for all
	$t\in\R$, $x\in \R^{2n}$, $|x|>R_1$\/;
\end{enumerate}
Rabinowitz shows the existence of infinitely many distinct subharmonics. 
About infinite subharmonics on closed symplectic aspherical  manifolds, there was a conjecture formulated by Charles Conley in 1984, for the details of which we refer the reader to the celebrated 
\cite{SaZeh92}, \cite{Hin09} and \cite{Ginz10}.
Given an integer $j\in\Z$ and a $kT$-periodic function $x_k$, denote by $j*x_k$ the
phase shift, defined by
\[(j*x_k)(t)=x_k(t+jT).\]
For a $kT$-periodic solution of \eqref{e:HS-nonauto}, which is denoted by $(x_k,kT)$,
if
\begin{equation*}
	j*x_k\neq x_k \text{ for all }\quad j\neq 0 \mod k,
\end{equation*}
then
we  say that $kT$ is a \textit{simple} period of $x_k$.
Given two  periodic solutions $(x_j,jT)$  and $(x_k,kT)$ of \eqref{e:HS}, we shall say they are \textit{geometrically distinct} if
\[l*x_j\neq h*x_k \text{ for all } l,h\in\Z.\]

 In order to utilize Morse theory or the Maslov-type indices, we assume
\begin{enumerate}
	\item[{\bf(SH0)}] $H\colon \R\times \R^{2n}\to \R^{2n}$, $H$ possesses a second partial derivative with respect to $x\in\R^{2n}$ such that
	$H$, $H_{x_i}$, $H_{x_ix_j}\in C(\R\times \R^{2n},\R)$ for $i,j\in\{1,2,\ldots,2n\}$.
	\end{enumerate}
Further assuming the strictly convexity of $H(t,x)$ about $x$ and  other growth conditions about the gradients of $H(t,\cdot )$ and its Fenchel conjugate $H^*(t;\cdot)$,
I. Ekeland and H. Hofer in \cite{EkHo87} proved that \eqref{e:HS-nonauto}
possesses subharmonic $jT$-periodic solution $x_j$ for each integer $j\ge1$ and all of them are pairwise \textit{geometrically distinct}; and for almost such $H$ (in the sense of Baire), $jT$ is a simple period of $x_j$ for $j\in\N$.
Without the convex assumption of $H$, in \cite{Liu00} by using Maslov-type index iteration theory, C. Liu proved that for each integer $j\ge 1$, there exists a non-constant $jT$-periodic solution $x_j$ of \eqref{e:HS-nonauto} such that for any integer $p>2n+1$, $x_j$ and $x_{pj}$
are geometrically distinct; while in \cite{Zhou25sub}, the author improved to $p>2n$. For $p>1$, if $x_{pj}$ is nondegenerate, then $x_j$ and $x_{pj}$ are geometrically distinct. Note that 
 a $jT$-periodic solution $(x_j,jT)$ is called nondegenerate if $\nu_{1}(x_j)=0$.

We assume $H$ satisfies the reversible condition:
\begin{enumerate}
	\item[{\bf(SH6)}]  $H(t,x)=H(-t,Nx)$ for all $t\in\R$, $x\in\R^{2n}$.
\end{enumerate}
In \cite{LiLiu10}, assuming the Hessian of $H$ is positive definite, some results about \textit{brake subharmonics} similar to \cite{Liu00}
were obtained.
Actually, using direct action functional, they added a semi-positive quadratic form 
$\frac{1}{2}\tilde B(t)x\cdot x$ to the superquadratic Hamiltonian $H$.

More precisely, for $\tilde B \in C(S_T, \mathcal{L}_s(\R^{2n}))$, we denote the $C^0$-norm by $\|\tilde B\|_{C^0}$, where
$S_T=\R/T\Z$ and $\mathcal{L}_s(\R^{2n})$ is the set of symmetric $2n\times 2n$ real matrices. 
We define
\begin{enumerate}
	\item[{\bf(B1)}] $\tilde B(t)$ is semi-positive definite for all $t\in\R$,
\end{enumerate} 
and  the reversible condition:
\begin{enumerate}
	\item[{\bf(B2)}] $N\tilde B(t)N=\tilde B(-t)$ for $t\in\R$.
\end{enumerate}

In the literature, to tell apart geometrical distinct brake subharmonics,  they always need
assume $H(t,x)$ was convex about $x$. 
However, because of our new iteration inequalities, we can study  brake subharmonics for general systems and get better results.
For $j\in\N$, consider  
\begin{equation}\label{e:HS-jT}
	{\rm (HS)_j}:\qquad
	\left\{
	\begin{array}{l}
		\dot x(t)=J  H'(t,x(t)),\\
		x(-t)= Nx(t),\\
		x(t+jT)=x(t),
	\end{array}
	\right.
\end{equation}
for all $t\in\R$.
Solutions  of {\rm (HS)$_j$}, for $j\ge 2$, are  called  
\textit{brake subharmonics}.
A periodic solution $(x_j,jT)$ of {\rm (HS)$_j$} is called nondegenerate if $\nu_{L_0}(x_j)=0$, to be defined in \eqref{e:Maslov-index-L_0}. 
We will prove
\begin{theorem}\label{t:super-quadratic-1}
	Suppose $ H(t,x)=\frac{1}{2}\tilde B(t)x\cdot x+ \tilde H(t,x)$ with $\tilde H$ satisfying {\bf (SH0)}--{\bf (SH6)}, $\tilde B\in C(S_T, L_s(\R^{2n}))$ satisfying {\bf (B1)}-{\bf (B2)}.
	Then for any $j\in\N$ and $1\le j< \frac{2\pi}{T\|\tilde B\|_{C^0}}$, the Hamiltonian system \eqref{e:HS-jT} possesses a non-constant $jT$-periodic solution $x_j$
 satisfies
	\begin{enumerate}
		\item for $j \in (2\N-1)\cup(4\N-2)$, $p\in \N$ and $pj< \frac{2\pi}{T\|\tilde B\|_{C^0}}$, $x_j$ and $x_{pj}$ are geometrically  distinct provided $p>n+1$.
		Furthermore, if  $x_j$ is nondegenerate,
		then for $p>1$, $x_j$ and $x_{pj}$ are geometrically
		distinct.
		\item For $j \in 4\N$, $p\in \N$ and $pj< \frac{2\pi}{T\|\tilde B\|_{C^0}}$, $x_j$ and $x_{pj}$ are geometrically  distinct provided $p>2n+1$.
		Furthermore, if $x_j$ is nondegenerate,
		then for $p>n+1$, $x_j$ and $x_{pj}$ are geometrically
		distinct.
	\end{enumerate}
 If we furthermore assume 
	\begin{enumerate}
		 \item[{\bf (SH7)}]  The $n\times n$ matrix $H_{qq}(t,x)$ is positive definite for any $t\in \R$ and  $x=\begin{pmatrix}
			p\\
			q
		\end{pmatrix}\in\R^{2n}\setminus\{0\}$,
		\end{enumerate}
	 then we have
	\begin{enumerate}
		\item[Case 1:] Assume $j \in (2\N-1)\cup(4\N-2)$, $p\in \N$ and $pj< \frac{2\pi}{T\|\tilde B\|_{C^0}}$.
		Then
		$x_j$ and $x_{pj}$  are  geometrically distinct provided
		$p>2$. 
		\item[Case 2:]
		Assume $j\in 4\N$, $p\in \N$ and $pj< \frac{2\pi}{T\|\tilde B\|_{C^0}}$. Then
		$x_j$ and $x_{pj}$  are  geometrically distinct provided $p>n+2$.
	\end{enumerate}
\end{theorem}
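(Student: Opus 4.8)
The plan is to follow the classical Rabinowitz--Ekeland--Hofer scheme for subharmonics adapted to the brake (reversible) setting, and then feed the resulting index information into the new iteration inequalities of Section \ref{s:Maslov-index}. For fixed $j\in\N$ with $1\le j<\frac{2\pi}{T\|\tilde B\|_{C^0}}$, the quadratic part $\frac12\tilde B(t)x\cdot x$ is genuinely subquadratic at infinity relative to the superquadratic $\tilde H$, so on the space of $jT$-periodic brake curves (i.e. functions satisfying $x(-t)=Nx(t)$ and $x(t+jT)=x(t)$) the action functional $\Phi_j(x)=\int_{-jT/2}^{jT/2}[\frac12(J\dot x,x)+H(t,x)]\,dt$ still has the mountain-pass geometry under {\bf(SH1)}--{\bf(SH3)}, and the Palais--Smale condition holds by {\bf(SH5)} as in \cite{Rabin80}. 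Restricting to the $N$-symmetric subspace is legitimate because the symmetry {\bf(SH6)} (together with {\bf(B2)}) makes $\Phi_j$ invariant under the antisymplectic involution $x(t)\mapsto Nx(-t)$, so critical points of the restriction are genuine brake solutions. First I would extract, for each such $j$, a non-constant brake solution $x_j$ of \eqref{e:HS-jT} with controlled Maslov-type index; the standard bound from mountain-pass critical points gives $i_{L_0}(x_j)\le n$ and $i_{L_0}(x_j)+\nu_{L_0}(x_j)\ge n$ (or the analogous one-sided estimate, as in \cite{Liu00,LiLiu10}), which is the input the iteration machinery needs.

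The core of the argument is then purely index-theoretic: suppose for contradiction that $x_j$ and $x_{pj}$ are geometrically distinct fails, i.e. $x_{pj}=h*x_j$ for some $h$, so that $x_{pj}$ is (up to phase shift) the $p$-fold iterate of $x_j$. Let $\gamma$ be the matrizant along $x_j$ on $[-jT/2,jT/2]$; then the symplectic path associated with $x_{pj}$ is the $p$-fold iteration $\gamma^{(p)}$ in the appropriate sense for brake orbits. Here the residue class of $j$ modulo $4$ matters because the brake boundary condition alternates between $(L_0,L_0)$-type and $(L_0,L_1)$-type data under iteration — this is exactly the phenomenon flagged in the introduction (``time shift may change the Maslov-type indices with Lagrangian boundary conditions''), and it is why the statement splits into $j\in(2\N-1)\cup(4\N-2)$ versus $j\in 4\N$. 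I would apply the Bott-type iteration formulae together with the new iteration inequalities \eqref{e:iterarion-even}, \eqref{e:iteration-ineq}, \eqref{e:i-L0-L1} and \eqref{e:i-L1+nu-L1-ge1} to lower-bound $i_{L_0}(\gamma^{(p)})$ by something that grows linearly in $p$; comparing with the a priori ceiling $i_{L_0}(x_{pj})\le n$ forces $p$ to be small, and choosing the threshold ($p>n+1$, resp. $p>2n+1$) rules out geometric coincidence. The nondegeneracy refinement ($\nu_{L_0}(x_j)=0$) removes the slack coming from the nullity term $\nu_{L_1}(\gamma)$, which is what lets the threshold drop to $p>1$, resp. $p>n+1$.

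For the sharper thresholds under the extra partial-convexity hypothesis {\bf(SH7)}, the plan is to upgrade the a priori bound on $i_{L_0}(x_j)$. Under {\bf(SH7)} one has $H_{qq}(t,x)>0$, so the modified dual action functional \eqref{e:reduced-functional} (obtained by adding a suitable quadratic form in the $q$-variables to make the Hamiltonian fully convex, then dualizing) is available, and the mountain-pass essential point it produces has Morse index $\le 1$; transferring this through the Maslov-type/Morse index identification gives a much tighter estimate, essentially $i_{L_0}(x_j)\le 1$ together with $i_{L_0}(x_j)+\nu_{L_0}(x_j)\ge 1$. Re-running the iteration comparison with this improved ceiling replaces $n+1$ by $2$ in Case 1 and $2n+1$ by $n+2$ in Case 2 (the residual $n$-dependence in Case 2 being an artifact of the $(L_0,L_1)$ boundary data that appears when $j\in 4\N$). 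The main obstacle I anticipate is precisely the bookkeeping in this last step: one must verify that \eqref{e:i-L1+nu-L1-ge1}, i.e. $i_{L_1}(\gamma)+\nu_{L_1}(\gamma)\ge 1$, holds along $x_j$ — as the introduction stresses, this does \emph{not} follow from {\bf(SH7)} alone and requires the decomposition $B_2(t)=B_1(t)+[B_2(t)-B_1(t)]$ into matrices of controllable block form together with Lemmas \ref{l:relative-morse-index} and \ref{l:ker-A-sB-constant}; keeping the brake boundary conditions, the phase shifts, and the iteration formulae mutually consistent throughout is where the real care is needed.
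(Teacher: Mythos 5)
Your proposal has the right overall architecture (existence via the direct variational method on the $N$-symmetric subspace, then an index-iteration argument), but it rests on an incorrect a priori index estimate and misidentifies which auxiliary lemmas the proof actually needs. The paper does not start from $i_{L_0}(x_j)\le n$ and $i_{L_0}(x_j)+\nu_{L_0}(x_j)\ge n$: it imports from \cite[Theorem 3.1]{LiLiu10} the much sharper bound $i_{L_0}(x_j)\le 1\le i_{L_0}(x_j)+\nu_{L_0}(x_j)$ \emph{already for the first part of the theorem}, with no convexity assumption whatsoever. If you try to run the iteration inequality $i_{L_0}(\gamma_S^p)\ge p\bigl(i_{L_0}(\gamma_S)+\nu_{L_0}(\gamma_S)\bigr)-n$ against your proposed ceiling $\le n$ and floor $\ge n$, you get $p\le 2$, not the threshold $p>n+1$ in the statement, so the arithmetic simply does not close. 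With the correct estimate $i_{L_0}(\gamma_S^p)\le 1$, $i_{L_0}(\gamma_S)+\nu_{L_0}(\gamma_S)\ge 1$ one obtains $1\ge p-n$, giving $p\le n+1$ exactly.

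Correspondingly you have the role of \textbf{(SH7)} backwards. It does not ``upgrade'' the index bound to $\le 1$ — that bound already held. What \textbf{(SH7)} supplies, via Lemma \ref{l:partial-positive-L0-L1}(a) (positive $q$-block of the Hessian gives $i_{L_0}\ge 0$ of the linearized flow restricted to a half-period), is the extra inequality $i_{L_0}(\gamma_S)\ge 0$ for the \emph{restricted} one-period path. Feeding that into \eqref{e:iteration-ineq} is what pushes the threshold from $p>n+1$ down to $p>2$ (Case 1) and from $p>2n+1$ to $p>n+2$ (Case 2). You also invoke $i_{L_1}(\gamma)+\nu_{L_1}(\gamma)\ge 1$ (equation \eqref{e:i-L1+nu-L1-ge1}) and Lemma \ref{l:i+nu-ge-1} with the $B_2=B_1+(B_2-B_1)$ decomposition — but none of that machinery enters this theorem. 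It is used exclusively in the autonomous minimal-period results (Theorems \ref{t:super-mini-brake} and \ref{t:potential-well1}), where one must rule out the half-period $T/2$; here there is no such need, because you are only bounding $p$, not eliminating $p=2$. Finally, the step that turns a failure of geometric distinctness into a statement about iterated Maslov-type indices is Proposition \ref{p:index-geometrical-same}, which you gesture at but never state; in particular, it is this proposition that makes precise why $j\in 4\N$ costs an additive error of $n$ (the time-shift by $jT/2$ can change $i_{L_0}$ of the $l$-fold brake iterate by up to $n$, cf.\ Lemma \ref{l:shift-index} and Corollary \ref{c:shift-two-times-index}), while $j\in(2\N-1)\cup(4\N-2)$ preserves the index exactly. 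Without that proposition the case split by $j\bmod 4$, and the different thresholds, cannot be derived.
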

In Theorem \ref{t:super-quadratic-1}, $\tilde B(t)$ can be 
constant zero matrix. If $\tilde B(t)\equiv 0$,
then $\frac{2\pi}{T\|\tilde B\|_{C^0}} =+\infty$.
Moreover, we have
\begin{theorem}\label{t:super-quadratic-2}
	Suppose that $H(t,x)$ satisfies {\bf (SH0)}-{\bf (SH7)}.
	Then for any $j\in\N$, the Hamiltonian system \eqref{e:HS-jT} possesses a non-constant $jT$-periodic solution $x_j$
	which satisfies
	\begin{equation*}
		i_{L_0}(x_j)\leq 1\le  i_{L_0}(x_j)+\nu_{L_0}(x_j).
	\end{equation*}
		\begin{enumerate}
		\item[Case 1:]Assume $j \in (2\N-1)\cup(4\N-2)$ and  $p\in \N$. Then  $x_j$ and $x_{pj}$ are geometrically  distinct provided $p>1$.
		\item[Case 2:]
		Assume $j=2^l$ for some $l\in \N$. If $x_j$ is nondegenerate, then $jT$ is a simple period
		of $x_j$.
	\end{enumerate}

\end{theorem}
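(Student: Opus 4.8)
The plan is to prove Theorem~\ref{t:super-quadratic-2} in three movements: produce, for every $j$, a non-constant $jT$-periodic brake solution carrying the asserted index localization; then derive geometric distinctness (Case~1) and the simple-period property (Case~2) from the new Maslov-type iteration inequalities.

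\emph{Step 1: existence and index localization.} Since \textbf{(SH0)}--\textbf{(SH7)} give only the $q$-convexity of $H$, I would first truncate $H$ in the $q$-variables to a superquadratic $C^2$ Hamiltonian $\hat H$ that grows polynomially and still satisfies \textbf{(SH0)}--\textbf{(SH7)} (the constructions $\hat H_K$ in \eqref{e:HS-T-K} and $\tilde H$ in \eqref{e:tilde-H}), and then, after adding an auxiliary quadratic form, form the adjusted dual action functional $\psi_j$ of \eqref{e:reduced-functional} on the Hilbert space of $jT$-periodic loops obeying the brake symmetry $x(-t)=Nx(t)$. The superquadratic hypotheses \textbf{(SH1)}--\textbf{(SH3)} give $\psi_j$ the mountain-pass geometry with the strict inequality \eqref{e:d-bigger}, and \textbf{(SH5)} together with the growth of $\hat H$ yields condition~(C) of Definition~\ref{d:conditionC} at the minimax level; Theorem~\ref{t:critical-point-mp} then supplies a mountain-pass essential critical point $u_j$. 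A superquadraticity a priori bound shows the corresponding $jT$-periodic brake solution $x_j$ stays inside the region where $\hat H=H$, so $x_j$ solves \eqref{e:HS-jT}, and it is non-constant because the minimax level strictly exceeds the endpoint values. Finally, mountain-pass essentiality forces $m^-(u_j)\le 1\le m^-(u_j)+m^0(u_j)$, and the index identification for the adjusted dual functional, $m^-(u_j)=i_{L_0}(x_j)$ and $m^0(u_j)=\nu_{L_0}(x_j)$ (via \eqref{e:Maslov-index-L_0}), gives $i_{L_0}(x_j)\le 1\le i_{L_0}(x_j)+\nu_{L_0}(x_j)$.

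\emph{Step 2: geometric distinctness, Case~1.} Fix $j\in(2\N-1)\cup(4\N-2)$ and $p>1$, and suppose $x_j$ and $x_{pj}$ are \emph{not} geometrically distinct. Then $x_{pj}$ equals, up to a phase shift $l\ast$, the $p$-fold iterate of $x_j$; when $j$ is odd no nontrivial brake-compatible shift exists, while when $j\equiv 2\bmod 4$ the only remaining possibility is the half-period shift by $\frac{j}{2}T$, which interchanges the roles of $L_0$ and $L_1$. In either case I would compute $i_{L_0}(x_{pj})$ from the half-period symplectic path $\gamma$ of $x_j$ by means of the Bott-type iteration formulae --- this step is necessary precisely because, by Lemma~\ref{l:shift-index} and Remark~\ref{r:time-shift-change}, such a shift can alter the Lagrangian Maslov indices --- and then apply the $L_0$- and $L_1$-versions of the iteration inequalities \eqref{e:iteration-ineq} and \eqref{e:i-L0-L1}: for this parity of $j$ they yield a lower bound of the schematic form $i_{L_0}(x_{pj})\ge p\,i_{L_0}(x_j)+(p-1)\bigl(i_{L_1}(\gamma)+\nu_{L_1}(\gamma)\bigr)+\cdots$, which, using $i_{L_0}(x_j)+\nu_{L_0}(x_j)\ge 1$ from Step~1 and the bound $i_{L_1}(\gamma)+\nu_{L_1}(\gamma)\ge 1$ (cf.\ \eqref{e:i-L1+nu-L1-ge1}), forces $i_{L_0}(x_{pj})\ge 2$ for every $p\ge 2$. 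This contradicts $i_{L_0}(x_{pj})\le 1$ from Step~1, so $x_j$ and $x_{pj}$ are geometrically distinct.

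\emph{Step 3: simple period (Case~2) and the main obstacle.} Let $j=2^{\,l}$ and assume $x_j$ is nondegenerate, i.e.\ $\nu_{L_0}(x_j)=0$; then Step~1 forces $i_{L_0}(x_j)=1$. If $jT$ were not a simple period of $x_j$, then $x_j$ would be the $2^{\,l-m}$-fold iterate of a $2^{\,m}T$-periodic brake solution $y$ for some $m<l$, so the iteration exponent $2^{\,l-m}$ is even; iterating the even-iteration inequality \eqref{e:iterarion-even} (equivalently, doubling $l-m$ times while tracking the pair $(i_{L_0},i_{L_1})$), together with the lower bounds for $i_{L_0}(y)+\nu_{L_0}(y)$ and $i_{L_1}(\gamma_y)+\nu_{L_1}(\gamma_y)$ coming from Maslov and H\"ormander index theory, I would obtain $1=i_{L_0}(x_j)\ge 2$, a contradiction. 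The delicate ingredient in both Step~2 and Step~3 --- flagged in the paper's own discussion --- is precisely securing these $L_1$-type lower bounds, which do \emph{not} follow from the $q$-convexity \textbf{(SH7)} alone: one must feed the iteration inequalities with finer information about the solution, obtained by combining the relative Morse index (Lemmas~\ref{l:relative-morse-index} and \ref{l:ker-A-sB-constant}), the $(L_0,L_1)$-concavity estimate \eqref{e:i-nu-L0-L1}, the sign of the critical value of the direct action functional \eqref{e:phi-autonomous}, and \textbf{(SH1)}, via the decomposition $B_2(t)=B_1(t)+[B_2(t)-B_1(t)]$ into matrices of tractable form. The second subtlety is that a phase shift of an iterated brake orbit can change the Lagrangian Maslov indices, which is exactly why the Bott-type iteration formulae (Lemma~\ref{l:shift-index}, Corollary~\ref{c:shift-two-times-index}) are indispensable and why the conclusion genuinely splits according to whether $j$ is odd, $\equiv 2\bmod 4$, or a power of $2$.
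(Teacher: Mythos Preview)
Your Step~1 is correct and matches the paper. Steps~2 and~3, however, contain a genuine gap at exactly the point you flag as delicate.

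You propose to eliminate $p=2$ in Case~1 (and to rule out the half period in Case~2) by securing $i_{L_1}(\gamma)+\nu_{L_1}(\gamma)\ge 1$ and feeding it into the iteration inequalities. But this $L_1$-bound is \emph{not} available under \textbf{(SH0)}--\textbf{(SH7)}. The machinery you invoke---Lemmas~\ref{l:relative-morse-index}--\ref{l:ker-A-sB-constant}, the sign of $\Phi$ in \eqref{e:phi-autonomous}, and the decomposition $B_2=B_1+(B_2-B_1)$---is the content of Lemma~\ref{l:i+nu-ge-1}, whose proof requires both that the system be \emph{autonomous} (so that $\dot{\bar x}\in\ker(\check A-\check B_2)$, which is the pivotal step) and that $H_{pp}(\bar x(t))$ be semi-positive definite. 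Neither hypothesis is part of \textbf{(SH0)}--\textbf{(SH7)}, so this route is closed. Under \textbf{(SH7)} alone the iteration inequalities of \S\ref{ss:Maslov-index-iteration} yield only $p\le 2$ (this is precisely the \textbf{(SH7)}-clause of Theorem~\ref{t:super-quadratic-1}, Case~1) and cannot exclude $p=2$.

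The paper handles $p=2$ and Case~2 by a completely different, variational--topological argument that never touches the $L_1$-index. After reducing to $i_{L_0}(\gamma^2)=1$, $i_{L_0}(\gamma)=0$ via Proposition~\ref{p:iteration-brake-inequality}, the Morse decomposition (Lemma~\ref{l:morse-decomposition}) forces the one-dimensional negative eigenspace of $\psi''_\infty(\bar u)$ to be spanned by an $e$ with the anti-periodic symmetry $e(t+\tau)=-e(t)$, where $\tau$ is the half period. Proposition~\ref{p:morse-index--1} then exploits the mountain-pass essential structure of Theorem~\ref{t:two-path-components}: the two path components $\mathscr P^{\pm}$ of $\dot\psi^d$ near $\bar u$ are exchanged by the $\tau$-time shift, yet this shift fixes $0$ and preserves $\psi$, so $\bar u+\eta e$ and $\bar u-\eta e$ land in the \emph{same} component---contradicting $m^-(\bar u)=1$. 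This is packaged as Lemma~\ref{l:simple-period} and settles both cases in one stroke; your index-theoretic strategy should be replaced by this argument.
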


This paper consists of five sections and five appendixes.
Section \ref{s:introduction} presents the new critical point theory and our applications on reveisble Hamiltonian systems.
In Section \ref{s:critical-point}, 
first we prove our extended Mountain Pass Theorem \ref{t:critical-point-mp}. To use it prove Theorems \ref{t:super-mini-brake}, \ref{t:potential-well1} and \ref{t:super-quadratic-2}, we then do some preparations.
In Section \ref{ss:dual-action}, we establish the variational framework and prove that the new functional still satisfies the conditions 
of Theorem \ref{t:critical-point-mp}. In Section \ref{ss:morse-index-topology}, we define and estimate the Morse index of the critical point found in Section \ref{ss:critical-point}, the geometrical structure near which 
in also studied similar to \cite{EkHo87}.
In Section \ref{ss:Maslov-index-iteration}, we recall Maslov-type index
theory and give some new iteration inequalities and formulae including the H\"{o}rmander index.
As a direct application of our new iteration inequalities, in Section \ref{ss:subharmonic}, we prove 
Theorem \ref{t:super-quadratic-1} by using the well-known direct variation.
We will prove the equality of the Morse index and the Maslov-type index for the brake solution of partially 
convex reversible Hamiltonian systems in
Section  \ref{ss:Morse-Maslov-indices}. We also give a formula for the relative Morse index on the semi-positive definite case in Section \ref{ss:relative-morse-index}, which will be utilized to deal with minimal periodic problems under our novel weaker condition {\rm (H0)$^{p\ge0}$}. 
In Section \ref{s:partial-convex-symmetric}, we prove Theorems \ref{t:super-quadratic-2},\ref{t:super-mini-brake} and  \ref{t:potential-well1}.
Our approach here is a blend of the mountain-pass critical point theory, Morse theory and the Maslov-type indices. We also give an \textit{a priori} estimate of the solutions we found.
The first three appendixes are more novel.
We use
the classical Morse index theorem of the quadratic form in \ref{a:morse} to prove the equivalence of the Morse index and Maslov-type index of the critical point for our new adjusted functional. 
In \ref{a:hormander}, we calculate the
H\"{o}mander index explicitly. In \ref{app:continuity-eigemvalue}, we give a self-contained knowledge of the eigenvalues of the  self-adjoint Fredholm operators as the preparations for 
the spectral flow and the relative Morse index in Section \ref{ss:relative-morse-index}.

Carefully studying and developing the methods in \cite{EkHo85},
\cite{EkHo87} and \cite{GhoPre89}, we prove our theorems by using the dual action principle, the relative Morse index, the critical values of the critical points, convex analysis, the critical points of the  mountain-pass essential type, \textit{a priori} estimate of solutions,
the Morse index of the periodic solutions and iteration theorem of the Maslov-type indices.

\section{Study of the critical point}\label{s:critical-point}
\subsection{Existence of the mountain-pass essential point}\label{ss:critical-point}
Let $E$ be a real Banach space. For a slightly better result, we change the metric on $E$.
Just as in \cite[Chapter IV.1]{Ek90}, define the geodesic length $l(c)$ of a curve $c\in C^1([0,1];E)$ to be:
\[l(c):=\int^1_0\frac{||\dot c(t)||}{1+||c(t)||}dt.\]
Then define the geodesic distance $\delta$ between two points $x_1$ and $x_2$ in $E$ to be
\[\delta(x_1,x_2):=\inf \{l(c);c\in C^1([0,1];E),c(0)=x_1,c(1)=x_2 \}.\]
Clearly $\delta (x_1,x_2)\le \|x_1-x_2\|$. In fact, $(E,\delta)$ is a complete metric space.
We denote the dual space of $E$ by $E^*$ with the operator norm $\|\cdot\|_*$.
\begin{definition}\label{d:conditionC}
	We shall say that a G{\^a}teaux-differentiable function $f\colon E\to \R$ satisfies condition (C)
	at the level $d$
	if every sequence $x_n$ such that
	\[ f(x_n)\to d, \quad (1+\|x_n\|)\|f'(x_n)\|_*\to 0 \quad { as }\quad n\to +\infty,\]
	has a convergent subsequence, which converges to a critical point $\bar x$ of $f$ with $f(\bar x)=d$.
\end{definition}
\begin{definition}
	We shall say that a $C^1$ function $f\colon E\to \R$ satisfies condition (PS) if every sequence $\{x_n;n\in\N\}$
	such that $|f(x_n)|$ is uniformly bounded and $f'(x_n)\to 0$ in $E^*$ has a convergent sequence.
\end{definition}
By definition, if a function $f$ satisfies condition (PS), then it must satisfy condition (C).
We recall \cite[Theorem IV.1.12]{Ek90}, which will be used to prove our main critical point theorem.
\begin{proposition}\label{p:critical-point-mp}
	Let $f\colon E\to \R$ be a continuous and G{\^a}teaux-differentiable function on a Banach space $E$ such that $f'\colon E\to E^*$
	is continuous from the norm topology of $E$ to the weak$^*$-topology of $E^*$.
	Take two distinct points $(u_0,u_1)$ in $E$, and define
	\begin{gather*}
		\Gamma =\{c\in C([0,1];E)| c(0)=u_0,c(1)=u_1\},  \text{ and } \\
		d=\inf_{c\in \Gamma}\sup_{0\le t\le 1}f(c(t)).
	\end{gather*}
	Suppose $F$ is a closed subset of $E$ such that $F_d:=F\cap \{u\in E;f(u)\ge d\}$ separates $u_0$ and $u_1$, i.e., $u_0$ and $u_1$ belong to disjoint connected components in $E\setminus F_d$. Assume $f$
	satisfies condition {\rm(C)} at the level $d$. Then either $F\cap \cl(M(f,d))\neq\emptyset$
	or $F\cap \Cr(f,d)$ contains a mountain-pass point.
\end{proposition}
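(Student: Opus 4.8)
\medskip
\noindent\textbf{Proof proposal.}
The plan is to run the Ghoussoub--Preiss-style minimax argument in the complete metric space of paths, using Ekeland's variational principle in place of a deformation lemma (so that only the stated weak regularity of $f'$ is used), and then to upgrade the critical point produced to the claimed alternative by Hofer's local-structure argument phrased relative to the closed set $F$. To set up, equip $\Gamma$ with the metric $\rho(c,c')=\max_{t\in[0,1]}\delta(c(t),c'(t))$; since $(E,\delta)$ is complete so is $(\Gamma,\rho)$, and, $\delta$-balls being norm-bounded, $f$ is uniformly continuous on $\rho$-bounded sets. For a constant $K>0$ introduce the $F$-penalized max $\Psi_K(c):=\max_{t\in[0,1]}\bigl(f(c(t))-K\,\delta(c(t),F)\bigr)$, a $\rho$-continuous functional. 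The separation hypothesis enters exactly once: any $c\in\Gamma$ joins $u_0$ to $u_1$ across disjoint components of $E\setminus F_d$, so it meets $F_d$ and thus has a parameter with $c(t)\in F$, $f(c(t))\ge d$; hence $\Psi_K(c)\ge d$, while $\Psi_K(c)\le\max_t f(c(t))$ gives $\inf_\Gamma\Psi_K\le\inf_\Gamma\max_t f(c(t))=d$. So $\inf_\Gamma\Psi_K=d$ for every $K>0$; note also $f(u_0),f(u_1)\le d$ automatically, since $\max_t f(c(t))\ge\max\{f(u_0),f(u_1)\}$ for all $c$.

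For small $\epsilon>0$, starting from a path $\epsilon$-minimal for $\max_t f(c(t))$ (hence for $\Psi_K$), Ekeland's variational principle produces $g_\epsilon\in\Gamma$ with $\Psi_K(g_\epsilon)\le d+\epsilon$, $\max_t f(g_\epsilon(t))\le d+o(1)$, and $\Psi_K(c)>\Psi_K(g_\epsilon)-\sqrt\epsilon\,\rho(c,g_\epsilon)$ for $c\ne g_\epsilon$. On the peak set $P_\epsilon=\{t: f(g_\epsilon(t))-K\delta(g_\epsilon(t),F)\ge d-\epsilon\}$ one reads off $\delta(g_\epsilon(t),F)=o(1)$ and $|f(g_\epsilon(t))-d|=o(1)$, so $g_\epsilon(P_\epsilon)$ is compact, close to $F$, at levels close to $d$. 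The core claim is that $\inf\{(1+\|x\|)\,\|f'(x)\|_*:x\in g_\epsilon(P_\epsilon)\}\to0$ as $\epsilon\to0$. If it failed, there would be $\eta_0>0$ with $(1+\|x\|)\|f'(x)\|_*\ge\eta_0$ on $g_\epsilon(P_\epsilon)$ and a $\delta$-neighbourhood of it along some sequence $\epsilon\to0$; fixing $K=\eta_0/2$ and using that $x\mapsto\delta(x,F)$ is $1$-Lipschitz for the metric $\delta$, the $(1+\|x\|)$-weighted norm of a pseudo-gradient of $f-K\delta(\cdot,F)$ is $\ge\eta_0/2$ there, so flowing $g_\epsilon$ along it for $\delta$-time of order $\epsilon/\eta_0$, cut off outside $P_\epsilon$ and away from the endpoints (legitimate since $f(u_0),f(u_1)\le d$, and in case of equality the endpoint lies off the closed set $F$), gives $h_\epsilon\in\Gamma$ with $\Psi_K(g_\epsilon)-\Psi_K(h_\epsilon)\gtrsim\epsilon$ but $\rho(h_\epsilon,g_\epsilon)\lesssim\epsilon/\eta_0$, contradicting the Ekeland inequality for $\epsilon$ small. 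Hence there is a sequence $x_n$ with $f(x_n)\to d$, $(1+\|x_n\|)\|f'(x_n)\|_*\to0$, $\delta(x_n,F)\to0$.

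Now condition (C) at level $d$ extracts a subsequence converging to some $\bar u\in\Cr(f,d)$, and $\bar u\in F$ because $F$ is closed; thus $F\cap\Cr(f,d)\ne\emptyset$. For the alternative, suppose $F\cap\cl(M(f,d))=\emptyset$ and that no point of $F\cap\Cr(f,d)$ is a mountain-pass point. Then around each such critical point $\dot f^d$ is, in small neighbourhoods, nonempty and path connected (Definition \ref{d:mountain-pass-point} together with the failure of the local-minimum alternative) and has no nearby local minima; covering the compact set $F\cap\Cr(f,d)$ by finitely many such neighbourhoods and localizing the Ekeland/deformation argument above inside them, one modifies the near-minimizing paths so that $\Psi_K$ drops strictly below $d$ --- yet each path must still cross $F_d$, where its value is $\ge d$. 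This contradiction delivers the alternative; it is Hofer's structural argument carried out relative to $F$.

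The main obstacle is the deformation estimate behind the core claim: producing a single modification of the \emph{entire} peak portion of a near-minimizing path that lowers $\Psi_K$ enough while keeping the $\rho$-displacement small enough to beat the $\sqrt\epsilon$ perturbation in Ekeland's principle, using only a pseudo-gradient field (there is no genuine gradient flow, $f$ being merely G{\^a}teaux-differentiable with $f'$ continuous only into the weak$^*$ topology), reconciling the non-smooth penalty $\delta(\cdot,F)$ with the smoothness of $f$, and keeping the endpoints fixed. Hofer's relative dichotomy in the last step is delicate as well, but it is routine once the local structure of $\dot f^d$ near a critical point that is neither a local minimum nor a mountain-pass point is in hand.
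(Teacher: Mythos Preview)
The paper does not give its own proof of this proposition: it is recalled verbatim from \cite[Theorem IV.1.12]{Ek90}, and the remark following it only indicates that the known proofs (Ghoussoub--Preiss and Ekeland's book) use Ekeland's variational principle together with, in Ekeland's version, a partition of unity in the metric space $(E,\delta)$. So there is nothing in the paper to compare against beyond that pointer.

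Your strategy for producing a critical point in $F$ --- penalize by the $\delta$-distance to $F$, show $\inf_\Gamma\Psi_K=d$ via the separation hypothesis, run Ekeland's principle on the path space, and extract a (C)-sequence approaching $F$ at level $d$ --- is the Ghoussoub--Preiss argument and matches what the paper's remark describes. The technical caveats you flag about building a pseudo-gradient with only weak$^*$-continuous $f'$ are real but surmountable: Ekeland's book handles this by choosing, for each point of the peak set, a single direction of steepest descent and gluing with a partition of unity, rather than constructing a global field.

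The gap is in your final paragraph. You argue that if no critical point in $F$ is a mountain-pass point and $F\cap\cl(M(f,d))=\emptyset$, then one can deform near-optimal paths so that $\Psi_K$ drops strictly below $d$, contradicting separation. But separation forces $\Psi_K(c)\ge d$ for \emph{every} $c\in\Gamma$, unconditionally; so what you would actually have to show is that the deformation is impossible, not that it succeeds. Put differently, your contradiction is stated backwards: the separation bound is the fixed fact, and the hypothesis to refute is the local one about the sublevel set. The correct mechanism (this is the content of Hofer's argument, and of Ekeland's Chapter IV) is not a further drop of $\Psi_K$ but a topological analysis: if $\bar u\in F\cap\Cr(f,d)$ is neither a mountain-pass point nor a limit of local minima, then in a small ball the sublevel set $\dot f^{\,d}$ is nonempty and path-connected, and one uses this to reroute almost-optimal paths \emph{around} $\bar u$ inside $\dot f^{\,d}$, producing new almost-optimal paths whose peak sets avoid a fixed neighbourhood of $\bar u$. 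Doing this uniformly over the (compact) set $F\cap\Cr(f,d)$ contradicts the localization of the (C)-sequence to $F\cap\Cr(f,d)$ established in your first step. Your sketch gestures at this but the ``drop $\Psi_K$ below $d$'' formulation does not work; you need the rerouting/avoidance version.
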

\begin{remark}
	By using the metric space $(E,\delta)$,
	Proposition \ref{p:critical-point-mp} improved
	\cite[Theorem (1.ter)(a)]{GhoPre89}, where they assume $f$ verifies the stronger condition (PS) in lieu of (C).
	Ekeland used a partition of unity and the metric space $(X,\delta)$ in the proof of \cite[Theorem IV.1.6 (Ghoussoub-Preiss)]{Ek90}.
	Both the proofs need Ekeland's variational principle.
\end{remark}

The classical theorem of Ambrosetti-Rabinowitz gives the existence of a critical value, thus a critical point.
In \cite{GhoPre89}, N. Ghoussoub and D. Preiss
formulate a more general principle which gives some information  about the critical point's location.
We reprove and extend \cite[Theorem 1.4]{EkHo87} to our Theorem \ref{t:critical-point-mp}.
\begin{proof}[Proof of Theorem \ref{t:critical-point-mp}]
	Since $E$ is locally path connected, the connected components and the path components are the same.
	Recall $\dot f^d=\{u\in E; f(u)<d\}$ and $u_0,u_1\in \dot f^d$.
	Denote by $\mathcal{L}$ the path component of $\dot f^d$ containing $u_0$.
	Note that $\dot f^d$ is an open subset of $E$, thus $\mathcal{L}$ is open in $E$.
	Set $\partial \mathcal{L}=\cl(\mathcal{L})\setminus \mathcal{L}$, i.e., the  boundary of $\mathcal{L}$. We claim that $\partial \mathcal{L}$ separate $u_0$ and $u_1$ in $E$, that is, $u_0$ and $u_1$ belong to disjoint connected components of $E\setminus \partial\mathcal{L}$.

	In fact, pick any $c\in\Gamma$ and define $t_0\in[0,1]$ by
	\[t_0=\sup\{t\in [0,1];c(t)\in \mathcal{L}\}.\]
	Since $u_0\in \mathcal{L}$ and $\mathcal{L}$ is open,
 we have $t_0>0$; and by \eqref{e:d-bigger}, we have $t_0<1$. We claim that $c(t_0)\in \partial \mathcal{L}$.
	By definition, $c(t_0)\in \cl(\mathcal{L})$; since $\mathcal{L}$ is open, $c(t_0)\notin\mathcal{L}$.
	Thus $c(t_0)\in \partial \mathcal{L}$, and we get that there is no path in $E\setminus \partial\mathcal{L}$ from $u_0$ to $u_1$.
	
	Then we prove that $\partial \mathcal{L}\subset\{u\in E;f(u)=d\}$.
	Since $\partial \mathcal{L}\subset \cl(\mathcal{L}) $, we have  $\partial \mathcal{L}\subset\{u\in E;f(u)\le d\}$.
	Argue by contradiction: if there were an element $w\in \partial \mathcal{L}$ such that $f(w)<d$, then there would exist a connected open neighborhood $V$ of $w$ such that  $V\subset \dot f^d$. Since $w\in \cl(\mathcal{L})$, $V\cap \mathcal{L}\neq \emptyset$. By the connectedness of $V$ and the definition of $\mathcal{L}$, we get that $w\in V\subset \mathcal{L}$, which yields a contradiction.
	
	Thus $\partial\mathcal{L}=\partial\mathcal{L}\cap \{u\in E;f(u)\ge d\}$ separates $u_0$ and $u_1$. Applying Proposition \ref{p:critical-point-mp}, we get that there exists a $\bar u\in \partial\mathcal{L}$ such that either  $\bar u\in  \cl(M(f,d))$
	or $\bar u$ is a mountain-pass point. By the definition of $\mathcal{L}$, we get $u_0 \xrightarrow[f]{} \bar u$, which implies $\bar u\notin M(f,d)$. The proof is complete.
\end{proof}

\subsection{Application  of Theorem \ref{t:critical-point-mp} to partially convex reversible Hamiltonian systems}\label{ss:dual-action}

Fix a positive number $T$ and denote the interval $I_T:=[-\frac{T}{2},\frac{T}{2}]$.
For any $p>1$, define \[\hat L^p(I_T;\R^{2n})=\{u\in L^p(I_T;\R^{2n}); u(-t)=Nu(t) \text{ for } t\in I_T\}\]
with norm  $\|u\|_{p}=\left(\int_{-\frac{T}{2}}^{\frac{T}{2}}|u(t)|^p dt\right)^{\frac{1}{p}}$, and 
\[ \hat W_T^{1,p}(I_T;\R^{2n})=\{x\in  W^{1,p}(I_T;\R^{2n});x(-t)=Nx(t) \text{ for } t\in I_T \text{ and } x(\frac{T}{2})=x(-\frac{T}{2})\}.\]
Note for $p>1$, if  $x\in  W^{1,p}(I_T;\R^{2n})$, then $x$ is absolutely continuous.
For $\lambda\ge 0$, we define an operator $A_{\lambda}\colon \hat L^{\mu}(I_T;\R^{2n})\to \hat L^{\mu'}(I_T;\R^{2n})$ by $A_{\lambda}x=-J\dot x+\Lambda x$ with domain $\dom(A_{\lambda})=\hat W_T^{1,\mu'}(I_T;\R^{2n})$, where $\Lambda=\begin{pmatrix}
	\lambda I_n&0\\
	0&0
\end{pmatrix}$, $\lambda\ge 0$ and  $\mu'$ satisfies $\frac{1}{\mu'}+\frac{1}{\mu}=1$.
Since we have assumed $\mu>2$ in {\rm (H3)} and {\bf(SH3)}, one gets $1<\mu'<2$.
Similar to \cite[Lemma II.4.4]{Ek90},
$A_{\lambda}$ is a closed self-adjoint operator from $\hat L^{\mu}(I_T;\R^{2n})$ to $\hat L^{\mu'}(I_T;\R^{2n})$.
Since $x(-t)=Nx(t)$, where $N=\begin{pmatrix} -I_n&0\\
	0&I_n
\end{pmatrix}
$, we have $\ker A_{\lambda}=\{\xi\in \R^{2n};\xi=\begin{pmatrix}0\\
	b\end{pmatrix}, b\in \R^n\}$.
	In this paper, we denote the inner product of $x,y\in\R^{2n}$  by $x\cdot y$ or $(x,y)$.
	We also use the notation $(x,y)(t):=x(t)\cdot y(t)$, $t\in \R$, for a pair of paths in $\R^{2n}$, that is, $x(t),y(t)\in \R^{2n}$.
For $x=\begin{pmatrix}
	p\\
	q
\end{pmatrix}\in\R^{2n}
$, $\Lambda x=\begin{pmatrix}
	\lambda p\\
	0
\end{pmatrix}\in\R^{2n}$ and $(\Lambda x,x)=\lambda |p|^2$,
which will be used in the adjusted dual functional later.

The direct action functional of  \eqref{e:HS-jT} for $j=1$ is defined by
\[\Phi(x)=\int_{-\frac{T}{2}}^{\frac{T}{2}}\left[\frac{1}{2}(J\dot x,x)+H(t,x)\right]dt\]
on $\hat W_T^{1,\mu'}(I_T;\R^{2n})$.
To get a  $C^1$ and convex functional, $H$ will be modified.
Choose a positive integer $m$ such that $2<2+\frac{1}{m}\le\mu$ and
denote by $\alpha =2+\frac{1}{m}$.
Let $K>\bar r$ and $\chi\in C^{\infty}(\R,\R)$ such that
$\chi(y)=1$ if $y\le K+1$, $\chi(y)=0$ if $y\ge K+2$, and $\chi'(y)<0$
if $y\in (K+1,K+2)$.
Set
\[H_K(t,x)=\chi(|x|)H(t,x)+(1-\chi(|x|))R|x|^{\alpha},\]
where $R\ge \displaystyle\max_{K+1\le|x|\le K+2,t\in I_T}\frac{H(t,x)}{|x|^{\alpha}}$.
Then $H_K$ satisfies {\bf (SH0)--(SH7)} with the same $\bar r$ and  $\mu$ replaced by $\alpha$
in {\bf(SH3)},  $\theta$ replaced by $\bar \theta= \max\{\theta,1\}$ and $R_1$ replaced by $\bar R_1=\max\{R_1,1\}$ in  {\bf(SH5)}.
Note that there exists a positive constant $M_K$ such that
$H''_K(t,x)y\cdot y\ge -M_K |y|^2$ for $K+1\le |x|\le K+2$.
Define a special convex $C^2$ function $l$ on $\R^{2n}$ by
\begin{equation*}
	l(x)=\left\{
	\begin{array}{ll}
		(|x|^2-2K|x|+K^2)^{\frac{\alpha}{2}}, & \hbox{for $|x|\ge K$;} \\
		0, & \hbox{for $|x|\le K$.}
	\end{array}
	\right.
\end{equation*}
For $|x|\ge K$, let $j(x)=(|x|-K)^2$.
Direct calculation shows that the Hessian \[j''(x)=2\frac{|x|-K}{|x|}I_{2n}+2\frac{K}{|x|^3}x\cdot x^T.\]
Then we obtain
\[l''(x)y\cdot y\ge \alpha j^{\frac{\alpha}{2}-1}(x)\frac{|x|-K}{|x|}|y|^2 \qquad\text{ for } |x|\ge K.\]
So for $|x|\ge K+1$, we have
$l''(x)y\cdot y\ge \alpha \frac{1}{K+1}|y|^2$.
Define 
\[\hat H_K(t,x)=H_K(t,x)+\frac{K+1}{\alpha}(M_K+1)l(x)\] for $x\in\R^{2n}$.
Then $\hat H_K(t,x)$ satisfies {\bf (SH0)--(SH7)} as $H_K$, $\hat H_K=H$
for $|x|\le K$ and $\hat H''_K(t,x)$ is positive definite for $|x|\ge K+1$.
Integrating the inequality in {\bf (SH3)}, we get
\begin{equation}\label{e:H-super-quadratic}
    	\hat H_K(t,x) \ge a_3|x|^{\alpha}-a_4,
\end{equation}
for all $t\in\R$, $x\in\R^{2n}$, where $a_3>0$ and $a_4$
are both constants \textit{independent} of $K$.
Actually, both $a_3$ and $a_4$ depend only on  $\bar r$, $\alpha$ and the maximal value of $H(t,x)$ on $\{(t,x)\in I_T\times\R^{2n} ;|x|\le \bar r\}$.

We will solve
\begin{equation}\label{e:HS-T-K}
	\begin{cases}
		\dot x(t)=J\hat H_K'(t,x(t)), \\
		x(-t)=Nx(t),\\
		x(t+T)=x(t), 
	\end{cases}
\end{equation}
and find a solution 
$\bar x$ of \eqref{e:HS-T-K}  with an \textit{a priori} estimate similar to \eqref{e:a-priori-estimate-solution}. Denote by $K=\max\{C,\bar r,\bar R_1\}$,
where $C$ is a positive number depending only on $\bar r,\mu,T,\bar R_1, \bar \theta$ and the maximal value of $H(t,x)$ and $|H'(t,x)|$ on $\{(t,x)\in I_T\times \R^{2n};|x|\le \max\{\bar r,\bar R_1\}\}$.
Then $|\bar x(t)|\le K$ for $t\in \R$.
Thus  $\hat H_K(t,\bar x)=H(t,\bar x)$ and $\bar x$ solves the original \eqref{e:HS-jT}.

Since  $\hat H''_K(t,x)$ is positive definite for $|x|\ge K+1$, we can choose $\lambda>0$ so large that
\begin{enumerate}
	\item[{\rm (F0)}]  $\hat H_K(t,x)+\frac{1}{2}(\Lambda x,x)$ is strictly convex about $x\in\R^{2n}$;
\end{enumerate}
see Lemma \ref{l:positive-definite-lambda} for a proof.
Denote this strictly convex function by \[F(t,x)=\hat H_K(t,x)+\frac{1}{2}(\Lambda x,x).\]

Similar to \cite[Proposition II.4.5]{Ek90}, we have the least action principle, that is, the critical points of $\Phi$ are exactly the solutions of \eqref{e:HS-T-K}.
In fact, set
\[\mathcal{F}(x)=\int_{-\frac{T}{2}}^{\frac{T}{2}}\left[\hat H_K(t,x(t)) + G(x(t))\right]dt \quad \text{ for }\quad  x\in \hat L^{\alpha}(I_T;\R^{2n}),\]
where $G(x)=\frac{1}{2}(\Lambda x,x)$.
Let $\beta$ be the conjugate exponent of $\alpha$, i.e., $\frac{1}{\beta}+\frac{1}{\alpha}=1$.
Similar to \cite[Corollary II.3.3]{Ek90},
by (F0), we have
\[\partial \mathcal{F}(x)=\{x^*\in \hat L^{\beta}(I_T;\R^{2n}); x^*(t)\in \partial [\hat H_K(t,x(t))+G(x(t))] \text{ a.e. } t\in I_T\}, \]
where $\partial$ stands for subgradients of $\mathcal{F}$ at $x$ and $\hat H_K+G$ at $x(t)$.
Denote 
\[\Phi_K(x)=\int_{-\frac{T}{2}}^{\frac{T}{2}}\left[\frac{1}{2}(J\dot x-\Lambda x,x)+\hat H_K (t,x)+\frac{1}{2}(\Lambda x,x)\right]dt\]
on $\hat W_T^{1,\beta}(I_T;\R^{2n})$.
By regularity, a critical point of $\Phi_K$  is
a classical solution of $\eqref{e:HS-T-K}$.

The Fenchel conjugate (or Legendre transform) of $F(t,x)$ about $x\in\R^{2n}$  is defined by
\[F^*(t;y)=\sup_{x\in \R^{2n}}\{x\cdot y-F(t,x)\}.\]
Because of {\rm (F0)}, the function $F^*$ is well-defined and satisfies {\bf (SH0)}, i.e., $C^2$.  The Legendre reciprocity
formula (cf.\cite[Proposition II.1.15]{Ek90}) about the convex function $F(t,\cdot)$ is:
\begin{proposition}\label{p:legendre-reciprocity}
	The three properties are equivalent:
	\begin{enumerate}
		\item [\rm (i)] $y=F'(t,x)$,
		\item [\rm (ii)] $x=\nabla F^*(t;y)$, where $\nabla F^*$ is the partial gradient of $F^*(t;y)$ with respect to $y\in\R^{2n}$, 
		\item [\rm (iii)] $F(t,x)+F^*(t;y)=x\cdot y$.
	\end{enumerate}
\end{proposition}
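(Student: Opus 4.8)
The plan is to follow the classical Fenchel duality argument, exploiting the two structural features of $F(t,\cdot)$ already established: it is $C^2$ (from {\bf (SH0)} for $\hat H_K$ together with the fact that $\tfrac12(\Lambda x,x)$ is a quadratic form) and strictly convex (from {\rm (F0)}), and by \eqref{e:H-super-quadratic} it satisfies the superlinear growth bound $F(t,x)\ge \hat H_K(t,x)\ge a_3|x|^{\alpha}-a_4$ with $\alpha>2$, since $(\Lambda x,x)=\lambda|p|^2\ge 0$. First I would record the consequences: for each fixed $t$ and $y$, the concave function $z\mapsto z\cdot y-F(t,z)$ tends to $-\infty$ as $|z|\to\infty$, so $F^*(t;y)$ is finite and the supremum defining it is attained; and by strict convexity of $F(t,\cdot)$ this maximizer is unique. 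Thus $F^*(t;\cdot)$ is a finite, convex, continuous function on $\R^{2n}$.

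Next I would prove (i)$\Leftrightarrow$(iii). The Fenchel--Young inequality $F(t,x)+F^*(t;y)\ge x\cdot y$ holds by the very definition of $F^*$, with equality exactly when $x$ maximizes $z\mapsto z\cdot y-F(t,z)$. Since this function is concave and differentiable in $z$, a point $x$ is a maximizer if and only if its gradient vanishes there, i.e. $y=F'(t,x)$. This establishes the equivalence (i)$\Leftrightarrow$(iii) directly.

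For (ii)$\Leftrightarrow$(iii) I would first show that $F^*(t;\cdot)$ is of class $C^1$ with $\nabla F^*(t;y)$ equal to the unique maximizer $x(y)$ above. Indeed, the subdifferential of the convex function $F^*(t;\cdot)$ at $y$ is precisely the (nonempty) set of such maximizers, which is the singleton $\{x(y)\}$; a finite convex function on $\R^{2n}$ whose subdifferential is a singleton at every point is differentiable everywhere, hence continuously differentiable. Then, since $F(t,\cdot)$ is proper, convex and lower semicontinuous, the Fenchel--Moreau theorem gives $F^{**}(t;\cdot)=F(t,\cdot)$, so (iii) is exactly the equality case of the Fenchel--Young inequality for the pair $(F^*,F^{**})$. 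Running the argument of the previous paragraph with $F$ replaced by $F^*$ — now legitimate because $F^*$ is $C^1$ — shows that this equality case is equivalent to $x=\nabla F^*(t;y)$, i.e. to (ii). Chaining the two equivalences finishes the proof.

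The main obstacle is the regularity of the conjugate $F^*$. If Lemma \ref{l:positive-definite-lambda} in fact delivers a $\lambda$ for which $F''(t,\cdot)=\hat H''_K(t,\cdot)+\Lambda$ is (uniformly) positive definite, then $F'(t,\cdot)$ is a $C^1$-diffeomorphism of $\R^{2n}$ by the inverse function theorem, its inverse is $\nabla F^*(t;\cdot)$, and $F^*\in C^2$; conditions (i)--(iii) then just express that $x$ and $y$ correspond under this diffeomorphism. If only strict convexity is available, one argues as above through the singleton subdifferential — injectivity of $F'(t,\cdot)$ from strict convexity, surjectivity from the superlinear growth \eqref{e:H-super-quadratic} — which still yields $F^*\in C^1$ and the equivalence of (i)--(iii). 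In either case the $K$-independent constants $a_3,a_4$ in \eqref{e:H-super-quadratic} supply the coercivity that makes the supremum defining $F^*$ attained, which is the one analytic point that genuinely needs checking.
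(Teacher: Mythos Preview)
Your proof is correct and follows the standard Fenchel duality argument. The paper does not actually prove this proposition: it simply cites \cite[Proposition II.1.15]{Ek90} and states the result, so your argument is precisely the classical proof one would find in that reference.
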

According to \cite[Proposition II.2.10]{Ek90}, 
\[(F^*)^{''}(t;F'(t,x))F''(t,x)=I_{2n},\]
where $(F^*)^{''}(t;y)\in \R^{2n\times 2n}$ is the Hessian of $F^*(t;y)$ with respect to $y\in \R^{2n}$.

The dual functional $\Psi_K \colon \hat W_T^{1,\beta}(I_T;\R^{2n})\to \R$  is defined by
\[\Psi_K(x)=\int_{-\frac{T}{2}}^{\frac{T}{2}}\left[ \frac{1}{2}(J\dot{x}-\Lambda x,x)+F^*(t;-J\dot{x}+\Lambda x)\right]dt.\]
Since $F(t,x)$ is convex in $x\in\R^{2n}$, $\mathcal{F}$ is a convex lower semicontinuous functional
on  $\hat L^{\alpha}(I_T;\R^{2n})$ and $\Psi_K$ satisfies the formula (14) in
\cite[Theorem II.4.2]{Ek90}. In fact, the proof is almost the same as that of
\cite[Proposition II.4.6]{Ek90}, since we have
\[\mathcal{F}^*(x^*)=\int_{-\frac{T}{2}}^{\frac{T}{2}}F^*(t;x^*(t))dt \quad \text{ for any } x^*\in \hat L^{\beta}(I_T;\R^{2n})\]
and $\dom(\mathcal{F})=\hat L^{\alpha}(I_T;\R^{2n})$ by \cite[Theorem II.3.2]{Ek90}.
Then
we have the dual action principle  for our reversible
case:
\begin{proposition}\label{p:dual-action-principle}
	$x$ is a critical point of $\Psi_K$ if and only if there is a constant $\xi\in \{0\}\times \R^{n}$
	such that $x+\xi$ solves \eqref{e:HS-T-K}.
	We have
	\begin{equation}\label{e:func-dual-func}
		\Phi_K(x+\xi)+\Psi_K(x)=0.
		\end{equation}
\end{proposition}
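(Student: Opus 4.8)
The plan is to recognize $\Psi_K$ as the Clarke--Ekeland dual of the direct action $\Phi_K$ and to follow \cite[Theorem II.4.2 and Proposition II.4.6]{Ek90}, adapted to the reversible (brake) setting. Write the quadratic part $Q(x)=\int_{I_T}\frac{1}{2}(J\dot x-\Lambda x,x)\,dt$ and $\mathcal{F}(x)=\int_{I_T}F(t,x(t))\,dt$. By {\rm (F0)}, \cite[Theorem II.3.2]{Ek90} and \eqref{e:H-super-quadratic}, $\mathcal{F}$ is convex, lower semicontinuous and finite on all of $\hat L^{\alpha}(I_T;\R^{2n})$, with Fenchel conjugate $\mathcal{F}^*(x^*)=\int_{I_T}F^*(t;x^*(t))\,dt$ on $\hat L^{\beta}(I_T;\R^{2n})$. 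The elementary identity $\frac{1}{2}(J\dot x-\Lambda x,x)+\frac{1}{2}(\Lambda x,x)=\frac{1}{2}(J\dot x,x)$ shows $\Phi_K=Q+\mathcal{F}$ and $\Psi_K(x)=Q(x)+\mathcal{F}^*(A_\lambda x)$, where $A_\lambda x=-J\dot x+\Lambda x$ is the closed self-adjoint operator introduced above, with $\ker A_\lambda=\{0\}\times\R^n$; moreover the least action principle (the analogue of \cite[Proposition II.4.5]{Ek90}) identifies the critical points of $\Phi_K$ on $\hat W_T^{1,\beta}(I_T;\R^{2n})$ with the classical solutions of \eqref{e:HS-T-K}.

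First I would compute the G{\^a}teaux derivative of $\Psi_K$ at $x\in\hat W_T^{1,\beta}(I_T;\R^{2n})$: integrating by parts in $Q$ and using that $x$ and a variation $y$ both satisfy $x(\frac{T}{2})=x(-\frac{T}{2})$, together with $J^{T}=-J$ and $\Lambda^{T}=\Lambda$, the boundary terms vanish and one obtains
\[\langle\Psi_K'(x),y\rangle=\int_{I_T}\bigl[(J\dot x-\Lambda x,y)+\nabla F^*(t;-J\dot x+\Lambda x)\cdot(-J\dot y+\Lambda y)\bigr]\,dt.\]
Put $v:=-J\dot x+\Lambda x=A_\lambda x$ and $u:=\nabla F^*(t;v)$; by Proposition \ref{p:legendre-reciprocity} this is equivalent to $v=F'(t,u)$. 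Since $(J\dot x-\Lambda x,y)=-(v,y)$ and $\nabla F^*(t;v)\cdot(-J\dot y+\Lambda y)=(u,A_\lambda y)$, the equation $\Psi_K'(x)=0$ reads $\int_{I_T}(u,A_\lambda y)\,dt=\int_{I_T}(v,y)\,dt$ for all $y\in\dom(A_\lambda)$, which by self-adjointness of $A_\lambda$ means $u\in\dom(A_\lambda)=\hat W_T^{1,\beta}(I_T;\R^{2n})$ and $A_\lambda u=v=A_\lambda x$. Hence $\xi:=u-x\in\ker A_\lambda=\{0\}\times\R^n$, while $A_\lambda u=F'(t,u)$ rewrites as $-J\dot u=\hat H_K'(t,u)$; a regularity bootstrap then makes $u=x+\xi$ a classical solution of \eqref{e:HS-T-K}. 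Reading the same chain backwards gives the converse: if $u=x+\xi$ solves \eqref{e:HS-T-K} with $\xi\in\{0\}\times\R^n$, then $A_\lambda u=F'(t,u)$, so $u=\nabla F^*(t;A_\lambda u)=\nabla F^*(t;-J\dot x+\Lambda x)$ and $\Psi_K'(x)=0$ by self-adjointness. Finally, since $\int_{I_T}(v,\xi)\,dt=\int_{I_T}(A_\lambda u,\xi)\,dt=\int_{I_T}(u,A_\lambda\xi)\,dt=0$, one has $\int_{I_T}(v,u)\,dt=\int_{I_T}(v,x)\,dt$, and adding the two functionals while using $F(t,u)+F^*(t;v)=u\cdot v$ (equivalence (iii) of Proposition \ref{p:legendre-reciprocity}) yields $\Phi_K(x+\xi)+\Psi_K(x)=-\int_{I_T}(u,v)\,dt+\int_{I_T}(u,v)\,dt=0$, which is \eqref{e:func-dual-func}.

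The formal identities above are routine; the genuinely delicate points, all handled exactly as in \cite[\S II.3--II.4]{Ek90}, are functional-analytic: that $A_\lambda$ is closed and self-adjoint for the reflexive pair $\hat L^{\alpha}(I_T;\R^{2n})$--$\hat L^{\beta}(I_T;\R^{2n})$ and not merely for $\hat L^{\mu}$--$\hat L^{\mu'}$ (this is automatic since $\alpha\le\mu$ on the bounded interval $I_T$); that the weak identity $A_\lambda u=v$ really puts $u$ into $\dom(A_\lambda)=\hat W_T^{1,\beta}(I_T;\R^{2n})$ --- in particular recovers the brake endpoint conditions --- and then, by bootstrapping $\dot u=J\hat H_K'(t,u)$, into $C^1$; and that the Fenchel conjugate stays compatible with the reversibility. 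For the last point, using $N^{T}JN=-J$, $N\Lambda N=\Lambda$ and $\hat H_K(t,x)=\hat H_K(-t,Nx)$, one checks $F(t,x)=F(-t,Nx)$, hence $F^*(-t;Ny)=F^*(t;y)$ and $\nabla F^*(-t;Ny)=N\nabla F^*(t;y)$; combined with $x(-t)=Nx(t)$, which gives $v(-t)=Nv(t)$, this forces $u(-t)=Nu(t)$, so the whole construction stays inside $\hat W_T^{1,\beta}(I_T;\R^{2n})$. The one feature that differs from the classical convex autonomous case of \cite{EkHo85} (and from \cite{Ek90}) is that $\ker A_\lambda=\{0\}\times\R^n$ rather than the space of constants, which is precisely why the corrector $\xi$ in the statement must be taken in $\{0\}\times\R^n$.
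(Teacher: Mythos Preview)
Your proposal is correct and follows essentially the same route the paper indicates: the paper does not give a self-contained argument but simply invokes \cite[Theorem II.4.2 and Proposition II.4.6]{Ek90} together with $\mathcal{F}^*(x^*)=\int_{I_T}F^*(t;x^*(t))\,dt$ and $\dom(\mathcal{F})=\hat L^{\alpha}$, remarking that the proof is ``almost the same''. You have spelled out that reference in detail---computing $\Psi_K'$, invoking self-adjointness of $A_\lambda$ and Legendre reciprocity, and verifying the brake-symmetry compatibility and the identity \eqref{e:func-dual-func}---so your write-up is strictly more explicit than the paper's, but the underlying strategy is identical.
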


We take advantage of the fact $\Psi_K$ is invariant by space translations:
\[\Psi_K(x+\xi)=\Psi_K(x) \qquad \forall \xi\in \{0\}\times \R^{n},\]
to perform the change of variables $-J\dot{x}+\Lambda x=u$.
For $r>1$, let \[\hat L_{\circ}^r(I_T;\R^{2n})=\{u\in \hat L^r(I_T;\R^{2n})|\int^{\frac{T}{2}}_{-\frac{T}{2}}u(t)dt=0\}.\]
We get a reduced functional 
\begin{equation}\label{e:reduced-functional}
\psi(u)=\int_{-\frac{T}{2}}^{\frac{T}{2}}\left[ \frac{1}{2}(-J\Pi u+J\Lambda J\Pi^2u,u)+F^*(t;u)\right]dt
\end{equation}
on 
$\hat L^{\beta}_{\circ}(I_T;\R^{2n})$,
where $\Pi u$ denotes the primitive of $u$ whose mean over $I_T$ is zero:
\[\frac{d}{dt}(\Pi u)=u \quad \text{and} \quad \int^{\frac{T}{2}}_{-\frac{T}{2}}(\Pi u)(t)dt=0.\]
Let $e_1,...,e_{2n}$ denote the usual bases in $\R^{2n}$, for $T=2\pi$, $j\in\Z\setminus \{0\}$ and $1\le k\le n$,
\begin{equation}\label{e:Pi-example}
	\begin{split}
	\Pi [(\sin jt)e_k+(\cos jt)e_{k+n}]&= \frac{1}{j}[-(\cos jt)e_k+(\sin jt)e_{k+n}],\\
	J\Pi [(\sin jt)e_k+(\cos jt)e_{k+n}]&= -\frac{1}{j}[(\sin jt)e_k+(\cos jt)e_{k+n}].
	\end{split}
\end{equation}
Then we have
$\psi(-J\dot x+\Lambda x)=\Psi_K(x)$ for $x\in \hat W^{1,\beta}_T(I_T;\R^{2n})$.
According to Proposition \ref{p:dual-action-principle}, we obtain 
\begin{proposition}\label{p:dual-critical-solution}
If we find a critical point $\bar u\in \hat L_{\circ}^{\beta}(I_T;\R^{2n})$ of $\psi$, then there must be
some $\bar x\in \hat W_T^{1,\beta}(I_T;\R^{2n})$ such that $-J\frac{d\bar x}{dt}+\Lambda \bar x=\bar u$ and $\bar x$ is the critical point of $\Psi_K$, thus a $T$-periodic brake solution of \eqref{e:HS-T-K}.
\end{proposition}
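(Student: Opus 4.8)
The plan is to obtain Proposition \ref{p:dual-critical-solution} as a formal consequence of the identity $\psi(-J\dot x+\Lambda x)=\Psi_K(x)$ recorded just above and of the dual action principle (Proposition \ref{p:dual-action-principle}). Writing $A_{\lambda}x=-J\dot x+\Lambda x$ as before, the whole point is that $A_{\lambda}$ maps $\hat W_T^{1,\beta}(I_T;\R^{2n})$ \emph{onto} $\hat L_{\circ}^{\beta}(I_T;\R^{2n})$. Granting that, the identity reads $\Psi_K=\psi\circ A_{\lambda}$, and a critical point $\bar u$ of $\psi$ lifts, through any preimage $\bar x$ with $A_{\lambda}\bar x=\bar u$, to a critical point of $\Psi_K$ sitting over it; Proposition \ref{p:dual-action-principle} then turns $\bar x$ into a solution of \eqref{e:HS-T-K}.

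First I would verify that $A_{\lambda}$ indeed takes values in the mean-zero space: for $x\in\hat W_T^{1,\beta}(I_T;\R^{2n})$ the brake symmetry $x(-t)=Nx(t)$ forces the upper block of $x$ to be odd, hence $\int_{I_T}\Lambda x\,dt=0$, while the endpoint identification $x(\frac T2)=x(-\frac T2)$ kills $\int_{I_T}(-J\dot x)\,dt$; so $\int_{I_T}A_{\lambda}x\,dt=0$. For surjectivity, given $\bar u\in\hat L_{\circ}^{\beta}(I_T;\R^{2n})$ I would write down the explicit candidate already implicit in \eqref{e:reduced-functional}, namely $\bar x:=J\Pi\bar u-J\Lambda J\,\Pi^{2}\bar u$, and check $A_{\lambda}\bar x=\bar u$ by a direct computation using $J^{2}=-I_{2n}$ and $\Lambda J\Lambda J=0$; equivalently, $\bar x$ is the mean-zero solution of the linear ODE $\dot{\bar x}=J\bar u-J\Lambda\bar x$, which closes up $T$-periodically because $J\Lambda$ is nilpotent and $\int_{I_T}\bar u=0$. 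This $\bar x$ is periodic and automatically has mean zero over $I_T$ (both $\Pi\bar u$ and $\Pi^{2}\bar u$ do, by the construction of $\Pi$), and it lies in $W^{1,\beta}$ since $\Pi$ sends $\hat L^{\beta}$ into absolutely continuous functions.

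The step I expect to require the most care — though it is still routine — is checking that $\bar x$ also satisfies the brake symmetry. Here I would use that $A_{\lambda}$ commutes with the involution $S$ given by $(Sv)(t)=Nv(-t)$; this follows from $N^{T}JN=-J$ (equivalently $JN=-NJ$) together with $\Lambda N=N\Lambda$. Since $\bar u$ is brake symmetric, $S\bar u=\bar u$, so $A_{\lambda}(\bar x-S\bar x)=A_{\lambda}\bar x-S(A_{\lambda}\bar x)=\bar u-S\bar u=0$, i.e. $\bar x-S\bar x\in\ker A_{\lambda}=\{0\}\times\R^{n}$. On the constant functions in $\{0\}\times\R^{n}$ the involution $S$ acts as the identity, whereas $S(\bar x-S\bar x)=-(\bar x-S\bar x)$; hence $\bar x-S\bar x=0$, i.e. $\bar x\in\hat W_T^{1,\beta}(I_T;\R^{2n})$ with $-J\frac{d\bar x}{dt}+\Lambda\bar x=\bar u$.

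Finally I would close the argument. Both $\psi$ and $\Psi_K$ are $C^{1}$: the Fenchel conjugate $F^{*}$ is $C^{2}$ and satisfies a dual growth bound $F^{*}(t;y)\le c(1+|y|^{\beta})$ inherited via Fenchel duality from \eqref{e:H-super-quadratic}, and the quadratic parts are bounded (hence smooth) since $\Pi$ is smoothing. Differentiating $\Psi_K=\psi\circ A_{\lambda}$ gives $\Psi_K'(\bar x)v=\psi'(\bar u)(A_{\lambda}v)=0$ for all $v$, so $\bar x$ is a critical point of $\Psi_K$. By Proposition \ref{p:dual-action-principle} there is $\xi\in\{0\}\times\R^{n}$ with $\bar x+\xi$ solving \eqref{e:HS-T-K}; since $A_{\lambda}\xi=0$, replacing $\bar x$ by $\bar x+\xi$ does not disturb $A_{\lambda}\bar x=\bar u$, so we may take $\bar x$ itself to solve \eqref{e:HS-T-K}. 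As \eqref{e:HS-T-K} is a first-order ODE with continuous right-hand side and $\bar x$ is continuous, bootstrapping makes $\bar x$ a classical $T$-periodic brake solution, which is the claim.
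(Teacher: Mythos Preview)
Your proposal is correct and follows essentially the same route as the paper: the paper simply records the identity $\psi(-J\dot x+\Lambda x)=\Psi_K(x)$ and invokes Proposition~\ref{p:dual-action-principle}, leaving the surjectivity of $A_{\lambda}$ and the brake symmetry of the preimage implicit (the explicit formula $\bar x=J\Pi\bar u-J\Lambda J\Pi^{2}\bar u$ you use appears later as~\eqref{e:u-inverse-x}). Your involution argument for the brake symmetry and the nilpotency check $\Lambda J\Lambda J=0$ are the natural details to spell out, and nothing in your write-up departs from the paper's approach.
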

Since $F(t,x)$ is superquadratic in $q\in\R^n$ for $x=\begin{pmatrix}
	p\\q
\end{pmatrix}$ and contains the quadratic term $\frac{1}{2}\lambda |p|^2$,  by duality we have
\begin{lemma}\label{l:H*-big-power2}
	For any $\varepsilon>0$,
	there exits an $\eta>0$ such that
	\[ F^*(t;y)\ge \frac{1}{2\varepsilon}|q|^2+\frac{1}{2(\lambda +\varepsilon)}|p|^2\quad \text{ for }\quad  y=\begin{pmatrix}
		p\\q\end{pmatrix},
	|y|\le  \eta.\]
\end{lemma}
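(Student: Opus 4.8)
The plan is to exploit that, near the origin, $F(t,\cdot)$ is dominated by a positive‑definite quadratic form whose $p$‑block carries only the coefficient $\tfrac12\lambda$ and whose $q$‑block can be made arbitrarily small; passing to the Fenchel conjugate turns such an upper bound for $F$ into the desired quadratic lower bound for $F^*$ on a neighbourhood of $0$. Recall that $F(t,x)=\hat H_K(t,x)+\tfrac12(\Lambda x,x)=\hat H_K(t,x)+\tfrac12\lambda|p|^2$ for $x=(p,q)^T$, that $\hat H_K=H$ on $\{|x|\le K\}$, and that by {\bf(SH2)} (together with {\bf(SH1)}) the Hessian of $H(t,\cdot)$ at the origin vanishes for every $t$. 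First I would record that the $o(|x|^2)$ behaviour is uniform in $t$: since $H\in C^2$ is $T$‑periodic in $t$, a second–order Taylor expansion with integral remainder gives
\[\hat H_K(t,x)=\int_0^1(1-s)\,H''(t,sx)x\cdot x\,ds,\qquad |x|\le K,\]
and, because $H''(t,0)=0$ while $H''$ is uniformly continuous on the compact set $I_T\times\{|x|\le 1\}$, for $\delta:=\varepsilon/2>0$ there is a radius $\rho\in(0,K]$ with $\hat H_K(t,x)\le\delta|x|^2$ whenever $|x|\le\rho$, $t\in I_T$. Hence
\[F(t,x)\le\Bigl(\tfrac{\lambda}{2}+\delta\Bigr)|p|^2+\delta|q|^2\qquad\text{for }|x|\le\rho,\ t\in I_T.\]

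Next, given $y=(p,q)^T$, I would test the supremum defining $F^*(t;y)=\sup_{x}\{x\cdot y-F(t,x)\}$ against the single competitor $x_*=(p_*,q_*)^T$ with $p_*=\dfrac{p}{\lambda+\varepsilon}$ and $q_*=\dfrac{q}{\varepsilon}$, which is precisely the maximiser of $x\mapsto x\cdot y-\bigl[(\tfrac\lambda2+\delta)|p|^2+\delta|q|^2\bigr]$ for $\delta=\varepsilon/2$. Since $|x_*|^2=\dfrac{|p|^2}{(\lambda+\varepsilon)^2}+\dfrac{|q|^2}{\varepsilon^2}\le|y|^2\Bigl(\dfrac1{(\lambda+\varepsilon)^2}+\dfrac1{\varepsilon^2}\Bigr)$, choosing $\eta>0$ small enough (depending only on $\varepsilon,\lambda,\rho$) forces $|x_*|\le\rho$ whenever $|y|\le\eta$, so the quadratic bound above applies at $x_*$ and
\[F^*(t;y)\ \ge\ x_*\cdot y-F(t,x_*)\ \ge\ x_*\cdot y-\Bigl(\tfrac{\lambda}{2}+\delta\Bigr)|p_*|^2-\delta|q_*|^2\ =\ \frac{1}{2(\lambda+\varepsilon)}|p|^2+\frac{1}{2\varepsilon}|q|^2,\]
the last equality being the elementary computation with $x_*$ and $\delta=\varepsilon/2$. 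Note $\lambda>0$ is fixed by {\rm (F0)} and $\varepsilon>0$, so every denominator is positive, and $\rho\le K$ ensures we are in the region where $\hat H_K\equiv H$, legitimising the use of {\bf(SH2)}.

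The argument is essentially routine; the only point demanding care is the \emph{uniformity in $t$} of the $o(|x|^2)$ estimate for $H$ near the origin (handled by the Taylor/compactness remark above) and, correspondingly, the quantitative choice of $\eta$ that keeps the optimal test point $x_*$ inside the ball of radius $\rho$. The superquadratic growth of $F(t,\cdot)$ in $q$ plays no role here — it is only the degeneracy of $F$ at the origin (no $q^2$‑term, and just $\tfrac12\lambda|p|^2$ in the $p$‑direction) that is responsible for the blow‑up of $F^*$, which is exactly the ``by duality'' heuristic quantified above.
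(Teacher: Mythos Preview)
Your proof is correct and follows essentially the same route as the paper: bound $F(t,\cdot)$ above near the origin by the quadratic $(\tfrac{\lambda}{2}+\tfrac{\varepsilon}{2})|p|^2+\tfrac{\varepsilon}{2}|q|^2$ using {\bf(SH2)}, then dualize by testing with the explicit maximiser $x_*=(p/(\lambda+\varepsilon),\,q/\varepsilon)$. Your treatment is in fact slightly cleaner than the paper's: where the paper additionally invokes the strict convexity of $F$ (through the quantity $\eta_1=\min\{|F'(t,x)|:|x|\ge\delta\}$) to locate the true maximiser, you bypass this entirely by choosing $\eta$ directly so that the test point $x_*$ stays in the small ball---which is all that is needed.
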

\begin{proof}
By {\bf (SH2)}, for any $\varepsilon>0$, there exists a $\delta>0$ such that
$\hat H_K(t,x)\le \frac{\varepsilon}{2}|x|^2$ if $|x|<\delta$ and $t\in I_T$.
Set $\eta_1=\min \{|F'(t,x)|;|x|\ge \delta, t\in I_T\}$.
Then $\eta >0$, since $F'(t,0)=0$ and the Hessian of $F$ about $x\neq0$ is positive definite.
Recall $F(t,x)=\hat H_K(t,x)+\frac{1}{2}(\Lambda x,x)$. By definition,
	\[F^*(t;y)=\sup_{x\in\R^{2n}}\left[(x,y)-\hat H_K(t,x)-\frac{1}{2}(\Lambda x,x)\right].\]
Notice the condition {\rm (F0)}, which states $F(t,x)$ is  convex about $x\in\R^{2n}$.
By the definition of $\eta_1$, if $|y|< \eta$, then  $y=F'(t,x)$ for some $x$ with $|x|\le \delta$. 
Set $\eta=\min \{\eta_1,\varepsilon\delta\}$.
Thus for $y<\eta $ we have
	\begin{equation*}
		\begin{split}
			F^*(t;y)&\ge \sup _{|x|\le \delta}\left[(x,y)-\frac{\varepsilon}{2}|x|^2-\frac{1}{2}(\Lambda x,x)\right]\\
			&= \frac{1}{2\varepsilon}|q|^2+\frac{1}{2(\lambda +\varepsilon)}|p|^2.
		\end{split}
	\end{equation*}
\end{proof}

We are now in a position to apply our main critical point theorem to the functional $\psi$.
We first have to check the assumptions of Theorem \ref{t:critical-point-mp}, where we take $u_0=0$ and $u_1$ will be defined presently. 
Note that, because of conditions {\bf (SH1)} and {\bf (SH2)}, $\psi(0)=0$.
\begin{theorem}\label{t:mountain-condition}
	\begin{enumerate}
		\item [\rm (i)]
 $\psi$ has a local minimum at the origin. In fact,
		there exist two positive numbers $\rho,a$ such that
		$\psi(u)\ge a$ for any $u\in \hat L_{\circ}^{\beta}(I_T;\R^{2n})$ with $\|u\|_{\beta}=\rho$.
		\item [\rm (ii)] There is an $e\in\hat L_{\circ}^{\beta}(I_T;\R^{2n})$ with $\|e\|_{\beta}> \rho$ such that $\psi(e)<0$.
		\item [\rm (iii)] The functional $\psi$ satisfies  condition {\rm (PS)}.
	\end{enumerate}
\end{theorem}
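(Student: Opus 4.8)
The plan is to verify the three items in turn, writing $\psi(u)=Q(u)+\int_{-\frac{T}{2}}^{\frac{T}{2}}F^*(t;u)\,dt$ with $Q(u)=\frac12\int_{-\frac{T}{2}}^{\frac{T}{2}}(-J\Pi u+J\Lambda J\Pi^2u,u)\,dt=:Q_1(u)+Q_2(u)$, and I would first record two structural facts. In the Fourier basis adapted to the brake symmetry — sines in the $p$-slots, cosines in the $q$-slots, all frequencies $j\ge1$ — $Q_1$ pairs the $p$- and $q$-coefficients \emph{of the same frequency} $j$ with weight comparable to $1/j$, so $Q_1$ is bounded and indefinite but controlled by $\|u\|_{L^2}$, while an integration by parts (using that the $q$-component of the zero-mean primitive of a brake function vanishes at $t=\pm\frac{T}{2}$) gives $Q_2(u)=\frac{\lambda}{2}\|\Pi u_q\|_{L^2}^2\ge0$. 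On the nonlinear side, the added term $\frac12(\Lambda x,x)=\frac{\lambda}{2}|p|^2$ and the $q$-superquadraticity {\rm (H0)$^{q+}$}/{\bf(SH7)} make $F^*$ uniformly convex in $y_p$ and super-quadratic in $y_q$ near $0$ — this is exactly Lemma~\ref{l:H*-big-power2}, with the coefficient $\frac1{2\varepsilon}$ as large as we please — whereas at infinity $F^*$ is sub-quadratic, with two-sided bounds $c|y|^\beta-c'\le F^*(t;y)\le c_1|y|^\beta+c_1'$, $\beta<2$, dual to the $\alpha$-growth of $F$; by strict convexity one also has $F^*(t;y)\ge c_2|y|^\beta>0$ for $|y|\ge\eta$.

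For (i) I would first treat $u$ with $\|u\|_\infty\le\eta$: since $|Q_1(u)|\le\sum_{j}\frac{C}{j}|a_j||b_j|\le\frac{\tau}{2}\|u_p\|_{L^2}^2+\frac1{2\tau}\|u_q\|_{L^2}^2$ for every $\tau>0$, choosing $\tau$ strictly between $\varepsilon$ and $(\lambda+\varepsilon)^{-1}$ — possible precisely when $\varepsilon(\lambda+\varepsilon)<1$, i.e. for $\varepsilon$ small — and adding the bound $\int_{-\frac{T}{2}}^{\frac{T}{2}}F^*(t;u)\,dt\ge\frac1{2(\lambda+\varepsilon)}\|u_p\|_{L^2}^2+\frac1{2\varepsilon}\|u_q\|_{L^2}^2$ of Lemma~\ref{l:H*-big-power2} while discarding $Q_2\ge0$, I obtain $\psi(u)\ge c_0\|u\|_{L^2}^2$ with $c_0>0$. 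For general $u$ on $\{\|u\|_\beta=\rho\}$ I would split $u=v+w$ with $v=u\,\mathbf{1}_{\{|u|\le\eta\}}$ and $w=u\,\mathbf{1}_{\{|u|>\eta\}}$ (both still brake symmetric, as $N$ is orthogonal). If $\|w\|_\beta^\beta\ge\tfrac12\rho^\beta$, then $\int F^*(t;u)\,dt\ge\int_{\{|u|>\eta\}}F^*(t;w)\,dt\ge c_2\|w\|_\beta^\beta\ge\tfrac{c_2}{2}\rho^\beta$, which dominates $|Q_1(u)|\le\tfrac12\|u\|_{L^1}^2\le C\rho^2$ for $\rho$ small because $\beta<2$. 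If instead $\|v\|_\beta^\beta\ge\tfrac12\rho^\beta$, then $\|v\|_{L^2}\gtrsim\rho$ by H\"older, and, passing to the zero-mean function $v-\bar v$ (which only decreases the relevant $L^2$ norms) and noting that all cross terms in $Q_1(u)$ are $O(\rho^{\beta+1})$, the preceding estimate applied to $v-\bar v$ together with $\int_{\{|u|\le\eta\}}F^*(t;v)\,dt\ge\frac1{2(\lambda+\varepsilon)}\|v_p\|_{L^2}^2+\frac1{2\varepsilon}\|v_q\|_{L^2}^2$ yields $\psi(u)\ge c_0\|v\|_{L^2}^2-O(\rho^{\beta+1})\ge c\rho^2$. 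Taking $\rho$ small and $a$ the resulting positive constant finishes (i). \emph{This local-minimum estimate is where the real work lies: one has to reconcile the bounded but indefinite form $Q_1$ with the fact that $\hat L_\circ^\beta(I_T;\R^{2n})$ is not a Hilbert space, so that $\|u\|_\beta=\rho$ gives no control of $\|u\|_\infty$; the rest of the theorem is essentially routine.}

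For (ii) I would pick a frequency $j_0$ large enough that the single ``negative mode'' $w(t)=(\sin\tfrac{2\pi j_0}{T}t)e_k-(\cos\tfrac{2\pi j_0}{T}t)e_{k+n}$ has $Q(w)<0$ — possible since $Q_1(w)\sim-1/j_0$ while $Q_2(w)\sim1/j_0^2$ — and then observe that $\psi(sw)=s^2Q(w)+\int F^*(t;sw)\,dt\le-cs^2+Cs^\beta\|w\|_\beta^\beta+CT\to-\infty$ as $s\to+\infty$, again because $\beta<2$. Choosing $s_0$ with $\psi(s_0w)<0$ and $\|s_0w\|_\beta>\rho$ and setting $e=s_0w$ gives (ii).

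For (iii), let $\{u_n\}$ satisfy $|\psi(u_n)|$ bounded and $\psi'(u_n)\to0$. Since $Q$ is quadratic, $2\psi(u_n)-\langle\psi'(u_n),u_n\rangle=\int_{-\frac{T}{2}}^{\frac{T}{2}}\big(x_n\cdot F'(t,x_n)-2F(t,x_n)\big)\,dt$ with $x_n:=\nabla F^*(t;u_n)$, using the Legendre identity $F^*(t;u_n)=x_n\cdot u_n-F(t,x_n)$ of Proposition~\ref{p:legendre-reciprocity}; by {\bf(SH3)} for $\hat H_K$ and \eqref{e:H-super-quadratic} the right side is $\ge c\int|x_n|^\alpha\,dt-C$, and since $u_n=F'(t,x_n)$ has $(\alpha-1)$-growth in $x_n$ this yields $\|u_n\|_\beta^\beta\le C(1+|\psi(u_n)|+\|\psi'(u_n)\|_*\|u_n\|_\beta)$, hence boundedness of $\{u_n\}$ in $\hat L_\circ^\beta(I_T;\R^{2n})$. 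Along a subsequence $u_n\rightharpoonup u$; the operator $u\mapsto-J\Pi u+J\Lambda J\Pi^2u$ is compact from $\hat L_\circ^\beta$ into $\hat L_\circ^\alpha$ (as $\Pi$ is smoothing), so it converges strongly, whence $\psi'(u_n)\to0$ forces $\nabla F^*(t;u_n)\to x_*$ strongly in $\hat L^\alpha(I_T;\R^{2n})$ (its mean being bounded), and then $u_n=F'(t,x_n)\to F'(t,x_*)=u$ strongly by continuity of the Nemytskii operator $F'$. Thus $\psi'(u)=0$ and $\psi$ satisfies {\rm (PS)} (a fortiori {\rm (C)}). The main obstacle throughout is item (i); (ii) and (iii) follow the standard superquadratic dual-action pattern.
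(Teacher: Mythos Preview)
Your arguments for (ii) and (iii) are correct and in fact somewhat different from the paper's. For (iii) the paper does not use the identity $2\psi(u_n)-\langle\psi'(u_n),u_n\rangle=\int\big(x_n\cdot F'(t,x_n)-2F(t,x_n)\big)\,dt$; instead it works entirely on the dual side, proving the inequality $(y,\nabla F^*(t;y))\le\beta F^*(t;y)-(\tfrac{\beta}{2}-1)(\Lambda\nabla F^*,\nabla F^*)$ for $|y|\ge R$ and then splitting $u_n=v_n+w_n$ by the level $\eta$ to bound $\|w_n\|_\beta$. Your primal-side computation is cleaner. For (ii) the paper takes the lowest frequency but parametrises by $x$ rather than $u$, i.e.\ sets $x(t)=(-\sin\tfrac{2\pi t}{T}y_0,\cos\tfrac{2\pi t}{T}y_0)$ and $u=-J\dot x+\Lambda x$, obtaining $Q(u)=-\pi|y_0|^2-\tfrac{\lambda T}{4}|y_0|^2<0$ for \emph{every} $\lambda$; your pure Fourier mode needs $j_0$ large when $\lambda$ is large, which is fine but slightly less explicit. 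The paper then bounds $F^*(-J\dot x+\Lambda x)$ by an inf-convolution, whereas your global bound $F^*\le C|y|^\beta+C'$ also suffices.

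The genuine gap is in (i), second case. When $\|v\|_\beta^\beta\ge\tfrac12\rho^\beta$ you want $\psi(u)\ge c_0\|v\|_{L^2}^2-O(\rho^{\beta+1})$, but the cross terms in $Q_1(u)=\int(\Pi u_q,u_p)\,dt$ between $v$ and $w$ (or between $v-\bar v$ and the rest) are only $O(\rho^2)$: for instance $|\int(\Pi v_q,w_p)|\le\|\Pi v_q\|_\infty\|w_p\|_1\le\|v_q\|_1\|w_p\|_1\le C\rho^2$, and similarly for the term involving the mean $\bar v$. Since the main positive term $c_0\|v\|_{L^2}^2$ is itself only of order $\rho^2$, these cross terms cannot be discarded, and the conclusion does not follow from your case analysis. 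A second, related difficulty is that your ``preceding estimate'' for bounded $u$ used the Fourier weights $1/j$, which requires zero mean; once you pass to $v$ (or $v-\bar v$) that structure is partially lost and you only have the crude bound $\|\Pi v_q\|_2\le T\|v_q\|_2$, which is not small enough.

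The paper circumvents this by not separating into cases at all: it writes $-\int(\Pi q,p)\,dt$ directly as a sum of four products $\|\Pi q_i\|\cdot\|p_j\|$ (with $L^2$ or $L^\alpha$ norms chosen according to whether the index is $1$ or $2$), applies $ab\le\tfrac{a^2}{2\varepsilon_1^2}+\tfrac{\varepsilon_1^2 b^2}{2}$ to each, and arrives at a single inequality of the form
\[
\psi(u)\ge A_1\|p_1\|_2^2+A_2\|q_1\|_2^2-A_3\|p_2\|_\beta^2-A_4\|q_2\|_\beta^2+\tfrac{1}{\beta k^\beta}\|u_2\|_\beta^\beta,
\]
where the coefficients $A_1,A_2$ are made strictly positive by choosing $\varepsilon_1^2=2T\varepsilon$ with $\varepsilon$ small, and the last three terms are handled for small $\|u_2\|_\beta$ because $\beta<2$. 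The point is that the cross contributions are absorbed \emph{into} the coefficients $A_1,\dots,A_4$ via the auxiliary parameter $\varepsilon_1$, rather than being treated as a separate error.
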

\begin{proof}
	First we give some estimates of $F$, $F^*$ and their gradients, which are inspired by \cite[Lemmas IV.2.3-2.6]{Ek90}.
	Note that the modified $\hat H_K$ satisfies 
	\begin{enumerate}
	\item [{\bf (SH8)}] $\displaystyle \limsup_{|x|\to \infty}\max_{t\in I_T}\frac{\hat H_K(t,x)}{|x|^{\alpha}}<\infty$.
	\end{enumerate}
	Recall  $F(t,x)=\hat H_K(t,x)+\frac{1}{2}(\Lambda x,x)$ and $\alpha>2$. Thus $F$ still satisfies {\bf (SH8)}.
	
	Since $\hat H_K$ satisfies {\bf (SH3)} with $\mu$ replaced by $\alpha$, integrating this inequality then yields 
	\begin{equation}\label{e:super-quadratic1}
		\hat H_K(t,x)\ge \frac{m^{\alpha}}{\alpha}\frac{|x|^{\alpha}}{\bar r^{\alpha}} \qquad \text{for} \quad |x|\ge \bar r, t\in I_T,
	\end{equation}
	where $\frac{m^{\alpha}}{\alpha}:=\min\{H(t,x);|x|=\bar r,t\in I_T\}$ is \textit{independent} of $K$.
	Since $F\ge \hat H_K$, $F$ still satisfies \eqref{e:super-quadratic1}.
	Together with {\bf(SH8)}, there is a constant $c>0$ such that
	\begin{equation}\label{e:H-big-small-alpha}
		\frac{m^{\alpha}}{\alpha \bar r^{\alpha}}|x|^{\alpha}\le F(t,x)\le \frac{c^{\alpha}}{\alpha}|x|^{\alpha} \qquad\text{for}\quad  |x|\ge \bar r, t\in I_T.
	\end{equation}
	Thus using the convexity of $F(t,\cdot)$ (cf. \cite[Lemma IV.2.6]{Ek90}), there is  a constant $c_1$ such that 	\begin{equation}\label{e:bound-H-gradient}
		|F'(t,x)|\le c_1(1+|x|^{\alpha-1})\qquad \text{for}\quad (t,x)\in I_T\times \R^{2n}.
	\end{equation}

Recall $F^*(t;y)=\sup\limits_{x\in \R^{2n}}\{x\cdot y-F(t,x)\}$ and $\frac{1}{\beta}+\frac{1}{\alpha}=1$.	
Dual to \eqref{e:H-big-small-alpha}, $F^*(t;\cdot)$ satisfies 
\begin{equation}\label{e:low-up-bound-H*}
		\frac{1}{\beta k^{\beta}}|y|^{\beta}\le F^*(t;y) \le \frac{\bar r^{\beta}}{\beta m^{\beta}}|y|^{\beta} \qquad\text{for}\quad |y|\ge R,t\in I_T, 
\end{equation}
where $R:=\max\{|F'(t,x)|;|x|\le \bar r,t\in I_T\}+1$.
The existence of $k>0$ (which can be taken arbitrarily large) follows from that $F$ satisfies {\bf (SH8)}
and $F^*(t;y)>0$ for $y\neq 0$.
Similarly, there exists a constant $c_2$ such that
\begin{equation}\label{e:bound-H*-gradient}
	|\nabla F^*(t;y)| \le (1+c_2)|y|^{\beta-1}\qquad
\text{for}\quad (t,y)\in I_T\times \R^{2n}. 
\end{equation}

By  Lemma \ref{l:H*-big-power2} and \eqref{e:low-up-bound-H*},   we can choose $k$ so large that for $y=\begin{pmatrix}
		p\\q
	\end{pmatrix}\in \R^{2n}$,
	\begin{equation}\label{e:F*-ge-eta}
		\begin{cases}
			F^*(t;y)\ge \frac{|q|^2}{2\varepsilon}+\frac{|p|^2}{2(\varepsilon+\lambda)} & |y|\le \eta, \\
			F^*(t;y)\ge \frac{1}{\beta k^{\beta}}|y|^{\beta} & |y|\ge \eta,
		\end{cases}
	\end{equation}
	where $\eta$ is defined in Lemma \ref{l:H*-big-power2}.
	
With the $R$ defined in \eqref{e:low-up-bound-H*}, we have\begin{equation}\label{e:H*-beta}
		(y,\nabla F^*(t;y))\le  \beta F^*(t;y)-(\frac{\beta}{2}-1)(\Lambda \nabla F^*(t;y),\nabla F^*(t;y)) \text{ for } |y|\ge R, t\in I_T.
	\end{equation}
	Now we prove \eqref{e:H*-beta}.
	For $|x|\ge \bar r$, we have
	\begin{equation}\label{e:super-quadratic}
		(x,F'(t,x))=(x,\hat H_K'(t,x)+\Lambda x)\ge \alpha \hat H_K(t,x)+(\Lambda x,x).
	\end{equation}
	For $|y|\ge R$, let $x=\nabla F^*(t;y)$.
	By the Legendre reciprocity formula, we get $y=F'(t,x)$.
	By the definition of $R$ in \eqref{e:low-up-bound-H*}, we get $|x|\ge \bar r$.
 \eqref{e:super-quadratic} becomes
	\begin{equation*}
		\begin{split}
			(\nabla F^*(t;y),y)&\ge \alpha (\hat H_K(t,x)+\frac{1}{2}(\Lambda x,x))+(1-\frac{\alpha}{2})(\Lambda x,x)\\
			&= \alpha F(t,x)+ (1-\frac{\alpha}{2})(\Lambda x,x)\\
			&= \alpha (-F^*(t;y)+(\nabla F^*(t;y),y))+
			(1-\frac{\alpha}{2})(\Lambda \nabla F^*(t;y)\nabla F^*(t;y)).	\end{split}
	\end{equation*}
	Since $\frac{1}{\alpha}+\frac{1}{\beta}=1$,
	we readily get \eqref{e:H*-beta}.
	

	(1) 	We recall for $u\in \hat L_{\circ}^{\beta}(I_T;\R^{2n})$,
	\[\psi(u)=\int_{-\frac{T}{2}}^{\frac{T}{2}}\left[ \frac{1}{2}(-J\Pi u+J\Lambda J\Pi^2u,u)+F^*(t;u)\right]dt.\]
	According to the estimate of \eqref{e:bound-H*-gradient},
	$\psi$ is $C^1$ on $\hat L_{\circ}^{\beta}(I_T;\R^{2n})$; see
	\cite[Corollary II3.5]{Ek90}.
	By the definition of $\Pi$ and Ascoli's theorem,
	$\Pi\colon \hat L_{\circ}^{\beta}(I_T;\R^{2n}) \to \hat L_{\circ}^{\alpha}(I_T;\R^{2n})$
	is a compact operator.
	
	Now we prove that $\psi$ satisfies condition (PS).
	Consider a (PS) sequence $u_n\in \hat L_{\circ}^{\beta}(I_T;\R^{2n})$, i.e.,
	$\psi(u_n)$ is bounded and $\psi'(u_n)\to 0 $ in $(\hat L_{\circ}^{\beta}(I_T;\R^{2n}))^*$.
	We need to show that it has a convergent subsequence in $\hat L_{\circ}^{\beta}(I_T;\R^{2n})$.
	
	Since $(\hat L_{\circ}^{\beta}(I_T;\R^{2n}))^*\hookrightarrow  \hat L^{\alpha}(I_T;\R^{2n})$,
	\[\psi'(u_n)=-J\Pi u_n+ J\Lambda J\Pi^2u_n+ \nabla F^*(t;u_n)+\xi_n,\]
	where $\xi_n\in \{0\}\times \R^{n}$ is a constant vector.
	Denote by
	\begin{equation}\label{e:explicit-gradient}
		\epsilon_n=-J\Pi u_n+ J\Lambda J\Pi^2u_n+ \nabla F^*(t;u_n)+\xi_n.
	\end{equation}
	Then by assumption, $\epsilon_n\to 0$ in $\hat L^{\alpha}(I_T;\R^{2n})$.
	
	We will prove $u_n\in \hat L_{\circ}^{\beta}(I_T;\R^{2n})$ is bounded.
	Assuming that, then $u_n$ has a weakly convergent subsequence in $\hat L_{\circ}^{\beta}(I_T;\R^{2n})$.
	Applying  \eqref{e:bound-H*-gradient} and the compactness of  $\Pi$ to \eqref{e:explicit-gradient}, $\xi_n$ is a bounded sequence of $\R^{2n}$ and has a subsequence which converges to
	some $\xi\in\{0\}\times \R^n$.
	In the sense of subsequences, which are still denoted by $u_n,\xi_n$ and $\epsilon_n$,
	we have $u_n$ is weakly convergent to some $u\in \hat L_{\circ}^{\beta}(I_T;\R^{2n})$  and
	\[J\Pi u_n- J\Lambda J\Pi^2u_n-\xi_n+\epsilon_n \text{ converges to } J\Pi u- J\Lambda J\Pi^2u-\xi \in \hat L_{\circ}^{\beta}(I_T;\R^{2n}).\]
	
	We rewrite \eqref{e:explicit-gradient} as
	$\nabla F^*(t;u_n)=J\Pi u_n- J\Lambda J\Pi^2u_n-\xi_n+\epsilon_n$.
	Then by the Legendre reciprocity formula (cf. Proposition \ref{p:legendre-reciprocity}),
	we have
	\[u_n=F'(t,J\Pi u_n- J\Lambda J\Pi^2u_n-\xi_n+\epsilon_n).\]
	Since $F'\colon \hat L_{\circ}^{\alpha}(I_T;\R^{2n}) \to \hat  L^{\beta}(I_T;\R^{2n})$ is continuous,
	\[\lim_{n\to \infty}u_n=F'(t,J\Pi u- J\Lambda J\Pi^2u-\xi) \text{ in } \hat L^{\beta}(I_T;\R^{2n}).\]
	We have already known that $u_n$ is weakly convergent to $u$ in $\hat L_{\circ}^{\beta}(I_T;\R^{2n})$.
	Finally, we have
	$u=F'(t,J\Pi u- J\Lambda J\Pi^2u-\xi)$;
	then $J\Pi u- J\Lambda J\Pi^2u-\xi=\nabla F^*(t;u)$, that is
	$\psi'(u)=0$.
	
	We write $u_n=v_n+w_n$ to prove the boundedness of $u_n$,
	where \begin{align*}
		v_n(t)=
		\left\{
		\begin{array}{lr}
			0   &\text{ if } |u_n(t)|>\eta,\\
			u_n(t) &\text{ if } |u_n(t)|\le \eta; \\
		\end{array}
		\right.
		\qquad&
		w_n(t)=
		\left\{
		\begin{array}{lr}
			u_n(t) &\text{ if } |u_n(t)|>\eta,\\
			0 &\text{ if } |u_n(t)|\le \eta, \\
		\end{array}
		\right.
	\end{align*}
	and $\eta$ is the same one in \eqref{e:F*-ge-eta}.
	
	Then
	by the definition of $v_n$,  we have $\|v_n\|_{\infty}<\eta$. Now we only need to prove $\|w_n\|_{\beta}$ is bounded.
	Writing \eqref{e:explicit-gradient} into $\psi(u_n)$, we have
	\[\psi(u_n)=\int_{-\frac{T}{2}}^{\frac{T}{2}}\left[ \frac{1}{2}(-\nabla F^*(t;u_n)+\epsilon_n, u_n)+F^*(t;u_n)\right]dt.\]
	
	Split the above integral into two parts by the following partition
	\[I_T=\{t\in I_T;|u_n(t)|\le \eta\}\cup \{t\in I_T;|u_n(t)|> \eta\}.\]
	Together with the assumption that $\psi(u_n)$ is uniformly bounded,
	we get
	\begin{equation*}
		\begin{split}
			\int_{\{t;|u_n(t)|>\eta\}}&\left[ \frac{1}{2}(-\nabla F^*(t;u_n)+\epsilon_n, u_n)+F^*(t;u_n)\right]dt \\
			&=\int_{-\frac{T}{2}}^{\frac{T}{2}}\left[ \frac{1}{2}(-\nabla F^*(t;w_n)+\epsilon_n, w_n)+F^*(t;w_n)\right]dt
		\end{split}
	\end{equation*}
	is uniformly bounded.
	
	Note first that condition \eqref{e:H*-beta} implies that there exists a constant $c_3\ge 0$ such that
	\[(y,\nabla F^*(t;y))\le  \beta F^*(t;y)-(\frac{\beta}{2}-1)(\Lambda \nabla F^*(t;y),\nabla F^*(t;y))+c_3 \text{ for any } y\in\R^{2n}, t\in I_T.\]
Together with \eqref{e:bound-H*-gradient} and \eqref{e:F*-ge-eta},
we get
	\begin{equation*}
		\begin{split}
			\int_{-\frac{T}{2}}^{\frac{T}{2}}&\left[ \frac{1}{2}(-\nabla F^*(t;w_n)+\epsilon_n, w_n)+F^*(t;w_n)\right]dt \\
			&\ge (1-\frac{\beta}{2})\int_{-\frac{T}{2}} ^{\frac{T}{2}}F^*(t;w_n)dt
			+\frac{1}{2}(\frac{\beta}{2}-1)\lambda(1+c_2)\|w_n\|^{2\beta-2}_{2\beta-2}
			-Tc_3-\frac{1}{2}\|\epsilon_n\|_{\alpha}\|w_n\|_{\beta}\\
			& \ge (1-\frac{\beta}{2})\frac{1}{\beta k^{\beta}}\|w_n\|^{\beta}_{\beta}
			+\frac{\beta-2}{4}\lambda(1+c_2)\|w_n\|^{2\beta-2}_{2\beta-2}-Tc_3-\frac{1}{2}\|\epsilon_n\|_{\alpha}\|w_n\|_{\beta}.
		\end{split}
	\end{equation*}
	
	Since $1<\beta<2$,
	$0<2\beta-2<\beta$, we obtain
	\[\|w_n\|^{2\beta-2}_{2\beta-2}\le T^{\frac{2}{\beta}-1}\|w_n\|^{2\beta-2}_{\beta}.\]
	Since $1<\beta<2$ and $\|\epsilon_n\|_{\alpha}\to 0$ as $n\to \infty$,
	$\|w_n\|_{\beta}$ is uniformly bounded.
	Then we get the sequence $u_n=v_n+w_n$ is bounded in $\hat L_{\circ}^{\beta}(I_T;\R^{2n})$. Thus we get {\rm (iii)}: $\psi$ satisfies condition (PS).
	
	(2) 
	 Now we will prove {\rm (i)} and {\rm (ii)}. Since $H$ satisfies
	{\bf (SH1)} and {\bf (SH2)}, we have $H(t,0)=0$
	and  $F^*(t;0)=0$. Then we get $\psi(0)=0$.
	We claim that there exists a positive number $\rho$ such that
	\[\inf \{\psi(u);\|u\|_{\beta}=\rho\}>0.\]
	Define $L^{\beta}_{\circ}(I_T;\R^n)=\{z\in L^{\beta}(I_T;\R^n);\int_{-T/2}^{T/2}z(t)dt =0\}$, and 
	\begin{equation*}
		\begin{split}
			\Pi \colon L^{\beta}_{\circ}(I_T;\R^n) &\rightarrow  L^{\beta}_{\circ}(I_T;\R^n)\\
			z&\mapsto \int^t_{-T/2}z(s)ds-\frac{1}{T}\int^{T/2}_{-T/2}\int^t_{-T/2}z(s)dsdt.
		\end{split}
	\end{equation*}
	We solve $-J\dot x+\Lambda x=u$:
	for $u=\begin{pmatrix}p\\q
	\end{pmatrix}\in \hat L^{\beta}_{\circ}(I_T;\R^{2n})$, 
	\begin{equation}\label{e:u-inverse-x}
	x=J\Pi u-J\Lambda J\Pi^2u=\begin{pmatrix}
		-\Pi q\\
		\Pi p +\lambda \Pi^2q
	\end{pmatrix}\in \hat W_T^{1,\beta}(I_T;\R^{2n}) \mod {0}\times \R^n.\end{equation}
	Note that since $q\in L^{\beta}_{\circ}(I_T;\R^n)$ and $q(-t)=q(t)$, $\int_{-T/2}^t q(s) ds$ is an odd function and thus \[(\Pi q)(t)=\int_{-T/2}^t q(s) ds.\]
	\begin{equation}\label{e:psi-J-big}
		\begin{split}
			\frac{1}{2}\int_{-\frac{T}{2}}^{\frac{T}{2}}&(J\dot x-\Lambda x,x) dt=-\frac{1}{2}\int_{-\frac{T}{2}}^{\frac{T}{2}}(u,x) dt
			=\frac{1}{2}\int^{\frac{T}{2}}_{-\frac{T}{2}}[(\Pi q,p)-(\Pi p,q)-\lambda(\Pi^2q,q)]dt\\
			&=\int^{\frac{T}{2}}_{-\frac{T}{2}} \left[(\Pi q,p)+\frac{\lambda}{2}(\Pi q,\Pi q)\right] dt
			\ge \int^{\frac{T}{2}}_{-\frac{T}{2}} (\Pi q,p) dt.
		\end{split}
	\end{equation}
For any $z\in L^{\beta}_{\circ}(I_T;\R^n)$, 
	set 
	\begin{align*}
		z_1(t)=
		\left\{
		\begin{array}{lr}
			0   &\text{ if } |z(t)|>\eta,\\
			z(t) &\text{ if } |z(t)|\le \eta; \\
		\end{array}
		\right.
		\qquad&
		z_2(t)=
		\left\{
		\begin{array}{lr}
			z(t) &\text{ if } |z(t)|>\eta,\\
			0 &\text{ if } |z(t)|\le \eta, \\
		\end{array}
		\right.
	\end{align*}
	where $\eta$ is from Lemma \ref{l:H*-big-power2}. Thus $z=z_1+z_2$.
Here we use notation
\[p(t)=p_1(t)+p_2(t) \quad q(t)=q_1(t)+q_2(t) \quad u(t)=u_1(t)+u_2(t), \]
where $u_i=\begin{pmatrix}p_i\\q_i\end{pmatrix}, i=1,2$.
The mean value of $q_i,i=1,2$ may not be zero.
But we still have 
\[(\Pi q)(t)=\int_{-T/2}^t[q_1(s)+q_2(s)]ds = \int_{-T/2}^tq_1(s) ds+ \int_{-T/2}^tq_2(s)ds.\]
We still use the notation $\Pi$ to represent the primitive, that is, $(\Pi q_i)(t)= \int_{-T/2}^tq_i(s)ds$, $i=1,2$.
Note that $q_1\in L^{\infty}(I_T;\R^n)$ and $q_2\in L^{\beta}(I_T;\R^n)$. By H\"{o}lder's inequality, we have
\begin{equation}\label{e:pi-2-less}
\|\Pi q_1\|_2\le T\|q_1\|_2,\quad \|\Pi q_2\|_2\le T^{\frac{\alpha+2}{2\alpha}}\|q_2\|_{\beta},
\quad \|\Pi q_1\|_{\alpha}\le T^{\frac{\alpha+2}{2\alpha}}\|q_1\|_2,\quad
\|\Pi q_2\|_{\alpha}\le T^{\frac{2}{\alpha}}\|q_2\|_{\beta}.
\end{equation}
By virtue of  the H\"{o}lder inequality, the triangle inequality  and \eqref{e:pi-2-less}, we obtain
\begin{equation*}
	\begin{split}
		-\int^{\frac{T}{2}}_{-\frac{T}{2}} (\Pi q,p) dt
		&=	\int^{\frac{T}{2}}_{-\frac{T}{2}} [-(\Pi q,p_1)-(\Pi q,p_2) ]dt\\
		&\le \|\Pi q\|_2\|p_1\|_2+\|\Pi q\|_{\alpha}\|p_2\|_{\beta}	\\
		&	\le  \|\Pi q_1\|_2\|p_1\|_2+\|\Pi q_2\|_2\|p_1\|_2+
		\|\Pi q_1\|_{\alpha}\|p_2\|_{\beta}+\|\Pi q_2\|_{\alpha}\|p_2\|_{\beta}\\
		&\le (T\|q_1\|_2+T^{\frac{\alpha+2}{2\alpha}}\|q_2\|_{\beta})\|p_1\|_2+(T^{\frac{\alpha+2}{2\alpha}}\|q_1\|_2+T^{\frac{2}{\alpha}}\|q_2\|_{\beta})\|p_2\|_{\beta}.
	\end{split}
\end{equation*}
We transform rectangular terms to the squares by the inequality $ab\le \frac{1}{2}(\frac{a^2}{\varepsilon^2_1}+\varepsilon^2_1b^2)$, valid for any $\varepsilon_1$, and go on.
We get
\begin{equation}\label{e:psi-H-big}
	\begin{split}
		-\int^{\frac{T}{2}}_{-\frac{T}{2}} (\Pi q,p) dt
		&\le
		\frac{T}{2}(\varepsilon^2_1\|p_1\|_2^2+\frac{\|q_1\|_2^2}{\varepsilon^2_1})+\frac{T^{\frac{\alpha+2}{2\alpha}}}{2}(\frac{\|q_2\|^2_{\beta}}{\varepsilon^2_1}+\varepsilon^2_1\|p_1\|_2^2)\\
		&\quad\ +\frac{T^{\frac{\alpha+2}{2\alpha}}}{2}\|q_1\|^2+\frac{T^{\frac{2}{\alpha}}}{2}\|q_2\|^2_{\beta}+\frac{1}{2}(T^{\frac{\alpha+2}{2\alpha}}+T^{\frac{2}{\alpha}})\|p_2\|^2_{\beta}\\
		&= (\frac{T}{2}+\frac{T^{\frac{\alpha+2}{2\alpha}}}{2})\varepsilon^2_1\|p_1\|_2^2+ (\frac{T}{2\varepsilon^2_1}  +\frac{T^{\frac{\alpha+2}{2\alpha}}}{2}) \|q_1\|_2^2\\
		&\quad \ +
		\frac{1}{2}(T^{\frac{\alpha+2}{2\alpha}}+T^{\frac{2}{\alpha}})\|p_2\|^2_{\beta}+(\frac{T^{\frac{\alpha+2}{\alpha}}}{2\varepsilon^2_1}+\frac{T^{\frac{2}{\alpha}}}{2})\|q_2\|^2_{\beta}.
	\end{split}
\end{equation}	

Then by the definition of $\psi$ (\eqref{e:reduced-functional}),  \eqref{e:u-inverse-x},
\eqref{e:psi-J-big}, \eqref{e:psi-H-big} and \eqref{e:F*-ge-eta}, we have  
\begin{equation}\label{e:psi-big>0}
	\begin{split}
		\psi(u)&\ge
		-(\frac{T}{2}+\frac{T^{\frac{\alpha+2}{2\alpha}}}{2})\varepsilon^2_1\|p_1\|_2^2- (\frac{T}{2\varepsilon^2_1}  +\frac{T^{\frac{\alpha+2}{2\alpha}}}{2}) \|q_1\|_2^2 +\frac{\|q_1\|_2^2}{2\varepsilon}+\frac{\|p_1\|_2^2}{2(\varepsilon+\lambda)}\\
		&\quad\  -\frac{1}{2}(T^{\frac{\alpha+2}{2\alpha}}+T^{\frac{2}{\alpha}})\|p_2\|^2_{\beta}-(\frac{T^{\frac{\alpha+2}{\alpha}}}{2\varepsilon^2_1}+\frac{T^{\frac{2}{\alpha}}}{2})\|q_2\|^2_{\beta} +\frac{1}{\beta k^{\beta}} \|u_2\|_{\beta}^{\beta}.
	\end{split}
\end{equation}
For any $T>0$, choose $\varepsilon^2_1=2T\varepsilon$, where $\varepsilon>0$. To prove (i),
we want both the 
coefficients of terms $\|p_1\|_2^2$ and $\|q_1\|^2_2$ to be positive, that is,
\begin{gather}
	\frac{T}{2} +\frac{T^{\frac{\alpha+2}{2\alpha}}}{2}< \frac{1}{4(\varepsilon+\lambda)T\varepsilon}, \label{e:estimate_big0_1}\\
	\frac{1}{4\varepsilon}+\frac{T^{\frac{\alpha+2}{2\alpha}}}{2}<\frac{1}{2\varepsilon}.
	\label{e:estimate_big0_2}
\end{gather}
	For any $T>0$, we can choose $\varepsilon$ so small that both \eqref{e:estimate_big0_1} and \eqref{e:estimate_big0_2} hold.
	Then we get corresponding $\delta$ and $\eta$ in Lemma \ref{l:H*-big-power2}.
	The second line of \eqref{e:psi-big>0} may be negative.
	Since $\beta\in (1,2)$ and $\|u_2\|_{\beta}=\|p_2\|_{\beta}+\|q_2\|_{\beta}$, its behavior near $0$ is determined by the term $\|u_2\|^{\beta}_{\beta}$.
	If $\|u_2\|_{\beta}>0$ small enough, we get
	$\psi (u)>0$.
	Obviously, we have
	 $\|u\|^{\beta}_{\beta}=\|u_1\|^{\beta}_{\beta}+\|u_2\|^{\beta}_{\beta}$.
	  It follows that, if $\|u\|_{\beta}=\rho>0$ is small enough, $\psi (u)>0$ and (i) holds.

	{\rm (ii)}
	Setting $x(t)=\begin{pmatrix}p(t)\\q(t)
	\end{pmatrix}=\begin{pmatrix}
		-\sin \frac{2\pi t}{T} y_0\\
		\cos \frac{2\pi t}{T} y_0
	\end{pmatrix}$, $t\in I_T$, where $y_0\in \R^n$,
	we get $x\in \hat W^{1,\beta}_T(I_T;\R^{2n})$,
	\begin{equation}\label{e:psi-small-0}
		u=J\dot x-\Lambda x=\frac{2\pi}{T}\begin{pmatrix}
		 \sin \frac{2\pi t}{T} y_0\\
		 -\cos \frac{2\pi t}{T} y_0
		\end{pmatrix}+\lambda \begin{pmatrix}
		\sin \frac{2\pi t}{T} y_0\\
		0
		\end{pmatrix},
\end{equation}
and $u\in \hat L_{\circ}^{\beta}(I_T;\R^{2n})$.
Then we obtain
\[\int^{\frac{T}{2}}_{-\frac{T}{2}} (J\dot x-\Lambda x,x) dt=(-2\pi-\frac{\lambda T}{2})|y_0|^2.\]
Since $F(t,x)\ge a_3|x|^{\alpha}-a_4+\frac{1}{2}(\Lambda x,x)$ (cf. \eqref{e:H-super-quadratic}), writing the two convex continuous functions $F_1(x)=a_3|x|^{\alpha}$ and $F_2(x)=\frac{1}{2}(\Lambda x,x)$,
we have \[F(t,x)\ge F_1(x)+F_2(x)-a_4,\]
for all $t\in\R$ and $x\in\R^{2n}$.
Using the concept of \textit{inf-convolute} of $F_1^*$ and $F^*_2$, 
we have
\begin{equation}\label{e:inf-convolute}
		\begin{split}
		F^*(t;-J\dot x+\Lambda x)&\le (F_1+F_2)^*(t;-J\dot x+\Lambda x)+a_4\\
		&\le F_1^*(t;-J\dot x)+F_2^*(\Lambda x)+ a_4\\
		&\le c_3|\dot x|^{\beta} +\frac{\lambda}{2}|p|^2+a_4,	\end{split}
	\end{equation}where  
	$c_3$ and $a_4$ are constants independent of $\lambda$ and $K$.
	In fact, the second inequality in \eqref{e:inf-convolute}  follows immediately from formulae (3) and (5) in \cite[Theorem II.2.3]{Ek90}.
	Thus we have
	\begin{equation*}
		\begin{split}
			\psi(u)=\Psi_K(x)&\le (-\pi -\frac{\lambda T}{4})|y_0|^2
		+\sqrt{2}c_3\frac{(2\pi)^{^{\beta}}}{T^{\beta-1}}|y_0|^{\beta}
		+\frac{\lambda T}{4}|y_0|^2+a_4T\\
		&=-\pi |y_0|^2
		+\sqrt{2}c_3\frac{(2\pi)^{^{\beta}}}{T^{\beta-1}}|y_0|^{\beta}+
		a_4T.
		\end{split}
	\end{equation*}
	Let $h>0$. Then 
	\begin{equation}\label{e:psi-upper-bound}
		\psi(hu)\le -\pi h^2|y_0|^2
		+\sqrt{2}c_3\frac{(2\pi)^{^{\beta}}}{T^{\beta-1}}h^{\beta}|y_0|^{\beta}+
		a_4T,
	\end{equation}
	and the right-hand side goes to $-\infty$ when 
	$h\to \infty$, since $1<\beta<2$.
	Note that $u\neq 0$, and we can choose $h$ sufficient large such that $\psi(hu)<0$ and $\|hu\|_{\beta}>\rho$ in {\rm (i)}. Then we choose $e=hu$.

\end{proof}

\subsection{Morse index and the geometrical structure near the critical point}\label{ss:morse-index-topology}
In this subsection,
we define the Morse index of the brake solution of \eqref{e:HS-T-K} and prove the Morse index
of the critical point we found in Subsection \ref{ss:critical-point} is less than one.
At the same time, we describe the geometrical structure near the critical point of mountain-pass type.

For $1\le p\le \infty$, recall  $I_T=[-T/2,T/2]$ and 
\[\hat L_{\circ}^p(I_T;\R^{2n})=\{u\in L^p(I_T;\R^{2n})| u(-t)=Nu(t), t\in I_T  \text{ and }\int_{-\frac{T}{2}}^{\frac{T}{2}}udt=0\}.\]
Denote a quadratic form on $\hat L_{\circ}^2(I_T;\R^{2n})$
by
\begin{equation}\label{e:quadratic-form-T}
	\begin{split}
		q_{T/2}(u,u)&=\int_{-\frac{T}{2}}^{\frac{T}{2}}\frac{1}{2} \left[(-J\Pi u+J\Lambda J\Pi^2u, u)+((F^*)^{''}(t;
		\bar u)u,u)\right]dt\\
		&=\int_{-\frac{T}{2}}^{\frac{T}{2}}\frac{1}{2} \left[(-J\Pi u+J\Lambda J\Pi^2u, u)+((F^{''}(t,\bar x))^{-1}u,u)\right]
		dt,
	\end{split}
\end{equation}
where $\bar x$ is a non-constant solution of \eqref{e:HS-T-K},
$\bar u=-J\frac{d}{dt}\bar x+\Lambda \bar x$ and $\Pi\colon \hat L_{\circ}^2(I_T;\R^{2n}) \to \hat L_{\circ}^2(I_T;\R^{2n})$ denotes the primitive.
By \cite[Proposition II.2.10]{Ek90}, remembering that 
\[-J\frac{d \bar x}{dt}+\Lambda \bar x=\hat H_K'(t,\bar x)+\Lambda \bar x=F'(t,\bar x),\]
we have
\[(F^*)''(t;-J\frac{d \bar x}{dt}(t)+\Lambda x(t))=F''(t,\bar x(t))^{-1}.\]

Note that $F''(t,\bar x)=\hat H_K''(t,\bar x)+\Lambda$.
The index $m^-(\bar x)=m^-(\bar u)$ is defined to be the maximal dimension of the  negative definite subspace associated with the quadratic form $q_{T/2}$ on $\hat L_{\circ}^2(I_T;\R^{2n})$. Let $m^0(\bar x)$ denote the nullity of the quadratic form $q_{T/2}$.
Define the restricted function $\psi_{\infty}\colon \hat L_{\circ}^{\infty}(I_T;\R^{2n})\to \R$
by $\psi_{\infty}(u)=\psi(u)$ for any $u\in \hat L_{\circ}^{\infty}(I_T;\R^{2n})$.
Then $\psi_{\infty}\in C^2(\hat L_{\circ}^{\infty};\R)$.

If $\bar u$ is a critical point of $\psi$ on $\hat L_{\circ}^{\beta}(I_T;\R^{2n})$, then
$\bar u=-J\frac{d \bar x}{d t}+\Lambda \bar x$ and $\bar x$ solves \eqref{e:HS-T-K}. We get
$\bar u$ is continuous, thus $\bar u\in \hat L_{\circ}^{\infty}(I_T;\R^{2n})$.
The restriction of $q_{T/2}$ to $\hat L_{\circ}^{\infty}(I_T;\R^{2n})$ is just the Hessian of $\psi_{\infty}$ at $\bar u$:
\[(\psi''_{\infty}(\bar u)v,v)=q_{T/2}(v,v)\ \ \ \forall v\in \hat L_{\circ}^{\infty}(I_T;\R^{2n}).\]

Similar to \cite[Lemma IV.3.1]{Ek90}, to make use of Morse theory, we 
want to do our analysis down from $\hat L_{\circ}^{\beta}(I_T;\R^{2n})$ to $\hat L_{\circ}^{\infty}(I_T;\R^{2n})$.
\begin{lemma}\label{l:L-beta-infinity}
	Take $\bar u\in \Cr(\psi,d)$  and
	set \begin{gather*}
		\dot \psi^d:=\{u\in \hat L_{\circ}^{\beta}(I_T;\R^{2n}); \psi(u)<d \}, \\
		B_{\beta}(\bar u,\delta)=\{u\in \hat L_{\circ}^{\beta}(I_T;\R^{2n});\|u-\bar u\|_{\beta}\le \delta\},\\
		B_{\infty}(\bar u,\varepsilon)=\{u\in \hat L_{\circ}^{\infty}(I_T;\R^{2n});\|u-\bar u\|_{\infty}\le \varepsilon\}.
	\end{gather*}
	For any $\varepsilon>0$, there exists a $\delta >0$ such that, for any
	path component $P$ of $\dot \psi^d$, if
	\begin{equation}\label{e:component-cap-nonempty}
		P\cap B_{\beta}(\bar u,\delta)\neq\emptyset,
	\end{equation} then
	\[P\cap B_{\beta}(\bar u,\delta)\cap B_{\infty}(\bar u,\varepsilon)\neq\emptyset.\]
\end{lemma}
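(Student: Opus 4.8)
\emph{Plan.} Roughly, the lemma lets one transport $\pi_0$--information about the sublevel set $\dot\psi^d$ from the large space $\hat L^\beta_\circ$ down to the regular space $\hat L^\infty_\circ$, where the restricted functional $\psi_\infty$ is $C^2$ and Morse theory applies. I would prove it by constructing, for the prescribed $\varepsilon>0$ and all sufficiently small $\delta>0$, a continuous deformation \emph{inside} $\dot\psi^d$ that drags every point of $\dot\psi^d\cap B_\beta(\bar u,\delta)$ into $B_\infty(\bar u,\varepsilon)$; a deformation inside $\dot\psi^d$ keeps each point in its own path component, which gives the conclusion. First I would record the regularity of $\bar u$: by the dual action principle (Proposition \ref{p:dual-critical-solution}), $\bar u=-J\dot{\bar x}+\Lambda\bar x$ with $\bar x$ a classical $T$-periodic brake solution of \eqref{e:HS-T-K}; in particular $\bar u$ is continuous, $\bar u\in\hat L^\infty_\circ(I_T;\R^{2n})$ and $M_0:=\|\bar u\|_\infty<\infty$, so $B_\infty(\bar u,\varepsilon)$ is meaningful.

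\emph{The deformation.} Fix $\varepsilon>0$; since the statement for small $\varepsilon$ implies it for large $\varepsilon$, I may assume $\varepsilon$ is as small as I like. For $u\in B_\beta(\bar u,\delta)$ set $S(u)=\{t\in I_T:|u(t)-\bar u(t)|>\varepsilon/4\}$; by the brake symmetry of $u$ and $\bar u$ this set is invariant under $t\mapsto-t$, and by Chebyshev $|S(u)|\le(4\delta/\varepsilon)^\beta\to0$ as $\delta\to0$. I would define $\Theta\colon[0,1]\times B_\beta(\bar u,\delta)\to\hat L^\beta_\circ(I_T;\R^{2n})$ by linearly interpolating $u$ towards $\bar u$ on $S(u)$ and towards a fixed slightly shrunken multiple of $u$ off $S(u)$, followed by the (small) constant adjustment in $\{0\}\times\R^n$ needed to restore zero mean (this adjustment lies in $\{0\}\times\R^n$ because $\Theta_0(s,u)$ inherits the brake symmetry, and is of size $O(|S(u)|^{1/\alpha}\delta)=o(\delta)$). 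Then $\Theta$ is continuous, $\Theta(0,\cdot)=\mathrm{id}$, $\Theta(s,\bar u)\equiv\bar u$, and the time-one map $\hat u=\Theta(1,u)$ satisfies $\|\hat u-\bar u\|_\infty\le\varepsilon$ and $\|\hat u-\bar u\|_\beta\le\delta$ — the shrink factor and the $o(\delta)$ mean-correction absorb the loss from the modification — so $\hat u\in B_\beta(\bar u,\delta)\cap B_\infty(\bar u,\varepsilon)$.

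\emph{Staying below level $d$, and the obstacle.} The remaining task, and the hard part, is to check $\psi(\Theta(s,u))<d$ for all $s\in[0,1]$ whenever $u\in\dot\psi^d\cap B_\beta(\bar u,\delta)$. Writing $\psi(v)=-\tfrac12\langle J\Pi v-J\Lambda J\Pi^2v,\,v\rangle+\int_{I_T}F^*(t;v)\,dt$, the Fenchel term is controlled by the convexity of $y\mapsto F^*(t;y)$ (the interpolated integrand never exceeds the convex combination), so $\int F^*$ can increase by at most $\int_{S(u)}[F^*(t;\bar u)-F^*(t;u)]_+$, which by \eqref{e:low-up-bound-H*} and $\|u-\bar u\|_\beta\le\delta$ is $O(|S(u)|)+O(\delta^\beta)$; the quadratic term is controlled by the compactness of $\Pi$, since $\|J\Pi v-J\Lambda J\Pi^2v\|_\infty\le C\|v\|_\beta$ and $\Theta$ moves $u$ by at most $\delta$ in $\hat L^\beta_\circ$, giving a change $O(\delta)$. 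Thus $\psi(\Theta(s,u))\le\psi(u)+\omega(\delta)$ with $\omega(\delta)\to0$. This only yields $\psi(\Theta(s,u))\le d+\omega(\delta)$: truncation alone cannot recover the strict inequality, because the margin $d-\psi(u)$ may be smaller than $\omega(\delta)$. To close this gap I would precompose $\Theta$ with a short stretch of the negative pseudo-gradient flow of $\psi$ and use condition (PS) near the level $d$ to manufacture a descent that dominates $\omega(\delta)$ once $\delta$ is small — coupling the genuine descent to the truncation is exactly the delicate point, carried out as in \cite[\S IV.3]{Ek90}. Granting it, $s\mapsto\Theta(s,u)$ is a path in $\dot\psi^d$ from $u\in P$ to $\hat u$, so $\hat u\in P\cap B_\beta(\bar u,\delta)\cap B_\infty(\bar u,\varepsilon)$ and the lemma follows.
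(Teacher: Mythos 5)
The gap you flag yourself — keeping the deformation $\Theta$ below level $d$ — is genuine, and the proposed fix does not close it. To dominate the truncation loss you would need a descent of size at least $\omega(\delta)$, uniform over all $u\in\dot\psi^d\cap B_\beta(\bar u,\delta)$; but $\bar u$ is a \emph{critical} point at level $d$, so $\|\psi'(u)\|$ can be arbitrarily small for $u$ near $\bar u$ with $\psi(u)$ arbitrarily close to $d$, and condition (PS) yields no uniform lower bound on the gradient in that regime. The flow also has no reason to stay inside $B_\beta(\bar u,\delta)$ or to preserve the $L^\infty$ control the truncation just produced, so the two deformations fight each other rather than compose. Your attribution of the descent coupling to \cite[\S IV.3]{Ek90} is inaccurate as well: Ekeland's Lemma IV.3.1, which the paper closely follows, contains no pseudo-gradient flow at all.

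The paper's argument is structurally different and sidesteps all of this. It does not try to move \emph{every} point of $P\cap B_\beta(\bar u,\delta)$ into $B_\infty(\bar u,\varepsilon)$ — the lemma only needs one. It minimizes $\psi$ over $P\cap B_\beta(\bar u,\delta)$; the infimum $d(\delta,P)<d$ is attained at some $\tilde u$ (reflexivity of $\hat L_\circ^\beta$, weak lower semicontinuity, and Mazur's theorem), and $\tilde u\in P$ because convexity of $F^*$ keeps the segment joining $\tilde u$ to a minimizing sequence below level $d$. Being a constrained minimizer, $\tilde u$ satisfies an Euler--Lagrange equation with a Lagrange multiplier $\hat\lambda\ge 0$; subtracting $\psi'(\bar u)=0$ and rearranging gives $\nabla F^*(t;\tilde u)+\hat\lambda(\tilde u-\bar u)^{\beta-1}=f$, where $f$ is uniformly close to $\nabla F^*(t;\bar u)$ because $\Pi$ and $\Pi^2$ are compact and produce continuous outputs. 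Convexity of $F^*$ then bootstraps $\tilde u$ into $L^\infty$ with an a priori bound, and \emph{strict} convexity forces $\|\tilde u-\bar u\|_\infty\to 0$ as $\delta\to 0$; the lemma follows by contradiction. The two features the paper leans on — that $\psi$ is convex up to a compact perturbation, and that $F^*$ is strictly convex — are exactly what gives the jump from $L^\beta$-closeness to $L^\infty$-closeness, and your deformation sketch uses neither, so it cannot recover that regularity gain.
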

\begin{remark}
	For the critical point found by Theorem \ref{t:critical-point-mp}, the condition (\ref{e:component-cap-nonempty}) can be satisfied.
\end{remark}
\begin{proof}
	We take three steps to prove this lemma.
	
	Step 1: Given $\delta >0$ and let $P$ be
	a path component of $\dot \psi^d$ such that $P\cap B_{\beta}(\bar u,\delta)\neq\emptyset$.
	Let $d(\delta,P)=\inf \{\psi(u);u\in P\cap B_{\beta}(\bar u,\delta)\}$.
	Since $P\subset \dot \psi^d$, we have
	\begin{equation}\label{e:critical-value-small}
		d(\delta,P)<d.
	\end{equation}
	
	We claim that the infimum is achieved at a point in $P\cap B_{\beta}(\bar u,\delta)$.
	Take a minimizing sequence $u_n\in P\cap B_{\beta}(\bar u,\delta)$.
	Since $\hat L_{\circ}^{\beta}(I_T;\R^{2n})$ is reflexive, we can get a subsequence of $u_n$ to be
	weakly convergent to some $\tilde{u}\in \hat L_{\circ}^{\beta}(I_T;\R^{2n})$.
	We want to show that $\tilde u\in P\cap B_{\beta}(\bar u,\delta)$.
	Since $B_{\beta}(\bar u,\delta)$ is closed  and convex, $\tilde u\in B_{\beta}(\bar u,\delta)$.
	In fact, according to Mazur's theorem (\cite[Corollary 3.8]{Bre2011}), there exists a
	sequence  made up of convex combinations of the $u_n$'s that converges strongly
	to $\bar u$.
	Since  $\psi$ is weakly lower semicontinuous, we have
	$\psi(\tilde u)\le \displaystyle\liminf_{n\to \infty}\psi(u_n)= d(\delta,P)$.
	
	We will prove that there is some $n_0$ such that for any $h\in [0,1]$,
	\begin{equation*}
		\psi((1-h)\tilde u+hu_{n_0})<d .
	\end{equation*}
	This means that $\tilde u\in P$.
	
	Arguing by contradiction, we assume that for any $k\in\N$, there exists an $h_k\in[0,1]$
	such that
	$\psi(v_k)\ge d$ where $v_k=(1-h_k)\tilde u+h_ku_k$.
	Since $F^*(t;\cdot)$ is convex, we have
	\begin{equation}\label{e:connect-u-in-P}
		\begin{split}
		d&\le \int_{-\frac{T}{2}}^{\frac{T}{2}}\frac{1}{2} ((-J\Pi+J\Lambda J\Pi^2)v_k,v_k)dt+
		(1-h_k)\int_{-\frac{T}{2}}^{\frac{T}{2}}F^*(t;\tilde u)dt+h_k\int_{-\frac{T}{2}}^{\frac{T}{2}}F^*(t;u_k)dt\\&
		=h_k\psi(u_k)-h_k\int_{-\frac{T}{2}}^{\frac{T}{2}}\frac{1}{2}((-J\Pi+J\Lambda J\Pi^2)u_k,u_k)dt\\
		&\quad +\int_{-\frac{T}{2}}^{\frac{T}{2}}\frac{1}{2} ((-J\Pi+J\Lambda J\Pi^2)v_k,v_k)dt+(1-h_k)\int_{-\frac{T}{2}}^{\frac{T}{2}}F^*(t;\tilde u)dt.
		\end{split}
	\end{equation}
	We may assume, possibly taking a subsequence, that $h_k\to h_{\infty}\in [0,1]$ and $v_k$ is weakly convergent to  $\tilde{u}$ in $\hat   L_{\circ}^{\beta}(I_T;\R^{2n})$.
	Taking the limit in \eqref{e:connect-u-in-P},
	we get
	\[d\le h_{\infty}d(\delta,P)+(1-h_{\infty})\psi(\tilde u)\le d(\delta,P),\]
	which contradicts \eqref{e:critical-value-small}.
	This  proves that $\tilde u\in P$.
	Finally we have $\tilde u\in P\cap B_{\beta}(\bar u,\delta)$.
	
	Note that $P$ is open.
	If $\|\tilde u-\bar u\|_{\beta}<\delta$,
	then $\psi'(\tilde u)=0$.
	
	If $\|\tilde u-\bar u\|_{\beta}=\delta$,
	then there is a Lagrange multiplier $\hat \lambda\in \R$ such that
	\begin{equation}\label{e:lagrange-multiplier}
		(\hat \lambda(\tilde u-\bar u)^{\beta -1}+\psi'(\tilde u),w)=0\ \ \ \forall w\in \hat L_{\circ}^{\beta}(I_T;\R^{2n}),
	\end{equation}
	where we denote by $y^{\beta-1}=y|y|^{\beta-2}$ for a vector $y\in\R^{2n}$.
	Then for any minimum $\tilde u=\tilde u_{P,\delta}\in P\cap B_{\beta}(\bar u,\delta)$,
	there is a real number  $\hat \lambda=\hat \lambda_{\delta,P}$ such that
	\eqref{e:lagrange-multiplier} holds.
	
	Since $(\psi'(\tilde u),\bar u-\tilde u)=\frac{d}{dh}|_{h=0}\psi((1-h)\tilde u+h\bar u)\ge 0$,
	taking $w=\tilde u-\bar u$ in \eqref{e:lagrange-multiplier}, we get
	$\hat \lambda \ge 0$.
	Using the explicit expression of $\psi'(\tilde u)$, we get that there exists a
	constant vector $\tilde \xi=\tilde \xi_{\delta,P}\in \{0\}\times \R^n$
	such that
	\begin{equation}\label{e:lagrange-multiplier-explicit}
		\hat \lambda(\tilde u-\bar u)^{\beta-1}-J\Pi \tilde u+J\Lambda J\Pi^2\tilde u+\nabla F^*(t;\tilde u)=\tilde \xi.
	\end{equation}
	The fact $\psi'(\bar u)=0$ means that there exists $\bar \xi \in \{0\}\times \R^n$
	such that
	\begin{equation}\label{e:critical-point-explicit}
		-J\Pi \bar u+J\Lambda J\Pi^2\bar u+\nabla F^*(t;\bar u)=\bar \xi.
	\end{equation}
	Adding \eqref{e:lagrange-multiplier-explicit} and \eqref{e:critical-point-explicit},
	we get
	\begin{equation}\label{e:necessary-condition-u}
		\hat \lambda(\tilde u-\bar u)^{\beta-1}+\nabla F^*(t;\tilde u)-\nabla F^*(t;\bar u)
		-J\Pi (\tilde u-\bar u)+J\Lambda J\Pi^2(\tilde u-\bar u)=\tilde \xi-\bar \xi.
	\end{equation}

	Step 2: We are going to prove
	
	{\bf Claim}:
	Given a sequence of pair $(\delta_n,P_n)$ such that $\delta_n\to0$ and
	$B_{\beta}(\bar u,\delta_n)\cap P_n\neq \emptyset$,
	the corresponding sequence $\tilde u_{\delta_n,P_n}$ consisting of the minimizers
	is uniformly bounded and converges to $\bar u$ in $\hat L_{\circ}^{\infty}(I_T;\R^{2n})$ as $n\to \infty$.
	
	Let $\tilde u_n=\tilde u_{\delta_n,P_n}$, $\lambda_n=\hat \lambda_{\delta_n,P_n}$
	and $\tilde \xi_n=\tilde \xi_{\delta_n,P_n}$ where
	$\hat \lambda_{\delta_n,P_n}$ and $\tilde \xi_{\delta_n,P_n}$ were defined in Step 1.
	Writing these into
	\eqref{e:necessary-condition-u}, we get
	\begin{equation}\label{e:necessary-condition-u-n}
		\lambda_n(\tilde u_n-\bar u)^{\beta-1}+\nabla F^*(t;\tilde u_n)-\nabla F^*(t;\bar u)
		-J\Pi (\tilde u_n-\bar u)+J\Lambda J\Pi^2(\tilde u_n-\bar u)=\tilde \xi_n-\bar \xi.
	\end{equation}
	Taking the $L^2$-inner product with $\frac{\tilde u_n-\bar u}{\|\tilde u_n-\bar u\|_{\beta}}$
	on both sides of \eqref{e:necessary-condition-u-n}, we get
	\begin{equation*}
		\begin{split}
			\lambda_n\|\tilde u_n-\bar u\|^{\beta-1}_{\beta}&+
			(\nabla F^*(t;\tilde u_n)-\nabla F^*(t;\bar u),\tilde u_n-\bar u)\|\tilde u_n-\bar u\|^{-1}_{\beta}\\
			& +(-J\Pi(\tilde u_n-\bar u)+J\Lambda J\Pi^2(\tilde u_n-\bar u),\tilde u_n-\bar u)\|\tilde u_n-\bar u\|^{-1}_{\beta}=0.
		\end{split}
	\end{equation*}
	Since $\nabla F^*(t;\cdot)\colon \hat L^{\beta}_{\circ}(I_T;\R^{2n})\to \hat L^{\alpha}(I_T;\R^{2n})$ is continuous
	and $\Pi\colon \hat L^{\beta}_{\circ}(I_T;\R^{2n})\to \hat L_{\circ}^{\alpha}(I_T;\R^{2n})$ is compact, we get
	\[\lambda_n\|\tilde u_n-\bar u\|^{\beta-1}_{\beta}\to 0 \text{ when } n\to \infty.\]
	Writing this back into \eqref{e:necessary-condition-u-n}, we have
	$\tilde \xi_n\to \bar \xi$ when $n\to \infty$.
	Rewrite \eqref{e:necessary-condition-u-n} as the following form:
	\begin{gather}
		\nabla F^*(t;\tilde u_n) + \lambda_n(\tilde u_n-\bar u)^{\beta -1}=f_n \label{e:tildeu-condition}\\
		\text{where } f_n= \nabla F^*(t;\bar u)+J\Pi(\tilde u_n-\bar u)-J\Lambda J\Pi^2(\tilde u_n-\bar u)+ \tilde \xi_n-\bar \xi.\nonumber
	\end{gather}
	Then $f_n$ converges uniformly to $\nabla F^*(t;\bar u(t))$.
	Note that the left-hand side of \eqref{e:tildeu-condition}
	is the gradient at $y=\tilde u_n(t)$ of the following convex function about $y$
	\[F^*(t;y)+\frac{\lambda_n}{\beta}|y-\bar u(t)|^{\beta} \text{  for any } t\in \R.\]
	Hence, we write that the function lies above its tangent hyperplane at $\tilde  u_n(t)$:
	\begin{equation}\label{e:H^*-convex-tildeu}
		\begin{split}
			F^*(t;\bar u(t))\ge & F^*(t,\tilde  u_n(t))+\frac{\lambda_n}{\beta}|y-\bar u(t)|^{\beta}\\
			&+(f_n(t),\bar u(t)-\tilde  u_n(t)).
		\end{split}
	\end{equation}
	For $t\in I_T$ either $|\tilde  u_n(t)|\le R$
	or $|\tilde  u_n(t)|> R$, where $R$ was defined in \eqref{e:low-up-bound-H*}.
	For the second case, we use the lower bound of $F^*$ \eqref{e:low-up-bound-H*} to get
	\[F^*(t;\bar u(t))\ge \frac{1}{\beta k^{\beta}}|\tilde  u_n(t)|^{\beta}+(f_n(t),\bar u(t)-\tilde  u_n(t)).\]
	Together with the uniform boundedness of
	the continuous function sequence $f_n$,
	we get $\tilde u_n$ is uniformly bounded in $L^{\infty}(I_T;\R^{2n})$.
	
	We claim that $\|\tilde  u_n-\bar u\|_{\infty}\to0$ as $n\to \infty$.
	If the claim is not true, then we can find a subsequence of
	$\tilde  u_n$ which is still denoted by itself, and a sequence $t_n\in I_T$ such that
	\[\tilde  u_n(t_n)-\bar u(t_n)\to \bar y \neq 0\quad  \text{and} \quad t_n\to \bar t \quad \text{ as }\quad n\to \infty.\]
	Since $\lambda_n\ge 0$,  from inequality \eqref{e:H^*-convex-tildeu}, we get
	the limit satisfies
	\[F^*(t;\bar u(\bar t ))\ge F^*(t;\bar u(\bar t )+\bar y)-(\nabla F^*(t;\bar u(\bar t)),\bar y),\]
	which we write as
	\[F^*(t;\bar u(\bar t )+\bar y)\le F^*(t;\bar u(\bar t ))+(\nabla F^*(t;\bar u(\bar t)),\bar y) .\]
	Since the Hessian  of $F^*(t;\cdot)$ is positive definite, we get $\bar y=0$, which is a contradiction.
	
	Step 3:
	Assume our lemma is false. Then there is an $\varepsilon>0$, a sequence  $\delta_n\to0$
	and a sequence $P_n$ of path components of $\dot \psi^d$, such that
	$P_n\cap B_{\beta}(\bar u,\delta_n)\neq \emptyset$
	and
	\begin{equation}\label{e:beta-infinity-empty}
		P_n\cap B_{\beta}(\bar u,\delta_n)\cap B_{\infty}(\bar u,\varepsilon)= \emptyset.
	\end{equation}
	Since we have found a minimizer $\tilde u_n\in P_n\cap B_{\beta}(\bar u,\delta_n)$
	of $\psi$ such that $\|\tilde u_n-\tilde u\|_{\infty}\to 0$ as $n\to \infty$, this contradicts
	\eqref{e:beta-infinity-empty}. The proof is complete.
\end{proof}

Lemma \ref{l:L-beta-infinity} has some useful consequences.
Using the similar method in \cite[IV.3]{Ek90}, we get the following two conclusions
by analogy with Corollaries IV.3.2 and IV.3.4 in loc. cit.
\begin{proposition}\label{p:minimal-infinity-beta}
	$\bar u\in \hat L_{\circ}^{\infty}(I_T;\R^{2n})$ is a local minimum for $\psi_{\infty}$ if and only if it is a local minimum of $\psi$.
\end{proposition}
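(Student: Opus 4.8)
The plan is to prove the two implications separately; the second is the one that consumes the content of Lemma \ref{l:L-beta-infinity}, while the first is a one-line norm comparison.

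For the implication ``$\psi$-local-minimum $\Rightarrow$ $\psi_{\infty}$-local-minimum'' I would argue as follows. If $\psi(v)\ge\psi(\bar u)$ for all $v\in\hat L_{\circ}^{\beta}(I_T;\R^{2n})$ with $\|v-\bar u\|_{\beta}<r$, then because $\|v-\bar u\|_{\beta}\le T^{1/\beta}\|v-\bar u\|_{\infty}$ for every $v\in\hat L_{\circ}^{\infty}(I_T;\R^{2n})$, the ball $B_{\infty}(\bar u,rT^{-1/\beta})$ lies inside the given $L^{\beta}$-ball; hence $\psi_{\infty}(v)=\psi(v)\ge\psi(\bar u)=\psi_{\infty}(\bar u)$ on it, and $\bar u$ is a local minimum of $\psi_{\infty}$.

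For the converse, assume $\psi_{\infty}(v)\ge\psi_{\infty}(\bar u)$ for all $v\in B_{\infty}(\bar u,\varepsilon)$. I would first check that $\bar u$ is a critical point of $\psi$, so that Lemma \ref{l:L-beta-infinity} applies: since $\psi_{\infty}\in C^2$ and $\bar u$ is a local minimum, $\psi_{\infty}'(\bar u)=0$, and since $\psi'(\bar u)\in(\hat L_{\circ}^{\beta}(I_T;\R^{2n}))^{*}$ restricts to $\psi_{\infty}'(\bar u)$ on the dense subspace $\hat L_{\circ}^{\infty}(I_T;\R^{2n})$, we get $\psi'(\bar u)=0$; put $d=\psi(\bar u)$, so $\bar u\in\Cr(\psi,d)$. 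Let $\delta>0$ be the radius Lemma \ref{l:L-beta-infinity} produces for this $\varepsilon$. Then I would argue by contradiction: were $\bar u$ not a local minimum of $\psi$, there would exist $u_n\to\bar u$ in $\hat L_{\circ}^{\beta}(I_T;\R^{2n})$ with $\psi(u_n)<d$, i.e. $u_n\in\dot\psi^d$; fixing $n$ with $u_n\in B_{\beta}(\bar u,\delta)$ and letting $P$ be the path component of $\dot\psi^d$ through $u_n$, we have $P\cap B_{\beta}(\bar u,\delta)\neq\emptyset$, so the lemma supplies $w\in P\cap B_{\beta}(\bar u,\delta)\cap B_{\infty}(\bar u,\varepsilon)$. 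Since $w\in P\subset\dot\psi^d$ this gives $\psi(w)<d$, while $w\in B_{\infty}(\bar u,\varepsilon)\subset\hat L_{\circ}^{\infty}(I_T;\R^{2n})$ forces $\psi(w)=\psi_{\infty}(w)\ge\psi_{\infty}(\bar u)=d$ --- a contradiction. Hence $\bar u$ is a local minimum of $\psi$.

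The whole force of the statement is absorbed into Lemma \ref{l:L-beta-infinity}, which is already proved (on the back of the minimization-plus-Lagrange-multiplier analysis of its Steps 1--3); given that lemma, the only remaining points are the trivial comparison of $L^{\infty}$- and $L^{\beta}$-balls on a finite interval and the density argument needed to upgrade the $L^{\infty}$-stationarity of $\bar u$ to $\psi'(\bar u)=0$ so that the lemma's hypothesis $\bar u\in\Cr(\psi,d)$ is met. I do not expect a real obstacle here beyond correctly invoking those hypotheses.
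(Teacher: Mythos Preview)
Your proof is correct and follows precisely the route the paper indicates: the paper does not write out its own argument but refers to \cite[Corollary IV.3.2]{Ek90}, and your contradiction argument via Lemma \ref{l:L-beta-infinity} is exactly that corollary's proof transplanted to the present setting. The only addition you make---checking that a $\psi_{\infty}$-local minimum is automatically in $\Cr(\psi,d)$ via density of $\hat L_{\circ}^{\infty}$ in $\hat L_{\circ}^{\beta}$---is a necessary bookkeeping step that the paper (and Ekeland) leave implicit.
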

\begin{proposition}\label{p:mountainpass-infinity-beta}
	If $\bar u\in \hat L_{\circ}^{\infty}(I_T;\R^{2n})$ is a mountain-pass point for $\psi$, then it is a mountain-pass point for $\psi_{\infty}$.
\end{proposition}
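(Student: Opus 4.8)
\noindent\emph{Proof plan.} Set $d=\psi(\bar u)$, so $\psi_\infty(\bar u)=d$ too. By Definition \ref{d:mountain-pass-point} the goal is to show that for every open neighbourhood $\mathcal V$ of $\bar u$ in $\hat L_{\circ}^{\infty}(I_T;\R^{2n})$ the set $A_{\mathcal V}:=\{u\in\mathcal V;\ \psi_\infty(u)<d\}$ is nonempty and not path connected. Nonemptiness comes for free: a mountain‑pass point is never a local minimum, so $\bar u$ is not a local minimum of $\psi$, hence by Proposition \ref{p:minimal-infinity-beta} not a local minimum of $\psi_\infty$, which is exactly the statement that $A_{\mathcal V}\neq\emptyset$ for all $\mathcal V$. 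The real work is the failure of path connectedness, and the plan is to deduce it from Lemma \ref{l:L-beta-infinity} together with the continuity of the inclusion $\iota\colon\hat L_{\circ}^{\infty}(I_T;\R^{2n})\hookrightarrow\hat L_{\circ}^{\beta}(I_T;\R^{2n})$, along which $\psi\circ\iota=\psi_\infty$.

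First I would isolate a purely topological remark about mountain‑pass points in a Banach space $E$: \emph{if $\bar u$ is a mountain‑pass point of $f$, then for every open neighbourhood $\mathcal O$ of $\bar u$ there exist two distinct path components of $\dot f^{d}$ each meeting $\mathcal O$.} The argument is short: $\dot f^{d}$ is open and $E$ locally path connected, so its path components are open; $\dot f^{d}\cap\mathcal O\neq\emptyset$ since $\bar u$ is not a local minimum; and if $\dot f^{d}\cap\mathcal O$ lay in a single path component $P$, then $P\cup\mathcal O$ would be an open neighbourhood of $\bar u$ with $\dot f^{d}\cap(P\cup\mathcal O)=P$ path connected, contradicting the definition of a mountain‑pass point.

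Then, given an open $L^{\infty}$‑neighbourhood $\mathcal V$ of $\bar u$, I would choose $\varepsilon>0$ with $B_{\infty}(\bar u,\varepsilon)\subset\mathcal V$, take $\delta=\delta(\varepsilon)>0$ from Lemma \ref{l:L-beta-infinity}, and apply the remark with $\mathcal O$ the open $\beta$‑ball of radius $\delta$ about $\bar u$. This yields distinct path components $P_1\neq P_2$ of $\dot\psi^{d}$ each meeting that ball, hence each meeting $B_{\beta}(\bar u,\delta)$, and Lemma \ref{l:L-beta-infinity} then provides
\[a\in P_1\cap B_{\beta}(\bar u,\delta)\cap B_{\infty}(\bar u,\varepsilon),\qquad b\in P_2\cap B_{\beta}(\bar u,\delta)\cap B_{\infty}(\bar u,\varepsilon).\]
Both $a,b$ lie in $B_{\infty}(\bar u,\varepsilon)\subset\mathcal V$, are elements of $\hat L_{\circ}^{\infty}$, and satisfy $\psi_\infty=\psi<d$, so $a,b\in A_{\mathcal V}$. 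If $A_{\mathcal V}$ were path connected, a path from $a$ to $b$ in $A_{\mathcal V}\subset\hat L_{\circ}^{\infty}$ would, after composing with $\iota$, give a continuous path in $\hat L_{\circ}^{\beta}$ with image in $\dot\psi^{d}$ joining $a\in P_1$ to $b\in P_2$ — impossible for distinct path components. Hence $A_{\mathcal V}$ is not path connected, and $\bar u$ is a mountain‑pass point of $\psi_\infty$.

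All the analytic weight sits in Lemma \ref{l:L-beta-infinity} (reflexivity of $\hat L_{\circ}^{\beta}$, convexity of $F^{*}(t,\cdot)$, condition (PS), the uniform bounds on $F^{*}$ and $\nabla F^{*}$), which I am assuming as given; so in the proof of the Proposition itself I do not expect a real obstacle. The one point that genuinely needs care is the \emph{two distinct components} statement — a priori the local disconnectedness of $\dot\psi^{d}$ near $\bar u$ might just be an artifact of a single global path component whose slice by a small ball is disconnected — and that is exactly what the topological remark disposes of, using nothing beyond the definition of a mountain‑pass point and the openness of path components. The remainder is routine bookkeeping with the continuous embedding $\hat L_{\circ}^{\infty}\hookrightarrow\hat L_{\circ}^{\beta}$, in the spirit of the proof of Corollary IV.3.4 in \cite{Ek90}.
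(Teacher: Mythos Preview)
Your proposal is correct and is precisely the argument the paper has in mind: the paper gives no explicit proof here, only the pointer ``by analogy with Corollary~IV.3.4 in \cite{Ek90}'', and your write-up is exactly that argument---Lemma~\ref{l:L-beta-infinity} supplies two $L^\infty$-close points in distinct path components of $\dot\psi^{\,d}$, and the continuous embedding $\hat L_{\circ}^{\infty}\hookrightarrow\hat L_{\circ}^{\beta}$ transports any hypothetical $L^\infty$-path between them into $\dot\psi^{\,d}$, a contradiction. The topological remark you isolate (two distinct global path components of $\dot f^{\,d}$ must meet every neighbourhood of a mountain-pass point) is the correct way to rule out the ``single component with disconnected trace'' scenario, and your proof of it via the neighbourhood $P\cup\mathcal O$ is clean.
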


Now we study the $C^2$ function $\psi_{\infty}$.
Since $F''(t,\bar x(t))$ is positive definite for any $t\in I_T$,
by \cite[Proposition I.4.2]{Ek90}, $q_{T/2}$ induces a $q_{T/2}$-orthogonal splitting of
$\hat L^2_{\circ}(I_T;\R^{2n})\colon$
\[\hat L^2_{\circ}(I_T;\R^{2n})=E_-\oplus E_0\oplus E_+\]
such that $E_0$ is the null-subspace of $q_{T/2}$,
$q_{T/2}$ is positive definite and negative definite on $E_+$ and $E_-$ respectively.
By the spectral theory of a compact selfadjoint operator on Hilbert space, $E_-$ and $E_0$ are finite-dimensional. 

By regularity,
we have a topological splitting of $\hat L^{\infty}_{\circ}(I_T;\R^{2n})\colon$
\begin{equation}\label{e:spitting-L-infinity}
	\hat L^{\infty}_{\circ}(I_T;\R^{2n})=E_-\oplus E_0\oplus E^{\infty}_+
\end{equation}
where $E^{\infty}_+=E_+\cap \hat L^{\infty}_{\circ}(I_T;\R^{2n})$.
In fact, for $u\in \hat L^2_{\circ}(I_T;\R^{2n})$, if 
\[-J\Pi u +J\Lambda J\Pi^2u+  (F^{''}(t,\bar x))^{-1}u=\tilde \lambda u+\xi,\] for some $\tilde \lambda\le 0$ and $\xi\in\{0\}\times \R^n$, then let $x=J\Pi u-J\Lambda J\Pi^2u+\xi$ and we have
\[-J\dot x+\tilde \lambda F''(t,\bar x(t))J\dot x= [F''(t,\bar x(t))-\Lambda+\tilde \lambda F''(t,\bar x(t))\Lambda]x.\]
Since $-J+\tilde \lambda F''(t,\bar x(t))J$ is invertible for $\tilde\lambda\le 0$,
we obtain $x\in C^1(I_T;\R^{2n})$ and finally get $u=-J\dot x+\Lambda x\in \hat L^{\infty}_{\circ}(I_T;\R^{2n})$.

We denote by $P_-,P_0$ and $P_+$ the corresponding projections about the splitting \eqref{e:spitting-L-infinity}.
Denote by
\[u_+=P_+u\in E_+, u_0=P_0u\in E_0 \text{ and } u_-=P_-u\in E_- \text{ for } u\in \hat L^{\infty}_{\circ}(I_T;\R^{2n}).\]
We have
\begin{lemma}\label{l:E-0-to-+}
	There exists a neighborhood $\mathcal{V}$ of $\bar u_-+\bar u_0$ in $E_-\oplus E_0$ and a
	$C^1$ map $\sigma \colon \mathcal{V}\to E^{\infty}_+$ satisfying
	$\langle \psi'_{\infty}(v+\sigma(v)),w\rangle=0$, for $v\in \mathcal{V}$ and $w\in E^{\infty}_+$;
	$\sigma(\bar u_-+\bar u_0)=\bar u_+$,  $\sigma'(\bar u_-+\bar u_0)=0$.
	The new functional $\hat \psi_{\infty}(v):=\psi_{\infty}(v+\sigma(v))$ is $C^2$.
\end{lemma}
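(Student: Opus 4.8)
The plan is to obtain $\sigma$ from the implicit function theorem, via the standard Gromoll--Meyer splitting procedure; write $p_0:=\bar u_-+\bar u_0$ for the base point. First I would realize $\psi'_\infty$ through a gradient map. Since $F^*\in C^2$, the Nemytskii operator $u\mapsto \nabla F^*(\cdot\,;u(\cdot))$ is $C^1$ from $\hat L_\circ^\infty(I_T;\R^{2n})$ to itself; adding the bounded linear map $u\mapsto -J\Pi u+J\Lambda J\Pi^2u$ and subtracting the mean of $\nabla F^*(\cdot\,;u)$ (which is $N$--invariant, hence lies in $\{0\}\times\R^n$, so that the result is again $N$--symmetric and mean--zero) produces a $C^1$ map $\nabla\psi_\infty\colon \hat L_\circ^\infty\to \hat L_\circ^\infty$ with $\langle\psi'_\infty(u),w\rangle=(\nabla\psi_\infty(u),w)_{L^2}$ for all $w\in \hat L_\circ^\infty$; in particular $\nabla\psi_\infty(\bar u)=0$. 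By the regularity splitting \eqref{e:spitting-L-infinity} the $L^2$--projection $P_+$ restricts to a bounded projection of $\hat L_\circ^\infty$ onto $E^\infty_+$, so I would set $\Phi(v,w):=P_+\nabla\psi_\infty(v+w)$, a $C^1$ map on a neighbourhood of $(p_0,\bar u_+)$ in $(E_-\oplus E_0)\times E^\infty_+$ with values in $E^\infty_+$ and with $\Phi(p_0,\bar u_+)=P_+\nabla\psi_\infty(\bar u)=0$.

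The decisive step, which I expect to be the main obstacle, is to show that $\partial_w\Phi(p_0,\bar u_+)$ is a Banach--space isomorphism of $E^\infty_+$. Writing $\mathcal A:=(\nabla\psi_\infty)'(\bar u)$, this derivative is precisely the self--adjoint operator representing $q_{T/2}$, namely $\mathcal A u=-J\Pi u+J\Lambda J\Pi^2u+F''(t,\bar x(t))^{-1}u$ corrected by its mean; since the decomposition $\hat L^2_\circ=E_-\oplus E_0\oplus E_+$ is exactly the spectral splitting of $\mathcal A$ (built by \cite[Proposition I.4.2]{Ek90}), $\mathcal A$ preserves each summand, with $\mathcal A E_0=\{0\}$, $\mathcal A E_-\subseteq E_-$ and $\mathcal A E^\infty_+\subseteq E^\infty_+$, so $\partial_w\Phi(p_0,\bar u_+)=\mathcal A|_{E^\infty_+}$. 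On $E_+$ the form $q_{T/2}$ is positive definite and, the essential spectrum of $\mathcal A$ coinciding (as $-J\Pi+J\Lambda J\Pi^2$ and the mean correction are $L^2$--compact) with that of the multiplication operator $F''(t,\bar x(t))^{-1}$, which lies in a compact subinterval of $(0,\infty)$ because $F''(t,\bar x(t))$ is continuous and positive definite on the compact interval $I_T$, the spectrum of $\mathcal A|_{E_+}$ is bounded away from $0$; hence $\mathcal A|_{E_+}$ is an $L^2$--isomorphism of $E_+$. To lift this to the $L^\infty$--topology I would re--run the bootstrap already used for \eqref{e:spitting-L-infinity}: for $g\in E^\infty_+$ the $L^2$--solution $u\in E_+$ of $\mathcal A u=g$ satisfies $F''(t,\bar x(t))^{-1}u=g+\xi+J\Pi u-J\Lambda J\Pi^2u$ for some $\xi\in\{0\}\times\R^n$, hence $u=F''(t,\bar x(t))\bigl(g+\xi+J\Pi u-J\Lambda J\Pi^2u\bigr)$ is continuous, and the elementary bounds $|\xi|+\|u\|_{L^2}\le C\|g\|_{L^2}\le C\|g\|_\infty$, $\|\Pi u\|_\infty+\|\Pi^2u\|_\infty\le C\|u\|_{L^2}$ give $\|u\|_\infty\le C\|g\|_\infty$. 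Thus $\mathcal A|_{E^\infty_+}\colon E^\infty_+\to E^\infty_+$ is a bounded bijection, hence an isomorphism.

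Granting this, the implicit function theorem (finite--dimensional parameter space $E_-\oplus E_0$, Banach target $E^\infty_+$, which is closed by \eqref{e:spitting-L-infinity}) yields a neighbourhood $\mathcal V$ of $p_0$ and a $C^1$ map $\sigma\colon\mathcal V\to E^\infty_+$ with $\sigma(p_0)=\bar u_+$ and $P_+\nabla\psi_\infty(v+\sigma(v))=0$ on $\mathcal V$; since $E_+$ is $L^2$--orthogonal to $E_-\oplus E_0$, this is equivalent to $\langle\psi'_\infty(v+\sigma(v)),w\rangle=(\nabla\psi_\infty(v+\sigma(v)),w)_{L^2}=0$ for every $w\in E^\infty_+$. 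Differentiating $\Phi(v,\sigma(v))\equiv0$ at $v=p_0$ gives $\partial_v\Phi(p_0,\bar u_+)+\partial_w\Phi(p_0,\bar u_+)\circ\sigma'(p_0)=0$; but $\partial_v\Phi(p_0,\bar u_+)v'=P_+\mathcal A v'=0$ for $v'\in E_-\oplus E_0=\ker P_+$, because $\mathcal A$ preserves $E_-\oplus E_0$, so $\partial_w\Phi(p_0,\bar u_+)\circ\sigma'(p_0)=0$, and the invertibility of $\partial_w\Phi(p_0,\bar u_+)$ forces $\sigma'(p_0)=0$.

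Finally, to see that $\hat\psi_\infty(v)=\psi_\infty(v+\sigma(v))$ is $C^2$ even though $v\mapsto v+\sigma(v)$ is only $C^1$, I would use the Gromoll--Meyer identity: for $v'\in E_-\oplus E_0$,
\[\langle\hat\psi_\infty'(v),v'\rangle=\langle\psi'_\infty(v+\sigma(v)),\,v'+\sigma'(v)v'\rangle=\langle\psi'_\infty(v+\sigma(v)),v'\rangle,\]
because $\sigma'(v)v'\in E^\infty_+$ while $\psi'_\infty(v+\sigma(v))$ annihilates $E^\infty_+$. Hence $\hat\psi_\infty'=j^*\circ\psi'_\infty\circ(\mathrm{id}+\sigma)$, where $j\colon E_-\oplus E_0\hookrightarrow\hat L_\circ^\infty$ is the inclusion; this is a composite of the $C^1$ map $v\mapsto v+\sigma(v)$, the $C^1$ map $\psi'_\infty$, and the fixed bounded linear map $j^*$, so $\hat\psi_\infty'$ is $C^1$ and $\hat\psi_\infty\in C^2$, which completes the plan.
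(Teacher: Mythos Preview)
Your proposal is correct and follows essentially the same route as the paper: define the $C^1$ gradient map $S=\nabla\psi_\infty$, observe that $S'(\bar u)$ preserves $E^\infty_+$ and is an isomorphism there, apply the implicit function theorem to $P_+S(v+w)=0$, and conclude $C^2$ from $\hat\psi_\infty'(v)=(P_-+P_0)S(v+\sigma(v))$. The paper is terser---it cites \cite[Lemma IV.3.5]{Ek90} and \cite[Page 19]{EkHo87} for the invariance and isomorphism of $S'(\bar u)|_{E^\infty_+}$---whereas you supply the essential-spectrum argument and the regularity bootstrap explicitly; your derivation of $\sigma'(p_0)=0$ and the Gromoll--Meyer identity for $\hat\psi_\infty'$ likewise spell out what the paper compresses into a sentence.
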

\begin{proof}
	Define a map $S\colon \hat L^{\infty}_{\circ}(I_T;\R^{2n}) \to \hat L^{\infty}_{\circ}(I_T;\R^{2n})$ by
	\[S(u):=-J\Pi u+J\Lambda J\Pi^2u+\nabla F^*(t;u)-\frac{1}{T}\int_{-\frac{T}{2}}^{\frac{T}{2}} \nabla F^*(t;u(t))dt .\]
Then we have
	\[\langle \psi'_{\infty}(u),v\rangle=\int_{-\frac{T}{2}}^{\frac{T}{2}}(S(u),v)dt.\]
	Clearly $S$ is a $C^1$ map near $\bar u$ and
	\[S'(\bar u)v=-J\Pi v +J\Lambda J\Pi^2v+  (F^*)^{''}(t;\bar u)v+\frac{1}{T}\int^{\frac{T}{2}}_{-\frac{T}{2}}\left[ J\Pi v -J\Lambda J\Pi^2v-(F^*)^{''}(t;\bar u)v\right]dt.\]
	Similar to \cite[Page 19]{EkHo87}, we have
	\[S'(\bar u) E^{\infty}_+ \subset  E^{\infty}_+.\]
	Moreover, $S'(\bar u)\colon E^{\infty}_+ \to  E^{\infty}_+$ is an isomorphism.
	One step is the regularity, that is,
	for $S'(\bar u)v\in \hat L^{\infty}_{\circ}(I_T;\R^{2n})$ where $v\in \hat L^2_{\circ}(I_T;\R^{2n})$,
	then by the explicit expression of $S'(\bar u)$ we get
	$v\in \hat L^{\infty}_{\circ}(I_T;\R^{2n})$.
	
	We use the implicit function theorem to find a neighborhood of $\mathcal{V}$
	of $\bar u_-+\bar u_0$ in $E_-\oplus E_0$ and a unique $C^1$ map
	$\sigma\colon \mathcal{V}\to E^{\infty}_+$ such that
	\[P_+S(v+\sigma(v))=0 \text{ for $v\in \mathcal{V}$ and } \sigma(\bar u_-+\bar u_0)=\bar u_+.\]
	
	Following the lines in the proof of \cite[Lemma IV.3.5]{Ek90}, we get this lemma.
	Note that $\hat \psi'_{\infty}(v)=(P_-+P_0)S(v+\sigma(v))$ is a $C^1$ map.
\end{proof}
We recall the following normal form theorem.
\begin{theorem}[cf. {\cite[Chapter IV.3]{Ek90}}]\label{t:normal-form}
	Let $\mathcal{U}$ be an open neighborhood of $0$ in a Hilbert space $V$.
	Assume that $\varphi\in C^2(\mathcal{U},\R)$ and $0$ is a critical point of $\varphi$. Suppose that $L=\varphi''(0)$ is a Fredholm operator, so that $V$ splits orthogonally into positive, negative and null subspaces relative to $\varphi''(0)\colon$
	\[V=E_+\oplus E_-\oplus E_0\]
	with $E_0=\ker L$ and $E_+\oplus E_- =\im L$. Then there exists an open neighborhood $\mathcal{V}$
	of $0$ in $\im L$,
	and an open neighborhood $\mathcal{W}$ of $0$ in $\ker L$, a local homeomorphism $h$
	from $\mathcal{V}\times \mathcal{W} $ into $\mathcal{U}$ with $h(0,0)=0$, and a function
	$f\in C^2(\mathcal{W},\R)$ with $f'(0)=0$, $f''(0)=0$ such that
	\[\varphi(h(v,w))=\frac{1}{2}(Lv,v)+f(w)\ \ \ \text{for}\ \ (v,w)\in \mathcal{V}\times \mathcal{W}, \]
	\begin{equation}\label{e:restriction-of-h}
		\text{and the restriction }\ \ h(\cdot,0) \colon \mathcal{V} \to \im L \quad \text{is a local  $C^1$-diffeomorphism}.
	\end{equation}
\end{theorem}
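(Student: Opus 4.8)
The plan is to prove Theorem~\ref{t:normal-form} by combining a Lyapunov--Schmidt reduction with the generalized Morse (Morse--Palais) lemma, exactly along the lines of \cite[Chapter IV.3]{Ek90}. First I would record that, since $L=\varphi''(0)$ is a self-adjoint Fredholm operator on the Hilbert space $V$, the splitting $V=E_-\oplus E_+\oplus E_0$ is orthogonal with $E_0=\ker L$ and $\im L=E_-\oplus E_+$, and the restriction $L|_{\im L}\colon\im L\to\im L$ is a linear isomorphism. Let $P\colon V\to\im L$ and $Q=\Id-P\colon V\to E_0$ be the orthogonal projections, and identify $\varphi'$ with the gradient $\nabla\varphi$.

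The first step I would carry out is the reduction. Writing $u=v+w$ with $v\in\im L$ and $w\in E_0$, I would solve the equation $P\nabla\varphi(v+w)=0$ for $v$ as a function of $w$ near the origin: its $v$-derivative at $(0,0)$ is $PLP|_{\im L}=L|_{\im L}$, an isomorphism, so the implicit function theorem yields a neighborhood $\mathcal{W}_0$ of $0$ in $E_0$ and a unique $C^1$ map $\sigma\colon\mathcal{W}_0\to\im L$ with $\sigma(0)=0$ and $P\nabla\varphi(\sigma(w)+w)=0$. Differentiating this identity at $w=0$ and using $L|_{\ker L}=0$ together with the injectivity of $L$ on $\im L$ gives $\sigma'(0)=0$. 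Then I would set $f(w):=\varphi(\sigma(w)+w)$ on $\mathcal{W}_0$; since $\nabla\varphi(\sigma(w)+w)\in E_0$ while $\sigma'(w)$ maps into $\im L\perp E_0$, one gets $\nabla f(w)=Q\,\nabla\varphi(\sigma(w)+w)$, hence $f\in C^2$, $f'(0)=0$, and $f''(0)=0$ because $D\nabla f(0)w''=QLw''=0$ for $w''\in\ker L$. This $f$ is the one in the statement.

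The second step is to put $\varphi$ into normal form in the $\im L$ directions, with $w$ as a parameter. For $w\in\mathcal{W}_0$ and $v\in\im L$ near $0$ I would consider $\Phi(v,w):=\varphi(v+\sigma(w)+w)-f(w)$; by construction $\Phi(0,w)=0$, the point $v=0$ is a critical point of $\Phi(\cdot,w)$ for each $w$, and $\partial_v^2\Phi(0,0)=L|_{\im L}$ is nondegenerate. Applying the parametrized Morse--Palais lemma on $\im L$ (after conjugating the $w$-varying Hessians $\partial_v^2\Phi(0,w)$ to the fixed operator $L|_{\im L}$ by a $w$-continuous family of linear isomorphisms, which is possible because the negative index is locally constant), I would obtain a local homeomorphism $(\hat v,w)\mapsto k(\hat v,w)\in\im L$ with $k(0,w)=0$ and $\Phi(k(\hat v,w),w)=\tfrac12(L\hat v,\hat v)$; then $h(\hat v,w):=k(\hat v,w)+\sigma(w)+w$ on a product neighborhood $\mathcal{V}\times\mathcal{W}$ satisfies $h(0,0)=0$ and $\varphi(h(\hat v,w))=\tfrac12(L\hat v,\hat v)+f(w)$. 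At $w=0$ no conjugation is needed and $\sigma(0)=0$, so $h(\cdot,0)=k(\cdot,0)$ is a local $C^1$-diffeomorphism of $\mathcal{V}$ into $\im L$, which is \eqref{e:restriction-of-h}.

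The hard part will be the Morse--Palais step: producing the change of variables $k$ with continuous dependence on the parameter $w$ while only $\varphi\in C^2$ is available, and simultaneously ensuring that at the base point $w=0$ the chart is genuinely a $C^1$-diffeomorphism rather than merely a homeomorphism (a generic $C^2$ Morse lemma gives only the latter). Here I would follow the construction in \cite[Chapter IV.3]{Ek90}, which exploits the specific structure of the situation; by contrast, the Lyapunov--Schmidt reduction in the first step and the bookkeeping identifying $h$ and $f$ are entirely routine.
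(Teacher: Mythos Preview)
Your proposal sketches a reasonable Lyapunov--Schmidt reduction followed by a parametrized Morse--Palais lemma, which is indeed the standard route taken in \cite[Chapter IV.3]{Ek90}. However, the paper does not give its own proof of this theorem: it is stated as a recalled result (note the ``cf.\ \cite[Chapter IV.3]{Ek90}'' in the header), and the only addition is the remark immediately following it, which points out that the extra assertion \eqref{e:restriction-of-h} about $h(\cdot,0)$ being a local $C^1$-diffeomorphism comes from Remark~1) after \cite[Theorem 8.3]{MaWi89}. So there is no proof in the paper to compare your proposal against; your outline is consistent with the cited sources and correctly identifies the delicate point (getting a $C^1$ chart at $w=0$ under only $C^2$ hypotheses), which the paper handles by appealing to Mawhin--Willem rather than by reproducing the argument.
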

\begin{remark}
	According to Remark 1) after \cite[Theorem 8.3]{MaWi89}, we get \eqref{e:restriction-of-h}
	which also will be used in the following lemma.
\end{remark}
Applying the normal form theorem to the functional
$\hat \psi_{\infty}(\cdot+\bar u_-+\bar u_0)\colon E_-\oplus E_0\to \R$,
we get
the corresponding maps $h$ and $f$. In this case, we have $\im L=E_-$, $\ker L=E_0$ and $f(0)=d$.

Denote by $u\sim u'$ if $u$ and $u'$ belong to the same path component of the sublevel set $\dot \psi^d_{\infty}$.
This also defines an equivalent relation in $\dot\psi^d_{\infty}$.

By the same proofs in \cite[Lemma IV.3.7 and Corollary IV.3.8]{Ek90},
we have the following lemma in our reversible and adjusted variational framework.
\begin{lemma}\label{l:dim-negative-morethan-one}
	Assume $\dim E_-\ge 1$.
	\begin{enumerate}
		\item[(a)] Set $\tilde{u}=\bar u_-+\bar u_0\in E_-\oplus E_0$. There exists a neighborhood $\mathcal{U}$ of $\bar u$ in $\hat L^{\infty}_{\circ}(I_T;\R^{2n})$ such that for $u\in \mathcal{U}\cap \dot \psi^d_{\infty}$,  there exists some
		$v\in E_-\setminus \{0\}$ such that $u \sim \tilde{u}+h(v)+\sigma(\tilde{u}+h(v))$.
		\item[(b)] There is a neighborhood $\mathcal{U}$ of $\bar u$ in $\hat L^{\infty}_{\circ}(I_T;\R^{2n})$
		with the following property: for every $u\in \mathcal{U}\cap \dot\psi^d_{\infty}$
		we can find $v\in E_-\setminus\{0\}$ and some $\rho>0$ such that
		$\bar u+tv\sim u$, where $t$ is any positive number smaller than $\rho$.
	\end{enumerate}
\end{lemma}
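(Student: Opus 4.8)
I would prove Lemma~\ref{l:dim-negative-morethan-one} along the lines of \cite[Lemma IV.3.7, Corollary IV.3.8]{Ek90}, by descending twice. Recall the topological splitting $\hat L^{\infty}_{\circ}(I_T;\R^{2n})=E_-\oplus E_0\oplus E^{\infty}_+$ of \eqref{e:spitting-L-infinity}, the $C^1$ map $\sigma$ of Lemma~\ref{l:E-0-to-+}, and the reduced $C^2$ functional $\hat\psi_{\infty}(v)=\psi_{\infty}(v+\sigma(v))$ near $\tilde u=\bar u_-+\bar u_0$ in $E_-\oplus E_0$. The first reduction is the observation that, since $q_{T/2}=\psi''_{\infty}(\bar u)$ is positive definite on $E_+$, for $u$ close to $\bar u$ the restriction of $\psi_{\infty}$ to the affine fibre $(P_-u+P_0u)+E^{\infty}_+$ is strictly convex with unique minimum at $\sigma(P_-u+P_0u)$; hence the segment $s\mapsto (P_-u+P_0u)+(1-s)P_+u+s\,\sigma(P_-u+P_0u)$ stays near $\bar u$ and makes $\psi_{\infty}$ nonincreasing, so it lies in $\dot\psi^d_{\infty}$ whenever $u\in\dot\psi^d_{\infty}$. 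Thus $u\sim (P_-u+P_0u)+\sigma(P_-u+P_0u)$, and, dually, any path in $\{\hat\psi_{\infty}<d\}$ lifts through $\sigma$ to a path in $\dot\psi^d_{\infty}$. The second reduction is Theorem~\ref{t:normal-form} applied to $\hat\psi_{\infty}(\cdot+\tilde u)$: its Hessian $L:=\hat\psi''_{\infty}(\tilde u)$ is $q_{T/2}$ restricted to $E_-\oplus E_0$ (using $\sigma'(\tilde u)=0$), so $\ker L=E_0$, $\im L=E_-$, $L$ is negative definite on $E_-$, while $f(0)=d$, $f'(0)=0$, $f''(0)=0$, and $\hat\psi_{\infty}(\tilde u+h(\xi,w))=\tfrac{1}{2}(L\xi,\xi)+f(w)$.

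For part (a): fix $r^*>0$ with $\{\xi\in E_-:\|\xi\|\le r^*\}$ inside the normal-form neighbourhood $\mathcal V\subset E_-$, set $-a:=\max\{\tfrac{1}{2}(L\xi,\xi):\|\xi\|=r^*\}<0$, and pick $\delta>0$ with $\{w\in E_0:\|w\|\le\delta\}\subset\mathcal W$ and $|f(w)-d|<a/2$ there. Given $u\in\mathcal U\cap\dot\psi^d_{\infty}$ for a sufficiently small neighbourhood $\mathcal U$ of $\bar u$, write $P_-u+P_0u-\tilde u=h(\xi_0,w_0)$ with $\|\xi_0\|\le r^*$, $\|w_0\|\le\delta$, and $\tfrac{1}{2}(L\xi_0,\xi_0)+f(w_0)=\hat\psi_{\infty}(P_-u+P_0u)<d$. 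I would connect, inside $\{\hat\psi_{\infty}<d\}$: first move $\xi$ radially from $\xi_0$ (along any fixed unit direction if $\xi_0=0$) out to a vector $\hat v$ with $\|\hat v\|=r^*$, holding $w=w_0$ fixed; along this leg $\tfrac{1}{2}(L\xi,\xi)$ only decreases, so $\hat\psi_{\infty}$ stays $<d$. Then shrink $w$ linearly to $0$ holding $\xi=\hat v$; now $\tfrac{1}{2}(L\hat v,\hat v)\le -a$ while $|f(w)-d|<a/2$, so $\hat\psi_{\infty}\le d-a/2<d$. The terminal point is $\tilde u+h(\hat v,0)$ with $\hat v\in E_-\setminus\{0\}$; lifting this path through $\sigma$ and combining with the first reduction gives $u\sim\tilde u+h(\hat v)+\sigma(\tilde u+h(\hat v))$, which is (a).

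For part (b): put $P(\xi):=\tilde u+h(\xi,0)+\sigma(\tilde u+h(\xi,0))$, which lies in $\dot\psi^d_{\infty}$ for $0<\|\xi\|\le r^*$, and note that $\xi\mapsto P(\xi)$ sends each path component of $\{\xi\in E_-:0<\|\xi\|\le r^*\}$ into a single path component of $\dot\psi^d_{\infty}$. By part (a), $u\sim P(\hat v_u)$ with $\|\hat v_u\|=r^*$. Let $A$ be the derivative at $0$ of the local $C^1$-diffeomorphism $h(\cdot,0)\colon\mathcal V\to E_-$, an automorphism of $E_-$; choose a unit vector $\epsilon_0\in E_-$ lying in the component of $E_-\setminus\{0\}$ that contains $\hat v_u$ (any unit vector if $\dim E_-\ge2$) and set $v:=A\epsilon_0\in E_-\setminus\{0\}$. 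For $0<t<\rho$ with $\rho$ small: $\psi_{\infty}(\bar u+tv)=d+\tfrac{t^2}{2}q_{T/2}(v,v)+o(t^2)<d$ since $v\in E_-\setminus\{0\}$; the first reduction gives $\bar u+tv\sim\tilde u+tv+\sigma(\tilde u+tv)$ because $P_-v=v$ and $P_0v=P_+v=0$; and since $tv$ lies in the image of $h(\cdot,0)$ we may write $tv=h(\zeta_t,0)$ with $\zeta_t=(h(\cdot,0))^{-1}(tv)=tA^{-1}v+o(t)=t\epsilon_0+o(t)$ (inverse function theorem), so this point equals $P(\zeta_t)$ and $\zeta_t$ lies in the same component of $\{0<\|\xi\|\le r^*\}$ as $\hat v_u$. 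Hence $\bar u+tv\sim P(\zeta_t)\sim P(\hat v_u)\sim u$ for all $0<t<\rho$, which is (b).

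The delicate part is not the analysis but the bookkeeping of nested neighbourhoods combined with the fact that the homeomorphism $h$ of Theorem~\ref{t:normal-form} is only known to be $C^1$ on the slice $w=0$: this is why in (b) the direction $v$ must be taken along the image of $h(\cdot,0)$, so that $\zeta_t=(h(\cdot,0))^{-1}(tv)$ is defined and $C^1$ in $t$, and why the sign bookkeeping is needed when $\dim E_-=1$, where $E_-\setminus\{0\}$ is disconnected and $\bar u+tv$ must be steered into the half-ray containing $\hat v_u$. The remaining ingredients — the fibrewise strict convexity of $\psi_{\infty}$, the second-order Taylor expansion of $\psi_{\infty}$ at $\bar u$ (legitimate since $\psi_{\infty}\in C^2$), and the lifting of paths through $\sigma$ — are routine given what has already been set up.
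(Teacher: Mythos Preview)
Your proposal is correct and follows essentially the same strategy as the paper's proof (which itself follows \cite[Lemma IV.3.7 and Corollary IV.3.8]{Ek90}): first reduce to the $\sigma$-section via fibrewise convexity in $E_+^{\infty}$, then apply the normal form theorem in $E_-\oplus E_0$ coordinates, push the $E_-$-component out to a fixed sphere and shrink the $E_0$-component to zero. Your treatment of part (b) via the inverse function theorem for $h(\cdot,0)$ and the explicit asymptotics $\zeta_t=t\epsilon_0+o(t)$ is a somewhat more direct route than the limiting argument (involving secants $(c(t_n)-\bar u)/t_n$ and a contradiction from $\psi_{\infty}[\bar u+t_n(\cdots)]\ge\psi_{\infty}(\bar u)$) that the paper's typo correction indicates is used in \cite{Ek90}; your care in handling the disconnected case $\dim E_-=1$ by steering $\epsilon_0$ into the half-ray of $\hat v_u$ is exactly the point that makes this work.
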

\begin{proof}
	(a) is almost same as in the proof of \cite[Lemma IV.3.7]{Ek90} and we just stretch the key points.
	By the definition of $\sigma$, there exists $\delta>0$ such that
	\begin{equation}\label{e:min-convex}
		||\sigma(\tilde u+v)-\bar u_+||_{\infty}<\delta \implies
		\hat \psi_{\infty}(\tilde u+v)=\min\{\psi_{\infty}(\tilde u+v+\sigma(\tilde u+v)+w);||w||_{\infty}<\delta\}.
	\end{equation}
	We may choose such a small $\eta$ above (68) of Page 166 in loc. cit.  to fall within the neighborhood of the origin in $E_-\oplus E_0$ specified by Theorem \ref{t:normal-form}, and to make  $\|\sigma(v+\tilde u)-\bar u_+\|_{\infty}<\delta$ hold when $v=h(v')$ and $\|v'\|_{\infty}<\eta$.

	We then define an open neighborhood $\mathcal{U}$
	of $\bar u$ in $\hat L^{\infty}_{\circ}(I_T;\R^{2n})$ by:
	\[\mathcal{U}=\{\tilde{u}+h(v)+\sigma(\tilde u+h(v))+w;v\in E_-\oplus E_{\circ},\|v\|_{\infty}<\eta; w\in E_+,\|w\|_{\infty}<\delta\}.\]
	Take any $u\in \mathcal{U}\cap \dot \psi^d$, so that
	\[u=\tilde{u}+h(v)+\sigma(\tilde u+h(v))+w.\]
	Recall $u\sim u'$ if $u$ and $u'$ belong to the same path component of $\dot\psi^d$. By \eqref{e:min-convex}, we have
	\[u\sim u':=\tilde{u}+h(v)+\sigma(\tilde u+h(v)).\]
In fact, by \eqref{e:min-convex} and the monotropy of convex function, we have
	\[\hat \psi_{\infty}(\tilde u+h(v))\le \psi(\tilde u+h(v)+\sigma(\tilde u+h(v))+tw)\le \psi(u) \quad \text{for} \quad t\in[0,1].\]
	
	By the definition of $\hat\psi_{\infty}$, we have
	\begin{equation*}
		\psi_{\infty}(u')=\hat\psi_{\infty}(\tilde u+h(v)).
	\end{equation*}
	
	Define $\tilde v_-\in E_-$ by:
	\begin{equation*}
		\tilde v_-=\begin{cases}
			v_- &\text{if $\|v_-\|_{\infty}\ge \frac{\eta}{2}$},\\
			\frac{\eta}{2\|v_-\|_{\infty}}v_- &\text{if $0<\|v_-\|_{\infty}< \frac{\eta}{2}$},\\
			 \frac{\eta}{2}e_-&\text{if $v_-=0$},\\
		\end{cases}
	\end{equation*}
	where we choose $e_-\in E_-$ with $\|e_-\|_{\infty}=1$, and set
	\begin{equation*}
		u'':=\tilde u+h(\tilde v_-+v_0)+\sigma(\tilde u+h(\tilde v_-+v_0)).
	\end{equation*}
	Finally, we  can claim that
	\begin{equation*}
		u'\sim u'' \sim u''' :=\tilde u +h(\tilde v_-)+\sigma(\tilde u+h(\tilde v_-)).
	\end{equation*}
We just follow the proof of \cite[Lemma IV.3.7]{Ek90},
where the reader can also find how to prove (b). Note that there is a tiny typo in the last line of Page 168 in loc. cit.:
	\[\psi_{\infty}\left[\bar u+t_n\left((1-\lambda_n)c'(0)+\lambda_n\frac{c(t_n)-\bar u}{t_n}\right)\right]\ge \psi_{\infty}(\bar u).\]
\end{proof}
\begin{corollary}\label{c:dim-greater-two}
	If $\dim E_-\ge 2$, then there exists a neighborhood $\mathcal{U}$ of $\bar u$
	in $\hat L^{\infty}_{\circ}(I_T;\R^{2n})$ such that $\dot \psi^d_{\infty}\cap \mathcal{U}$
	is path-connected.
\end{corollary}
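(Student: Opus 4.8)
The plan is to deduce the corollary from Lemma~\ref{l:dim-negative-morethan-one}(b) in the same way that \cite[Corollary IV.3.8]{Ek90} follows from \cite[Lemma IV.3.7]{Ek90}. Lemma~\ref{l:dim-negative-morethan-one}(b) reduces every point of $\dot\psi^d_\infty$ lying near $\bar u$ to a point of the form $\bar u+tv$ with $v\in E_-\setminus\{0\}$ and $t>0$ small; and once $\dim E_-\ge2$, all such points are connected to one another inside $\dot\psi^d_\infty$, because removing $\bar u$ from a ball in the finite-dimensional affine space $\bar u+E_-$ leaves a path-connected set. So the missing ingredient is that this ``punctured $E_-$-ball'' at $\bar u$ actually lies inside $\dot\psi^d_\infty$.

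I would establish that ingredient quantitatively. The space $E_-$ is finite-dimensional and the quadratic form $q_{T/2}=\psi''_\infty(\bar u)$ is negative definite on it, so there is $c>0$ with $q_{T/2}(v,v)\le-c\|v\|_\infty^2$ for all $v\in E_-$. Since $\psi_\infty\in C^2(\hat L^\infty_\circ(I_T;\R^{2n});\R)$, $\psi_\infty(\bar u)=d$ and $\psi'_\infty(\bar u)=0$, a second-order Taylor expansion of $\psi_\infty$ at $\bar u$ yields $\rho_0>0$ such that
\[
\psi_\infty(\bar u+v)\le d-\tfrac{c}{2}\|v\|_\infty^2<d\qquad\text{for every }v\in E_-\text{ with }0<\|v\|_\infty\le\rho_0 .
\]
Consequently, writing $D_\rho:=\{\bar u+v:\ v\in E_-,\ 0<\|v\|_\infty\le\rho\}$, we have $D_\rho\subset\dot\psi^d_\infty$ whenever $0<\rho\le\rho_0$, and each $D_\rho$ is path-connected because it is a punctured ball in a space of dimension $\dim E_-\ge2$.

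Next I would pick $\mathcal{U}$ to be a closed ball $B_\infty(\bar u,\rho_1)$. By Lemma~\ref{l:dim-negative-morethan-one}(b) — more precisely, by the chain of short segment-type paths $u\sim u'\sim u''\sim\cdots$ used in its proof, which realizes ``$\sim$'' by paths staying in an arbitrarily small neighborhood of $\bar u$ — one may choose $\rho_1\in(0,\rho_0]$ so small that every $u\in\mathcal{U}\cap\dot\psi^d_\infty$ can be joined, by a path lying in $\mathcal{U}\cap\dot\psi^d_\infty$, to some point $\bar u+tv$ with $v\in E_-\setminus\{0\}$ and $0<\|tv\|_\infty\le\rho_1$. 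Every such endpoint lies in $D_{\rho_1}$, which is contained in $\mathcal{U}\cap\dot\psi^d_\infty$ (by the Taylor estimate and $D_{\rho_1}\subset B_\infty(\bar u,\rho_1)$) and is path-connected. Hence any two points of $\mathcal{U}\cap\dot\psi^d_\infty$ are joined inside $\mathcal{U}\cap\dot\psi^d_\infty$ — first to $D_{\rho_1}$, then through $D_{\rho_1}$ — so $\mathcal{U}\cap\dot\psi^d_\infty$ is path-connected.

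The Taylor estimate, and the fact that a punctured ball of dimension at least two is path-connected, are routine. The point that I expect to require the most care is the neighborhood bookkeeping in the last step: one must verify that the connecting paths provided by Lemma~\ref{l:dim-negative-morethan-one}(b), as well as the connecting paths used inside $D_{\rho_1}$, can all be kept within the single fixed ball $\mathcal{U}=B_\infty(\bar u,\rho_1)$. This is read off from the explicit constructions in the proof of Lemma~\ref{l:dim-negative-morethan-one}, following \cite[Lemma IV.3.7 and Corollary IV.3.8]{Ek90}, in which each ``$\sim$'' is realized by a path lying close to $\bar u$ in the sublevel set.
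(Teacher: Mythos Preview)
Your proposal is correct, but the paper takes a slightly more direct route: it invokes part~(a) of Lemma~\ref{l:dim-negative-morethan-one} rather than part~(b). Part~(a) already hands you, for each $u\in\mathcal U\cap\dot\psi^d_\infty$, a target of the form $\tilde u+h(v)+\sigma(\tilde u+h(v))$ with $v\in E_-\setminus\{0\}$, and the explicit chain $u\sim u'\sim u''\sim u'''$ in its proof stays inside the neighborhood $\mathcal U$ constructed there; since these targets are parametrized by a punctured ball in $E_-$, which is path-connected once $\dim E_-\ge 2$, the corollary follows at once. Your route through~(b) together with a second-order Taylor estimate is a legitimate alternative: the Taylor step makes explicit what is built into~(a) via the normal form, namely that the affine punctured disk $\bar u+(E_-\setminus\{0\})$ sits in $\dot\psi^d_\infty$ near $\bar u$. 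The trade-off is that~(a) avoids this extra computation, while your argument via~(b) lands on the simpler linear targets $\bar u+tv$. In either approach the only delicate point is exactly the one you identify --- keeping the connecting paths inside the fixed neighborhood --- and this is read off from the explicit constructions in \cite[Lemma~IV.3.7]{Ek90}, which is what the paper cites.
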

\begin{proof}
	By Lemma \ref{l:dim-negative-morethan-one}(a) and noticing that $\dim E_-\ge 2$,
	we get this corollary.
\end{proof}

We recall \cite[Theorem 3.1]{EkHo87} for our later use.
\begin{theorem}\label{t:two-path-components}
	Let $\bar u\in \Cr(\psi,d)$ be a mountain-pass essential point, and let $\bar x$ be the corresponding $T$-periodic brake solution of problem \eqref{e:HS-T-K}.
	Then \begin{equation}\label{e:morse-index-less1-greater1}
	m^-(\bar x)\le 1\le m^-(\bar x)+m^0(\bar x).
	\end{equation}
	If $m^-(\bar x)= 1$, the set
	\[\{\mathscr{P}\ ; \mathscr{P}\text { is a path component  of } \dot \psi^d \text{\ \ and\  \ } \bar u \in \cl(\mathscr{P}) \}\]
	has exactly two different elements $\mathscr{P}^+$ and $\mathscr{P}^-$. Moreover,
	there is an eigenvector $e\neq 0$ associated with the negative eigenvalues of $\psi_{\infty}''(\bar u)$ and some $\eta >0$
	such that for $0<s<\eta$, $\bar u+se\in \mathscr{P}^+$ and $\bar u-se\in \mathscr{P}^-$.
\end{theorem}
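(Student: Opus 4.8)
The plan is to transport the entire local analysis to the $C^2$ functional $\psi_{\infty}$ on $\hat L_{\circ}^{\infty}(I_T;\R^{2n})$, to study it there via the Lyapunov--Schmidt reduction of Lemma \ref{l:E-0-to-+}, the normal form Theorem \ref{t:normal-form} and Lemma \ref{l:dim-negative-morethan-one}, and finally to lift the conclusions back to $\psi$ on $\hat L_{\circ}^{\beta}(I_T;\R^{2n})$ by means of Lemma \ref{l:L-beta-infinity}. This is the scheme of \cite[Theorem 3.1]{EkHo87}, which I would adapt to the present reversible, adjusted dual setting.

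For the index bounds \eqref{e:morse-index-less1-greater1}, recall $m^-(\bar x)=\dim E_-$, $m^0(\bar x)=\dim E_0$, and $\psi_{\infty}''(\bar u)=q_{T/2}$. Since $\bar u$ is mountain-pass essential, part (a) of Theorem \ref{t:critical-point-mp} gives $u_0\xrightarrow[\psi]{}\bar u$, whence $\bar u\notin M(\psi,d)$; so $\bar u$ is not a local minimum of $\psi$, nor of $\psi_{\infty}$ by Proposition \ref{p:minimal-infinity-beta}. If $\dim E_-=\dim E_0=0$, then $q_{T/2}$ would be positive definite, and because $(F''(t,\bar x))^{-1}$ is uniformly positive definite on $I_T$ while $-J\Pi+J\Lambda J\Pi^2$ is compact, $q_{T/2}$ would be coercive on $\hat L_{\circ}^{2}(I_T;\R^{2n})$, making $\bar u$ a strict local minimum of $\psi_{\infty}$, a contradiction; hence $m^-(\bar x)+m^0(\bar x)\ge1$. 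For $m^-(\bar x)\le1$ I would argue by contradiction: if $\dim E_-\ge2$, Corollary \ref{c:dim-greater-two} yields a neighborhood $\mathcal{U}$ of $\bar u$ in $\hat L_{\circ}^{\infty}(I_T;\R^{2n})$ with $\dot\psi^d_{\infty}\cap\mathcal{U}$ path-connected and nonempty (nonempty because $\psi_{\infty}$ strictly decreases along directions in $E_-$). But a mountain-pass essential point is either a mountain-pass point of $\psi$ --- hence of $\psi_{\infty}$ by Proposition \ref{p:mountainpass-infinity-beta}, contradicting the path-connectedness just obtained --- or it lies in $\cl(M(\psi,d))\setminus M(\psi,d)$; in the latter case there are local minima $u_k$ of $\psi$ with $\psi(u_k)=d$, $u_k\ne\bar u$, $u_k\to\bar u$, each corresponding to a solution of \eqref{e:HS-T-K} obeying the uniform a priori bound, so by Ascoli's theorem (along a subsequence) $u_k\to\bar u$ in $\hat L_{\circ}^{\infty}(I_T;\R^{2n})$, and then for large $k$ the point $u_k\in\mathcal{U}$ would be a local minimum of $\psi_{\infty}$ (Proposition \ref{p:minimal-infinity-beta}) --- impossible, since through the reduction of Lemma \ref{l:E-0-to-+} and the normal form of Theorem \ref{t:normal-form}, $\dim E_-\ge1$ provides a direction along which $\psi_{\infty}$ strictly decreases from $u_k$. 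This establishes $m^-(\bar x)\le1$.

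For the case $m^-(\bar x)=1$, write $E_-=\R e$ with $e$ an eigenvector of the unique negative eigenvalue of $\psi_{\infty}''(\bar u)$, so that $\bar u\pm se\in\dot\psi^d_{\infty}$ for all small $s>0$. By Lemma \ref{l:dim-negative-morethan-one}(b) there are a neighborhood $\mathcal{U}$ of $\bar u$ in $\hat L_{\circ}^{\infty}(I_T;\R^{2n})$ and a $\rho>0$ such that every $u\in\mathcal{U}\cap\dot\psi^d_{\infty}$ satisfies $u\sim\bar u+tv$ for some $v\in E_-\setminus\{0\}$ and all $0<t<\rho$, where $\sim$ means ``lies in the same path component of $\dot\psi^d_{\infty}$''; since $\dim E_-=1$, $v$ is a positive or a negative multiple of $e$, so $u\sim\bar u+se$ or $u\sim\bar u-se$ for small $s>0$. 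Hence $\dot\psi^d_{\infty}\cap\mathcal{U}$ has at most two path components, one through $\bar u+se$ and one through $\bar u-se$, and these are genuinely distinct: were they equal, $\dot\psi^d_{\infty}\cap\mathcal{U}$ would be path-connected, and the two alternatives in the definition of mountain-pass essential would be ruled out exactly as in the previous paragraph. It then remains to lift to $\psi$ via Lemma \ref{l:L-beta-infinity}: any path component $\mathscr{P}$ of $\dot\psi^d$ with $\bar u\in\cl(\mathscr{P})$ meets $B_{\beta}(\bar u,\delta)\cap B_{\infty}(\bar u,\varepsilon)$ for all small $\varepsilon$, hence meets $\dot\psi^d_{\infty}\cap\mathcal{U}$ and so contains $\bar u+se$ or $\bar u-se$; conversely, $\bar u+se$ and $\bar u-se$ sit in path components of $\dot\psi^d$ whose closures contain $\bar u$, and these are distinct because an $\hat L^{\beta}$-path joining them would, by the reduction argument behind Lemma \ref{l:L-beta-infinity} and \cite[\S IV.3]{Ek90}, descend to a path in $\dot\psi^d_{\infty}$ joining $\bar u+se$ and $\bar u-se$. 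Taking $\mathscr{P}^{\pm}$ to be the path component of $\dot\psi^d$ containing $\bar u\pm se$ then finishes the argument.

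The step I expect to be the main obstacle is the careful passage between path components of $\dot\psi^d$ in the reflexive space $\hat L_{\circ}^{\beta}(I_T;\R^{2n})$ and those of $\dot\psi^d_{\infty}$ in $\hat L_{\circ}^{\infty}(I_T;\R^{2n})$ --- in particular, within the $\cl(M)\setminus M$ alternative, upgrading $\hat L^{\beta}$-convergence of a sequence of local minima to $\hat L^{\infty}$-convergence, which leans on the a priori bounds for brake solutions and on condition (C). This is handled precisely as in \cite[\S 3]{EkHo87} and \cite[\S IV.3]{Ek90}, but it is the point at which the weaker regularity and the adjusted dual functional of this paper must be verified to cause no harm.
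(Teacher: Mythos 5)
Your proposal follows essentially the same route as the paper's proof, which itself defers most details to \cite[Theorem 3.1]{EkHo87}: work in $\hat L_{\circ}^{\infty}$ via Propositions \ref{p:minimal-infinity-beta}--\ref{p:mountainpass-infinity-beta} and Lemma \ref{l:L-beta-infinity}, apply the Lyapunov--Schmidt reduction (Lemma \ref{l:E-0-to-+}), the normal form (Theorem \ref{t:normal-form}), Corollary \ref{c:dim-greater-two}, and Lemma \ref{l:dim-negative-morethan-one}(b), then transfer back to $\dot\psi^d$. The small differences are cosmetic: for $m^-+m^0\ge1$ the paper invokes the Morse lemma directly, while you argue via positive definiteness of $q_{T/2}$ and Proposition \ref{p:minimal-infinity-beta} (your ``coercive on $\hat L_{\circ}^2$'' step is slightly loose as stated — coercivity in the $L^2$-norm does not immediately give a local minimum of $\psi_\infty$ in the $L^\infty$-topology; the clean way is that when $E_-\oplus E_0=\{0\}$ the normal form Theorem \ref{t:normal-form} reduces to $\psi_\infty(h(v))=\frac12(Lv,v)$ with $L>0$); and for the $\cl(M)\setminus M$ alternative the paper uses lower semicontinuity of the Morse index of the converging Hessians, whereas you argue that no local minimum of $\psi_\infty$ can survive near $\bar u$ when $\dim E_-\ge1$ — logically equivalent. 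You correctly identify the $\hat L^\beta$-to-$\hat L^\infty$ passage (upgrading convergence of the local minima $u_k$ to uniform convergence through the a priori bounds, and the $\tilde W$-style bookkeeping for the final two-components assertion) as the technical crux, and you handle it as the paper does, by invoking Lemma \ref{l:L-beta-infinity} together with \cite[\S 3]{EkHo87} and \cite[\S IV.3]{Ek90}.
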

\begin{proof}
	Note that $\bar u\in \Cr(\psi,d)$ is a mountain-pass essential point.
	If $\bar u$ is a mountain-pass point, then by
	Proposition \ref{p:mountainpass-infinity-beta} and Corollary \ref{c:dim-greater-two},
	we have $m^-(\bar x)\le 1$.
	If $\bar u$ is a limit of a sequence of local minima $u_n$ at the level $d$, we have $m^-(\bar x)=0$, because
	the Morse index of local minimum is zero, i.e., $m^-(u_n)=0$ and the sequence of quadratic forms corresponding to $u_n$ converges the quadratic form to corresponding to  $\bar u$.
	By the Morse lemma, if $m^-(\bar u)=0$ and $m^0(\bar u)=0$,
	$\bar u$ would be a local minimum. So $m^-(\bar x)+ m^0(\bar x)\ge 1$.
	Thus if $m^-(\bar x)= 1$, $\bar u$ is a mountain-pass point.
	By Lemma \ref{l:dim-negative-morethan-one}, we obtain our theorem.
	The proof can be found in \cite[Theorem 3.1]{EkHo87} and the detail is omitted. For the convenience of readers, we give a remark here.
	As in \cite[page 22]{EkHo87}, define
	\begin{equation}\label{e:open-neighborhood}
		\tilde{W}=B_{\tau}(\bar u)\cup (\cup\{\text{path components of $\dot \psi^d$ intersecting $B_{\tau}(\bar u)$}\}),
	\end{equation}
	where $B_{\tau}(\bar u)=\{u\in \hat L_{\circ}^{\beta}(I_T;\R^{2n});||u-\bar u||_{\beta}\le \tau\}$. Then $\tilde{W}$ is an open neighborhood of $\bar u$ in
	$\hat L_{\circ}^{\beta}(I_T;\R^{2n})$. Since $\bar u$ is a mountain-pass point, by Lemma \ref{l:dim-negative-morethan-one}(b), we have
	\[\tilde{W}\cap \dot \psi^d=\mathscr{P}^+\cup \mathscr{P}^-,\] where $\mathscr{P}^+$ and $\mathscr{P}^-$ two different path components of
	$\tilde{W}\cap \dot \psi^d$ such that there is an eigenvector $e\neq0$ associated with the negative eigenvalue 
	of $\psi_{\infty}''(\bar u)$ and
	for all $t'>0$ sufficient small, $\bar u+t'e\in \mathscr{P}^+$ and $\bar u-t'e\in \mathscr{P}^-$. Now we claim that $\mathscr{P}^+$ and $\mathscr{P}^-$ are also two different path components of $\dot \psi^d$. In fact, since $\mathscr{P}^+\subset \dot \psi^d$ is path-connected, there exists a path component of $\dot \psi^d$, which is denoted by $\mathscr{P}^*$, such that $\mathscr{P}^+\subset \mathscr{P}^*$. By the definition of $\tilde{W}$ \eqref{e:open-neighborhood}, we have that $\mathscr{P}^*\cap B_{\tau}(\bar u)$ is nonempty, thus $\mathscr{P}^*\subset \tilde{W}\cap \dot \psi^d$; finally we get $\mathscr{P}^*=\mathscr{P}^+$.
	
\end{proof}

\begin{proposition}\label{p:morse-index--1}
	For $j\in\N$,
	let $\psi\colon \hat L_{\circ}^{\beta}(I_{2jT};\R^{2n})\to\R$ be the functional as in \eqref{e:reduced-functional}
	\[\psi(u)=\int_{-jT}^{jT}\left[ \frac{1}{2}(-J\Pi u+J\Lambda J\Pi^2u,u)+F^*(t,u)\right]dt.\]
	Assume $\bar u$ is a mountain-pass essential point of $\psi$. Let $q_{jT}$ be the quadratic form  defined  on 
	$\hat L_{\circ}^2(I_{2jT};\R^{2n})$ in \eqref{e:quadratic-form-T} corresponding $\bar u$ and $\psi$. If 
	\begin{enumerate}
		\item[\rm (i)] $\bar u(t)=\bar u(t-jT)$ for $t\in[0,jT]$; and
		\item[\rm (ii)] there is an $e\in \hat L_{\circ}^2(I_{2jT};\R^{2n}) $ such that $q_{jT}(e,e)<0$ and $e(t)=-e(t-jT)$ for $t\in [0,jT]$,
	\end{enumerate}
	then $m^-(\bar u)\neq1$.
\end{proposition}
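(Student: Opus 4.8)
The plan is to argue by contradiction: suppose $m^-(\bar u)=1$ and exploit the $jT$-periodicity of $\bar u$ to produce a fixed-point obstruction. The only new tool is the time-shift operator $S$ on $I_{2jT}$, defined by $(Su)(t)=u(t-jT)$, where every $u\in\hat L^\beta_\circ(I_{2jT};\R^{2n})$ is extended $2jT$-periodically. Then $S$ is a linear isometric involution of $\hat L^\beta_\circ(I_{2jT};\R^{2n})$ (and of $\hat L^2_\circ(I_{2jT};\R^{2n})$): it preserves the norm, the constraint $u(-t)=Nu(t)$ and the zero-mean condition, it commutes with the zero-mean primitive $\Pi$ (both $S\Pi u$ and $\Pi Su$ are the zero-mean primitive of $Su$, using that $\Pi u$ is $2jT$-periodic), and, since $jT$ is an integer multiple of $T$ and $F^*(t,\cdot)$ is $T$-periodic in $t$, one has $\psi\circ S=\psi$. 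In particular $S$ restricts to a self-homeomorphism of the open sublevel set $\dot\psi^d=\{u:\psi(u)<d\}$ with $d=\psi(\bar u)$, hence it maps path components of $\dot\psi^d$ bijectively onto path components.

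First I would record the consequences of hypothesis (i). From $\bar u(t)=\bar u(t-jT)$ together with $\bar u(-t)=N\bar u(t)$ and $N^2=I$ one gets $S\bar u=\bar u$. Next, $S$ commutes with the Hessian $\psi''_\infty(\bar u)$, equivalently $q_{jT}(Sv,Sw)=q_{jT}(v,w)$ for all $v,w\in\hat L^2_\circ(I_{2jT};\R^{2n})$: the symplectic term is $S$-invariant because $S$ commutes with $\Pi$, $J$, $\Lambda$ and preserves integrals over a full period, while the term $\int((F^*)''(t;\bar u(t))v,v)\,dt$ is $S$-invariant because the matrix $(F^*)''(t;\bar u(t))$ is $jT$-periodic in $t$ (this uses $\bar u(t)=\bar u(t-jT)$ and the $T$-periodicity of $F^*$). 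Therefore $S$ commutes with the self-adjoint operator on $\hat L^2_\circ(I_{2jT};\R^{2n})$ representing $q_{jT}$, hence with its spectral projections, so $S$ preserves the $q_{jT}$-orthogonal splitting $\hat L^2_\circ(I_{2jT};\R^{2n})=E_-\oplus E_0\oplus E_+$ and commutes with the projections $P_\pm,P_0$. Since we are assuming $\dim E_-=m^-(\bar u)=1$, the restriction $S|_{E_-}$ is multiplication by $+1$ or $-1$.

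Now hypothesis (ii) pins down the sign. Any $e$ with $q_{jT}(e,e)<0$ satisfies $P_-e\neq 0$, since $q_{jT}\ge 0$ on $E_0\oplus E_+$; fixing a generator $e_0$ of the line $E_-$, write $P_-e=c\,e_0$ with $c\neq 0$. Applying $S$ and using $Se=-e$ (from $e(t)=-e(t-jT)$, extended to all of $I_{2jT}$ by the same computation as for $\bar u$ using $e(-t)=Ne(t)$ and $N^2=I$) together with $SP_-=P_-S$ gives $cSe_0=-ce_0$, whence $Se_0=-e_0$; that is, the negative direction $e_0$ is $jT$-antiperiodic. Invoke Theorem \ref{t:two-path-components} (applied with period $2jT$): since $\bar u$ is mountain-pass essential and $m^-(\bar u)=1$, there are exactly two distinct path components $\mathscr P^+$ and $\mathscr P^-$ of $\dot\psi^d$ with $\bar u\in\cl(\mathscr P^+)\cap\cl(\mathscr P^-)$, and $\bar u+se_0\in\mathscr P^+$, $\bar u-se_0\in\mathscr P^-$ for all small $s>0$. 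Because $S(\bar u+se_0)=\bar u-se_0$, the homeomorphism $S$ carries $\mathscr P^+$ onto a path component of $\dot\psi^d$ containing $\bar u-se_0\in\mathscr P^-$; thus $S\mathscr P^+=\mathscr P^-$, and since $S^2=\id$ also $S\mathscr P^-=\mathscr P^+$. So $S$ interchanges the two components.

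Finally, the contradiction. Since $\bar u$ is mountain-pass essential we have $0\xrightarrow[\psi]{}\bar u$, with $u_0=0$ the base point of the mountain-pass geometry (Theorem \ref{t:mountain-condition}); hence the path component $A$ of $0$ in $\dot\psi^d$ satisfies $\bar u\in\cl(A)$, so $A\in\{\mathscr P^+,\mathscr P^-\}$. But $S0=0\in A$, while $SA$ is the other of the two components and is therefore disjoint from $A$, a contradiction. Hence $m^-(\bar u)\neq 1$. The step requiring the most care is the $S$-invariance of $q_{jT}$ (equivalently, $S$ commuting with $\psi''_\infty(\bar u)$), since it uses both hypothesis (i) and the compatibility of the shift amount $jT$ with the period $T$ of $F^*$; everything afterwards is point-set topology combined with Theorem \ref{t:two-path-components}.
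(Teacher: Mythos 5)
Your proof is correct and follows essentially the same approach as the paper: both exploit that the $jT$-time-shift is a $\psi$-preserving map fixing $0$ and $\bar u$ but swapping the two local components $\mathscr{P}^\pm$ near $\bar u$. The paper realizes this by shifting an explicit path in $\mathscr{P}^+$, whereas you package the shift as a global involution $S$ and, more carefully than the paper, verify via commutation with $\psi_\infty''(\bar u)$ that the negative eigendirection is $S$-antiperiodic even when the $e$ in hypothesis (ii) is not itself the eigenvector.
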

\begin{proof}
We will show that if the conclusion were not true,  a contradiction would occur.
Now we assume $m^-(\bar u)=1$.
	Since $0 \xrightarrow[\psi]{}\bar u$, by Theorem
	\ref{t:two-path-components}, we have $0\in \mathscr{P}^+$ or $0\in \mathscr{P}^-$,
	where $\mathscr{P}^{\pm}$ is the path component of $\dot \psi^d$ whose closure contains $\bar u$ and $\mathscr{P}^+\cap\mathscr{P}^- =\emptyset$. Furthermore, there is a positive number $\eta$
	such that for $0<t'\le \eta $, $\bar u+t'e\in \mathscr{P}^+$ and $\bar u-t'e\in \mathscr{P}^-$.
	Without loss of generality we can assume $0\in \mathscr{P}^+$.
	Let
	$c_0$ be a path in $\mathscr{P}^+$ connecting $0$ to $\bar u+\eta e$:
	\[c_0(0)=0 \ \ \ \text{and}\ \ \ c_0(1)=\bar u+\eta e.\]
	Then $\psi(c_0(s))<d$ for $0\le s\le 1$.
	 For $s\in[0,1]$, $c_0(s)\in \hat L^{\beta}_{\circ}(I_{2jT};\R^{2n})$.
	Now we define  the $jT$-time shift path of $c_0(s)$ for $s\in[0,1]$,
	by
	\begin{equation}\label{e:shift-of-path}
		c_1(s)(t)=
		\begin{cases}
			c_0(s)(t-jT)&  t\in[0,jT], \\
			c_0(s)(t+jT)& t\in[-jT,0].
		\end{cases}
	\end{equation}
	By definition and assumptions {\rm (i)(ii)}, we get
	\[c_1(0)=0 \ \ \ \text{and}\ \ \ c_1(1)=\bar u-\eta e.\]
	Note that for any $s_1,s_2\in [0,1]$, $\|c_0(s_1)-c_0(s_2)\|_{\beta}=\|c_1(s_1)-c_1(s_2)\|_{\beta}$.
	For any $s\in[0,1]$, since $c_0(s)\in \hat L_{\circ}^{\beta}(I_{2jT};\R^{2n})$, we have
	$c_1(s)\in \hat L_{\circ}^{\beta}(I_{2jT};\R^{2n})$ and $\psi(c_0(s))=\psi(c_1(s))<d$.
	So $\bar u-\eta e$ lies in the path component of the origin in $\dot\psi^d$, that is,
	$\bar u-\eta e\in \mathscr{P}^+$.
	We get a contradiction. 
	Thus $m^-(\bar u)\neq1$.
	
	In fact, $-J\frac{d}{dt}+\Lambda$ induces an isomorphism from $\hat W_{2jT}^{1,\beta}(I_{2jT};\R^{2n})$ onto $\hat L_{\circ}^{\beta}(I_{2jT};\R^{2n})$. 
	For $s\in[0,1]$, let $\tilde c_0(s)\in \hat W_{2jT}^{1,\beta}(I_{2jT};\R^{2n})$ satisfies
	$-J\frac{d}{dt}\tilde c_0(s)+\Lambda \tilde c_0(s)=c_0(s)$.
	Then the $jT$-time shift path of $\tilde c_0(s)$ is  absolutely continuous on $[-jT,jT]$, which is 
	similarly defined by \eqref{e:shift-of-path} and denoted by $\tilde c_1(s)$.
  Then  $-J\frac{d}{dt}\tilde c_1(s)+\Lambda \tilde c_1(s)=c_1(s)$ and $\tilde c_1(s)\in\hat W_{2jT}^{1,\beta}(I_{2jT};\R^{2n})$.
   By simple computation,  one can get
	\[\psi(c_0(s))=\Psi_K(\tilde c_0(s))=\Psi_K(\tilde c_1(s))=\psi(c_1(s)).\] 
	\end{proof}.

\section{Iteration theory of Maslov-type indices: old and new}\label{s:Maslov-index}
\subsection{
Iteration formulae and  inequalities of the Maslov-type index theory}\label{ss:Maslov-index-iteration}
In order to understand iterations of periodic solutions for nonlinear Hamiltonian systems, one effective
way is to study  Maslov-type indices of iterations of the matrizant for corresponding linearized systems.
In the following proof of our main results, we will need the assistance of the iteration theory of Maslov-type indices.
Given a periodic solution $\bar x$ of the Hamiltonian system
\eqref{e:HS-jT}, we consider the corresponding linear Hamiltonian system $\dot y=JH''(t,\bar x(t))y$ and define the Maslov-type indices of $\bar x$.

In this section, we recall the Maslov-type indices for symplectic paths with Lagrangian boundary conditions, prove some  useful iteration inequalities and calculate the H\"{o}rmander index explicitly.
Let $\mathcal{L}_s(\R^{2n})$ be the space of symmetric $2n\times 2n$  real matrices and
let $\Sp(2n)=\Sp(2n,\R)=\{M\in \GL(2n,\R); M^TJM=J\}$ be the group of symplectic matrices in $\R^{2n}$, where $\GL(2n,\R)$ denotes the group of invertible
$2n\times 2n$ matrices with real entries and the expression $M^T$ denotes the transpose of the matrix $M$. 

We denote the set of paths in the symplectic group starting form the identity matrix by
\[\mathcal P_{\tau}(2n)=\mathcal P_{\tau}(2n,\R)=\{\gamma\in C([0,\tau],\Sp(2n));\gamma(0)=I_{2n}\},\]
where $\tau$ is a positive number.

Let $\gamma_1\in \mathcal P_{\tau_1}(2n)$ and
$\gamma_2\in \mathcal P_{\tau_2}(2n)$ be two symplectic paths
starting from $I_{2n}$, where $\tau_1,\tau_2>0$.
The \textit{iteration} of $\gamma_1$ and $\gamma_2$ is a symplectic path $\tilde \gamma \in P_{\tau_1+\tau_2}(2n)$
defined by 
\begin{equation}\label{e:two-iterations}
	\tilde \gamma(t)=
	\begin{cases}
	\gamma_1(t), & 0\le t \le \tau_1,\\
	\gamma_2(t-\tau_1)\gamma_1(\tau_1),& \tau_1\le t\le \tau_1+\tau_2.
	\end{cases}
\end{equation}

Set $S_{\tau}=\R/\tau\Z$.
Given a path $B\in C(S_{\tau},\mathcal{L}_s(\R^{2n}))$,
by $\gamma(t)$
we denote the particular fundamental matrix solution of the linear
system of differential equations
\begin{equation}\label{e:linear-path}
	\dot x =J B(t)x, \quad x\in\R^{2n},\\
\end{equation}
with $\gamma(0)=I_{2n}$.
We shall refer to the unique matrix function $\gamma(t)$ above as the \textit{matrizant} of the system \eqref{e:linear-path}.
Since the matrix $B(t)$ is symmetric for any $t\in S_{\tau}$, $\gamma\in \mathcal P_{\tau}(2n)$.

Let $(V,\omega)$ be a finite-dimensional complex symplectic linear space, where $\omega$ is the symplectic form, i.e., a nondegenerate  skew-Hermitian $2$-form. 
A linear subspace $\lambda$ of $(V,\omega)$
is called Lagrangian if $\lambda^{\omega}= \lambda$,
where $ \lambda^{\omega}$ is defined by
\[ \lambda^{\omega}=\{u\in V; \omega(u,v)=0 \text{ for all } v\in  \lambda\}.\]
We denote the set of Lagrangian subspaces of symplectic linear space $(V,\omega)$ by $\Lag(V)$.
We denote the set of symplectic linear transformations from $V$
to itself by
\[\Sp(V,\omega):=\{M\in \GL(V);M^*\omega=\omega\},\]
where $\GL(V)$ denote the set of invertible linear transformations from $V$ to itself and $M^*\omega$ is the pullback $2$-form given by $M^*\omega(u,v)=\omega(Mu,Mv)$ for $u,v\in V$.
Let $(\C^{2n},\omega_0)$ be the standard complex symplectic vector space, where  the symplectic form is defined by
\begin{equation*}
	\omega_0(u,v)= J\bar u\cdot v,\quad \text{ for all } u,v\in \C^{2n}.
\end{equation*}

Note $\Sp(2n,\R)$ can be seen as a subset of $\Sp(\C^{2n},\omega_0)$.
In the following, we always consider the symplectic space $V=\C^{2n}\oplus \C^{2n}$ and $\omega=(-\omega_0)\oplus\omega_0$. 
Then for any $M\in$ {\rm Sp}$(2n)$, its graph
$\Graph(M)=\{(x,Mx)\subset V;x\in \C^{2n}\}\in \Lag(V).$
Denote by $\alpha_0=\{0\} \times \C^n\subset \C^{2n}$
and $\alpha_1=\C^n \times\{0\}\subset \C^{2n}$.
Then  $\alpha_i\times \alpha_j\in \Lag(V)$, where $i,j\in\{0,1\}$.
As in \cite{zhou-zhu-wu}, we write $\tilde \alpha_j=\alpha_j\times \alpha_j$ for $j=0,1$.
We also 
denote $L_0=\{0\} \times \R^n\subset \R^{2n}$
and $L_1=\R^n \times\{0\}\subset \R^{2n}$.
Note that $\alpha_i=L_i\otimes \C$, $i=0,1$. 
By $\dim(\cdot)$ and $\dim_{\C}(\cdot)$, we denote the dimension of a real linear space and the complex dimension of a complex linear space respectively.

We follow Cappell, Lee and Miller's pioneer paper \cite{C-Lee-M} (cf. \cite{duistermaat,Robin}) to define the Maslov index.
\begin{definition}\label{d:Maslov-type-index}
	For $\tau>0$,
	let $(\lambda(t),\mu(t))$, $t\in[0,\tau]$, be a path of pairs of Lagrangian subspaces of $(V,\omega)$.
	We denote the \textit{Maslov index} of the path $(\lambda(t),\mu(t))$, $t\in[0,\tau]$, by $\Mas\{\lambda,\mu\}$ as in \cite{zhou-zhu-wu}, where we work in the complex symplectic space.
	 Let $\gamma\in\mathcal{P}_{\tau}(2n)$ be a symplectic path.
	  For $W\in\Lag(V)$, we define the Maslov-type index
	\[i_{W}(\gamma)=\Mas\{\Graph(\gamma),W\},\]
	where $\Graph(\gamma(t))=\{(x,\gamma(t)x)\subset V;x\in \C^{2n}\}$, $t\in[0,\tau]$.
We define the Maslov-type indices with  periodic boundary condition  by
	\[i_1^{\Lo}(\gamma)=i_{\Graph(I_{2n})}(\gamma)-n,
	\quad
	\nu_1(\gamma)=\nu_1(\gamma(\tau))=\dim \ker (\gamma(\tau)-I_{2n});\]
 and denote the Maslov-type indices with $e^{\sqrt{-1}\theta}$-boundary condition, where $e^{\sqrt{-1}\theta}\in U(1)\setminus 1$  by
	\[i_{e^{\sqrt{-1}\theta}}(\gamma)=i_{\Graph(e^{\sqrt{-1}\theta}\I_{2n})}(\gamma),\quad 
	\nu_{e^{\sqrt{-1}\theta}}(\gamma)=\dim_{\C} \ker_{\C} (\gamma(\tau)-e^{\sqrt{-1}\theta}I_{2n}).\]
We define the Maslov-type indices about the boundary conditions $L_i\times L_j$ ($i,j\in \{0,1\}$) by
	\begin{equation}\label{e:Maslov-index-L_0}
			i_{L_i}(\gamma)= i_{\tilde\alpha_i}(\gamma)-n,\quad
			\nu_{L_i}(\gamma)=\nu_{L_i}(\gamma(\tau))=\dim_{\C}(\Graph(\gamma(\tau))\cap \widetilde \alpha_i)=\dim(\gamma(\tau)L_i\cap L_i);
	\end{equation}
	for $i\neq j$, we denote $i_{L_i\times L_j}(\gamma)=i_{\alpha_i\times \alpha_j}(\gamma)$,
	\begin{equation}\label{e:Maslov-index-L_0-L_1}
			\nu_{L_i\times L_j}(\gamma)=\nu_{L_i\times L_j}(\gamma(\tau))=\dim_{\C}(\Graph(\gamma(\tau))\cap (\alpha_i\times \alpha_j))=\dim (\gamma(\tau)L_i\cap L_j).\\
		\end{equation}
\end{definition}
\begin{remark}
	$i_1^{\Lo}(\gamma)$ is exactly the  Maslov-type index defined by 
	Professor Y. Long (cf. \cite{Lo}), and 	$\nu_1(\gamma)$ is the nullity of $\gamma(\tau)-I_{2n}$, i.e., the dimension of the null-space of which.
	For $M=\begin{pmatrix}
		A & B \\
		C& D \\
	\end{pmatrix}\in \Sp(2n)$, where $A,B,C,D\in \R^{n\times n}$, $\nu_{L_0}(M)=\dim \ker B$ and $\nu_{L_1}(M)=\dim \ker C$.
\end{remark}
Given a $\tau$-periodic solution  $(x,\tau)$ of \eqref{e:HS-jT}, where $H(t,x)$ is $\tau$-periodic about $t\in\R$ including the autonomous case, 
set $B(t)=H''(t,x(t))$ for all $t\in \R$. Then $B\in  C(S_{\tau},\mathcal L_s{(\mathbb {R}}^{2n}))$.
By $\gamma_x$ we denote the matrizant of the linear Hamiltonian system
(\ref{e:linear-path}) with $B(t)=H''(t,x(t))$ for $t\in\R$.
We call $\gamma_x$ the associated symplectic path of $(x,\tau)$.
Define a symplectic path $\gamma\in\mathcal P_{\frac{\tau}{2}}(2n)$ by
$\gamma(t)=\gamma_x(t)$ for $t\in[0,\tau/2]$.
We define the Maslov-type indices of $(x,\tau)$ with the Lagrangian boundary condition $L_i$, $i=0$ or $1$, via that of the restriction of the symplectic path  $\gamma_x$ on the interval $[0,\tau/2]$:
\begin{equation*}
	(i_{L_i}(x),\nu_{L_i}(x))=  \left(i_{L_i}(\gamma),\nu_{L_i}\left(\gamma
	(\tau/2)\right)\right).
\end{equation*}
We still denote the restriction of $\gamma_x$ on $[0,\tau]$
by $\gamma_x\in\mathcal P_{\tau}(2n)$.
We define the Maslov-type indices of $(x,\tau)$ with periodic boundary condition  and $e^{\sqrt{-1}\theta}$-boundary condition by
\begin{equation*}
	(i^{\Lo}_1(x),\nu_1(x))= \left(i^{\Lo}_1(\gamma_x),\nu_1\left(\gamma_x
	(\tau)\right)\right) \text{ and }
\end{equation*}
\[(i_{e^{\sqrt{-1}\theta}}(x),\nu_{e^{\sqrt{-1}\theta}}(x))=\left (i_{e^{\sqrt{-1}\theta}}(\gamma_x),\nu_{e^{\sqrt{-1}\theta}}(\gamma_x(\tau))\right),\]
where $e^{\sqrt{-1}\theta}\in U(1)\setminus 1$ respectively.

According to  \cite[Lemma 3.1]{Lo-Z-Zhu}, we have
\begin{lemma}\label{l:partial-positive-L0-L1}
	Let $B\in C(S_{\tau},\mathcal{L}_s(\R^{2n}))$ and let
	$\gamma_{B}$ be the \textit{matrizant} of the system \eqref{e:linear-path}. 
	Define a symplectic path $\gamma\in\mathcal P_{\frac{\tau}{2}}(2n)$ by
	$\gamma(t)=\gamma_B(t)$ for $t\in[0,\tau/2]$.
 Write $B(t)=\left(\begin{smallmatrix}
		B_{11}(t)&B_{12}(t)\\
		B_{21}(t)&B_{22}(t)
	\end{smallmatrix}\right)$, where $B_{ij}(t)\in \R^{n\times n}$ for $i,j=1,2$ and $t\in S_{\tau}$.
	Set $S=\frac{\tau}{2}$.
\begin{enumerate}
	\item [\rm (a)]
	If $B_{22}(t)$ is positive definite for all $t\in[0,S]$, then $i_{L_0}(\gamma)\ge 0$.
	\item [\rm (b)]
	If $B_{11}(t)$ is positive definite for all $t\in[0,S]$, then $i_{L_1}(\gamma)\ge 0$.
\end{enumerate}
	\end{lemma}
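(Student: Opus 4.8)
The plan is to evaluate $i_{L_0}(\gamma)$ through the crossing form description of the Maslov index of the Lagrangian path $t\mapsto\Graph(\gamma(t))$ in $V=\C^{2n}\oplus\C^{2n}$ relative to the fixed Lagrangian $\tilde\alpha_0$, and to read its sign off from the hypothesis $B_{22}(t)>0$. First I would observe that, by the crossing form calculus for the Maslov index (as in \cite{C-Lee-M,zhou-zhu-wu}), a point $t_0\in[0,S]$ is a crossing precisely when $\gamma(t_0)L_0\cap L_0\neq\{0\}$, a crossing vector corresponding to some $v\in\gamma(t_0)L_0\cap L_0\subseteq L_0$; and a direct computation using $\dot\gamma=JB\gamma$ together with $\omega_0(u,w)=J\bar u\cdot w$ shows that the crossing form at $t_0$ is
\[
\Gamma(t_0)(v)=(B(t_0)v,v),\qquad v\in\gamma(t_0)L_0\cap L_0 .
\]
Writing $v=\left(\begin{smallmatrix}0\\ b\end{smallmatrix}\right)\in L_0$ with $b\in\R^n$, this equals $(B_{22}(t_0)b,b)$.

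Granting this, the hypothesis $B_{22}(t)>0$ on $[0,S]$ makes $\Gamma(t_0)$ positive definite on $\Graph(\gamma(t_0))\cap\tilde\alpha_0$ at every crossing; in particular every crossing is regular, hence isolated, so there are only finitely many crossings $0=t_0<t_1<\dots<t_k\le S$, with $t_0=0$ always present (since $\gamma(0)=I_{2n}$) and $\dim(\Graph(I_{2n})\cap\tilde\alpha_0)=n$. Each interior crossing contributes its positive signature $\dim(\gamma(t_j)L_0\cap L_0)>0$, a crossing at the endpoint $t=S$ contributes nonnegatively, and the unavoidable crossing at $t=0$, where the crossing form is positive definite on an $n$-dimensional space, contributes exactly $n$. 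Since $i_{L_0}(\gamma)=i_{\tilde\alpha_0}(\gamma)-n$, the $-n$ precisely removes the $t=0$ contribution, leaving a sum of nonnegative terms; hence $i_{L_0}(\gamma)\ge 0$, which proves (a).

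For (b) I would run the identical argument against $\tilde\alpha_1$: a crossing vector now corresponds to $v=\left(\begin{smallmatrix}a\\ 0\end{smallmatrix}\right)\in L_1$, the crossing form becomes $(B_{11}(t_0)a,a)$, and positivity of $B_{11}$ on $[0,S]$ yields $i_{L_1}(\gamma)\ge 0$ by the same bookkeeping. The step I expect to be the main obstacle is exactly this endpoint bookkeeping together with the normalization constant: one must verify, in the conventions of \cite{zhou-zhu-wu}, both that the crossing form carries the sign $\Gamma(t_0)(v)=+(B(t_0)v,v)$ (so that positive crossing forms count positively) and that the $t=0$ crossing contributes precisely $n$, so that it is exactly cancelled by the $-n$ in the definition of $i_{L_0}$. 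Once the crossing form is pinned down with its correct sign, the remaining count is routine. Alternatively, one could first perturb $B$ inside $\{B_{22}>0\}$ to make all crossings regular and then let the perturbation tend to zero, but under the present hypothesis this is unnecessary.
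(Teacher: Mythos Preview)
Your argument is correct and is in fact the standard one. The paper does not supply its own proof of this lemma; it simply cites \cite[Lemma~3.1]{Lo-Z-Zhu}. However, the very computation you outline reappears in the paper later, in the proof of Proposition~\ref{p:morse-maslov-index}: there, under the hypothesis that $B_{22}(t)>0$, the paper invokes \cite[Proposition~2.5]{zhou-zhu-wu} and \cite[Lemma~3.1]{duistermaat} to write
\[
i_{L_0}(\gamma)=\Mas\{\Graph(\gamma),\widetilde\alpha_0\}-n=\dim L_0+\sum_{0<t<S}\dim(\gamma(t)L_0\cap L_0)-n=\sum_{0<t<S}\nu_{L_0}(\gamma(t)),
\]
which is exactly your crossing-form count and makes $i_{L_0}(\gamma)\ge0$ immediate. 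So your approach and the paper's (implicit) approach coincide. Your caution about the endpoint bookkeeping is well placed: in the convention of \cite{zhou-zhu-wu} used here, the initial crossing at $t=0$ contributes its full positive signature $n$, which is then cancelled by the $-n$ in the definition of $i_{L_0}$, while the right endpoint $t=S$ does not enter the sum; this is precisely what the displayed formula above records.
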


Since the $\tau$-periodic brake solution has the brake symmetric property, we obtain
\[x(t)=Nx(\tau-t)\text{ for } t\in [\tau/2,\tau].\]
Hence the restriction of $\tau$-periodic brake solution $(x,\tau)$ on $[0,\tau/2]$ determines the whole
solution $(x,\tau)$. Thus the associated symplectic path $\gamma_x$ of $(x,\tau)$ inherits the similar property. Specifically, for $t\in [\tau/2,\tau]$, we have
\begin{equation*}
	\gamma_x(t)=N\gamma_x(\tau-t)\gamma_x(\tau/2)^{-1}N\gamma_x(\tau/2).
\end{equation*}
For other $t\in\R$, we have similar formulae.
This motivates us to define the iteration of a symplectic path  about the brake symmetry.
\begin{definition}[cf. {\cite[Definition 2.9]{L-Z2}}]\label{d:iteration-brake}
	Given a positive number $S$, let $\gamma\in \mathcal{P}_S(2n)$. Denote by $\gamma(2S)=N\gamma(S)^{-1}N\gamma(S)$.
	We define the $k$-th iteration $\gamma^k$ of $\gamma$ for the brake symmetry by the restriction of  $\tilde\gamma(t)$ on $t\in[0,kS]$, where
	\begin{equation*}
		\tilde\gamma(t)=\left\{
		\begin{array}{ll}
			\gamma(t-2jS)(\gamma(2S))^j   &2jS\le t\le(2j+1)S,\   j\in{\mathbb N}\cup\{0\},\\
			\ & \ \\
			N \gamma((2j+2)S-t)N(\gamma(2S))^{j+1}   &(2j+1)S\le t \le (2j+2)S,\  j\in{\mathbb N}\cup\{0\}.
		\end{array}
		\right.
	\end{equation*}
\end{definition}

The relation among $i^{\Lo}_{1}$, $i_{L_0}$ and  $i_{L_1}$ is as follows.
\begin{proposition}[cf. {\cite[Proposition 4.2]{L-Z1}}]
	\begin{equation}\label{e:Maslov-index-identity-L_0}
		\begin{split}
		i^{\Lo}_1(\gamma^2)&=i_{L_0}(\gamma)+i_{L_1}(\gamma)+n.\\
		\nu_1(\gamma^2)&=\nu_{L_0}(\gamma)+\nu_{L_1}(\gamma).
		\end{split}
	\end{equation}
\end{proposition}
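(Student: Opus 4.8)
The plan is to prove the two identities separately. The nullity identity reduces to elementary linear algebra about the end-matrix $\gamma^2(2S)$; the index identity is the $p=2$ case of a Bott-type iteration formula for the brake symmetry, which I would obtain from the Morse-index interpretation of the Maslov-type indices (or, equivalently, directly from the catenation and homotopy axioms of the CLM index).

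For the nullity, put $M=\gamma(S)$. By Definition~\ref{d:iteration-brake} one has $\gamma^2(2S)=N M^{-1} N M$, so $\nu_1(\gamma^2)=\dim\ker(\gamma^2(2S)-I_{2n})$. Multiplying first by $N$ and then by $M$ shows that $N M^{-1} N M x = x$ is equivalent to $N M x = M N x$, i.e. $(NM-MN)x=0$. Decompose $x=u+v$ with $u\in L_1$ and $v\in L_0$; since $N$ acts as $-I_n$ on $L_1$ and as $I_n$ on $L_0$, a direct computation gives $(NM-MN)x=(N+I_{2n})Mu+(N-I_{2n})Mv=2\,\pi_0(Mu)-2\,\pi_1(Mv)$, where $\pi_0,\pi_1$ are the projections of $\R^{2n}$ onto $L_0$ resp.\ $L_1$ along the complementary subspace. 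This vanishes exactly when $Mu\in L_1$ and $Mv\in L_0$, hence $\ker(\gamma^2(2S)-I_{2n})=(L_1\cap M^{-1}L_1)\oplus(L_0\cap M^{-1}L_0)$; applying the isomorphism $M$ yields $\dim=\dim(ML_1\cap L_1)+\dim(ML_0\cap L_0)=\nu_{L_1}(\gamma)+\nu_{L_0}(\gamma)$, which is the second line of the proposition.

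For the index, let $B$ be a coefficient path on $[0,S]$ whose matrizant is $\gamma$ (after homotoping $\gamma$ rel endpoints if necessary), and let $B_2$ be its brake reflection on $[0,2S]$: $B_2(t)=B(t)$ on $[0,S]$ and $B_2(t)=N B(2S-t) N$ on $[S,2S]$. Using $N^2=I_{2n}$ and $N^T J N=-J$ one checks that the matrizant of $\dot y=J B_2(t)y$ on $[0,2S]$ is precisely $\gamma^2$, and that $N B_2(2S-t)N=B_2(t)$. Because $N$ anti-commutes with $J$, the reflection $R\colon y(t)\mapsto Ny(2S-t)$ is an involution of the periodic space $\{y\in W^{1,2}([0,2S];\R^{2n}):y(0)=y(2S)\}$ that preserves the index form $I_{B_2}$ of the periodic problem, so $I_{B_2}$ is block-diagonal with respect to the splitting into the $\pm1$-eigenspaces $W^{\pm}$ of $R$. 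A $y\in W^{+}$ is determined by $y|_{[0,S]}$ and is forced to satisfy $y(0),y(S)\in L_0$, so $I_{B_2}|_{W^{+}}$ is (twice) the index form of the $L_0$-boundary value problem for $B$ on $[0,S]$; likewise $W^{-}$ gives the $L_1$-problem. By the Morse-index theorems for these problems (cf.\ Appendix~\ref{a:morse}) the Maslov-type indices differ from the corresponding Morse indices by a fixed, $\gamma$-independent constant; hence the additivity $m^-(I_{B_2})=m^-(I_{B_2}|_{W^{+}})+m^-(I_{B_2}|_{W^{-}})$ gives $i^{\Lo}_1(\gamma^2)=i_{L_0}(\gamma)+i_{L_1}(\gamma)+c$ for a universal constant $c$, and the case $\gamma\equiv I_{2n}$ (where both sides are immediate) forces $c=n$. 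An equivalent route is to write $i_{\Graph(I_{2n})}(\gamma^2)=\Mas\{\Graph(\gamma^2),\Delta\}$ over $[0,2S]$ with $\Delta=\Graph(I_{2n})$, split it at $t=S$ by catenation, keep the $[0,S]$ part as $\Mas\{\Graph(\gamma),\Delta\}_{[0,S]}$, and on the $[S,2S]$ part reverse the parameter and apply the anti-symplectic involution $N\oplus N$ of $V$ to re-express it against the fixed Lagrangians $\tilde\alpha_0$ and $\tilde\alpha_1$, then reassemble via $i_{L_i}(\gamma)=i_{\tilde\alpha_i}(\gamma)-n$.

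The main obstacle is the bookkeeping in the index step. In the Morse-theoretic route one must verify that the identifications $W^{+}\cong(L_0\text{-path space})$ and $W^{-}\cong(L_1\text{-path space})$ carry $I_{B_2}$ onto (twice) the respective index forms \emph{exactly}, including the boundary contributions, and that the Morse--Maslov dictionary is applied with consistent normalisations on the periodic interval and on both Lagrangian problems, so that the surviving discrepancy is a single $\gamma$-independent constant; one should also note that $B_2$ is only piecewise continuous at $t=S$, which is harmless but requires either allowing $L^1$ coefficients or first regularising $\gamma$. In the CLM-axiom route the delicate points are instead the correction terms in the catenation and path-reversal formulas (the junction Lagrangians need not be transversal) together with the explicit identification, after applying $N\oplus N$, of the resulting twisted Lagrangian with $\tilde\alpha_0$ and $\tilde\alpha_1$. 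In either approach I would isolate the universal constant first and pin it down on the single example $\gamma\equiv I_{2n}$ rather than tracking it through the general computation.
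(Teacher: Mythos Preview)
The paper gives no proof here; it simply cites \cite[Proposition~4.2]{L-Z1} and remarks that the first line is \cite[Proposition~C]{Lo-Z-Zhu}. So your proposal supplies more than the paper does, and the comparison is with the cited sources rather than with any argument in this text. Your nullity computation is clean and correct: the reduction of $\ker(NM^{-1}NM-I)$ to $(L_0\cap M^{-1}L_0)\oplus(L_1\cap M^{-1}L_1)$ via $(NM-MN)x=0$ is exactly the right linear-algebra move.

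For the index identity your reflection-splitting idea is the standard route (and essentially that of the cited references), but one point needs adjustment. The quadratic form of the periodic problem is strongly indefinite, so ``$m^-(I_{B_2})$'' is not defined and Appendix~\ref{a:morse} is the wrong reference: that appendix handles only the $L_0$-problem via the dual functional, under a positivity hypothesis on $B_{22}$. The dictionary you want is the relative Morse index of Section~\ref{ss:relative-morse-index}. The $R$-invariance you verified gives a block decomposition of $-J\tfrac{d}{dt}-B_2$ on the periodic domain into its restrictions to $W^{+}$ and $W^{-}$, which are unitarily equivalent to the $L_0$- and $L_1$-boundary operators on $[0,S]$; additivity of spectral flow under direct sum then gives
\[
I(\hat A,\hat A-\hat B_2)=I(A_{L_0},A_{L_0}-B)+I(A_{L_1},A_{L_1}-B),
\]
and the identifications $I(\hat A,\hat A-\hat B_2)=i^{\Lo}_1(\gamma^2)+n$, $I(A_{L_j},A_{L_j}-B)=i_{L_j}(\gamma)+n$ yield the formula directly, with no calibration step needed. (If you still prefer to calibrate, the choice $\gamma\equiv I_{2n}$ does work in the CLM convention used here, since the Maslov index of a constant pair is $0$; but it is a maximally degenerate test case, and a short regular path such as $\gamma(t)=e^{\epsilon tJ}$ is safer.) Your CLM-catenation alternative is also viable, with the bookkeeping you flagged; the spectral-flow phrasing of the first route is the cleaner of the two.
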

 The first equality is just \cite[Proposition C]{Lo-Z-Zhu}.
Now we recall the following Bott-type iteration formulae for the above brake iteration in \cite[Theorems 4.1 and 4.2]{L-Z1}.
\begin{proposition}\label{p:iteration-brake}
	For $k\in 2\N-1$ and $a\in\{0,1\}$ , we have
	\begin{equation*}
		\begin{split}
			i_{L_a}(\gamma^k)&=i_{L_a}(\gamma)+\sum_{j=1}^{\frac{k-1}{2}}i_{\beta^{2j}}(\gamma^2),\\
			\nu_{L_a}(\gamma^k)&=\nu_{L_a}(\gamma)+\sum_{j=1}^{\frac{k-1}{2}}\nu_{\beta^{2j}}(\gamma^2),
		\end{split}
	\end{equation*}
	where $\beta=e^{\frac{\sqrt{-1}\pi}{k}}$;
	while for $k\in 2\N$, we have
	\begin{equation*}
		\begin{split}
			i_{L_0}(\gamma^k)&=i_{L_0}(\gamma)+i_{L_0\times L_1}(\gamma)+\sum_{j=1}^{\frac{k}{2}-1}i_{\beta^{2j}}(\gamma^2),\\
			\nu_{L_0}(\gamma^k)&=\nu_{L_0}(\gamma)+\nu_{L_0\times L_1}(\gamma)+\sum_{j=1}^{\frac{k}{2}-1}\nu_{\beta^{2j}}(\gamma^2),
		\end{split}
	\end{equation*}
	where $\beta=e^{\frac{\sqrt{-1}\pi}{k}}$.
\end{proposition}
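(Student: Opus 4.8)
The proof follows the strategy of Liu and Zhang \cite{L-Z1}; here is the outline I would carry out.

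\textbf{Step 1 (unfolding the brake iteration).} First I would make the block structure of $\gamma^k$ from Definition \ref{d:iteration-brake} explicit. Setting $P=\gamma(2S)=N\gamma(S)^{-1}N\gamma(S)$, a direct check shows that on the block $[2jS,(2j+2)S]$ one has $\gamma^k(t)=\gamma^2(t-2jS)P^j$, and that when $k$ is odd there is one further terminal block $[(k-1)S,kS]$ on which $\gamma^k(t)=\gamma(t-(k-1)S)P^{(k-1)/2}$. Thus, up to the translations $P^j$, $\gamma^k$ is the catenation of $\lfloor k/2\rfloor$ copies of $\gamma^2$, with one extra copy of $\gamma$ glued on when $k$ is odd. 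By the catenation additivity and homotopy invariance of the Maslov index (\cite{C-Lee-M}), computing $i_{L_a}(\gamma^k)=\Mas\{\Graph(\gamma^k),\widetilde\alpha_a\}-n$ reduces to the computation of a single Maslov index on $[0,kS]$ of a path built from these building blocks against the fixed pair $\widetilde\alpha_a$; the translations $P^j$ only alter the intermediate Lagrangians by symplectomorphisms and can be normalized away.

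\textbf{Step 2 (equivariant Fourier decomposition).} The key point is that this $k$-block path, together with its boundary datum $\widetilde\alpha_a$ and its internal gluing data, carries a natural action of the dihedral group $D_k$ coming from cyclically permuting the $k$ alternately reflected copies of $\gamma$, the reflections being implemented by $N$ (recall $N^TJN=-J$). Decomposing the ambient symplectic space into $D_k$-isotypic components in a way compatible with the symplectic form and with all the gluing/boundary data, and using that the Maslov index is additive over such a symplectic direct sum, the index splits into: a contribution of the trivial (resp.\ sign) one-dimensional character, which is exactly $i_{L_a}(\gamma)$; when $k$ is even, a further contribution of the second pair of one-dimensional characters, which by a relation of the type \eqref{e:Maslov-index-identity-L_0} equals $i_{L_0\times L_1}(\gamma)$; and, for each two-dimensional character labeled by $j$ with rotation eigenvalue $e^{2\pi\sqrt{-1}j/k}=\beta^{2j}$, $\beta=e^{\sqrt{-1}\pi/k}$, a contribution in which the problem collapses to the Maslov index of a single copy of $\gamma^2$ against $\Graph(\beta^{2j}I_{2n})$, namely $i_{\beta^{2j}}(\gamma^2)$. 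Summing over $j$ in the stated ranges ($1\le j\le(k-1)/2$ for $k$ odd, $1\le j\le k/2-1$ for $k$ even) yields the index formulae. The nullity formulae come from applying the same $D_k$-decomposition to the finite-dimensional space $\Graph(\gamma^k(kS))\cap\widetilde\alpha_a$: on the isotypic pieces the intersection condition becomes $\gamma(S)L_a\cap L_a$, respectively (for $k$ even) $\gamma(S)L_0\cap L_1$, respectively $\gamma^2(2S)$-against-$\beta^{2j}$ conditions of dimension $\nu_{\beta^{2j}}(\gamma^2)$.

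\textbf{Main obstacle.} The delicate point is Step 2: one must verify that the $D_k$-action is simultaneously compatible with the terminal Lagrangian $\widetilde\alpha_a$, the internal gluing data, and the symplectic form, so that the Maslov index is genuinely additive over the isotypic decomposition, and then correctly match which characters produce $i_{L_a}(\gamma)$ versus $i_{L_0\times L_1}(\gamma)$ — this is exactly where the parity of $k$ enters, the extra $L_0\times L_1$ term for even $k$ being the contribution of the order-two rotation in $D_k$. Alternatively, one may simply invoke \cite[Theorems 4.1 and 4.2]{L-Z1} directly.
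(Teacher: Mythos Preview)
The paper does not prove this proposition at all: it is introduced with the words ``Now we recall the following Bott-type iteration formulae for the above brake iteration in \cite[Theorems 4.1 and 4.2]{L-Z1}'' and then stated without argument. Your final sentence---``one may simply invoke \cite[Theorems 4.1 and 4.2]{L-Z1} directly''---is therefore exactly what the paper does, and is the correct disposition of this statement.

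Your Steps 1--2 are a reasonable high-level caricature of the Liu--Zhang argument, but as written they are only a sketch, not a proof: the assertion that the $D_k$-action is simultaneously compatible with the symplectic form, the terminal Lagrangian $\widetilde\alpha_a$, and all the internal gluing data is precisely the content of the Liu--Zhang computation, and you have not actually carried it out. You also correctly flag this as the main obstacle. So your proposal is not wrong, but the substantive content is deferred to \cite{L-Z1} in either case, which matches the paper's treatment.
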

In \cite[Corollary 4.10]{zhou-zhu-wu}, by using the
 H\"{o}rmander index, the authors gave a new proof of the following iteration inequality (cf. \cite[Theorem 2.4.1$^{\circ}$]{liu2010}).
 \begin{proposition}
 	For $l\in\Z$,
 	\begin{equation}\label{e:iterarion-even}
 	i_{L_0}(\gamma^{2l})\ge i_{L_0}(\gamma^2)+(l-1)(i^{\Lo}_1(\gamma^2)+\nu_1(\gamma^2)-n).
 	\end{equation}
\end{proposition}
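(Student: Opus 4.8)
The plan is to reduce \eqref{e:iterarion-even} to the Bott-type iteration formula of Proposition~\ref{p:iteration-brake} together with one lower bound for the $\omega$-index of an arbitrary symplectic path. We may assume $l\ge 2$ (the case $l=1$ being an equality with empty sum; the cases $l\le 0$ follow by the same reasoning applied to the inverse iteration). Applying Proposition~\ref{p:iteration-brake} with $k=2l\in 2\N$ and then with $k=2$, and subtracting the two identities, the terms $i_{L_0}(\gamma)+i_{L_0\times L_1}(\gamma)$ cancel and we are left with
\[i_{L_0}(\gamma^{2l})=i_{L_0}(\gamma^2)+\sum_{j=1}^{l-1}i_{e^{\sqrt{-1}\pi j/l}}(\gamma^2).\]
Thus \eqref{e:iterarion-even} is equivalent to $\sum_{j=1}^{l-1}i_{e^{\sqrt{-1}\pi j/l}}(\gamma^2)\ge (l-1)\bigl(i^{\Lo}_1(\gamma^2)+\nu_1(\gamma^2)-n\bigr)$, i.e.\ a termwise estimate will suffice.

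The heart of the argument is therefore the following estimate: for every $\eta\in\mathcal P_{\tau}(2n)$ and every $\omega\in U(1)\setminus\{1\}$,
\[i_\omega(\eta)\ge i^{\Lo}_1(\eta)+\nu_1(\eta)-n.\]
I would prove this through the H\"ormander index. By Definition~\ref{d:Maslov-type-index}, $i_\omega(\eta)=\Mas\{\Graph(\eta(\cdot)),\Graph(\omega I_{2n})\}$ and $i^{\Lo}_1(\eta)+n=\Mas\{\Graph(\eta(\cdot)),\Graph(I_{2n})\}$, so $i_\omega(\eta)-i^{\Lo}_1(\eta)-n$ is the difference of the Maslov indices of the single path $s\mapsto\Graph(\eta(s))$ relative to the two fixed Lagrangians $\Graph(I_{2n})$ and $\Graph(\omega I_{2n})$. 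Since this path runs from $\Graph(I_{2n})$ to $\Graph(\eta(\tau))$, the path-independence of the H\"ormander index identifies this difference with $s\bigl(\Graph(I_{2n}),\Graph(\eta(\tau));\Graph(I_{2n}),\Graph(\omega I_{2n})\bigr)$. Computing the latter by deforming the second pair of Lagrangians along $\theta\mapsto\Graph(e^{\sqrt{-1}\theta}I_{2n})$ and invoking the explicit H\"ormander-index formula of Appendix~\ref{a:hormander}, one checks that it is bounded below by $\nu_1(\eta(\tau))-2n$; substituting back gives $i_\omega(\eta)\ge i^{\Lo}_1(\eta)+n+\nu_1(\eta)-2n=i^{\Lo}_1(\eta)+\nu_1(\eta)-n$.

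Applying this lemma with $\eta=\gamma^2$ and $\omega=e^{\sqrt{-1}\pi j/l}$ for $j=1,\dots,l-1$ and summing yields exactly the inequality required above; combined with the displayed Bott identity this proves \eqref{e:iterarion-even}. I expect the main obstacle to be the termwise estimate: extracting the sharp constant $-n$ (rather than the naive $-2n$) from the H\"ormander index calls for a careful analysis of the crossing at $\omega=1$ — equivalently, of the splitting numbers of $\gamma(\tau)$ at $1$ — and of how the dimension $\nu_1(\eta(\tau))$, which may be as large as $2n$, distributes among the positive and negative crossing directions; the explicit formula of Appendix~\ref{a:hormander} is precisely what makes this bookkeeping tractable. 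This is essentially the route of \cite[Corollary~4.10]{zhou-zhu-wu}; the inequality itself is originally \cite[Theorem~2.4.1$^{\circ}$]{liu2010}.
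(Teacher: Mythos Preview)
The paper does not give its own proof of this proposition but cites \cite[Corollary~4.10]{zhou-zhu-wu}, whose H\"ormander-index argument is precisely the route you outline; your reduction via Proposition~\ref{p:iteration-brake} to the termwise bound $i_\omega(\gamma^2)\ge i_1^{\Lo}(\gamma^2)+\nu_1(\gamma^2)-n$ and the subsequent H\"ormander-index computation are both correct. Your worry about the ``main obstacle'' is overstated: once you write the H\"ormander index as a difference of triple indices via Proposition~\ref{p:hormander-calculate}, the first term $i(\Graph(I),\Graph(I),\Graph(\omega I))$ vanishes (the form $Q$ is identically zero when its first two arguments coincide, and $\Graph(I)\cap\Graph(\omega I)=0$), while \eqref{e:i-index-calculation-b} bounds the second by $2n-\nu_1(\eta(\tau))$, yielding the lower bound $\nu_1-2n$ directly with no splitting-number bookkeeping needed.
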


Now we introduce the H\"{o}rmander index and the triple index.
The H\"{o}rmander index originates in H\"{o}rmander's pioneering \cite{Hor71}, a main topic of which is the Cauthy problem of hyperbolic differential equations. Then Duistermaat \cite{duistermaat}
used the H\"{o}rmander index in studying the Maslov index and Morse theory.
In this paper we deal with real symplectic matrices,
while in 
\cite{zhou-zhu-wu} we used unitary matrices to define the Maslov index and studied indices in complex symplectic vector space.
Note that  $\R^{2n}\subset \C^{2n}$ and the real symplectic path $\gamma_x(t)$ is also a complex symplectic path.


\begin{definition}[{\cite[Definition 3.9]{zhou-zhu-wu}}]\label{d:hormander}
	Let $\lambda_1,\lambda_2,\mu_1,\mu_2$ be four Lagrangian subspaces of a fixed symplectic linear space $(V,\omega)$.
	Let $\lambda\colon [0,1]\to \Lag(V)$ be a path with $\lambda(0)=\lambda_1$ and $\lambda(1)=\lambda_1$.
Then the H\"{o}rmander index is defined by  
	\[s(\lambda_1,\lambda_2;\mu_1,\mu_2)=\Mas\{\lambda,\mu_2\}-\Mas\{\lambda,\mu_1\}.\]
\end{definition}
\begin{remark}
	Since $\Lag(V)$ is path-connected and the Maslov-type index is homotopy-invariant (cf. \cite[Page 182]{duistermaat} and \cite[Section 3.4]{BoZh14}),
	the H\"{o}rmander index is well-defined.
\end{remark}
To calculate the  H\"{o}rmander index we defined the triple index $i(\alpha,\beta,\delta)$ in \cite[Corollary 3.12]{zhou-zhu-wu} for any three Lagrangian subspaces $\alpha,\beta,\delta$ of $(V,\omega)$.
For an Hermitian form $Q$ (or a symmetric matrix $D$), we denote by 
$m^*(Q)$ (or  $m^*(D)$), $*=+,0,-$, the Morse positive index, the nullity and the Morse negative index of $Q$ (or $D$) respectively.
Here we just recall the calculation formula:
\begin{proposition}[cf. {\cite[Lemma 3.13]{zhou-zhu-wu}}]\label{p:i-index-calculation}
Let $(V,\omega)$ be a complex symplectic linear space of dimension $2m$,
 and $\alpha,\beta,\delta \in\Lag(V)$. Then we have
\begin{align}
 		i(\alpha,\beta,\delta)&=m^+(Q(\alpha,\beta,\delta))+\dim_{\C} (\alpha\cap \delta) -\dim_{\C} (\alpha\cap\beta\cap\delta) \label{e:i-index-calculation-a}\\
 		&\le m-\dim_{\C} (\alpha\cap \beta)-\dim_{\C}(\beta\cap \delta)+\dim_{\C}(\alpha\cap\beta\cap \delta),
 		\label{e:i-index-calculation-b}
 	\end{align}
where $Q(\alpha,\beta;\delta)$ is a Hermitian  form defined on 
$\alpha\cap(\beta+\delta)$ (cf. Definition \ref{d:hermitian-form}).
\end{proposition}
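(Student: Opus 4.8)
The plan is to establish the equality \eqref{e:i-index-calculation-a} first, by reducing it to a finite normal-form computation, and then to deduce the bound \eqref{e:i-index-calculation-b} from \eqref{e:i-index-calculation-a} by elementary dimension counting. First I would assemble the ingredients: the triple index $i(\alpha,\beta,\delta)$ of \cite[Corollary 3.12]{zhou-zhu-wu} is defined so that the H\"{o}rmander index of Definition \ref{d:hormander} decomposes as a difference of two triple indices, and the Hermitian form $Q(\alpha,\beta;\delta)$ of Definition \ref{d:hermitian-form} is the form on $\alpha\cap(\beta+\delta)$ given by $Q(v,v)=\omega(v_\beta,v_\delta)$ for any decomposition $v=v_\beta+v_\delta$ with $v_\beta\in\beta$, $v_\delta\in\delta$ (well defined modulo its radical). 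Both $i(\cdot,\cdot,\cdot)$ and $Q(\cdot,\cdot;\cdot)$ are invariant under the action of $\Sp(V,\omega)$ and are additive with respect to $\omega$-orthogonal direct sums of symplectic spaces and of Lagrangian triples; the intersection dimensions $\dim_{\C}(\alpha\cap\delta)$, $\dim_{\C}(\alpha\cap\beta)$, $\dim_{\C}(\alpha\cap\beta\cap\delta)$ and $\dim_{\C}(\beta\cap\delta)$ are obviously additive as well.

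Next I would invoke the classification of triples of Lagrangian subspaces up to the action of $\Sp(V,\omega)$: every triple $(\alpha,\beta,\delta)$ is $\omega$-orthogonally isomorphic to a direct sum of standard triples of complex dimension $1$ (in which at least two of the three Lagrangians coincide) and of complex dimension $2$ (the ``generic'' transverse triple, parametrized by a sign). By additivity it then suffices to verify \eqref{e:i-index-calculation-a} on these finitely many building blocks, where both sides are computed by hand: $m^{+}(Q)$ is read off from the explicit $1\times1$ or $2\times2$ Gram matrix of $Q$, the intersection dimensions are immediate, and $i(\alpha,\beta,\delta)$ for the block follows directly from its definition (equivalently from a crossing-form computation along an explicit short path of Lagrangians, using the homotopy invariance of $\Mas\{\cdot,\cdot\}$). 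The main obstacle is exactly this step: one must pin down the sign conventions in the definition of the triple index and of $Q$ so that the $2$-dimensional transverse case contributes correctly; once the normal forms and signs are fixed, everything else is bookkeeping.

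Finally, I would derive \eqref{e:i-index-calculation-b} from \eqref{e:i-index-calculation-a}. Three observations suffice. \emph{(i)} The radical of $Q(\alpha,\beta;\delta)$ is $(\alpha\cap\beta)+(\alpha\cap\delta)$, so $m^{0}(Q)=\dim_{\C}(\alpha\cap\beta)+\dim_{\C}(\alpha\cap\delta)-\dim_{\C}(\alpha\cap\beta\cap\delta)$. \emph{(ii)} Since $\alpha$ is Lagrangian, the image of $\alpha\cap(\beta+\delta)$ under the projection onto the symplectic quotient $(\beta+\delta)/(\beta\cap\delta)$ (of complex dimension $2m-2\dim_{\C}(\beta\cap\delta)$) is isotropic, with kernel $\alpha\cap\beta\cap\delta$, so $\dim_{\C}(\alpha\cap(\beta+\delta))\le m-\dim_{\C}(\beta\cap\delta)+\dim_{\C}(\alpha\cap\beta\cap\delta)$. \emph{(iii)} Interchanging $\beta$ and $\delta$ in \eqref{e:i-index-calculation-a} and using $Q(\alpha,\delta;\beta)=-Q(\alpha,\beta;\delta)$ together with \emph{(i)} gives $i(\alpha,\beta,\delta)+i(\alpha,\delta,\beta)=\dim_{\C}(\alpha\cap(\beta+\delta))-\dim_{\C}(\alpha\cap\beta\cap\delta)$; moreover $i(\alpha,\delta,\beta)\ge\dim_{\C}(\alpha\cap\beta)-\dim_{\C}(\alpha\cap\beta\cap\delta)$ because $m^{+}\ge0$. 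Subtracting the last inequality from the preceding identity and then inserting the bound of \emph{(ii)} yields $i(\alpha,\beta,\delta)\le m-\dim_{\C}(\alpha\cap\beta)-\dim_{\C}(\beta\cap\delta)+\dim_{\C}(\alpha\cap\beta\cap\delta)$, which is \eqref{e:i-index-calculation-b}.
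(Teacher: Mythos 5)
The paper does not prove this proposition: it is imported verbatim from \cite[Lemma 3.13]{zhou-zhu-wu}, and the present paper only \emph{uses} it (e.g., in Lemma~\ref{l:i-nu-L0-L1}, Lemma~\ref{l:i=0}, Lemma~\ref{l:shift-index} and Appendix~\ref{a:hormander}). So there is no in-paper proof to compare against; what follows evaluates your argument on its own terms.

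Your derivation of \eqref{e:i-index-calculation-b} from \eqref{e:i-index-calculation-a} is correct and pleasantly self-contained. The three ingredients check out: with Definition~\ref{d:hermitian-form} one verifies $Q(\alpha,\delta;\beta)=-Q(\alpha,\beta;\delta)$ directly (swap $y$ and $z$ with sign), hence $m^+(Q(\alpha,\delta;\beta))=m^-(Q(\alpha,\beta;\delta))$; the radical of $Q(\alpha,\beta;\delta)$ is indeed $(\alpha\cap\beta)+(\alpha\cap\delta)$ (the containment $\supseteq$ is immediate from $\omega(z_1,y_2)=\omega(x_1,z_2)$ and Lagrangian-ness of $\beta,\delta$; the reverse is the usual nondegeneracy argument on the quotient), giving your formula for $m^0(Q)$; and since $(\beta+\delta)^\omega=\beta\cap\delta$ the symplectic reduction $(\beta+\delta)/(\beta\cap\delta)$ has complex dimension $2m-2\dim_{\C}(\beta\cap\delta)$, the image of $\alpha\cap(\beta+\delta)$ there is isotropic with kernel $\alpha\cap\beta\cap\delta$, which gives the bound in \emph{(ii)}. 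Summing the two instances of \eqref{e:i-index-calculation-a} (with $\beta\leftrightarrow\delta$), substituting $m^++m^-=\dim_{\C}(\alpha\cap(\beta+\delta))-m^0(Q)$, and then subtracting the trivial lower bound $i(\alpha,\delta,\beta)\ge \dim_{\C}(\alpha\cap\beta)-\dim_{\C}(\alpha\cap\beta\cap\delta)$ yields exactly \eqref{e:i-index-calculation-b}. The one place where your proposal is genuinely incomplete is the equality \eqref{e:i-index-calculation-a} itself: you reduce it to a normal-form classification of Lagrangian triples and finitely many model computations, but you never exhibit the models or pin down the sign conventions, and you yourself flag this as the main obstacle. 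In fact your description of the building blocks (``dimension $1$ where two Lagrangians coincide, dimension $2$ generic'') is not the usual statement of the classification in the Hermitian-symplectic setting; the standard decomposition already uses complex one-dimensional blocks where all three Lagrangians may be distinct, distinguished by the sign of the Kashiwara form. So \eqref{e:i-index-calculation-a} is asserted rather than proved, and a reader would still have to go back to \cite[Lemma 3.13]{zhou-zhu-wu} (or to the Kashiwara--Lion--Vergne-type classification) for that step, whereas your passage from \eqref{e:i-index-calculation-a} to \eqref{e:i-index-calculation-b} is complete and could be spliced in as-is.
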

Now we recall 
\begin{proposition}[cf. {\cite[Theorem 1.1]{zhou-zhu-wu}}]
\label{p:hormander-calculate}
Let $\lambda_1,\lambda_2,\mu_1,\mu_2$ be four Lagrangian subspaces of $(V,\omega)$. Then we have
\begin{equation*}
	\begin{split}
		s(\lambda_1,\lambda_2;\mu_1,\mu_2)=i(\lambda_1,\mu_1,\mu_2)-i(\lambda_2,\mu_1,\mu_2).
	\end{split}
\end{equation*}
\end{proposition}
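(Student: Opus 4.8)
The plan is to exploit the path-independence of the H\"{o}rmander index noted in the Remark after Definition \ref{d:hormander}. Since $s(\lambda_1,\lambda_2;\mu_1,\mu_2)=\Mas\{\lambda,\mu_2\}-\Mas\{\lambda,\mu_1\}$ is the same for every path $\lambda\colon[0,1]\to\Lag(V)$ joining $\lambda_1$ to $\lambda_2$, while the right-hand side $i(\lambda_1,\mu_1,\mu_2)-i(\lambda_2,\mu_1,\mu_2)$ depends only on the four Lagrangians, it suffices to pick one convenient path and match the two integers. For each fixed Lagrangian the set of non-transverse Lagrangians (its Maslov cycle) is a proper, hence nowhere dense, subvariety of $\Lag(V)$, so one may choose $W\in\Lag(V)$ with $W\cap\lambda_1=W\cap\lambda_2=W\cap\mu_1=W\cap\mu_2=0$. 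Fixing also a Lagrangian complement $Z$ of $W$ in $V$, every $L$ in the open cell $\Lag^{0}(W)=\{L\in\Lag(V);\,L\cap W=0\}$ is the graph of a linear map $Z\to W$, which via the nondegenerate pairing $\omega|_{Z\times W}$ corresponds to a Hermitian form $A_L$ on $Z$; moreover $\dim_{\C}(L\cap L')=\dim_{\C}\ker(A_L-A_{L'})$. I would then take $\lambda$ to be the affine segment $A_{\lambda(t)}=(1-t)A_{\lambda_1}+tA_{\lambda_2}$, which remains inside $\Lag^{0}(W)$ because $W$ is transverse to $\lambda_1$ and $\lambda_2$.

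Next I would compute $\Mas\{\lambda,\mu_1\}$ and $\Mas\{\lambda,\mu_2\}$ by the crossing-form recipe. Because the path $\lambda$ lies entirely in the single chart $\Lag^{0}(W)$, every object entering the computation lives there: relative to $\mu_i$ (represented by the fixed Hermitian form $A_{\mu_i}$), a crossing of $\lambda$ occurs where the affine pencil $t\mapsto A_{\lambda(t)}-A_{\mu_i}=(A_{\lambda_1}-A_{\mu_i})+t(A_{\lambda_2}-A_{\lambda_1})$ degenerates, and the crossing form is the restriction of the constant velocity $A_{\lambda_2}-A_{\lambda_1}$ to that kernel. Summing the signed crossings together with the endpoint weights of the Cappell--Lee--Miller normalization of $\Mas$ expresses $\Mas\{\lambda,\mu_i\}$ through the indices $m^{+}$, $m^{0}$, $m^{-}$ of the two boundary forms $A_{\lambda_1}-A_{\mu_i}$ and $A_{\lambda_2}-A_{\mu_i}$ (the familiar ``difference of signatures'' formula for the Maslov index of a pencil). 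Subtracting the two, $s(\lambda_1,\lambda_2;\mu_1,\mu_2)$ becomes an explicit integer built from $m^{+},m^{0},m^{-}$ of the four forms $A_{\lambda_1}-A_{\mu_1}$, $A_{\lambda_2}-A_{\mu_1}$, $A_{\lambda_1}-A_{\mu_2}$, $A_{\lambda_2}-A_{\mu_2}$.

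On the other side I would evaluate the two triple indices by Proposition \ref{p:i-index-calculation}, using $i(\alpha,\beta,\delta)=m^{+}(Q(\alpha,\beta,\delta))+\dim_{\C}(\alpha\cap\delta)-\dim_{\C}(\alpha\cap\beta\cap\delta)$ for $(\alpha,\beta,\delta)=(\lambda_1,\mu_1,\mu_2)$ and $(\lambda_2,\mu_1,\mu_2)$. Since $W$ is transverse to all four subspaces, the Hermitian form $Q(\lambda_j,\mu_1,\mu_2)$ on $\lambda_j\cap(\mu_1+\mu_2)$ can be rewritten in the chart $\Lag^{0}(W)$ and identified, up to a sign, with the form that controls the $\mu_1$- and $\mu_2$-crossings of the pencil through $\lambda_j$, while the correction terms $\dim_{\C}(\lambda_j\cap\mu_2)$ and $\dim_{\C}(\lambda_j\cap\mu_1\cap\mu_2)$ reproduce exactly the endpoint corrections already present in the Cappell--Lee--Miller count of $\Mas\{\lambda,\mu_i\}$. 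Carrying this algebra through yields $i(\lambda_1,\mu_1,\mu_2)-i(\lambda_2,\mu_1,\mu_2)=\Mas\{\lambda,\mu_2\}-\Mas\{\lambda,\mu_1\}=s(\lambda_1,\lambda_2;\mu_1,\mu_2)$.

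The main obstacle is this final reconciliation of conventions: the Cappell--Lee--Miller Maslov index weighs ``arrival at'' and ``departure from'' the Maslov cycle asymmetrically, whereas the triple index carries its own boundary corrections $\dim_{\C}(\alpha\cap\delta)$ and $\dim_{\C}(\alpha\cap\beta\cap\delta)$; making all of these cancel against the $\dim$-terms inside $\Mas\{\lambda,\mu_i\}$ demands careful bookkeeping rather than a new idea. A secondary technical point is the treatment of degenerate configurations, where some $\lambda_j\cap\mu_i$ or a triple intersection is nonzero: I would handle this by a density argument --- perturb $\mu_1$ and $\mu_2$ into transverse position, note that both sides of the identity are continuous there and that all the $\dim$-terms are upper semicontinuous, and pass to the limit --- or else choose crossing data adapted to these intersections from the outset, as is standard in the crossing-form treatment of the Maslov index.
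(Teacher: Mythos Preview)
The paper does not prove this proposition; it is recalled from \cite{zhou-zhu-wu} with the tag ``cf.\ {\cite[Theorem 1.1]{zhou-zhu-wu}}'' and no argument is given here. So there is no paper proof to compare against, and your outline must be judged on its own.

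Your strategy---pass to a transverse chart $\Lag^0(W)$, represent Lagrangians by Hermitian forms, run the affine pencil, and compare the crossing-form count with the formula of Proposition~\ref{p:i-index-calculation}---is the standard route and is in principle sound. But two points keep it from being a proof. First, the sentence ``Carrying this algebra through yields\ldots'' is precisely where the content lies: matching the Cappell--Lee--Miller endpoint convention against the correction terms $\dim_{\C}(\alpha\cap\delta)-\dim_{\C}(\alpha\cap\beta\cap\delta)$ in $i(\alpha,\beta,\delta)$ is the whole proof, and you have not written it. You yourself flag this as ``the main obstacle,'' so the proposal is an outline, not an argument.

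Second, your fallback density argument for degenerate configurations does not work as stated. Neither side of the identity is continuous in $(\lambda_1,\lambda_2,\mu_1,\mu_2)$: the H\"ormander index is an integer that jumps along the Maslov cycle, and the triple index contains the dimension terms you yourself call only upper semicontinuous. You cannot perturb to generic position, verify equality there, and then ``pass to the limit''; equality of two integer-valued, discontinuous functions on a dense set does not propagate. The correct treatment of the degenerate cases must come from the exact bookkeeping in the first step (the formula \eqref{e:i-index-calculation-a} already encodes the degenerate corrections), not from a limiting procedure.
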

According to \cite[Proposition 4.7]{zhou-zhu-wu}, we have
\begin{lemma}\label{l:hormander-index-ge-nulity}
	Let $\beta$
	be a Lagrangian subspace of $(\C^{2n},\omega_0)$, and let $\lambda$ be a Lagrangian subspace of $(V,\omega)=(\C^{2n}\oplus \C^{2n},(-\omega_0)\oplus\omega_0)$. Then we have
	\begin{align}
		s(\Graph(I_{2n}),\lambda;\widetilde\alpha_0,\beta\times \alpha_0)
		&\ge \dim_{\C} (\beta \cap \alpha_0)-n, \label{e:Hormander-index-beta-L0}\\
		s(\Graph(I_{2n}),\lambda;\widetilde\alpha_0, \alpha_0\times \beta)
		&\ge \dim_{\C} (\beta \cap \alpha_0)-n. \label{e:Hormander-index-L0-beta}
	\end{align}
\end{lemma}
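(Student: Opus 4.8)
The plan is to reduce the statement to Propositions \ref{p:hormander-calculate} and \ref{p:i-index-calculation}. By Proposition \ref{p:hormander-calculate}, for \eqref{e:Hormander-index-beta-L0},
\[ s(\Graph(I_{2n}),\lambda;\widetilde\alpha_0,\beta\times\alpha_0)=i(\Graph(I_{2n}),\widetilde\alpha_0,\beta\times\alpha_0)-i(\lambda,\widetilde\alpha_0,\beta\times\alpha_0), \]
so it suffices to bound the first triple index from below and the second from above. First I would compute the intersection dimensions that appear: $\Graph(I_{2n})\cap(\beta\times\alpha_0)$ and $\Graph(I_{2n})\cap\widetilde\alpha_0\cap(\beta\times\alpha_0)$ both consist of the pairs $(x,x)$ with $x\in\beta\cap\alpha_0$, hence both have complex dimension $\dim_{\C}(\beta\cap\alpha_0)$; and $\widetilde\alpha_0\cap(\beta\times\alpha_0)=(\beta\cap\alpha_0)\times\alpha_0$ has dimension $\dim_{\C}(\beta\cap\alpha_0)+n$.

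Next I would feed these into the two formulae of Proposition \ref{p:i-index-calculation}, using that $V=\C^{2n}\oplus\C^{2n}$ has complex dimension $4n$, i.e. $m=2n$. Applying the equality \eqref{e:i-index-calculation-a} to $i(\Graph(I_{2n}),\widetilde\alpha_0,\beta\times\alpha_0)$, the two intersection terms cancel and one gets $i(\Graph(I_{2n}),\widetilde\alpha_0,\beta\times\alpha_0)=m^+(Q(\cdot))\ge 0$. Applying the inequality \eqref{e:i-index-calculation-b} to $i(\lambda,\widetilde\alpha_0,\beta\times\alpha_0)$ gives
\[ i(\lambda,\widetilde\alpha_0,\beta\times\alpha_0)\le 2n-\dim_{\C}(\lambda\cap\widetilde\alpha_0)-\bigl(\dim_{\C}(\beta\cap\alpha_0)+n\bigr)+\dim_{\C}(\lambda\cap\widetilde\alpha_0\cap(\beta\times\alpha_0)), \]
and since $\lambda\cap\widetilde\alpha_0\cap(\beta\times\alpha_0)\subseteq\lambda\cap\widetilde\alpha_0$, the two $\lambda\cap\widetilde\alpha_0$-terms contribute a nonpositive quantity, leaving $i(\lambda,\widetilde\alpha_0,\beta\times\alpha_0)\le n-\dim_{\C}(\beta\cap\alpha_0)$. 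Subtracting, $s(\Graph(I_{2n}),\lambda;\widetilde\alpha_0,\beta\times\alpha_0)\ge 0-(n-\dim_{\C}(\beta\cap\alpha_0))=\dim_{\C}(\beta\cap\alpha_0)-n$, which is \eqref{e:Hormander-index-beta-L0}. For \eqref{e:Hormander-index-L0-beta} I would repeat this verbatim with $\alpha_0\times\beta$ in place of $\beta\times\alpha_0$; the only change is that $\Graph(I_{2n})\cap(\alpha_0\times\beta)$ and $\Graph(I_{2n})\cap\widetilde\alpha_0\cap(\alpha_0\times\beta)$ both have dimension $\dim_{\C}(\alpha_0\cap\beta)$, while $\widetilde\alpha_0\cap(\alpha_0\times\beta)=\alpha_0\times(\alpha_0\cap\beta)$ has dimension $n+\dim_{\C}(\alpha_0\cap\beta)$, and the same two estimates then yield the claim.

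The argument is essentially mechanical once Propositions \ref{p:hormander-calculate} and \ref{p:i-index-calculation} are in hand; the only places demanding care are (i) keeping the constant $m=2n$ correct in \eqref{e:i-index-calculation-b}, since a slip there propagates into the final bound, and (ii) verifying that the two intersection terms in \eqref{e:i-index-calculation-a} really do cancel for the triple containing $\Graph(I_{2n})$ — this exact cancellation is what makes the crude bound $m^+(Q)\ge 0$ strong enough to produce the sharp constant $-n$ on the right-hand side.
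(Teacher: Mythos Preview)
Your proof is correct and follows exactly the approach implicit in the paper's framework: the paper itself does not give a proof but merely cites \cite[Proposition 4.7]{zhou-zhu-wu}, and your argument reconstructs that proof using precisely the two ingredients the paper quotes from that reference, namely Propositions~\ref{p:hormander-calculate} and~\ref{p:i-index-calculation}. The intersection computations and the cancellation in \eqref{e:i-index-calculation-a} are all correct, as is the use of $m=2n$ in \eqref{e:i-index-calculation-b}.
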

Note that the proof of \eqref{e:Hormander-index-L0-beta} is almost the same as \eqref{e:Hormander-index-beta-L0}.
We have the following iteration inequalities which will be used later.
First we state some properties of the Maslov-type indices.
\begin{lemma}\label{l:mas-index-pro}
	Let $\gamma\in\mathcal{P}_{\tau}(2n)$. Let $\beta_1,\beta_2$ be two Lagrangian subspaces of $(\C^{2n},\omega_0)$.
	{\rm (i)} For $P\in\Sp(2n)$, we have
	\begin{equation}\label{e:mas-index-multi}
		i_{\beta_1\times \beta_2}(\gamma P)
	=i_{P\beta_1\times \beta_2}(\gamma).
	\end{equation}
	{\rm(ii)} Set $S=\tau/2$. We define $R(\gamma)\in C([0,S],\Sp(2n))$  by 
	\begin{equation}\label{e:define-R-gamma}
	R(\gamma)(t)=\gamma(S-t) \text{ for } t\in[0,S].
	\end{equation} Then we have
	\begin{equation}\label{e:mas-index-define-R-gamma}
		i_{\beta_1\times \beta_2}(NR(\gamma)\gamma(S)^{-1}N)=
		i_{N\beta_2\times N\beta_1}(\gamma).
	\end{equation}
\end{lemma}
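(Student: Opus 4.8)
The plan is to reduce both statements to the single fact that the Maslov index $\Mas\{\cdot,\cdot\}$ is invariant when a symplectomorphism of $(V,\omega)$ is applied simultaneously to both of its arguments (cf.\ \cite{C-Lee-M,zhou-zhu-wu}), combined with the graph description $i_W(\gamma)=\Mas\{\Graph(\gamma),W\}$ and, for (ii), the homotopy invariance of $\Mas$ with a fixed Lagrangian argument.

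For (i), the point is that a constant \emph{right} factor can be absorbed into the ``source'' copy of $\C^{2n}$ inside $V=\C^{2n}\oplus\C^{2n}$. I would introduce $\Psi\colon V\to V$, $\Psi(a,b)=(Pa,b)$. Since $P\in\Sp(2n)$ one has $\Psi^{*}\omega=\omega$, and for every $t$ it follows that $\Psi(\Graph(\gamma(t)P))=\{(Px,\gamma(t)Px):x\in\C^{2n}\}=\Graph(\gamma(t))$, while $\Psi(\beta_1\times\beta_2)=P\beta_1\times\beta_2$. Naturality of $\Mas$ then yields $i_{\beta_1\times\beta_2}(\gamma P)=\Mas\{\Graph(\gamma),P\beta_1\times\beta_2\}=i_{P\beta_1\times\beta_2}(\gamma)$, which is \eqref{e:mas-index-multi}.

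For (ii), set $\eta(t)=NR(\gamma)(t)\gamma(S)^{-1}N=N\gamma(S-t)\gamma(S)^{-1}N$ for $t\in[0,S]$, so that $\eta(0)=I_{2n}$. The delicate point is that $N$ is itself \emph{anti}-symplectic ($N^{T}JN=-J$, equivalently $\omega_0(Nu,Nv)=-\omega_0(u,v)$), so a naive conjugation by $N$ inside $\Mas$ is not available (it would flip the sign of the index). The remedy is to use $\Phi\colon V\to V$, $\Phi(a,b)=(Nb,Na)$: as the composition of the two anti-symplectic involutions $(a,b)\mapsto(Na,Nb)$ and the graph-flip $(a,b)\mapsto(b,a)$, it is a genuine symplectomorphism of $(V,\omega)$. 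A direct substitution (using $N^{-1}=N$ and $\eta(t)^{-1}=N\gamma(S)\gamma(S-t)^{-1}N$) gives $\Phi(\Graph(\eta(t)))=\Graph(N\eta(t)^{-1}N)=\Graph(\gamma(S)\gamma(S-t)^{-1})$ and $\Phi(\beta_1\times\beta_2)=N\beta_2\times N\beta_1$. Writing $\rho(t)=\gamma(S)\gamma(S-t)^{-1}$, naturality yields $i_{\beta_1\times\beta_2}(\eta)=i_{N\beta_2\times N\beta_1}(\rho)$. It then remains to replace $\rho$ by $\gamma|_{[0,S]}$ in the last index: both are paths in $\Sp(2n)$ from $I_{2n}$ to $\gamma(S)$, and they are homotopic with fixed endpoints, because the loop formed by $\gamma|_{[0,S]}$ followed by the reverse of $\rho$, namely $t\mapsto\gamma(S)\gamma(t)^{-1}$, lifts in the universal covering group $\widetilde{\Sp(2n)}$ to the path $\widetilde\gamma$ followed by $t\mapsto\widetilde\gamma(S)\widetilde\gamma(t)^{-1}$, which is closed. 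Since $M\mapsto\Graph(M)$ is continuous, $\Graph(\rho)$ and $\Graph(\gamma|_{[0,S]})$ are homotopic rel endpoints in $\Lag(V)$, so homotopy invariance of $\Mas$ gives $i_{N\beta_2\times N\beta_1}(\rho)=i_{N\beta_2\times N\beta_1}(\gamma)$, which is \eqref{e:mas-index-define-R-gamma}.

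I expect the main obstacle to be the symplectic/anti-symplectic bookkeeping around $N$: choosing the correct symplectomorphism $\Phi(a,b)=(Nb,Na)$ and tracking the orientation of the time-reversed path $\gamma(S-t)$ so that no spurious sign appears. Once that is in place, the homotopy reduction of $\rho$ to $\gamma$ is a routine $\pi_1$-computation, and one should only be careful that in (ii) the right-hand side $i_{N\beta_2\times N\beta_1}(\gamma)$ refers to the Maslov index of the restriction $\gamma|_{[0,S]}$, consistently with Definition~\ref{d:iteration-brake}.
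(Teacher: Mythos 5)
Your proposal is correct and is, in substance, a reconstruction of the content of the two results the paper cites. The paper's proof is literally two citations: part (i) is attributed to \cite[Lemma 2.9]{zhou-zhu-wu} and part (ii) to \cite[Corollary 4.5]{zhou-zhu-wu}. Those references presumably rest on exactly the two ingredients you isolate — invariance of $\Mas$ under a symplectomorphism of $(V,\omega)$ applied simultaneously to both arguments, and homotopy invariance of $\Mas$ — so you are not taking a genuinely different route; you are supplying the direct proof that the citation elides. What your write-up buys is self-containedness and, more importantly, it makes visible the one non-trivial point: $N$ is anti-symplectic on $(\C^{2n},\omega_0)$, so neither the map $(a,b)\mapsto(Na,Nb)$ nor the graph-flip $(a,b)\mapsto(b,a)$ alone preserves $\omega=(-\omega_0)\oplus\omega_0$, but their composite $\Phi(a,b)=(Nb,Na)$ does, and a direct computation then gives $\Phi(\Graph(\eta(t)))=\Graph(\gamma(S)\gamma(S-t)^{-1})$ and $\Phi(\beta_1\times\beta_2)=N\beta_2\times N\beta_1$. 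Your final homotopy step is also sound; for a cleaner finish you could replace the $\pi_1$/universal-cover argument with the explicit fixed-endpoint homotopy
\begin{equation*}
H(s,t)=\gamma\bigl((1-s)t+sS\bigr)\gamma\bigl(s(S-t)\bigr)^{-1},\qquad s\in[0,1],\ t\in[0,S],
\end{equation*}
which satisfies $H(0,\cdot)=\gamma|_{[0,S]}$, $H(1,\cdot)=\rho$, $H(s,0)=I_{2n}$, $H(s,S)=\gamma(S)$, so that homotopy invariance of the Maslov index applies immediately. One small caveat worth being explicit about: $\gamma P$ does not start at $I_{2n}$, so $i_{W}(\gamma P)$ is to be read as $\Mas\{\Graph(\gamma P),W\}$ for the path $t\mapsto\Graph(\gamma(t)P)$, consistently with Definition~\ref{d:Maslov-type-index}; your $\Psi$-argument already handles this correctly, but it is good practice to flag that the definition extends verbatim to paths with arbitrary initial point.
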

\begin{proof}
{\rm(i)} follows from \cite[Lemma 2.9]{zhou-zhu-wu}.
{\rm(ii)} is a special case of  \cite[Corollary 4.5]{zhou-zhu-wu}.
\end{proof}
\begin{proposition}\label{p:iteration-brake-inequality}
	For positive integer $k$ and $a\in\{0,1\}$, we have
	\begin{equation}\label{e:iteration-ineq}
		i_{L_a}(\gamma^k)\ge ki_{L_a}(\gamma)+\sum_{j=1}^{k-1}\nu_{L_a}(\gamma^j).
	\end{equation}
\end{proposition}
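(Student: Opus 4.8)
The plan is to prove the one-step estimate
\[
 i_{L_a}(\gamma^k)\ge i_{L_a}(\gamma^{k-1})+i_{L_a}(\gamma)+\nu_{L_a}(\gamma^{k-1})\qquad(k\ge 2,\ a\in\{0,1\}),
\]
and then to obtain \eqref{e:iteration-ineq} by induction on $k$, the base case $k=1$ being the trivial identity $i_{L_a}(\gamma^1)=i_{L_a}(\gamma)$ with empty sum. First I would note that the path $\tilde\gamma$ of Definition \ref{d:iteration-brake} does not depend on $k$, so that $\gamma^k$ restricted to $[0,(k-1)S]$ is exactly $\gamma^{k-1}$ and $\gamma^k$ is the concatenation $\gamma^{k-1}*\eta_k$, where $\eta_k\colon[0,S]\to\Sp(2n)$ is the restriction of $\tilde\gamma$ to $[(k-1)S,kS]$, reparametrized on $[0,S]$. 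Applying the additivity of the Maslov index under concatenation (a standard property, cf. \cite{zhou-zhu-wu}) to $\Graph(\gamma^k)=\Graph(\gamma^{k-1})*\Graph(\eta_k)$ against $\widetilde\alpha_a=\alpha_a\times\alpha_a$, and using $i_{L_a}(\gamma^j)=\Mas\{\Graph(\gamma^j),\widetilde\alpha_a\}-n$, gives
\[
 i_{L_a}(\gamma^k)=i_{L_a}(\gamma^{k-1})+\Mas\{\Graph(\eta_k),\widetilde\alpha_a\},
\]
so it remains to show $\Mas\{\Graph(\eta_k),\widetilde\alpha_a\}\ge i_{L_a}(\gamma)+\nu_{L_a}(\gamma^{k-1})$.

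Next I would make $\eta_k$ explicit, distinguishing the parity of $k$. Put $P:=\gamma^{k-1}((k-1)S)$, so that $\dim(PL_a\cap L_a)=\nu_{L_a}(\gamma^{k-1})$ by \eqref{e:Maslov-index-L_0}. A short computation from Definition \ref{d:iteration-brake}, using $\gamma(2S)=N\gamma(S)^{-1}N\gamma(S)$, shows that for $k$ odd one has $P=(\gamma(2S))^{(k-1)/2}$ and $\eta_k(\tau)=\gamma(\tau)P$, while for $k$ even one has $P=\gamma(S)(\gamma(2S))^{k/2-1}$ and $\eta_k(\tau)=\zeta(\tau)P$ with $\zeta(\tau):=NR(\gamma)(\tau)\gamma(S)^{-1}N$, $R(\gamma)$ as in \eqref{e:define-R-gamma}; note $\zeta(0)=N\gamma(S)\gamma(S)^{-1}N=I_{2n}$, so $\zeta\in\mathcal P_S(2n)$. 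Thus in both cases $\eta_k=\delta\cdot P$ for a path $\delta\in\mathcal P_S(2n)$ ($\delta=\gamma$, resp. $\delta=\zeta$), and $\Graph(\delta(\tau)P)=(P^{-1}\times I_{2n})\Graph(\delta(\tau))$. Since $P^{-1}\times I_{2n}$ is a symplectic automorphism of $(V,\omega)$, invariance of $\Mas$ and the definition of the H\"{o}rmander index along the path $\Graph(\delta)$ give
\[
 \Mas\{\Graph(\eta_k),\widetilde\alpha_a\}=\Mas\{\Graph(\delta),\widetilde\alpha_a\}+s\big(\Graph(I_{2n}),\Graph(\delta(S));\widetilde\alpha_a,\ P\alpha_a\times\alpha_a\big).
\]

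It then remains to estimate the two terms. The first equals $i_{L_a}(\gamma)+n$: when $\delta=\gamma$ this is Definition \ref{d:Maslov-type-index}, and when $\delta=\zeta$ it follows from Lemma \ref{l:mas-index-pro}(ii) together with $N\alpha_a=\alpha_a$, since $\Mas\{\Graph(\zeta),\widetilde\alpha_a\}=i_{\alpha_a\times\alpha_a}(\zeta)=i_{N\alpha_a\times N\alpha_a}(\gamma)=i_{\alpha_a\times\alpha_a}(\gamma)=i_{L_a}(\gamma)+n$. For the H\"{o}rmander term, Lemma \ref{l:hormander-index-ge-nulity} with $\lambda=\Graph(\delta(S))$ and $\beta=P\alpha_0$ gives, for $a=0$, the bound $\ge\dim_\C(P\alpha_0\cap\alpha_0)-n=\nu_{L_0}(\gamma^{k-1})-n$; and for $a=1$ the same bound holds because the $\alpha_1$-analogue of \eqref{e:Hormander-index-beta-L0} follows from \eqref{e:Hormander-index-beta-L0} by conjugating with the symplectic automorphism $J\times J$ of $(V,\omega)$, which fixes $\Graph(I_{2n})$, carries $\widetilde\alpha_1$ to $\widetilde\alpha_0$ and $\beta\times\alpha_1$ to $J\beta\times\alpha_0$, while $\dim_\C(\beta\cap\alpha_1)=\dim_\C(J\beta\cap\alpha_0)$. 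Adding the two estimates yields $\Mas\{\Graph(\eta_k),\widetilde\alpha_a\}\ge i_{L_a}(\gamma)+\nu_{L_a}(\gamma^{k-1})$, which is the required one-step inequality, and the induction finishes the proof. I expect the main obstacle to be the second step: pinning down $\eta_k$ for each parity and, in the even case, recognizing it as a right translate $\zeta\cdot P$ of a genuine symplectic path $\zeta\in\mathcal P_S(2n)$ whose Maslov index against $\widetilde\alpha_a$ is still $i_{L_a}(\gamma)+n$ — this is precisely where the reversibility relation $\gamma(2S)=N\gamma(S)^{-1}N\gamma(S)$ and Lemma \ref{l:mas-index-pro}(ii) enter. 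Once $\eta_k=\delta\cdot P$ is in place, the remainder is a routine application of H\"{o}rmander-index additivity and Lemma \ref{l:hormander-index-ge-nulity}.
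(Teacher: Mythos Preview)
Your proposal is correct and follows essentially the same approach as the paper: both reduce to the one-step inequality $i_{L_a}(\gamma^k)\ge i_{L_a}(\gamma^{k-1})+i_{L_a}(\gamma)+\nu_{L_a}(\gamma^{k-1})$ and sum, and both prove it by path-additivity of the Maslov index, the identity $i_{\beta_1\times\beta_2}(\delta P)=i_{P\beta_1\times\beta_2}(\delta)$, and the H\"ormander-index lower bound of Lemma~\ref{l:hormander-index-ge-nulity}. The only organizational difference is that in the even-$k$ case the paper uses \eqref{e:mas-index-define-R-gamma} to transfer the whole computation back to $\gamma$ and then invokes \eqref{e:Hormander-index-L0-beta}, whereas you keep $\delta=\zeta$, use \eqref{e:mas-index-define-R-gamma} only to identify $\Mas\{\Graph(\zeta),\widetilde\alpha_a\}=i_{L_a}(\gamma)+n$, and apply \eqref{e:Hormander-index-beta-L0} with $\lambda=\Graph(\zeta(S))$; your explicit $J\times J$-conjugation for the $a=1$ case is a clean way to supply the analogue of Lemma~\ref{l:hormander-index-ge-nulity} that the paper leaves to the reader.
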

\begin{proof}As was hidden
	in the proof of \cite[Corollary 4.12]{zhou-zhu-wu}, we have actually proved \eqref{e:iteration-ineq} for 
	the case $L_1$ briefly.
	The method of proving the other case $L_0$ is similar, a detailed proof of which is as follows. 

	By Definition \ref{d:iteration-brake} and
using the H\"{o}rmander index (cf. \cite[Corollary 4.5 and Theorem 4.11]{zhou-zhu-wu}),
we have for $j\in\N$ and $j\ge 2$
\begin{equation}\label{e:iteration-inequality}
	i_{L_0}(\gamma^j)-i_{L_0}(\gamma^{j-1})-i_{L_0}(\gamma)\ge \nu_{L_0}(\gamma^{j-1}).
\end{equation}
Add up \eqref{e:iteration-inequality} for $j=2,3,...,k$. Then we obtain  \eqref{e:iteration-ineq}.

Now we prove \eqref{e:iteration-inequality}. By Definition \ref{d:Maslov-type-index} (cf. \eqref{e:Maslov-index-L_0}), we have
\begin{equation}\label{e:ietration-brake-j}
	\begin{split}
i_{L_0}(\gamma^j)-i_{L_0}(\gamma^{j-1})-i_{L_0}(\gamma)
&=i_{\widetilde\alpha_0}(\gamma^j)
-n-(i_{\widetilde\alpha_0}(\gamma^{j-1})-n)-
(i_{\widetilde\alpha_0}(\gamma)-n)\\
&=i_{\widetilde\alpha_0}(\gamma^j)-
i_{\widetilde\alpha_0}(\gamma^{j-1})-
i_{\widetilde\alpha_0}(\gamma)+n.
\end{split}
\end{equation}

When $j\in 2\N$, by definition $\gamma^j$ is the iteration of $\gamma^{j-1}$ and $NR(\gamma)\gamma(S)^{-1}N$, where 
$R(\gamma)\in C([0,S],\Sp(2n))$ is defined in \eqref{e:define-R-gamma}.
Then continuing from \eqref{e:ietration-brake-j}, we get
\begin{equation*}
	\begin{split}
	 i_{L_0}(\gamma^j)-i_{L_0}(\gamma^{j-1})-i_{L_0}(\gamma)
	&=i_{\widetilde \alpha_0}(NR(\gamma)\gamma^{-1}(S)N\gamma^{j-1}((j-1)S))-
	i_{\widetilde \alpha_0}(\gamma)+n\\
	&=i_{\gamma^{j-1}((j-1)S)\alpha_0\times \alpha_0} (NR(\gamma)\gamma^{-1}(S)N)-i_{\widetilde \alpha_0}	(\gamma)+n\ \text{\quad  by \eqref{e:mas-index-multi}}\\
	&=i_{\alpha_0\times N\gamma^{j-1}((j-1)S)\alpha_0}(\gamma) -i_{\widetilde \alpha_0}(\gamma)+n\ \text{\quad  by \eqref{e:mas-index-define-R-gamma}}\\
	&=s(\Graph(I_{2n}),\Graph(\gamma(S)),\widetilde \alpha_0, \alpha_0\times N\gamma^{j-1}((j-1)S)\alpha_0)+n\\
	&\ge \nu_{L_0}(\gamma^{j-1}),
	\end{split}	
\end{equation*}
where in the last line we used \eqref{e:Hormander-index-L0-beta}.

When $j\in 2\N+1$, by definition $\gamma^j$ is the iteration of $\gamma^{j-1}$ and $\gamma$.
Then also  continuing from 	\eqref{e:ietration-brake-j}, we obtain
\begin{equation}\label{e:odd-iterations-L0}
	\begin{split}
	i_{L_0}(\gamma^j)-i_{L_0}(\gamma^{j-1})-i_{L_0}(\gamma)
		&=i_{\widetilde \alpha_0}(\gamma\gamma^{j-1}((j-1)S))
		-i_{\widetilde \alpha_0}(\gamma)+n\\
		&=i_{\gamma^{j-1}((j-1)S)\alpha_0\times \alpha_0}(\gamma)-	i_{\widetilde \alpha_0}(\gamma)+n \text{ \quad by \eqref{e:mas-index-multi}}\\
		&=s(\Graph(I_{2n}),\Graph(\gamma(S)),\widetilde \alpha_0,\gamma^{j-1}((j-1)S)\alpha_0\times \alpha_0)+n\\
		&\ge \nu_{L_0}(\gamma^{j-1}),
	\end{split}	
	\end{equation}
	where in the last line we used \eqref{e:Hormander-index-beta-L0}.
	The proof of this lemma is complete.
\end{proof}

As a  direct corollary of the above lemma, we have 
the following iteration inequality, which is different from  {\cite[Lemma 2.4]{FZZZ22}}.
\begin{corollary}[Compare  loc. cit.]\label{0-1}
	For each  $k\in\N$, we have
	\begin{equation}\label{e:i-L0-k-i+nu}
		i_{L_0}(\gamma^k)\ge k\left( i_{L_0}(\gamma)+\nu_{L_0}(\gamma)\right)-n.
	\end{equation}
\end{corollary}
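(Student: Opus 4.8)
The plan is to derive \eqref{e:i-L0-k-i+nu} from the iteration inequality \eqref{e:iteration-ineq} of Proposition \ref{p:iteration-brake-inequality} together with a suitable lower bound on the partial nullities $\nu_{L_0}(\gamma^j)$ for $1\le j\le k-1$. Taking $a=0$ in \eqref{e:iteration-ineq} gives immediately
\[
i_{L_0}(\gamma^k)\ge k\,i_{L_0}(\gamma)+\sum_{j=1}^{k-1}\nu_{L_0}(\gamma^j),
\]
so the whole matter reduces to showing
\[
\sum_{j=1}^{k-1}\nu_{L_0}(\gamma^j)\ge k\,\nu_{L_0}(\gamma)-n,
\]
which would then combine with the previous display to yield \eqref{e:i-L0-k-i+nu} exactly.

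First I would examine how $\nu_{L_0}(\gamma^j)$ is built from the Bott-type formulae in Proposition \ref{p:iteration-brake}. Recall $\nu_{L_0}(\gamma^j)=\dim_{\C}\bigl(\gamma^j(jS)L_0\cap L_0\bigr)$, and by Definition \ref{d:iteration-brake} the matrix $\gamma^j(jS)$ is an explicit product involving $\gamma(S)$, $N$, and $\gamma(2S)=N\gamma(S)^{-1}N\gamma(S)$. For $j=1$ the quantity is $\nu_{L_0}(\gamma)$ itself, and the point is that a vector $v\in L_0$ with $\gamma(S)v\in L_0$ propagates under iteration: I would check that such a $v$ remains in the $L_0$-intersection kernel at every stage $\gamma^j(jS)$, so that $\nu_{L_0}(\gamma^j)\ge \nu_{L_0}(\gamma)$ for all $j\ge 1$. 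If that monotonicity (really: stability) holds, then $\sum_{j=1}^{k-1}\nu_{L_0}(\gamma^j)\ge (k-1)\nu_{L_0}(\gamma)$, and it remains to absorb the missing $\nu_{L_0}(\gamma)$ term against the $-n$: since $\nu_{L_0}(\gamma)=\dim_{\C}(\gamma(S)L_0\cap L_0)\le \dim_{\C} L_0 = n$, we get $(k-1)\nu_{L_0}(\gamma)\ge k\nu_{L_0}(\gamma)-n$, which is exactly what is needed.

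So the skeleton is: (1) apply \eqref{e:iteration-ineq} with $a=0$; (2) prove $\nu_{L_0}(\gamma^j)\ge\nu_{L_0}(\gamma)$ for every $j\ge 1$ — this is the step I expect to carry the real content, done either by the propagation-of-kernel argument above or, more cleanly, by reading it off the Bott-type formula in Proposition \ref{p:iteration-brake} (all the summands $\nu_{\beta^{2l}}(\gamma^2)$ and the extra $\nu_{L_0\times L_1}(\gamma)$ term are nonnegative, and $\nu_{L_a}(\gamma)$ always appears); (3) use $\nu_{L_0}(\gamma)\le n$ to replace $(k-1)\nu_{L_0}(\gamma)$ by $k\nu_{L_0}(\gamma)-n$; (4) assemble. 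The main obstacle is step (2): one must be careful that the brake iteration $\gamma^j$ is not the naive $j$-fold composition but the reflected concatenation of Definition \ref{d:iteration-brake}, so the kernel-propagation claim has to be checked against that precise definition (equivalently, one invokes the already-established Bott decomposition rather than arguing from scratch). Everything else is bookkeeping, and the constant $-n$ is accounted for solely by the trivial bound $\nu_{L_0}(\gamma)\le n$.
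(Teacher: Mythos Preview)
Your proposal is correct and follows essentially the same route as the paper: apply \eqref{e:iteration-ineq} with $a=0$, use $\nu_{L_0}(\gamma^j)\ge \nu_{L_0}(\gamma)$ (which the paper simply asserts, and which you correctly note follows from the Bott-type nullity formulae in Proposition \ref{p:iteration-brake}), and then absorb the missing term via $\nu_{L_0}(\gamma)\le n$. The paper's proof is the one-line version of exactly this argument.
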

\begin{proof}
	Since
	$n\ge\nu_{L_0}(\gamma^j)\ge \nu_{L_0}(\gamma)$ for $j\in\N$, 
 \eqref{e:i-L0-k-i+nu} follows from \eqref{e:iteration-ineq}.
\end{proof}
From Proposition \ref{p:iteration-brake-inequality}, we readily obtain 
\begin{lemma}\label{l:iteration-ineq-L0}
	Let $k\in \N$. Assume $i_{L_0}(\gamma^k)\le 1$ and $i_{L_0}(\gamma)\ge 0$.
	Then 
	\begin{enumerate}
		\item[1)] $i_{L_0}(\gamma)=\nu_{L_0}(\gamma)=0$; or
		\item[2)] $k=2$, $i_{L_0}(\gamma^2)=1$ and $i_{L_0}(\gamma)=0$; or
		\item[2)] $k=1$.
	\end{enumerate}
	\end{lemma}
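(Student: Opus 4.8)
The plan is to combine the iteration inequality \eqref{e:iteration-ineq} from Proposition \ref{p:iteration-brake-inequality} with the elementary monotonicity $\nu_{L_0}(\gamma^j)\ge 0$ and a case analysis on $k$. First I would dispose of the case $k=1$, which is alternative 3) and requires nothing. So assume $k\ge 2$. Applying \eqref{e:iteration-ineq} with $a=0$ gives
\[
1\ge i_{L_0}(\gamma^k)\ge k\,i_{L_0}(\gamma)+\sum_{j=1}^{k-1}\nu_{L_0}(\gamma^j)\ge k\,i_{L_0}(\gamma)+\nu_{L_0}(\gamma),
\]
where in the last step I keep only the $j=1$ term and drop the rest (each $\nu_{L_0}(\gamma^j)\ge 0$). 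Since $i_{L_0}(\gamma)\ge 0$ by hypothesis and $\nu_{L_0}(\gamma)\ge 0$ always, this forces $k\,i_{L_0}(\gamma)+\nu_{L_0}(\gamma)\le 1$.

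Now I would split according to the value of $i_{L_0}(\gamma)$. If $i_{L_0}(\gamma)\ge 1$, then $k\,i_{L_0}(\gamma)\ge k\ge 2$, contradicting the bound $\le 1$; so necessarily $i_{L_0}(\gamma)=0$. With $i_{L_0}(\gamma)=0$ the inequality becomes $\nu_{L_0}(\gamma)\le 1$, and moreover $\sum_{j=1}^{k-1}\nu_{L_0}(\gamma^j)\le 1$. If $\nu_{L_0}(\gamma)=0$ we land in alternative 1). Otherwise $\nu_{L_0}(\gamma)=1$; then since each summand $\nu_{L_0}(\gamma^j)\ge \nu_{L_0}(\gamma)=1$ — here I use that $\gamma^j$ restricted to $L_0$ retains at least the intersection coming from $\gamma$, which is precisely the monotonicity hidden in Corollary \ref{0-1}, or more directly the fact that $n\ge \nu_{L_0}(\gamma^j)\ge \nu_{L_0}(\gamma)$ stated in the proof of that corollary — the sum $\sum_{j=1}^{k-1}\nu_{L_0}(\gamma^j)\ge k-1$, so $k-1\le 1$, i.e. $k=2$. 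For $k=2$ we then have $i_{L_0}(\gamma^2)\ge 2\cdot 0+\nu_{L_0}(\gamma)=1$ and also $i_{L_0}(\gamma^2)\le 1$, hence $i_{L_0}(\gamma^2)=1$ and $i_{L_0}(\gamma)=0$, which is alternative 2). This exhausts all cases.

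The only mildly delicate point — and the one I would state carefully rather than wave at — is the monotonicity $\nu_{L_0}(\gamma^j)\ge \nu_{L_0}(\gamma)$ for all $j\in\N$, used to conclude $k=2$ in the subcase $\nu_{L_0}(\gamma)=1$; this is exactly the fact invoked in the proof of Corollary \ref{0-1} (``$n\ge\nu_{L_0}(\gamma^j)\ge \nu_{L_0}(\gamma)$''), so I would simply cite it there. Everything else is bookkeeping: the lemma is really just \eqref{e:iteration-ineq} read off against the hypotheses $i_{L_0}(\gamma^k)\le 1$ and $i_{L_0}(\gamma)\ge 0$, together with non-negativity of nullities. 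I should also note the harmless typo in the statement that alternatives are labelled ``2)'' twice — the third should read ``3)'' — but that does not affect the proof.
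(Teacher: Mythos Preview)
Your proof is correct and follows essentially the same approach as the paper: both derive the inequality $i_{L_0}(\gamma^k)\ge k\,i_{L_0}(\gamma)+(k-1)\nu_{L_0}(\gamma)$ from \eqref{e:iteration-ineq} together with the monotonicity $\nu_{L_0}(\gamma^j)\ge \nu_{L_0}(\gamma)$, and then read off the three alternatives by elementary case analysis. The paper's proof is terser (it states the key inequality and declares the lemma follows), while you spell out the case analysis in full, but the substance is identical.
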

\begin{proof}
	Since $\nu_{L_0}(\gamma^j)\ge \nu_{L_0}(\gamma)$ for $j\in\N$,
	by \eqref{e:iteration-ineq} we have
	\begin{equation}\label{e:iteration-ineq-L0}
		i_{L_0}(\gamma^k)\ge ki_{L_0}(\gamma)+(k-1)\nu_{L_0}(\gamma).
	\end{equation}
Our lemma follows from \eqref{e:iteration-ineq-L0}.
\end{proof}

\begin{proposition}\label{p:control-iteration-times}
	Let $k\in\N$.
	Suppose $i_{L_0}(\gamma^k)\le 1$, $i_{L_0}(\gamma)\ge 0$ and 
	\begin{equation}\label{e:i+nu-ge-n+1}
	i^{\Lo}_1(\gamma^2)+\nu_1(\gamma^2)\ge n+1.
	\end{equation}
	When $k\in 2\N$, we furthermore assume $i_{L_0}(\gamma^2)\ge 0$.
	Then $k\le 4$ and $k\neq 3$.
\end{proposition}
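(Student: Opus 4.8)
I propose to argue by parity of $k$, splitting the odd case further with Lemma \ref{l:iteration-ineq-L0}. If $k\le 2$ there is nothing to do, so the content is in $k\ge 3$. For \emph{even} $k=2l$ (so $l\ge 1$) I would simply invoke the even iteration inequality \eqref{e:iterarion-even}:
\[
i_{L_0}(\gamma^{k})=i_{L_0}(\gamma^{2l})\ge i_{L_0}(\gamma^{2})+(l-1)\bigl(i^{\Lo}_1(\gamma^{2})+\nu_1(\gamma^{2})-n\bigr).
\]
The standing hypothesis for even $k$ gives $i_{L_0}(\gamma^{2})\ge 0$, and \eqref{e:i+nu-ge-n+1} makes the parenthesized factor $\ge 1$, so $i_{L_0}(\gamma^{k})\ge l-1=k/2-1$; together with $i_{L_0}(\gamma^{k})\le 1$ this yields $k\le 4$, and $k$ even already forces $k\ne 3$. (For later use one may note that in the alternative treated below, $i_{L_0}(\gamma^{2})\ge 0$ is in fact automatic from Proposition \ref{p:iteration-brake-inequality} with $k=2$.)

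For \emph{odd} $k\ge 3$ the goal is a contradiction, so that $k=1$ is the only odd possibility. Since $k\neq 1,2$, Lemma \ref{l:iteration-ineq-L0} forces its first alternative $i_{L_0}(\gamma)=\nu_{L_0}(\gamma)=0$. Feeding this into \eqref{e:Maslov-index-identity-L_0} and using \eqref{e:i+nu-ge-n+1},
\[
n+1\le i^{\Lo}_1(\gamma^{2})+\nu_1(\gamma^{2})=i_{L_0}(\gamma)+i_{L_1}(\gamma)+\nu_{L_0}(\gamma)+\nu_{L_1}(\gamma)+n=i_{L_1}(\gamma)+\nu_{L_1}(\gamma)+n,
\]
so $s:=i_{L_1}(\gamma)+\nu_{L_1}(\gamma)\ge 1$. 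Next I apply the Bott-type iteration formula for odd $k$ (Proposition \ref{p:iteration-brake}) to \emph{both} boundary conditions. Since $i_{L_0}(\gamma^{k})=i_{L_0}(\gamma)+\sum_{j=1}^{(k-1)/2}i_{\beta^{2j}}(\gamma^{2})$ and $i_{L_1}(\gamma^{k})=i_{L_1}(\gamma)+\sum_{j=1}^{(k-1)/2}i_{\beta^{2j}}(\gamma^{2})$ carry the same root-of-unity sum, subtracting and using $i_{L_0}(\gamma)=0$ gives $i_{L_1}(\gamma^{k})=i_{L_0}(\gamma^{k})+i_{L_1}(\gamma)\le 1+i_{L_1}(\gamma)$. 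On the other hand Proposition \ref{p:iteration-brake-inequality} with $a=1$, combined with the nullity monotonicity $\nu_{L_1}(\gamma^{j})\ge\nu_{L_1}(\gamma)$ (the $L_1$-analogue of the fact used in Corollary \ref{0-1}), gives $i_{L_1}(\gamma^{k})\ge k\,i_{L_1}(\gamma)+(k-1)\nu_{L_1}(\gamma)=ks-\nu_{L_1}(\gamma)$. Comparing the two estimates, $ks-\nu_{L_1}(\gamma)\le 1+i_{L_1}(\gamma)$, i.e. $ks\le 1+s$, i.e. $(k-1)s\le 1$; since $s\ge 1$ this forces $k\le 2$, contradicting $k\ge 3$. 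Hence no odd $k\ge 3$ occurs, and together with the even case we conclude $k\le 4$ and $k\ne 3$.

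The part I expect to be delicate is the odd case, and it hinges on two structural inputs rather than on any computation: first, that the brake-iteration Bott formula produces \emph{literally the same} sum $\sum_j i_{\beta^{2j}}(\gamma^{2})$ for $i_{L_0}(\gamma^{k})$ and $i_{L_1}(\gamma^{k})$, which is exactly what allows that term to be cancelled by subtraction; and second, the monotonicity $\nu_{L_1}(\gamma^{j})\ge\nu_{L_1}(\gamma)$, which I would justify by the same argument that underlies its $L_0$ counterpart already used in Corollary \ref{0-1}. Everything else is bookkeeping with \eqref{e:Maslov-index-identity-L_0}, \eqref{e:iterarion-even}, \eqref{e:iteration-ineq} and Lemma \ref{l:iteration-ineq-L0}.
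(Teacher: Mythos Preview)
Your proof is correct and follows essentially the same route as the paper's: the even case via the iteration inequality \eqref{e:iterarion-even}, and the odd case by reducing (via Lemma \ref{l:iteration-ineq-L0}, equivalently the paper's Case 1/first half of Case 2) to $i_{L_0}(\gamma)=\nu_{L_0}(\gamma)=0$, then transferring to $L_1$ through the odd Bott formula and applying Proposition \ref{p:iteration-brake-inequality} for $a=1$. The only difference is organizational---you split by parity first, whereas the paper first case-splits on the value of $i_{L_0}(\gamma)$ and only branches on parity inside the hardest sub-case---but the ingredients and the decisive inequality $(k-1)(i_{L_1}(\gamma)+\nu_{L_1}(\gamma))\le 1$ are identical.
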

\begin{proof}
	
	Since $i_{L_0}(\gamma)\ge 0$, according to $i_{L_0}(\gamma)$  we divide our proof into two cases.
	
	{\bf Case 1:} $i_{L_0}(\gamma)\ge 1$. By Proposition \ref{p:iteration-brake-inequality}, we get $k=1$.
	
	{\bf Case 2:}  $i_{L_0}(\gamma)= 0$.
	When $\nu_{L_0}(\gamma)\ge 1$,
	Proposition \ref{p:iteration-brake-inequality} tells us
	$k\le 2$.
	Since $\nu_{L_0}(\gamma)\ge 0$, the remained cases are
	\[i_{L_0}(\gamma)=0 \quad \text{ and }\quad \nu_{L_0}(\gamma)=0.\]
	By \eqref{e:Maslov-index-identity-L_0} and \eqref{e:i+nu-ge-n+1},
	we have 
	\begin{equation}\label{e:i+nu-ge1}
	i_{L_1}(\gamma)+\nu_{L_1}(\gamma)\ge 1.
	\end{equation}
	Now according to the partition of the positive integer $k$, we continue our study in two cases.
	
	When $k$ is odd, according to Bott's iteration formulae Proposition \ref{p:iteration-brake}, we have
	\begin{equation}\label{e:index-Bott-L0-L1}
		i_{L_1}(\gamma^k)- i_{L_1}(\gamma)=i_{L_0}(\gamma^k)-i_{L_0}(\gamma)=i_{L_0}(\gamma^k).
		\end{equation}
	Thus we obtain
	\begin{equation}\label{e:index-L0-L1-1}
		1\ge i_{L_0}(\gamma^k) = i_{L_1}(\gamma^k)-i_{L_1}(\gamma).
	\end{equation}
	By \eqref{e:index-L0-L1-1} and Proposition \ref{p:iteration-brake-inequality}, we get
	\[1+ i_{L_1}(\gamma)\ge ki_{L_1}(\gamma)+(k-1)\nu_{L_1}(\gamma).\]
	This means 
	\begin{equation}\label{e:iter-1-ge}
	1\ge (k-1)(i_{L_1}(\gamma)+\nu_{L_1}(\gamma)).
	\end{equation}
Together with \eqref{e:i+nu-ge1}, we get $k\le2$.
Since we assume $k$ is odd, we get $k=1$.
	
	When $k$ is even, by 
	\eqref{e:iterarion-even}, which means
	\begin{equation}\label{e:iter-k-even}
	i_{L_0}(\gamma^k)\ge i_{L_0}(\gamma^2)+(\frac{k}{2}-1)(i^{\Lo}_1(\gamma^2)+\nu_1(\gamma^2)-n),
	\end{equation}
	we get $k\le4$.
\end{proof}

\begin{lemma}\label{l:even-times-L0-L1}
Let $k\in\N$. Assume $i_{L_0}(\gamma^{2k})=1$ and $i_{L_0}(\gamma^k)\ge 0$.
	Then $i_{L_0}(\gamma^k)= 0$ and $i_{L_0\times L_1}(\gamma^k)=1$.
\end{lemma}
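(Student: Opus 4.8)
The plan is to reduce everything to iterating the path $\gamma^k$ once more. The two ingredients are the multiplicativity $(\gamma^k)^2=\gamma^{2k}$ of the brake iteration (which follows from Definition \ref{d:iteration-brake}), the iteration inequality \eqref{e:iteration-ineq}, and the Bott-type formula of Proposition \ref{p:iteration-brake} in the case of an \emph{even} iteration exponent. In particular, no case distinction on the parity of the given $k$ will be needed, since we only ever iterate $\gamma^k$ by the fixed exponent $2$.

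First I would apply \eqref{e:iteration-ineq} of Proposition \ref{p:iteration-brake-inequality} to the symplectic path $\gamma^k\in\mathcal{P}_{kS}(2n)$ with iteration exponent $2$ and $a=0$, getting
\[
i_{L_0}(\gamma^{2k})=i_{L_0}\big((\gamma^k)^2\big)\ge 2\,i_{L_0}(\gamma^k)+\nu_{L_0}(\gamma^k).
\]
Since by hypothesis $i_{L_0}(\gamma^{2k})=1$ and $\nu_{L_0}(\gamma^k)=\dim\big(\gamma^k(kS)L_0\cap L_0\big)\ge 0$, this forces $2\,i_{L_0}(\gamma^k)\le 1$, hence $i_{L_0}(\gamma^k)\le 0$. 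Combined with the assumed lower bound $i_{L_0}(\gamma^k)\ge 0$, we conclude $i_{L_0}(\gamma^k)=0$.

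Next I would invoke the Bott-type iteration formula of Proposition \ref{p:iteration-brake} with the even exponent $2$ applied to the path $\gamma^k$: the index sum $\sum_{j=1}^{2/2-1}$ is empty, so
\[
i_{L_0}(\gamma^{2k})=i_{L_0}\big((\gamma^k)^2\big)=i_{L_0}(\gamma^k)+i_{L_0\times L_1}(\gamma^k).
\]
Substituting $i_{L_0}(\gamma^{2k})=1$ and $i_{L_0}(\gamma^k)=0$ yields $i_{L_0\times L_1}(\gamma^k)=1$, which completes the proof.

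The only step requiring care is the identity $(\gamma^k)^2=\gamma^{2k}$, i.e.\ that the brake iteration is compatible with multiplying exponents; this is the standard multiplicativity of brake iterations (cf.\ \cite{L-Z1,L-Z2}). Concretely it follows from Definition \ref{d:iteration-brake}: one first notes $N\gamma(2S)^{-1}N=\gamma(2S)$, so the endpoints $\big((\gamma^k)^2\big)(2kS)$ and $\gamma^{2k}(2kS)$ both equal $(\gamma(2S))^k$, and then checks that the reflection/periodicity pattern of $\widetilde\gamma$ about $t=kS$ matches the defining pattern of the second iteration of $\gamma^k$ on $[kS,2kS]$. Everything else is an immediate combination of the two cited iteration statements, so I expect this multiplicativity bookkeeping (which is where the ``time shift'' phenomenon of Remark \ref{r:time-shift-change} must be confirmed to be harmless for the brake iteration proper) to be the only mildly delicate point.
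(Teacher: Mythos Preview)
Your proof is correct and follows essentially the same approach as the paper: both apply the iteration inequality \eqref{e:iteration-ineq} to $\gamma^k$ with exponent $2$ to force $i_{L_0}(\gamma^k)=0$, and then use the Bott-type formula of Proposition~\ref{p:iteration-brake} (with empty sum since $k=2$) to read off $i_{L_0\times L_1}(\gamma^k)=1$. The paper simply asserts the multiplicativity $(\gamma^k)^2=\gamma^{2k}$ as following from Definition~\ref{d:iteration-brake}, whereas you spell out the verification a bit more; this is harmless extra care rather than a different argument.
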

\begin{proof}
	By definition, $\gamma^{2k}$ is the second iteration of 
	$\gamma^k$ for the brake symmetry.
	Lemma \ref{p:iteration-brake-inequality} tells us
	\[i_{L_0}(\gamma^{2k})\ge 2 i_{L_0}(\gamma^k)+\nu_{L_0}(\gamma^k).\]
	Thus $i_{L_0}(\gamma^k)$ must be zero.
	By Proposition \ref{p:iteration-brake}, 
\begin{equation}\label{e:index-L0-L1-even}
i_{L_0}(\gamma^{2k})=i_{L_0}(\gamma^k)+i_{L_0\times L_1}(\gamma^k).
\end{equation}
Thus $i_{L_0\times L_1}(\gamma^k)=1$.
\end{proof}

\begin{lemma}\label{l:i-nu-L0-L1}
Let $\gamma\in \mathcal{P}_S(2n)$, where $S>0$. Rewrite $\gamma(S)=\begin{pmatrix}
	A&B\\
	C&D
\end{pmatrix}\in\Sp(2n,\R)$, where $A,B,C,D\in\R^{n\times n}$. Then we have
\begin{equation}\label{e:i-nu-L0-L1}
	i_{L_1}(\gamma)+\nu_{L_1}(\gamma)=i_{L_0}(\gamma)+\nu_{L_0}(\gamma)+m^+\begin{pmatrix}
		C^TA&C^TB\\
		B^TC&D^TB
	\end{pmatrix}-n+\dim\ker C.
\end{equation}
\end{lemma}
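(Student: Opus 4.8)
The plan is to express the left-hand side of \eqref{e:i-nu-L0-L1} through a H\"ormander index and then to evaluate it with the triple-index formula \eqref{e:i-index-calculation-a}. Since $i_{L_j}(\gamma)=i_{\widetilde\alpha_j}(\gamma)-n=\Mas\{\Graph(\gamma),\widetilde\alpha_j\}-n$ and both of these Maslov indices are computed along the \emph{same} Lagrangian path $t\mapsto\Graph(\gamma(t))$, $t\in[0,S]$, which joins $\Graph(I_{2n})$ to $\Graph(\gamma(S))$, the homotopy invariance of the Maslov index together with Definition \ref{d:hormander} gives
\[
i_{L_1}(\gamma)-i_{L_0}(\gamma)=\Mas\{\Graph(\gamma),\widetilde\alpha_1\}-\Mas\{\Graph(\gamma),\widetilde\alpha_0\}=s(\Graph(I_{2n}),\Graph(\gamma(S));\widetilde\alpha_0,\widetilde\alpha_1),
\]
and Proposition \ref{p:hormander-calculate} rewrites this as $i(\Graph(I_{2n}),\widetilde\alpha_0,\widetilde\alpha_1)-i(\Graph(\gamma(S)),\widetilde\alpha_0,\widetilde\alpha_1)$.

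Next I would evaluate the two triple indices via \eqref{e:i-index-calculation-a}. For the first, $\Graph(I_{2n})\cap\widetilde\alpha_1$ is $n$-dimensional, $\Graph(I_{2n})\cap\widetilde\alpha_0\cap\widetilde\alpha_1=\{0\}$, and the computation below with $\gamma(S)=I_{2n}$ shows that the Hermitian form $Q(\Graph(I_{2n}),\widetilde\alpha_0,\widetilde\alpha_1)$ vanishes identically, so $i(\Graph(I_{2n}),\widetilde\alpha_0,\widetilde\alpha_1)=n$. For the second, since $\widetilde\alpha_0+\widetilde\alpha_1=V$ the domain of $Q(\Graph(\gamma(S)),\widetilde\alpha_0,\widetilde\alpha_1)$ is all of $\Graph(\gamma(S))$, which I identify with $\C^{2n}$ via $\xi\mapsto(\xi,\gamma(S)\xi)$. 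Writing $\gamma(S)=\left(\begin{smallmatrix}A&B\\ C&D\end{smallmatrix}\right)$, decomposing $(\xi,\gamma(S)\xi)$ into its $\widetilde\alpha_0$- and $\widetilde\alpha_1$-components, and simplifying with the symplectic relations $C^{T}A=A^{T}C$, $A^{T}D-C^{T}B=I_n$, $D^{T}B=B^{T}D$, one finds that the pulled-back form is $\xi\mapsto -\,\xi^{T}M_0\,\xi$, where $M_0=\left(\begin{smallmatrix}C^{T}A&C^{T}B\\ B^{T}C&D^{T}B\end{smallmatrix}\right)$ is the symmetric matrix occurring in \eqref{e:i-nu-L0-L1}; hence its positive index is $m^-(M_0)$. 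As $\Graph(\gamma(S))\cap\widetilde\alpha_1$ has dimension $\dim\ker C$ and the triple intersection is again trivial, \eqref{e:i-index-calculation-a} gives $i(\Graph(\gamma(S)),\widetilde\alpha_0,\widetilde\alpha_1)=m^-(M_0)+\dim\ker C$, so that $i_{L_1}(\gamma)-i_{L_0}(\gamma)=n-m^-(M_0)-\dim\ker C$.

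To finish I would insert the nullities. By Definition \ref{d:Maslov-type-index} and the remark following it, $\nu_{L_1}(\gamma)=\dim\ker C$ and $\nu_{L_0}(\gamma)=\dim\ker B$, while a direct computation of $\ker M_0$ (namely $\xi\in\ker M_0$ iff $\gamma(S)\xi\in\ker C^{T}\oplus\ker B^{T}$, and $\gamma(S)$ is invertible) gives $m^0(M_0)=\dim\ker C+\dim\ker B$. Combining this with $m^+(M_0)+m^-(M_0)+m^0(M_0)=2n$ yields $n-m^-(M_0)=m^+(M_0)-n+\dim\ker B+\dim\ker C$, and therefore
\[
i_{L_1}(\gamma)+\nu_{L_1}(\gamma)=i_{L_0}(\gamma)+\dim\ker B+(m^+(M_0)-n+\dim\ker C)=i_{L_0}(\gamma)+\nu_{L_0}(\gamma)+m^+(M_0)-n+\dim\ker C,
\]
which is exactly \eqref{e:i-nu-L0-L1}.

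The block-matrix computations (the $\widetilde\alpha_0\oplus\widetilde\alpha_1$-decomposition of $(\xi,\gamma(S)\xi)$, the simplification using the symplectic relations, and the identification of $\ker M_0$) are routine. The one point I expect to require real care is the sign bookkeeping in the last evaluation of $Q(\Graph(\gamma(S)),\widetilde\alpha_0,\widetilde\alpha_1)$: one must follow the convention in Definition \ref{d:hermitian-form} precisely enough to conclude that the relevant positive index is $m^-(M_0)$ (equivalently $m^+(-M_0)$) and not $m^+(M_0)$. The two candidates differ by exactly $m^0(M_0)=\dim\ker B+\dim\ker C$, which at least pins the ambiguity down and provides a consistency check against the already-computed value $i(\Graph(I_{2n}),\widetilde\alpha_0,\widetilde\alpha_1)=n$.
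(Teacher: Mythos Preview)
Your proof is correct and follows essentially the same route as the paper: express the difference $i_{L_1}(\gamma)-i_{L_0}(\gamma)$ as a H\"ormander index, reduce to triple indices via Proposition~\ref{p:hormander-calculate}, and evaluate with \eqref{e:i-index-calculation-a}. The only difference is the ordering of the last two Lagrangians: the paper computes $s(\Graph(I_{2n}),\Graph(M);\widetilde\alpha_1,\widetilde\alpha_0)$, which leads directly to $Q(\Graph(M),\widetilde\alpha_1;\widetilde\alpha_0)$ with matrix $+M_0$ (this is exactly Lemma~\ref{l:quadratic-forms-calculation}(ii)), and so $m^+(M_0)$ appears without further work; your swap $(\widetilde\alpha_0,\widetilde\alpha_1)$ produces $-M_0$ and hence $m^-(M_0)$, forcing the extra step of computing $m^0(M_0)=\dim\ker B+\dim\ker C$ to convert back. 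Your sign worry in the last paragraph is resolved exactly as you anticipated, and the kernel identification $\ker M_0\cong\ker C^T\times\ker B^T$ via $\gamma(S)$ is correct. The paper's ordering is marginally cleaner, but the arguments are otherwise identical.
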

\begin{proof}
By Definitions  \ref{d:Maslov-type-index} and \ref{d:hormander}, Proposition \ref{p:hormander-calculate} and \eqref{e:i-index-calculation-a}, we have
\begin{equation}\label{e:L0-L1-mas}
	\begin{split}
		i_{L_0}(\gamma)-i_{L_1}(\gamma)
		&=
		\Mas\{\Graph(\gamma),\widetilde\alpha_0\}-\Mas\{\Graph(\gamma),\widetilde\alpha_1\}\\
		&=s(\Graph(I_{2n}),\Graph(M);\widetilde\alpha_1,\widetilde\alpha_0)\\
		&=i(\Graph(I_{2n}),\widetilde\alpha_1,\widetilde\alpha_0)-i(\Graph(M),\widetilde\alpha_1,\widetilde\alpha_0)\\
		&=n-(m^+(Q(\Graph(M),\widetilde\alpha_1;\widetilde\alpha_0))+\nu_{L_0}(M)).
	\end{split}
\end{equation}
Note that by Corollary \ref{c:i-index-cal-I}.(II), 
$i(\Graph(I_{2n}),\widetilde\alpha_1,\widetilde\alpha_0)=n$.
By calculation (Lemma \ref{l:quadratic-forms-calculation}.(ii)), $m^+(Q(\Graph(M), \widetilde\alpha_1;\widetilde\alpha_0 ))=m^+\begin{pmatrix}
	C^TA&C^TB\\
	B^TC&D^TB
\end{pmatrix}$.
Then together with \eqref{e:L0-L1-mas}, we have 
\begin{equation}\label{e:i-L0-nu-L0-i-L1}
	i_{L_0}(\gamma)+\nu_{L_0}(\gamma)=i_{L_1}(\gamma)-m^+
	\begin{pmatrix}
		C^TA & C^TB \\
		B^TC & D^TB\\
	\end{pmatrix}+n.
\end{equation}
Together with the definition that $\nu_{L_1}(\gamma)=\dim \ker C$, we obtain 
\eqref{e:i-nu-L0-L1}.
\end{proof}

\begin{lemma}\label{l:i=0}
	Let $\gamma\in \mathcal{P}_S(2n)$, where $S>0$. Then we have
	\begin{equation}\label{e:i-L0-L1}
		i_{L_0\times L_1}(\gamma)\ge i_{L_1}(\gamma)+\nu_{L_1}(\gamma).
	\end{equation}
\end{lemma}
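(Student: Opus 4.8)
The plan is to rewrite the quantity $i_{L_0\times L_1}(\gamma)-i_{L_1}(\gamma)-\nu_{L_1}(\gamma)$ as (essentially) a single Hörmander index and bound it from below using the triple‑index formulas \eqref{e:i-index-calculation-a}--\eqref{e:i-index-calculation-b}. Unwinding Definition \ref{d:Maslov-type-index}, we have $i_{L_0\times L_1}(\gamma)=\Mas\{\Graph(\gamma),\alpha_0\times\alpha_1\}$, $i_{L_1}(\gamma)=\Mas\{\Graph(\gamma),\widetilde\alpha_1\}-n$, and $\nu_{L_1}(\gamma)=\dim_{\C}(\Graph(\gamma(S))\cap\widetilde\alpha_1)$. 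Since $t\mapsto\Graph(\gamma(t))$ is a path in $\Lag(V)$ from $\Graph(I_{2n})$ to $\Graph(\gamma(S))$, Definition \ref{d:hormander} gives $\Mas\{\Graph(\gamma),\alpha_0\times\alpha_1\}-\Mas\{\Graph(\gamma),\widetilde\alpha_1\}=s(\Graph(I_{2n}),\Graph(\gamma(S));\widetilde\alpha_1,\alpha_0\times\alpha_1)$, so that
\[
i_{L_0\times L_1}(\gamma)-i_{L_1}(\gamma)-n=s(\Graph(I_{2n}),\Graph(\gamma(S));\widetilde\alpha_1,\alpha_0\times\alpha_1).
\]
By Proposition \ref{p:hormander-calculate} the right side equals $i(\Graph(I_{2n}),\widetilde\alpha_1,\alpha_0\times\alpha_1)-i(\Graph(\gamma(S)),\widetilde\alpha_1,\alpha_0\times\alpha_1)$.

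Next I would dispose of the two triple indices separately. The first one I do not even evaluate: formula \eqref{e:i-index-calculation-a} writes every triple index as $m^+(Q)+\dim_{\C}(\alpha\cap\delta)-\dim_{\C}(\alpha\cap\beta\cap\delta)$, which is $\ge 0$ because $m^+(Q)\ge 0$ and $\alpha\cap\beta\cap\delta\subseteq\alpha\cap\delta$. For the second one I apply the upper bound \eqref{e:i-index-calculation-b} with $\alpha=\Graph(\gamma(S))$, $\beta=\widetilde\alpha_1=\alpha_1\times\alpha_1$, $\delta=\alpha_0\times\alpha_1$, and $m=2n$ (since $\dim_{\C}(V)=4n$), and compute the three intersection dimensions: $\dim_{\C}(\alpha\cap\beta)=\dim_{\C}(\Graph(\gamma(S))\cap\widetilde\alpha_1)=\nu_{L_1}(\gamma)$ by \eqref{e:Maslov-index-L_0}; $\beta\cap\delta=\{0\}\times\alpha_1$ has dimension $n$; and $\alpha\cap\beta\cap\delta=\{0\}$, because any $(x,\gamma(S)x)$ in it satisfies $x\in\alpha_0\cap\alpha_1=\{0\}$. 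Hence $i(\Graph(\gamma(S)),\widetilde\alpha_1,\alpha_0\times\alpha_1)\le 2n-\nu_{L_1}(\gamma)-n=n-\nu_{L_1}(\gamma)$.

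Combining, $s(\Graph(I_{2n}),\Graph(\gamma(S));\widetilde\alpha_1,\alpha_0\times\alpha_1)\ge 0-(n-\nu_{L_1}(\gamma))=\nu_{L_1}(\gamma)-n$, so $i_{L_0\times L_1}(\gamma)-i_{L_1}(\gamma)=s(\cdots)+n\ge\nu_{L_1}(\gamma)$, which is exactly \eqref{e:i-L0-L1}. I do not expect a real obstacle in this argument; the only points that need care are keeping the ordering conventions in the Hörmander index straight and correctly evaluating the four Lagrangian intersection dimensions (in particular recognizing that only nonnegativity — not the exact value — of $i(\Graph(I_{2n}),\widetilde\alpha_1,\alpha_0\times\alpha_1)$ is used). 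It is perhaps worth noting why one cannot simply quote Bott's formula: the even‑iteration formula in Proposition \ref{p:iteration-brake} relates $i_{L_0\times L_1}$ to $i_{L_0}$ only, which via Proposition \ref{p:iteration-brake-inequality} yields the $L_0$‑analogue $i_{L_0\times L_1}(\gamma)\ge i_{L_0}(\gamma)+\nu_{L_0}(\gamma)$, whereas here we genuinely need the $L_1$ version, and the direct Hörmander‑index computation above supplies it.
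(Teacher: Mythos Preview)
Your proof is correct and is genuinely shorter than the paper's. You compare $i_{L_0\times L_1}(\gamma)$ and $i_{L_1}(\gamma)$ directly via the single H\"ormander index $s(\Graph(I_{2n}),\Graph(\gamma(S));\widetilde\alpha_1,\alpha_0\times\alpha_1)$ and then bound the two resulting triple indices using only the abstract inequalities \eqref{e:i-index-calculation-a}--\eqref{e:i-index-calculation-b}, with no explicit matrix computation at all. The paper instead routes through $i_{L_0}$: it first establishes the identity $i_{L_0}(\gamma)+\nu_{L_0}(\gamma)=i_{L_0\times L_1}(\gamma)-m^+(D^TB)$ from one H\"ormander index, then combines this with the earlier Lemma~\ref{l:i-nu-L0-L1} to get the exact formula
\[
i_{L_0\times L_1}(\gamma)=i_{L_1}(\gamma)+\nu_{L_1}(\gamma)+m^+(D^TB)-m^+\begin{pmatrix}C^TA&C^TB\\B^TC&D^TB\end{pmatrix}-\dim\ker C+n,
\]
and finally proves the remaining matrix quantity is $\ge 0$ by a diagonalization and continuity argument on $D^TB$. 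What your approach buys is economy: one H\"ormander index, two soft bounds, done. What the paper's approach buys is the exact identity above, which it actually reuses later (in Proposition~\ref{p:C=0}, the case $C=0$ collapses that identity to $i_{L_1}(\gamma)+\nu_{L_1}(\gamma)=i_{L_0\times L_1}(\gamma)$). So your route is cleaner for this lemma alone, while the paper's longer route produces a sharper formula needed elsewhere.
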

\begin{proof}
	Set $M=\gamma(S)$. By Definitions  \ref{d:Maslov-type-index} and \ref{d:hormander}, Proposition \ref{p:hormander-calculate}, we have
	\begin{equation}\label{e:index0-hor-mas}
		\begin{split}
			i_{L_0}(\gamma)-i_{L_0\times L_1}(\gamma)
			&=\Mas\{\Graph(\gamma),\widetilde \alpha_0\}-n-\Mas\{\Graph(\gamma),\alpha_0\times \alpha_1\}\\
			&=s(\Graph(I_{2n}),\Graph(M);\alpha_0\times \alpha_1,\widetilde\alpha_0)-n\\
			&=i(\Graph(I_{2n}),\alpha_0\times \alpha_1,\widetilde\alpha_0)-i(\Graph(M),\alpha_0\times \alpha_1,\widetilde\alpha_0)-n \\
			&=n-(m^+(Q(\Graph(M),\alpha_0\times \alpha_1;\tilde\alpha_0))+\nu_{L_0}(M))-n\ \text{ by
				\eqref{e:i-index-calculation-a}}.
		\end{split}
	\end{equation}
	We used  Corollary \ref{c:i-index-cal-I}.(I), 
	$i(\Graph(I_{2n}),\alpha_0\times \alpha_1,\widetilde\alpha_0)=n$.
	Rewrite $M=
	\begin{pmatrix}
		A & B \\
		C & D\\
	\end{pmatrix}
	\in \Sp(2n,\R)$, where $A,B,C,D\in\R^{n\times n}$.
	Since $M^TJM=J$, we have
	\[C^TA=A^TC,\qquad D^TB=B^TD \qquad D^TA-B^TC=I_n.\]
	By the definition of $Q(\cdot,\cdot;\cdot)$ (cf. Lemma \ref{l:quadratic-forms-calculation},(i)),
	\begin{equation}\label{e:positive-Morse-index}
		m^+(Q(\Graph(M),\alpha_0\times \alpha_1;\widetilde\alpha_0))=m^+(D^TB).
	\end{equation}
	Together with \eqref{e:index0-hor-mas}, we have 
	\begin{equation}\label{e:i-L0-nu-L0}
		i_{L_0}(\gamma)+\nu_{L_0}(\gamma)=i_{L_0\times L_1}(\gamma)-m^+(D^TB).
	\end{equation}
	Combining \eqref{e:i-L0-nu-L0} and \eqref{e:i-L0-nu-L0-i-L1}, we obtain 
	\begin{equation}\label{e:i-L1+nu-L1}
		i_{L_0\times L_1}(\gamma)=i_{L_1}(\gamma)+\nu_{L_1}(\gamma)+m^+(D^TB)-m^+
		\begin{pmatrix}
			C^TA & C^TB \\
			B^TC & D^TB\\
		\end{pmatrix}-\dim \ker C+n.
	\end{equation}
	
	Denote $r=m^-(D^TB)$ and $s=m^0(D^TB)$.
	Thus there exists a $P\in\GL(n,\R)$ such that $P^TB^TDP=\begin{pmatrix}
		-I_r & 0&0\\
		0 & 0_{s}&0\\
		0&0&I_{n-s-r}
	\end{pmatrix}$, where $0\le r+s\le n$.
	Then we have
	\[
	\begin{pmatrix}
		I_n & 0\\
		0 & P^T\\
	\end{pmatrix}
	\begin{pmatrix}
		C^TA & C^TB \\
		B^TC & D^TB\\
	\end{pmatrix}
	\begin{pmatrix}
		I_n & 0\\
		0 & P\\
	\end{pmatrix}=
	\begin{pmatrix}
		C^TA & C^TBP \\
		P^TB^TC & P^TD^TBP\\
	\end{pmatrix}.\]
	Let $\lambda \le 0$.
	Set an $n\times n$ matrix $E(\lambda)=
	\begin{pmatrix}
		-I_r & 0&0\\
		0 & \lambda I_{s}&0 \\
		0&0&I_{n-r-s}
	\end{pmatrix}$.
Consider  the $2n\times 2n$ matrix \[Q(\lambda)=
	\begin{pmatrix}
		C^TA & C^TBP \\
		P^TB^TC & E(\lambda)\\
	\end{pmatrix}.\] Then 
\begin{equation}\label{e:m+-Q0}
	m^+(Q(0))=m^+\begin{pmatrix}
		C^TA&C^TB\\
		B^TC&D^TB
	\end{pmatrix}.
\end{equation}
For $\lambda<0$ and letting $P_1= -C^TBPE(\lambda)^{-1}$, we have
	\begin{equation*}
		\begin{split}
	\begin{pmatrix}
		I_n &P_1\\
		0 & I_n\\
	\end{pmatrix}Q(\lambda)\begin{pmatrix}
	I_n & 0\\
	P_1^T &I_n\\
\end{pmatrix}&=	\begin{pmatrix}
		I_n &P_1\\
		0 & I_n\\
	\end{pmatrix}\begin{pmatrix}
		C^TA & C^TBP \\
		P^TB^TC & E(\lambda)\\
	\end{pmatrix}\begin{pmatrix}
		I_n & 0\\
		P_1^T &I_n\\
	\end{pmatrix}\\
&=\begin{pmatrix}C^TA-C^TBPE(\lambda)^{-1}P^TB^TC&0\\
		0&E(\lambda )\end{pmatrix}.
	\end{split}
\end{equation*}
By continuity, there exists a $\delta >0$ such that for $-\delta <\lambda <0$, we have
	\begin{equation}\label{e:m+-quadratic}
		m^+(Q(0))\le m^+(Q(\lambda))=m^+\left(C^TA-C^TBP\left(
		\begin{array}{ccc}
			-I_r & 0 &0\\
			0 & \frac{1}{\lambda} I_s&0\\
			0&0&I_{n-s-r}
		\end{array}\right)P^TB^TC\right)+n-r-s.
	\end{equation}
	Note that 
	\begin{equation}\label{e:m+-kerC}
		m^+(C^TA-C^TBPE(\lambda)^{-1}P^TB^TC
	) +\dim\ker C \le n.
	\end{equation}
	By \eqref{e:m+-Q0}--\eqref{e:m+-kerC} and the fact that $m^+(D^TB)+r+s=n$, we obtain
	\begin{equation}\label{e:m+-m+-kerC+n}
	m^+(D^TB)-m^+
	\begin{pmatrix}
		C^TA & C^TB \\
		B^TC & D^TB\\
	\end{pmatrix}-\dim \ker C+n\ge 0.
	\end{equation}
	
Combining \eqref{e:i-L1+nu-L1} and \eqref{e:m+-m+-kerC+n}, we get
	\eqref{e:i-L0-L1}.
\end{proof}

\begin{proposition}\label{p:index=0}
	Assume $i_{L_0}(\gamma^k)=0$, $i_{L_0}(\gamma)\ge 0$ and $i_{L_1}(\gamma)+\nu_{L_1}(\gamma)\ge1$.
	When $k\in 2\N$, we furthermore assume $i_{L_0}(\gamma^2)\ge0$. Then we get $k=1$.
\end{proposition}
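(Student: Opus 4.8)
The plan is to argue by contradiction: suppose $k\geq 2$ and push the hypotheses through the brake iteration inequalities of Section \ref{ss:Maslov-index-iteration} until an absurdity appears. The first step is to apply the iteration inequality \eqref{e:iteration-ineq} of Proposition \ref{p:iteration-brake-inequality} to $L_0$. Since $i_{L_0}(\gamma^k)=0$, $i_{L_0}(\gamma)\geq 0$, and $\nu_{L_0}(\gamma^j)\geq\nu_{L_0}(\gamma)\geq 0$ for all $j$ (the monotonicity of the nullity under iteration, as already used in the proofs of Lemma \ref{l:iteration-ineq-L0} and Corollary \ref{0-1}), one gets
\[
0=i_{L_0}(\gamma^k)\geq k\,i_{L_0}(\gamma)+(k-1)\,\nu_{L_0}(\gamma),
\]
and because $k\geq 2$ the two nonnegative summands on the right must both vanish, so $i_{L_0}(\gamma)=\nu_{L_0}(\gamma)=0$. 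Substituting this into \eqref{e:Maslov-index-identity-L_0} together with the hypothesis $i_{L_1}(\gamma)+\nu_{L_1}(\gamma)\geq 1$ yields $i^{\Lo}_1(\gamma^2)+\nu_1(\gamma^2)=n+i_{L_1}(\gamma)+\nu_{L_1}(\gamma)\geq n+1$, which is precisely the input \eqref{e:i+nu-ge-n+1} needed below.

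Next I would split on the parity of $k$, exactly as in the proof of Proposition \ref{p:control-iteration-times}. If $k$ is odd (so $k\geq 3$), Bott's odd-iteration formula in Proposition \ref{p:iteration-brake} gives $i_{L_1}(\gamma^k)-i_{L_1}(\gamma)=i_{L_0}(\gamma^k)-i_{L_0}(\gamma)=0$, hence $i_{L_1}(\gamma^k)=i_{L_1}(\gamma)$; combining with \eqref{e:iteration-ineq} applied to $L_1$ and the nullity monotonicity, $i_{L_1}(\gamma)\geq k\,i_{L_1}(\gamma)+(k-1)\nu_{L_1}(\gamma)$, i.e. $0\geq(k-1)\big(i_{L_1}(\gamma)+\nu_{L_1}(\gamma)\big)\geq k-1\geq 2$, a contradiction. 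If $k$ is even, I would invoke the even-iteration estimate \eqref{e:iterarion-even} with $2l=k$: using $i_{L_0}(\gamma^k)=0$, the standing assumption $i_{L_0}(\gamma^2)\geq 0$, and the bound $i^{\Lo}_1(\gamma^2)+\nu_1(\gamma^2)-n\geq 1$ just obtained, one finds $0\geq(\tfrac{k}{2}-1)$, forcing $k=2$.

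It then remains to exclude $k=2$. For $k=2$ Bott's even-iteration formula collapses to $i_{L_0}(\gamma^2)=i_{L_0}(\gamma)+i_{L_0\times L_1}(\gamma)$ (this is Proposition \ref{p:iteration-brake}, equivalently \eqref{e:index-L0-L1-even}), so from $i_{L_0}(\gamma^2)=i_{L_0}(\gamma)=0$ we get $i_{L_0\times L_1}(\gamma)=0$. But Lemma \ref{l:i=0} gives $i_{L_0\times L_1}(\gamma)\geq i_{L_1}(\gamma)+\nu_{L_1}(\gamma)\geq 1$, a contradiction. Hence $k=1$.

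I do not anticipate a serious obstacle here: every ingredient (the brake iteration inequality \eqref{e:iteration-ineq}, Bott's formulae, the even-iteration inequality \eqref{e:iterarion-even}, and the $(L_0,L_1)$-comparison Lemma \ref{l:i=0}) is already available, and the argument is a short strengthening of Proposition \ref{p:control-iteration-times}, the extra mileage coming from replacing $i_{L_0}(\gamma^k)\leq 1$ by $i_{L_0}(\gamma^k)=0$. The only points demanding a little care are keeping straight which Bott formula applies in the odd versus even branch, and the repeated use of the nullity monotonicity $\nu_{L_a}(\gamma^j)\geq\nu_{L_a}(\gamma)$, which however is already employed elsewhere in the paper.
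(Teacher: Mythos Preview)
Your proposal is correct and follows essentially the same route as the paper's proof: first force $i_{L_0}(\gamma)=\nu_{L_0}(\gamma)=0$ via \eqref{e:iteration-ineq}, then treat odd $k$ with the Bott formula and the $L_1$-iteration inequality, reduce even $k$ to $k=2$ via \eqref{e:iterarion-even}, and finally exclude $k=2$ using \eqref{e:index-L0-L1-even} together with Lemma \ref{l:i=0}. The organization differs only cosmetically (you front-load the derivation of $i_{L_0}(\gamma)=\nu_{L_0}(\gamma)=0$ and of \eqref{e:i+nu-ge-n+1}, whereas the paper interleaves these with the parity split).
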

\begin{proof}
	According to the proof of Proposition \ref{p:control-iteration-times}, we get $k\le 2$.
	In fact, if $k$ is odd, by Lemma \ref{l:iteration-ineq-L0}, the remaining case we need to deal with is $i_{L_0}(\gamma)=0$. 
	 By \eqref{e:index-Bott-L0-L1} and \eqref{e:iteration-ineq},
	we have
	\[i_{L_1}(\gamma)=i_{L_1}(\gamma^k)\ge ki_{L_1}(\gamma)+(k-1)\nu_{L_1}(\gamma).\]
	Thus we obtain \[0\ge (k-1)(i_{L_1}(\gamma)+\nu_{L_1}(\gamma)),\] from which we get $k=1$.
	
	Note that by \eqref{e:Maslov-index-identity-L_0}, we have
	\[i_1^{\Lo}(\gamma^2)+\nu_1(\gamma^2)=i_{L_1}(\gamma)+\nu_1(\gamma)+i_{L_0}(\gamma)+\nu_{L_0}(\gamma)+n\ge n+1.\]
	If $k$ is even,  from \eqref{e:iter-k-even}, we get $k\le 2$. .
	
	If $k=2$, then by Proposition \ref{p:iteration-brake-inequality}, we get $i_{L_0}(\gamma)=0$ and $\nu_{L_0}(\gamma)=0$; thus $i_{L_0\times L_1}(\gamma)=0$ by \eqref{e:index-L0-L1-even}.
	According to Lemma \ref{l:i=0}, we get $i_{L_1}(\gamma)+\nu_{L_1}(\gamma)\le 0$. This contradiction shows $k=1$.
\end{proof}
\begin{corollary}\label{c:index=1}
	Assume $i_{L_0}(\gamma^k)=1$, $i_{L_0}(\gamma)\ge 0$ and
	\begin{equation}\label{e:i-L1+nu-L1-ge1}
		i_{L_1}(\gamma)+\nu_{L_1}(\gamma)\ge 1.
	\end{equation}
	When $k\in 2\N$, we furthermore assume $i_{L_0}(\gamma^2)\ge0$. Then we get $k\le 2$.
\end{corollary}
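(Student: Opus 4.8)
The plan is to obtain the corollary as a short deduction from Propositions \ref{p:control-iteration-times} and \ref{p:index=0}, together with Lemma \ref{l:even-times-L0-L1}. The guiding observation is that the hypothesis \eqref{e:i-L1+nu-L1-ge1}, combined with $i_{L_0}(\gamma)\ge 0$, is precisely the input consumed by those earlier results, so essentially no new work is required beyond careful bookkeeping of which auxiliary hypotheses survive passage between iterates.

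First I would check that the iteration hypothesis \eqref{e:i+nu-ge-n+1} of Proposition \ref{p:control-iteration-times} holds automatically in our setting. By the identity \eqref{e:Maslov-index-identity-L_0},
\[
i^{\Lo}_1(\gamma^2)+\nu_1(\gamma^2)=n+\bigl(i_{L_0}(\gamma)+\nu_{L_0}(\gamma)\bigr)+\bigl(i_{L_1}(\gamma)+\nu_{L_1}(\gamma)\bigr)\ge n+1,
\]
where we used $i_{L_0}(\gamma)\ge 0$, $\nu_{L_0}(\gamma)\ge 0$, and \eqref{e:i-L1+nu-L1-ge1}. Since $i_{L_0}(\gamma^k)=1\le 1$, and in the case $k\in 2\N$ we additionally have $i_{L_0}(\gamma^2)\ge 0$ by assumption, Proposition \ref{p:control-iteration-times} applies and yields $k\le 4$ and $k\neq 3$; that is, $k\in\{1,2,4\}$. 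It then remains only to rule out $k=4$.

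Suppose $k=4$. Then $4\in 2\N$, so the hypothesis $i_{L_0}(\gamma^2)\ge 0$ is in force, and applying Lemma \ref{l:even-times-L0-L1} with its parameter equal to $2$ to the equality $i_{L_0}(\gamma^{4})=1$ gives $i_{L_0}(\gamma^2)=0$. But now $\gamma$ satisfies $i_{L_0}(\gamma^2)=0$, $i_{L_0}(\gamma)\ge 0$, $i_{L_1}(\gamma)+\nu_{L_1}(\gamma)\ge 1$, and (trivially) $i_{L_0}(\gamma^2)\ge 0$, so all hypotheses of Proposition \ref{p:index=0} with parameter $2$ are met, and its conclusion would force $2=1$, a contradiction. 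Hence $k=4$ is impossible, and we conclude $k\le 2$. The only point requiring attention is this bookkeeping step: verifying that ``$i_{L_0}(\gamma^2)\ge 0$'' is available after descending from $\gamma^4$ via Lemma \ref{l:even-times-L0-L1}. If one wishes to avoid invoking Proposition \ref{p:control-iteration-times}, the bound $k\le 4$ can be produced directly from the even-iteration inequality \eqref{e:iterarion-even} exactly as in the proof of that proposition, and $k\neq 3$ from Bott's formulae (Proposition \ref{p:iteration-brake}) together with \eqref{e:iteration-ineq}, as in the proof of Proposition \ref{p:index=0}.
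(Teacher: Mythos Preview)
Your proof is correct and follows essentially the same approach as the paper: both invoke Proposition~\ref{p:control-iteration-times} (after verifying \eqref{e:i+nu-ge-n+1} from \eqref{e:Maslov-index-identity-L_0}) to reduce to $k\in\{1,2,4\}$, then rule out $k=4$ by showing $i_{L_0}(\gamma^2)=0$ and deriving a contradiction with \eqref{e:i-L1+nu-L1-ge1}. The only cosmetic difference is that you package the final contradiction by citing Lemma~\ref{l:even-times-L0-L1} and Proposition~\ref{p:index=0} as black boxes, whereas the paper unpacks their content directly (using Proposition~\ref{p:iteration-brake-inequality}, \eqref{e:index-L0-L1-even}, and Lemma~\ref{l:i=0}); the underlying computations are identical.
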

\begin{proof}
	According to Proposition \ref{p:control-iteration-times},
	we get $k=4$ or $k\le 2$.
	Arguing by contradiction, we assume
	$k=4$.
	Then $i_{L_0}(\gamma^4)=1$.
	By Proposition \ref{p:iteration-brake-inequality}, we have
	\[i_{L_0}(\gamma^4)\ge 2i_{L_0}(\gamma^2)+\nu_{L_0}(\gamma^2).\]
	Thus $i_{L_0}(\gamma^2)=0$.
	By \eqref{e:index-L0-L1-even}, we obtain $i_{L_0\times L_1}(\gamma)=0$.
	Then by Lemma \ref{l:i=0},
	we get
	\[i_{L_1}(\gamma)+\nu_{L_1}(\gamma)\le 0,\]
	which contradicts \eqref{e:i-L1+nu-L1-ge1}.
	Thus $k\neq 4$. Consequently, $k\le 2$.
\end{proof}

According to Proposition \ref{p:index=0} and Corollary 
\ref{c:index=1}, we notice that the positive lower bound \eqref{e:i-L1+nu-L1-ge1} is very important.
Now we give some sufficient conditions to ensure that
\eqref{e:i-L1+nu-L1-ge1} holds.
\begin{proposition}\label{p:positive-L1}
	Let $B=\begin{pmatrix}
		B_{11}&B_{12}\\
		B_{21}&B_{22}
	\end{pmatrix}\in C(S_{\tau},\mathcal{L}_s(\R^{2n}))$
	and $\gamma\in \mathcal P_{\frac{\tau}{2}}(2n)$ be given in Lemma \ref{l:partial-positive-L0-L1}.
If $B_{11}(t)$ is positive definite for all $t\in[0,\tau/2]$ and $\nu_{L_1}(\gamma)\ge 1$, then
\[i_{L_1}(\gamma)+\nu_{L_1}(\gamma)\ge1.\]
\end{proposition}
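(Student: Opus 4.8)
The plan is to derive this immediately from the partial positivity estimate already established in Lemma~\ref{l:partial-positive-L0-L1}(b); essentially no new work is required. First I would check that the data of Proposition~\ref{p:positive-L1} are exactly the data of Lemma~\ref{l:partial-positive-L0-L1} with $S=\tau/2$: here $B\in C(S_\tau,\mathcal{L}_s(\R^{2n}))$, the path $\gamma\in\mathcal P_{\tau/2}(2n)$ is the restriction to $[0,\tau/2]$ of the matrizant $\gamma_B$ of the linear system \eqref{e:linear-path}, and $B_{11}$ is the upper-left $n\times n$ block of $B$. Since $B_{11}(t)$ is positive definite for every $t\in[0,\tau/2]$, Lemma~\ref{l:partial-positive-L0-L1}(b) applies (note the positivity holds on the full parameter interval of $\gamma$, so no localization is needed) and gives
\[
  i_{L_1}(\gamma)\ \ge\ 0 .
\]

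The second step is pure arithmetic: combining this with the remaining hypothesis $\nu_{L_1}(\gamma)\ge 1$ yields
\[
  i_{L_1}(\gamma)+\nu_{L_1}(\gamma)\ \ge\ 0+1\ =\ 1,
\]
which is the asserted inequality \eqref{e:i-L1+nu-L1-ge1}. There is therefore no genuine obstacle in the proof; the only point worth stating carefully is that the nondegeneracy input $\nu_{L_1}(\gamma)\ge 1$ is actually used, since Lemma~\ref{l:partial-positive-L0-L1}(b) on its own only bounds the sum below by $0$. I note in passing that the sharper conclusion $i_{L_1}(\gamma)\ge 1$ does \emph{not} follow from Lemma~\ref{l:partial-positive-L0-L1}(b), and would require excluding the simultaneous occurrence $i_{L_1}(\gamma)=0$ and $\nu_{L_1}(\gamma)>0$, e.g.\ via a crossing-form/monotonicity argument for the Lagrangian path $t\mapsto\gamma(t)L_1$ at the endpoint $t=\tau/2$ using $B_{11}>0$; but this refinement is not needed for the applications of \eqref{e:i-L1+nu-L1-ge1} in Proposition~\ref{p:index=0} and Corollary~\ref{c:index=1}.
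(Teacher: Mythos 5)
Your proof is correct and follows exactly the route the paper intends: Lemma \ref{l:partial-positive-L0-L1}(b) gives $i_{L_1}(\gamma)\ge 0$ from $B_{11}>0$ on $[0,\tau/2]$, and adding the hypothesis $\nu_{L_1}(\gamma)\ge 1$ finishes. Incidentally, the paper's own one-line proof cites Lemma \ref{l:partial-positive-L0-L1}(a) (the $B_{22}$/$L_0$ case) rather than (b); that is evidently a typo, and your citation of part (b) is the correct one.
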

\begin{proof}
	This is a direct corollary of Lemma \ref{l:partial-positive-L0-L1}(a).
\end{proof}

\begin{proposition}\label{p:C=0}
	Let $\gamma\in\mathcal{P}_S(2n)$ with $S>0$.
	Assume $i_{L_0}(\gamma^k)\le 1\le i_{L_0}(\gamma^k)+\nu_{L_0}(\gamma^k)$,   $i_{L_0}(\gamma)=\nu_{L_0}(\gamma)=0$ and $i_{L_0}(\gamma^j)\le i_{L_0}(\gamma^k)$ for $j<k$, $j,k\in\N$.
	Set $\gamma(S)=\begin{pmatrix}
		A&B\\
		C&D
	\end{pmatrix}\in\Sp(2n,\R)$, where $A,B,C,D\in\R^{n\times n}$ and assume $C=0$.
	Then we have
	\[i_{L_1}(\gamma)+\nu_{L_1}(\gamma)= 1.\]
\end{proposition}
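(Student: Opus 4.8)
The plan is to express every index in sight through the single symmetric matrix $D^{T}B$, and then to pin down its Morse positive index to be $1$. First I would unpack the structure of $M=\gamma(S)=\left(\begin{smallmatrix}A&B\\0&D\end{smallmatrix}\right)$. From $M^{T}JM=J$ one reads $A^{T}D=I_{n}$ and $D^{T}B=B^{T}D$, so $A$ is invertible, $D^{T}=A^{-1}$, and $D^{T}B=A^{-1}B$ is symmetric; the hypothesis $\nu_{L_{0}}(\gamma)=\dim\ker B=0$ makes $B$, hence $D^{T}B$, invertible, so $m^{0}(D^{T}B)=0$. A short computation using $M^{-1}=\left(\begin{smallmatrix}D^{T}&-B^{T}\\0&A^{T}\end{smallmatrix}\right)$ and the matrix $N$ gives $\gamma(2S)=N\gamma(S)^{-1}N\gamma(S)=\left(\begin{smallmatrix}I_{n}&2D^{T}B\\0&I_{n}\end{smallmatrix}\right)$, and then, using $AD^{T}=I_{n}$, Definition \ref{d:iteration-brake} yields
\[
\gamma^{2l}(2lS)=\begin{pmatrix}I_{n}&2l\,D^{T}B\\0&I_{n}\end{pmatrix},\qquad
\gamma^{2l+1}\big((2l+1)S\big)=\begin{pmatrix}A&(2l+1)B\\0&D\end{pmatrix}.
\]
In every case the lower-left block vanishes and the upper-right block is invertible, so $\nu_{L_{0}}(\gamma^{k})=0$ and $\nu_{L_{1}}(\gamma^{k})=n$ for all $k\ge1$.

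Next I would reduce the target. Lemma \ref{l:i-nu-L0-L1} with $C=0$ and $\dim\ker C=n$ collapses to $i_{L_{1}}(\gamma)+\nu_{L_{1}}(\gamma)=m^{+}(D^{T}B)$, while Proposition \ref{p:iteration-brake} (case $k=2$) together with the identity \eqref{e:i-L0-nu-L0} established in the proof of Lemma \ref{l:i=0}, applied to $\gamma$ with $i_{L_{0}}(\gamma)=\nu_{L_{0}}(\gamma)=0$, gives $i_{L_{0}}(\gamma^{2})=i_{L_{0}\times L_{1}}(\gamma)=m^{+}(D^{T}B)$. So the conclusion is equivalent to $i_{L_{0}}(\gamma^{2})=1$. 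Since $\nu_{L_{0}}(\gamma^{k})=0$, the hypothesis $i_{L_{0}}(\gamma^{k})\le1\le i_{L_{0}}(\gamma^{k})+\nu_{L_{0}}(\gamma^{k})$ forces $i_{L_{0}}(\gamma^{k})=1$, hence $k\ne1$ because $i_{L_{0}}(\gamma^{1})=i_{L_{0}}(\gamma)=0$. For $k=2$ this is already the claim: $i_{L_{0}}(\gamma^{2})=1$.

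It then remains to rule out $k\ge3$, which I would do by contradiction. The hypothesis $i_{L_{0}}(\gamma^{2})\le i_{L_{0}}(\gamma^{k})=1$ leaves $m^{+}(D^{T}B)\in\{0,1\}$. If $m^{+}(D^{T}B)=1$, then $i_{L_{1}}(\gamma)+\nu_{L_{1}}(\gamma)=1$ and $i_{L_{0}}(\gamma^{2})=1\ge0$, so Corollary \ref{c:index=1} applies and yields $k\le2$ --- a contradiction. If $m^{+}(D^{T}B)=0$, then $D^{T}B$ is negative definite; writing $\gamma^{j}(jS)=\left(\begin{smallmatrix}A_{j}&B_{j}\\0&D_{j}\end{smallmatrix}\right)$, the iterate formulas above show $D_{j}^{T}B_{j}$ is a positive multiple of $D^{T}B$, hence negative definite, so $m^{+}(D_{j}^{T}B_{j})=0$; then \eqref{e:i-L0-nu-L0} applied to $\gamma^{j}$ with $\nu_{L_{0}}(\gamma^{j})=0$ gives $i_{L_{0}}(\gamma^{j})=i_{L_{0}\times L_{1}}(\gamma^{j})$, and Proposition \ref{p:iteration-brake} (case $k=2$, applied to $\gamma^{j}$) upgrades this to the doubling identity $i_{L_{0}}(\gamma^{2j})=2\,i_{L_{0}}(\gamma^{j})$ for all $j$. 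Since $j\mapsto i_{L_{0}}(\gamma^{j})$ is nondecreasing by \eqref{e:iteration-inequality} (as $i_{L_{0}}(\gamma)=\nu_{L_{0}}(\gamma)=0$) and equals $1$ at $j=k$, there is a least $j^{\ast}$ with $i_{L_{0}}(\gamma^{j^{\ast}})=1$, and iterating the doubling identity gives $i_{L_{0}}(\gamma^{2^{m}j^{\ast}})=2^{m}\to\infty$. On the other hand $\gamma(2S)$ is unipotent, so its mean $L_{0}$-index is $0$ and $\{i_{L_{0}}(\gamma^{m})\}_{m}$ is bounded --- contradiction. Hence $k=2$, and the claim is proved.

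I expect the main obstacle to be exactly this last sub-case ($k\ge3$, $m^{+}(D^{T}B)=0$): the contradiction there leans on the boundedness of the iterated index $i_{L_{0}}(\gamma^{m})$ when the endpoint $\gamma(2S)$ is unipotent, equivalently the vanishing of the mean $(L_{0},L_{1})$-index, which belongs to the Long--Zhu index iteration theory (cf. \cite{L-Z1,L-Z2}) but is not recalled in the present excerpt. If one wants to stay strictly within the excerpt, the alternative is to play the doubling identity off against the monotonicity hypothesis $i_{L_{0}}(\gamma^{j})\le i_{L_{0}}(\gamma^{k})=1$ and the $L_{1}$-iteration inequality of Proposition \ref{p:iteration-brake-inequality} --- which, combined with the Bott-type formula of Proposition \ref{p:iteration-brake}, again forces $m^{+}(D^{T}B)=0$ on odd exponents $k\ge3$ --- to whittle down the admissible values of $k$; dealing cleanly with the odd exponents is the delicate part.
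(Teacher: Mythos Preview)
Your approach is essentially correct and in several respects cleaner than the paper's: the upfront computation of all iterates and the observation $\nu_{L_0}(\gamma^j)=0$ for every $j$ immediately force $i_{L_0}(\gamma^k)=1$ and subsume the paper's Cases~1 and~3. The one genuine gap is the mean-index step in the subcase $m^+(D^TB)=0$: unipotence of $\gamma(2S)$ does \emph{not} force the mean $L_0$-index to vanish, since the mean index depends on the whole path (a full rotation loop has identity endpoint but nonzero mean index), so the boundedness of $\{i_{L_0}(\gamma^m)\}$ is unavailable. But you already hold the fix. Your doubling identity $i_{L_0}(\gamma^{2j})=2\,i_{L_0}(\gamma^j)$ holds for \emph{every} $j$ once $m^+(D^TB)=0$ (since $D_j^TB_j=j\,D^TB$ is still negative definite), and together with monotonicity it forces $i_{L_0}(\gamma^j)=0$ for all $j$ by a one-line induction: if $j^\ast\ge2$ were least with $i_{L_0}(\gamma^{j^\ast})\ge1$, then $i_{L_0}(\gamma^{2(j^\ast-1)})=2\cdot0=0$ and $2(j^\ast-1)\ge j^\ast$ give $i_{L_0}(\gamma^{j^\ast})\le0$. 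This contradicts $i_{L_0}(\gamma^k)=1$ directly --- your closing paragraph hints at exactly this, and it is not delicate. (Also note: in the subcase $m^+(D^TB)=1$ the conclusion already holds, so no contradiction is needed there; your Corollary~\ref{c:index=1} argument is valid but optional.)

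The paper takes a different route in the odd-$k$ case: instead of the doubling trick it computes $i_{L_0}(\gamma^3)$ via a bespoke H\"ormander-index calculation (the Hermitian form $-\tfrac{3}{2}(D^TB)^{-1}$ of Lemma~\ref{l:quadratic-forms-calculation}(v)), which produces the exact relation $i_{L_0}(\gamma^3)=2\,m^+(D^TB)$ and drives the same dichotomy. Your route avoids that extra computation and stays inside the iteration inequalities already established in Section~\ref{ss:Maslov-index-iteration}, which is more economical.
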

\begin{proof}
	By definition $C=0$ means $\nu_{L_1}(\gamma)=n$.
	Now we divide the proof into four cases.
	
	{\bf Case 1}: $k=2l-1$ is odd and $i_{L_0}(\gamma^k)\le0$.
	Then $l>1$ and $\nu_{L_0}(\gamma^{2l-1})\ge1$. By Proposition \ref{p:iteration-brake}, we have
	\[\nu_{L_0}(\gamma^{2l-1})=\nu_{L_0}(\gamma^{2l-1})-\nu_{L_0}(\gamma)=\nu_{L_1}(\gamma^{2l-1})-\nu_{L_1}(\gamma).\]
	Thus $\nu_{L_1}(\gamma)=\nu_{L_1}(\gamma^{2l-1})-\nu_{L_0}(\gamma^{2l-1})\le n-1$, which contradicts $C=0$.
	Consequently this case cannot happen.
	
	{\bf Case 2}: $k=2l+1$ is odd and $i_{L_0}(\gamma^k)=1$.
	Then $i_{L_0}(\gamma^3)\le 1$.
	According to Case 1, $i_{L_0}(\gamma^3)=1$.
	
	Note that $\gamma^2(2S)=NM^{-1}NM=\begin{pmatrix}
		D^TA+B^TC&2B^TD\\
		2A^TC& C^TB+A^TD
	\end{pmatrix}$, where $M=\gamma(S)$.
	Then 
	\[\gamma^2(2S)=\begin{pmatrix}
		I_n&2B^TD\\
			0& I_n
			\end{pmatrix}, \text{ and }
	M\gamma^{2}(2S)=\begin{pmatrix}
				 A&3B\\
				0& D
			\end{pmatrix} .\]
Since $\nu_{L_0}(\gamma)=\dim \ker B=0$, $C=0$ and $\gamma(S)$ is invertible, we have
\begin{equation}\label{e:nu-L0-2l-2l+1}
\nu_{L_0}(\gamma^{2})=\dim \ker B^TD=0 \text{ and } \nu_{L_0}(\gamma^{3})=\dim \ker B=0.\end{equation}
According to \eqref{e:odd-iterations-L0}, Proposition \ref{p:hormander-calculate}, \eqref{e:i-index-calculation-a} and \eqref{e:nu-L0-2l-2l+1}, 
	we have 
	\begin{equation}\label{e:even-iterations}
		\begin{split}
			i_{L_0}(\gamma^3)&-i_{L_0}(\gamma^2)-i_{L_0}(\gamma)
			=s(\Graph(I_{2n}),\Graph(\gamma(S)),\widetilde \alpha_0,\gamma^2(2S)\alpha_0\times \alpha_0)+n\\
			&=i(\Graph(I_{2n}),\widetilde \alpha_0,\gamma^2(2S)\alpha_0\times \alpha_0)-i(\Graph(M),\widetilde \alpha_0,\gamma^2(2S)\alpha_0\times \alpha_0)+n\\
			&=m^+(Q(\Graph(I_{2n}),\widetilde \alpha_0,\gamma^2(2S)\alpha_0\times \alpha_0))-
			m^+(Q(\Graph(M),\widetilde \alpha_0,\gamma^2(2S)\alpha_0\times \alpha_0))+n.
		\end{split}	
	\end{equation}
By Lemma \ref{l:quadratic-forms-calculation}.(iv) and (v), we have $m^+(Q(\Graph(I_{2n}),\widetilde \alpha_0,\gamma^2(2S)\alpha_0\times \alpha_0))=0$ and \[m^+(Q(\Graph(M),\widetilde \alpha_0,\gamma^2(2S)\alpha_0\times \alpha_0))=m^+(-(D^TB)^{-1}).\]

	By \eqref{e:index-L0-L1-even} and \eqref{e:i-L0-nu-L0},
	$i_{L_0}(\gamma^2)=i_{L_0\times L_1}(\gamma)=m^+(D^TB)$.
	If $i_{L_0}(\gamma^2)=0$, then together 
	with \eqref{e:even-iterations} we have
	$m^+(-D^TB)=n-1$ and $m^+(D^TB)=0$. Thus $D^TB$ is not invertible, which yields a contradiction. 
	If $i_{L_0}(\gamma^2)=1$, then from \eqref{e:index-L0-L1-even} we get $i_{L_0\times L_1}(\gamma)=1$ .
	By \eqref{e:i-L1+nu-L1}, we have
	\[i_{L_1}(\gamma)+\nu_{L_1}(\gamma)= 1.\]
	
{\bf Case 3:} $k=2l$ is even and $i_{L_0}(\gamma^k)=0$. 
	
	Set $\gamma^l(lS)=M_1=\begin{pmatrix}
		A_1&B_1\\
		C_1&D_1
	\end{pmatrix}$.
	Since $\nu_{L_1}(\gamma^l)\ge \nu_{L_1}(\gamma)$,
	$\dim \ker C_1 \ge \dim \ker C$.
	Thus $C_1=0$.
	Note that \[\gamma^{2l}(2lS)=NM_1^{-1}NM_1=\begin{pmatrix}
		D_1^TA_1+B_1^TC_1&2B_1^TD_1\\
		2A_1^TC_1& C_1^TB_1+A_1^TD_1
	\end{pmatrix}=\begin{pmatrix}
	I_n&2B_1^TD_1\\
	0& I_n
	\end{pmatrix}.\]
	 Thus $\nu_{L_0}(\gamma^{2l})=\dim \ker B_1^TD_1\ge1$.
	Since $C_1=0$, $B^T_1D_1$ is invertible. This contradiction excludes this case.
	
{\bf Case 4:} $k=2l$ is even and $i_{L_0}(\gamma^k)=1$.
Then according to Case 3, we have
$i_{L_0}(\gamma^2)=1$; thus by \eqref{e:index-L0-L1-even}, we obtain $i_{L_0\times L_1}(\gamma)=1$.
By \eqref{e:i-L1+nu-L1}, we have
\[i_{L_1}(\gamma)+\nu_{L_1}(\gamma)= 1.\]
	\end{proof}
\begin{remark}
	For $n=1$, that is, $M=\begin{pmatrix}
		a&b\\
		c&d
	\end{pmatrix}\in \Sp(2)$,
if $\nu_{L_1}(M)\ge 1$, then $c=0$.
\end{remark}

We will give the following lemma and corollary to illustrate the time-translate may change the Maslov-type indices for the brake symmetric
boundary condition.
\begin{lemma}\label{l:shift-index}
	Let $\gamma\in \mathcal{P}_S(2n)$.
	Then we have
	\begin{equation}\label{e:index-L0-L1}
		|i_{L_1\times L_0}(\gamma)-i_{L_0\times L_1}(\gamma)|\le n.
	\end{equation}
	Set $\gamma(S)=\begin{pmatrix}
		A & B \\
		C& D \\
	\end{pmatrix}\in \Sp(2n)$, where $A,B,C,D\in \R^{n\times n}$. Then
	\begin{gather*}
		i_{L_1\times L_0}(\gamma)-i_{L_0\times L_1}(\gamma)=n-m^+\begin{pmatrix}
			C^TA & A^TD \\
			D^TA& D^TB \\
		\end{pmatrix}-\dim \ker A.\\
		i_{L_1\times L_0}(\gamma)+\nu_{L_1\times L_0}(\gamma)-i_{L_0\times L_1}(\gamma)-\nu_{L_0\times L_1}(\gamma)=n-m^+\begin{pmatrix}
			C^TA& A^TD \\
			D^TA& D^TB \\
		\end{pmatrix}-\dim \ker D.
	\end{gather*}
\end{lemma}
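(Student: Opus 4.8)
The plan is to proceed exactly as in the proofs of Lemmas \ref{l:i-nu-L0-L1} and \ref{l:i=0}, expressing the difference of the two Maslov-type indices as a H\"ormander index and then reducing it to a triple index, which is in turn computed via Proposition \ref{p:i-index-calculation}. First I would write $M=\gamma(S)$ and use Definitions \ref{d:Maslov-type-index}, \ref{d:hormander} together with Proposition \ref{p:hormander-calculate} to obtain
\begin{equation*}
  i_{L_1\times L_0}(\gamma)-i_{L_0\times L_1}(\gamma)
  =s(\Graph(I_{2n}),\Graph(M);\alpha_0\times\alpha_1,\alpha_1\times\alpha_0)
  =i(\Graph(I_{2n}),\alpha_0\times\alpha_1,\alpha_1\times\alpha_0)
  -i(\Graph(M),\alpha_0\times\alpha_1,\alpha_1\times\alpha_0).
\end{equation*}
The first triple index should evaluate to a constant (presumably $n$) by Corollary \ref{c:i-index-cal-I}, using that $\Graph(I_{2n})\cap(\alpha_0\times\alpha_1)$, $\Graph(I_{2n})\cap(\alpha_1\times\alpha_0)$ and the triple intersection are all trivial; this is the analogue of the computations $i(\Graph(I_{2n}),\widetilde\alpha_1,\widetilde\alpha_0)=n$ and $i(\Graph(I_{2n}),\alpha_0\times\alpha_1,\widetilde\alpha_0)=n$ used earlier. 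For the second triple index I would apply formula \eqref{e:i-index-calculation-a}, namely
\[
  i(\Graph(M),\alpha_0\times\alpha_1,\alpha_1\times\alpha_0)
  =m^+\bigl(Q(\Graph(M),\alpha_0\times\alpha_1;\alpha_1\times\alpha_0)\bigr)
   +\dim_{\C}(\Graph(M)\cap(\alpha_1\times\alpha_0))-\dim_{\C}(\text{triple intersection}).
\]
The term $\dim_{\C}(\Graph(M)\cap(\alpha_1\times\alpha_0))$ equals $\dim(ML_1\cap L_0)=\nu_{L_1\times L_0}(M)=\dim\ker A$ (since the bottom-right block of $M$ restricted appropriately is governed by $A$), and the triple intersection is trivial for generic position; matching these against the two displayed formulas tells me exactly which intersection dimension appears. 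The Hermitian form $Q(\Graph(M),\alpha_0\times\alpha_1;\alpha_1\times\alpha_0)$ must be identified, via the explicit calculation in Lemma \ref{l:quadratic-forms-calculation}, with the symmetric matrix $\left(\begin{smallmatrix} C^TA & A^TD\\ D^TA & D^TB\end{smallmatrix}\right)$; combining everything gives the first displayed identity, and then $|i_{L_1\times L_0}(\gamma)-i_{L_0\times L_1}(\gamma)|\le n$ follows since both $m^+(\cdot)\ge 0$ and $m^+(\cdot)+\dim\ker A\le n$ (the matrix is $2n\times 2n$ but its positive index plus the kernel dimension of $A$ is bounded by $n$, exactly as in the estimate \eqref{e:m+-kerC}).

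For the second displayed equation, the cleanest route is to repeat the argument with the roles of the two Lagrangian pairs kept but tracking nullities: one computes $i_{L_1\times L_0}(\gamma)+\nu_{L_1\times L_0}(\gamma)$ as a triple index with the opposite sign convention (i.e.\ using that $\Mas$ of a path plus the nullity at the endpoint is the "other" version of the index), which replaces $\dim\ker A$ by $\dim\ker D$ in the bookkeeping — the switch from $A$ to $D$ comes precisely because $\nu_{L_0\times L_1}(M)=\dim\ker B$ while the right endpoint contribution now involves $\dim(ML_1\cap L_0)$ counted from the other side, i.e.\ $\dim\ker D$ after using $M\in\Sp(2n)$. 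I would double-check the symplectic relations $C^TA=A^TC$, $D^TB=B^TD$, $D^TA-B^TC=I_n$ to confirm that the matrix $\left(\begin{smallmatrix} C^TA & A^TD\\ D^TA & D^TB\end{smallmatrix}\right)$ is genuinely symmetric so that $m^+$ makes sense.

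The main obstacle I anticipate is pinning down the correct Hermitian form $Q(\Graph(M),\alpha_0\times\alpha_1;\alpha_1\times\alpha_0)$ and its matrix representative — the definition of $Q(\alpha,\beta;\delta)$ on $\alpha\cap(\beta+\delta)$ is delicate, and getting the precise $2n\times 2n$ block matrix (rather than an $n\times n$ one as in \eqref{e:positive-Morse-index}) requires carefully parametrizing $\Graph(M)\cap(\alpha_0\times\alpha_1+\alpha_1\times\alpha_0)=\Graph(M)$ (since $\alpha_0\times\alpha_1+\alpha_1\times\alpha_0=V$) and evaluating the form; this is where Lemma \ref{l:quadratic-forms-calculation} in the appendix must be invoked with the right arguments. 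A secondary point to be careful about is ensuring the constant triple index $i(\Graph(I_{2n}),\alpha_0\times\alpha_1,\alpha_1\times\alpha_0)$ really is $n$ and not $0$ or $2n$; this I would verify directly from \eqref{e:i-index-calculation-a} by noting $\Graph(I_{2n})\cap(\alpha_1\times\alpha_0)=0$, so the index equals $m^+$ of the associated form on $\Graph(I_{2n})=V_{\text{diag}}$, which should come out to $n$ by a short explicit computation.
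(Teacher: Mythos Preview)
Your approach is essentially identical to the paper's: express the difference as a H\"ormander index, reduce via Proposition \ref{p:hormander-calculate} to a difference of triple indices, evaluate the first as $n$ by Corollary \ref{c:i-index-cal-I}(III), and compute the second via \eqref{e:i-index-calculation-a} together with Lemma \ref{l:quadratic-forms-calculation}(iii), noting that the triple intersection $\Graph(M)\cap(\alpha_0\times\alpha_1)\cap(\alpha_1\times\alpha_0)$ vanishes and $\dim_{\C}(\Graph(M)\cap(\alpha_1\times\alpha_0))=\dim\ker A$.

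Two small corrections. First, your justification of the bound \eqref{e:index-L0-L1} is off: the claim ``$m^+(\cdot)+\dim\ker A\le n$'' is neither obvious nor what you need, and the reference to \eqref{e:m+-kerC} does not apply (that estimate concerns an $n\times n$ matrix, not a $2n\times 2n$ one). The paper instead invokes \eqref{e:i-index-calculation-b} directly to get $0\le i(\Graph(M),\alpha_0\times\alpha_1,\alpha_1\times\alpha_0)\le 2n$, whence $|n-i(\ldots)|\le n$. Second, for the second displayed identity there is no need to invoke an ``other version of the index'': once the first identity is established, simply add $\nu_{L_1\times L_0}(\gamma)-\nu_{L_0\times L_1}(\gamma)=\dim\ker A-\dim\ker D$ to it, and the $\dim\ker A$ terms cancel to leave $n-m^+(\cdot)-\dim\ker D$.
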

\begin{proof}
  By Definitions  \ref{d:Maslov-type-index} and \ref{d:hormander}, Proposition \ref{p:hormander-calculate},
	we obtain
	\begin{equation*}
		\begin{split}
			i_{L_1\times L_0}(\gamma)&-i_{L_0\times L_1}(\gamma)= \Mas\{\Graph(\gamma),\alpha_1\times \alpha_0\}-\Mas\{\Graph(\gamma),\alpha_0\times \alpha_1\} \\
			&= s(\Graph(I_{2n}),\Graph(M);\alpha_0\times \alpha_1,\alpha_1\times \alpha_0)\\
			&= i(\Graph(I_{2n}),\alpha_0\times \alpha_1,\alpha_1\times \alpha_0)-i(\Graph(M),\alpha_0\times \alpha_1,\alpha_1\times \alpha_0)\\
			&=n-i(\Graph(M),\alpha_0\times \alpha_1,\alpha_1\times \alpha_0)\ \text{ by Corollary \ref{c:i-index-cal-I}.(III) }\\
			&=n-m^+(Q(\Graph(M),\alpha_0\times \alpha_1;\alpha_1\times \alpha_0))-\dim ML_1\cap L_0\
			\text{ by \eqref{e:i-index-calculation-a}},
		\end{split}
	\end{equation*}
	where $M=\gamma(S)$. By \eqref{e:i-index-calculation-b} of Proposition \ref{p:i-index-calculation}, we have \[0 \le i(\Graph(M),\alpha_0\times \alpha_1,\alpha_1\times \alpha_0)\le 2n.\]  Thus we get (\ref{e:index-L0-L1}).
	Note that in the notations $\Mas\{\cdot,\cdot\}$, $s(\cdot,\cdot;\cdot,\cdot)$ and triple index $i(\cdot,\cdot,\cdot)$ here and in
	\cite{zhou-zhu-wu}, we see $\Graph(\gamma)$ as its complexification in
	$\C^{2n}\times \C^{2n}$.
	Similarly, we have
	\begin{equation*}
		\begin{split}
			&\ \quad i_{L_1\times L_0}(\gamma)+\nu_{L_1\times L_0}(\gamma)-i_{L_0\times L_1}(\gamma)-\nu_{L_0\times L_1}(\gamma)\\
			&= s(\Graph(I_{2n}),\Graph(M);\alpha_0\times \alpha_1,\alpha_1\times \alpha_0) + \dim (ML_1\cap L_0)-\dim(ML_0\cap L_1)\\
			&= i(\Graph(I_{2n}),\alpha_0\times \alpha_1,\alpha_1\times \alpha_0)-i(\Graph(M),\alpha_0\times \alpha_1,\alpha_1\times \alpha_0)+ \dim (ML_1\cap L_0)-\dim (ML_0\cap L_1)\\
			&=n-m^+(Q(\Graph(M),\alpha_0\times \alpha_1;\alpha_1\times \alpha_0))-\dim(ML_0\cap L_1).
		\end{split}
	\end{equation*}
	Then by the direct computation (cf. Lemma \ref{l:quadratic-forms-calculation}(iii)), we get this lemma.
\end{proof}

\begin{corollary}\label{c:shift-two-times-index}
	Let $\gamma\in \mathcal{P}_S(2n)$. Set $\tilde \gamma(t)=N\gamma(S-t)\gamma(S)^{-1}N \text{ for } 0\le t\le S$.
	Then we have
	\[|i_{L_0}(\tilde\gamma^2)-i_{L_0}( \gamma^2)|\le n.\]
	Set $\gamma(S)=\begin{pmatrix}
		A & B \\
		C& D \\
	\end{pmatrix}\in {\rm Sp}(2n)$, where $A,B,C,D\in \R^{n\times n}$. Then
	\begin{gather*}
		i_{L_0}(\tilde\gamma^2)-i_{L_0}( \gamma^2)=n-m^+\begin{pmatrix}
			C^TA & A^TD \\
			D^TA& D^TB \\
		\end{pmatrix}-\dim \ker A.\\
		i_{L_0}(\tilde \gamma^2)+\nu_{L_0}( \tilde\gamma^2)-i_{L_0}(\gamma^2)-\nu_{L_0}(\gamma^2)=n-m^+\begin{pmatrix}
			C^TA & A^TD \\
			D^TA& D^TB\\
		\end{pmatrix}-\dim \ker D.
	\end{gather*}
\end{corollary}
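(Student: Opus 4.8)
The plan is to reduce Corollary~\ref{c:shift-two-times-index} to Lemma~\ref{l:shift-index}, by recognizing that $\tilde\gamma$ is precisely the ``second half'' of the brake iteration $\gamma^2$ and then applying the $k=2$ Bott-type splitting of Proposition~\ref{p:iteration-brake} to both $\gamma$ and $\tilde\gamma$. First I would record the elementary facts: $\tilde\gamma(0)=N\gamma(S)\gamma(S)^{-1}N=I_{2n}$, so $\tilde\gamma\in\mathcal P_S(2n)$; with $R(\gamma)$ as in~\eqref{e:define-R-gamma} one has $\tilde\gamma=NR(\gamma)\gamma(S)^{-1}N$; and, comparing with Definition~\ref{d:iteration-brake} for $k=2$ (where $\gamma(2S)=N\gamma(S)^{-1}N\gamma(S)$), the restriction of $\gamma^2$ to $[S,2S]$ equals $t\mapsto\tilde\gamma(t-S)\gamma(S)$, so that $\gamma^2$ is the concatenation, in the sense of~\eqref{e:two-iterations}, of $\gamma$ and $\tilde\gamma$. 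In particular $\tilde\gamma^2$ is well defined by Definition~\ref{d:iteration-brake}, and Proposition~\ref{p:iteration-brake} with $k=2$ applies to both paths, yielding
\begin{gather*}
 i_{L_0}(\gamma^2)=i_{L_0}(\gamma)+i_{L_0\times L_1}(\gamma),\qquad \nu_{L_0}(\gamma^2)=\nu_{L_0}(\gamma)+\nu_{L_0\times L_1}(\gamma),\\
 i_{L_0}(\tilde\gamma^2)=i_{L_0}(\tilde\gamma)+i_{L_0\times L_1}(\tilde\gamma),\qquad \nu_{L_0}(\tilde\gamma^2)=\nu_{L_0}(\tilde\gamma)+\nu_{L_0\times L_1}(\tilde\gamma).
\end{gather*}

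Next I would express the indices and nullities of $\tilde\gamma$ back in terms of $\gamma$. Since $N=\mathrm{diag}(-I_n,I_n)$ fixes both $L_0$ and $L_1$, hence $\alpha_0$ and $\alpha_1$, the time-reversal identity~\eqref{e:mas-index-define-R-gamma} of Lemma~\ref{l:mas-index-pro}(ii) applied with $\tilde\gamma=NR(\gamma)\gamma(S)^{-1}N$ gives $i_{\alpha_0\times\alpha_0}(\tilde\gamma)=i_{\alpha_0\times\alpha_0}(\gamma)$ and $i_{\alpha_0\times\alpha_1}(\tilde\gamma)=i_{\alpha_1\times\alpha_0}(\gamma)$, that is, $i_{L_0}(\tilde\gamma)=i_{L_0}(\gamma)$ and $i_{L_0\times L_1}(\tilde\gamma)=i_{L_1\times L_0}(\gamma)$. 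For the nullities, which depend only on the endpoint $\tilde\gamma(S)=N\gamma(S)^{-1}N$, a one-line computation using $NL_i=L_i$ and the invertibility of $\gamma(S)$ shows $\dim(\tilde\gamma(S)L_i\cap L_j)=\dim(\gamma(S)L_j\cap L_i)$, whence $\nu_{L_0}(\tilde\gamma)=\nu_{L_0}(\gamma)$ and $\nu_{L_0\times L_1}(\tilde\gamma)=\nu_{L_1\times L_0}(\gamma)$. Substituting into the four displayed identities gives
\[
 i_{L_0}(\tilde\gamma^2)-i_{L_0}(\gamma^2)=i_{L_1\times L_0}(\gamma)-i_{L_0\times L_1}(\gamma),
\]
and, adding the nullity identities, $i_{L_0}(\tilde\gamma^2)+\nu_{L_0}(\tilde\gamma^2)-i_{L_0}(\gamma^2)-\nu_{L_0}(\gamma^2)=\bigl(i_{L_1\times L_0}(\gamma)+\nu_{L_1\times L_0}(\gamma)\bigr)-\bigl(i_{L_0\times L_1}(\gamma)+\nu_{L_0\times L_1}(\gamma)\bigr)$.

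Finally I would invoke Lemma~\ref{l:shift-index} verbatim: its bound $|i_{L_1\times L_0}(\gamma)-i_{L_0\times L_1}(\gamma)|\le n$ yields $|i_{L_0}(\tilde\gamma^2)-i_{L_0}(\gamma^2)|\le n$, and its two explicit formulas for $i_{L_1\times L_0}(\gamma)-i_{L_0\times L_1}(\gamma)$ and for $i_{L_1\times L_0}(\gamma)+\nu_{L_1\times L_0}(\gamma)-i_{L_0\times L_1}(\gamma)-\nu_{L_0\times L_1}(\gamma)$, in terms of the Morse positive index of $\left(\begin{smallmatrix}C^TA&A^TD\\ D^TA&D^TB\end{smallmatrix}\right)$ together with $\dim\ker A$ and $\dim\ker D$, give precisely the two stated equalities. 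I expect no real analytic obstacle here — all the substantive work is already packaged in Lemma~\ref{l:shift-index}, Lemma~\ref{l:mas-index-pro} and the iteration formulae of Proposition~\ref{p:iteration-brake}; the only step demanding care is the bookkeeping above, namely tracking which Lagrangian sits in which factor of $V=\C^{2n}\oplus\C^{2n}$ and checking the $N$-invariance of $\alpha_0,\alpha_1$.
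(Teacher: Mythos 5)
Your proposal is correct and follows exactly the paper's own argument: it combines the time-reversal identity \eqref{e:mas-index-define-R-gamma}, the matching nullity identities $\nu_{L_0}(N M^{-1}N)=\nu_{L_0}(M)$ and $\nu_{L_0\times L_1}(N M^{-1}N)=\nu_{L_1\times L_0}(M)$ for $M=\gamma(S)$, the $k=2$ case of Proposition \ref{p:iteration-brake}, and Lemma \ref{l:shift-index}. The extra observation that $\gamma^2$ is the concatenation of $\gamma$ and $\tilde\gamma$ in the sense of \eqref{e:two-iterations} is a correct and illuminating remark but is not actually required beyond what the paper's short proof already invokes.
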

\begin{proof}
	By \eqref{e:mas-index-define-R-gamma},
	we have $i_{L_0}(\tilde\gamma)=i_{L_0}( \gamma)$ and $i_{L_0\times L_1}(\tilde\gamma)=i_{L_1\times L_0}(\gamma)$.
	For $M\in \Sp(2n)$, we also have
	$\nu_{L_0}(NM^{-1}N)=\nu_{L_0}(M)$ and $\nu_{L_0\times L_1}(NM^{-1}N)=\nu_{L_1\times L_0}(M)$. Then by
	Proposition \ref{p:iteration-brake} for $k=2$
and	 Lemma \ref{l:shift-index}, we get our conclusion.
\end{proof}
\begin{remark}\label{r:shift-index}
Let $\gamma\in \mathcal{P}_S(2)$
	and assume $\gamma(S)=\left(
	\begin{array}{cc}
		1 & -2 \\
		1 & -1 \\
	\end{array}
	\right)\in {\rm Sp}(2)
	$. Then by Corollary \ref{c:shift-two-times-index}, $i_{L_0}({\tilde\gamma^2})-i_{L_0}(\gamma^2)=-1\neq 0$ and
	$i_{L_0}(\tilde\gamma^2)+\nu_{L_0}(\tilde\gamma^2)-i_{L_0}(\gamma^2)-\nu_{L_0}(\gamma^2)=-1\neq 0$.
\end{remark}
\subsection{Proof of Theorem \ref{t:super-quadratic-1}}
\label{ss:subharmonic}
For $j\in\N$, 
a $jT$-periodic solution $(x_j,jT)$ of \eqref{e:HS-jT} is also called a brake solution.
Such a periodic brake solution is called nondegenerate if $\nu_{L_0}(x_j)=0$.
For an integer $j\in\Z$ and a $kT$-periodic function $x_k$, the
phase shift $j*x_k$ is defined to be
\[(j*x_k)(t)=x_k(t+jT).\]
Since $H$ is $T$-periodic in $t$, whenever $j\in2\N$ and $x_j$ solves {\rm (HS)$_j$}, so do its phase shift
$\frac{j}{2}*x_j$.
For two brake solutions $(x_j,jT)$ of {\rm (HS)$_j$} and $(x_k,kT)$ of {\rm (HS)$_k$} in \eqref{e:HS-jT},
it might happen that $x_j$ is in fact a time-translate of $x_k$ for $j,k\in\N$.
For instance,
\[\frac{l}{2}*x_j= \frac{h}{2}*x_k,\]
for some $l\in\{0,j\}\cap 2\N$ and some $h\in\{0,k\}\cap 2\N$.

More generally, given two brake solutions $(x_j,jT)$ of {\rm (HS)$_j$} and $(x_k,kT)$ of {\rm (HS)$_k$} in \eqref{e:HS-jT}, we shall say they are geometrically distinct if
\[l*x_j\neq h*x_k \text{ for all } l,h\in\Z.\]
\begin{lemma}\label{l:semi-periodic-even-shift}
	For $j\in2\N$.
	If $(x_j,jT)$ is a $jT$-periodic brake solution of \eqref{e:HS-jT}, then
	$\frac{j}{2}*x_j$ is also a $jT$-periodic brake solution of \eqref{e:HS-jT} and we have
	\[i_{L_0}(x_j)=i_{L_0}(\frac{j}{2}*x_j), \ \ \ \nu_{L_0}(x_j)=\nu_{L_0}(\frac{j}{2}*x_j).\]
\end{lemma}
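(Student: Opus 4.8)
The plan is to check directly that $y:=\frac{j}{2}*x_j$ satisfies the three conditions defining $\mathrm{(HS)}_j$, then to express the matrizant of $y$ through that of $x_j$ via the brake–symmetry relation, and finally to read off both index identities from the formula \eqref{e:mas-index-define-R-gamma}.

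First I would verify that $y(t)=x_j\!\left(t+\frac{j}{2}T\right)$ solves $\mathrm{(HS)}_j$. Since $j\in 2\N$, the number $m:=\frac{j}{2}$ is a positive integer, so $H(t+mT,\cdot)=H(t,\cdot)$ by $T$-periodicity, whence $\dot y(t)=\dot x_j(t+mT)=JH'(t+mT,x_j(t+mT))=JH'(t,y(t))$. The $jT$-periodicity of $y$ is inherited from that of $x_j$. For the brake symmetry, $y(-t)=x_j\bigl(-(t-mT)\bigr)=Nx_j(t-mT)=Nx_j(t-mT+jT)=Nx_j(t+mT)=Ny(t)$, using the brake condition $x_j(-s)=Nx_j(s)$ together with $jT$-periodicity. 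Thus $(y,jT)$ is again a $jT$-periodic brake solution.

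Next I would relate the associated symplectic paths. Write $S=\frac{jT}{2}=mT$, $B_1(t)=H''(t,x_j(t))$, let $\gamma_{x_j}$ be the matrizant of $\dot z=JB_1(t)z$, and set $\gamma:=\gamma_{x_j}|_{[0,S]}$. By $T$-periodicity of $H''$ in its first argument, $B_2(t):=H''(t,y(t))=H''(t+mT,x_j(t+mT))=B_1(t+S)$, so uniqueness of solutions gives the matrizant of $y$ as $\gamma_y(t)=\gamma_{x_j}(t+S)\,\gamma_{x_j}(S)^{-1}$. Inserting the brake–symmetry identity $\gamma_{x_j}(t+S)=N\gamma_{x_j}(S-t)\gamma_{x_j}(S)^{-1}N\gamma_{x_j}(S)$ (valid for $t\in[0,S]$, from the relation recorded just before Definition \ref{d:iteration-brake}), the factor $\gamma_{x_j}(S)\gamma_{x_j}(S)^{-1}$ cancels and I obtain
\[\gamma_y(t)=N\gamma_{x_j}(S-t)\gamma_{x_j}(S)^{-1}N=N\,R(\gamma)(t)\,\gamma(S)^{-1}N\qquad(t\in[0,S]),\]
with $R(\gamma)$ as in \eqref{e:define-R-gamma}.

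Finally, recalling $i_{L_0}(x_j)=i_{\widetilde\alpha_0}(\gamma)-n$ and $i_{L_0}(y)=i_{\widetilde\alpha_0}\bigl(\gamma_y|_{[0,S]}\bigr)-n$, I apply \eqref{e:mas-index-define-R-gamma} with $\beta_1=\beta_2=\alpha_0$: since $N=\mathrm{diag}(-I_n,I_n)$ maps $\alpha_0=\{0\}\times\C^n$ onto itself, $i_{\widetilde\alpha_0}\bigl(N R(\gamma)\gamma(S)^{-1}N\bigr)=i_{N\alpha_0\times N\alpha_0}(\gamma)=i_{\widetilde\alpha_0}(\gamma)$, hence $i_{L_0}(y)=i_{L_0}(x_j)$. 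For the nullity, evaluating the displayed identity at $t=S$ gives $\gamma_y(S)=N\gamma_{x_j}(S)^{-1}N$, and since $NL_0=L_0$ with $N$ invertible, $\nu_{L_0}(y)=\dim\bigl(\gamma_y(S)L_0\cap L_0\bigr)=\dim\bigl(\gamma_{x_j}(S)^{-1}L_0\cap L_0\bigr)=\dim\bigl(L_0\cap\gamma_{x_j}(S)L_0\bigr)=\nu_{L_0}(x_j)$. The one delicate point is to pin down the brake–symmetry identity for the matrizant and the orientation of the reversed path $R(\gamma)$ precisely, so that \eqref{e:mas-index-define-R-gamma} applies verbatim; the remainder is routine bookkeeping with the $T$-periodicity of $H$.
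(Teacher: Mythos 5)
Your proof is correct and follows essentially the same route as the paper: verify that the shift is a brake solution, express the associated symplectic path as $N\gamma_{x_j}(S-t)\gamma_{x_j}(S)^{-1}N$, and read both index identities from \eqref{e:mas-index-define-R-gamma} together with $N\alpha_0=\alpha_0$ and $NL_0=L_0$. The one cosmetic difference is how you reach the matrizant formula: the paper first computes $B(t+S)=NB(S-t)N$ directly from {\bf(SH6)} and the brake symmetry of $x_j$ and then invokes uniqueness, whereas you shift to $\gamma_{x_j}(t+S)\gamma_{x_j}(S)^{-1}$ and substitute the brake-symmetry identity for $\gamma_{x_j}$ on $[S,jT]$ recorded before Definition \ref{d:iteration-brake}; both are legitimate and equivalent.
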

\begin{proof}
	Since we assume $j\in2\N$, direct verification shows that $\frac{j}{2}*x_j$ is also a $jT$-periodic brake solution of \eqref{e:HS-jT}.
	Let $B(t)=H''(t,x_j(t))$ for all $t\in\R$.
	Let $S=\frac{jT}{2}$.
	By \eqref{e:HS-jT} and {\bf(SH6)}, we have
    $x_j(t+S)=Nx_j(S-t)$ and
	$H''(t,Nx)=NH''(-t,x)N$ for $t\in\R$, $x\in\R^{2n}$. Then we have for $t\in\R$,
	\[B(t+S)=H''(t,x_j(t+S))=NH''(-t,x_j(S-t))N=NH''(S-t,x_j(S-t))N=NB(S-t)N.\]
	Let $\gamma_{x_j}$ be the associated symplectic path of $(x_j,jT)$. Then the associated symplectic path of $(\frac{j}{2}*x_j,jT)$ is $N\gamma_{x_j}(S-t)\gamma_{x_j}(S)^{-1}N$ for $t\in[0,S]$.
	Define $\gamma\in \mathcal{P}_{jT/2}(2n)$ by 
	$\gamma(t)=\gamma_{x_j}(t)$ for $t\in[0,jT/2]$.
	Thus by the definition of the Maslov-type index and \eqref{e:mas-index-define-R-gamma}, we have
	\[i_{L_0}(x_j)=i_{L_0}(\gamma)=i_{L_0}(N\gamma(S-t)\gamma(S)^{-1}N)=i_{L_0}(\frac{j}{2}*x_j).\]
	Since $\gamma(S)$ is a symplectic matrix, we have
	\[\nu_{L_0}(x_j)=\nu_{L_0}(\gamma(S))=\nu_{L_0}(N\gamma(S)^{-1}N)=\nu_{L_0}(\frac{j}{2}*x_j).\]
\end{proof}
\begin{lemma}\label{l:even-periodic-solution}
	Let $j,h\in\N$.
	If $(x_j,jT)$ and $(h*x_j,jT)$ are two $jT$-periodic brake solutions of \eqref{e:HS-jT},
	then $x_j$ is $2hT$-periodic, i.e., $x_j(t+2hT)=x_j(t)$ for $t\in\R$.
\end{lemma}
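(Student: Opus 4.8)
The plan is to extract the two brake-symmetry identities carried by $x_j$ and by its phase shift $h*x_j$, and to eliminate $N$ between them; no use of the Hamiltonian equation or of the $jT$-periodicity will be needed. First I would use that $(h*x_j,jT)$ is a brake solution of \eqref{e:HS-jT}: by the second line of {\rm (HS)$_j$} applied to $h*x_j$ we have $(h*x_j)(-t)=N(h*x_j)(t)$ for all $t\in\R$, which, since $(h*x_j)(t)=x_j(t+hT)$, reads
\[ x_j(hT-t)=Nx_j(hT+t),\qquad t\in\R. \]
Substituting $t\mapsto t-hT$ turns this into
\[ x_j(2hT-t)=Nx_j(t),\qquad t\in\R. \]

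Next I would invoke that $(x_j,jT)$ is itself a brake solution, so the second line of {\rm (HS)$_j$} gives $x_j(-t)=Nx_j(t)$ for all $t\in\R$. Comparing the last two displays eliminates $Nx_j(t)$ and yields $x_j(2hT-t)=x_j(-t)$ for every $t\in\R$; finally, replacing $t$ by $-t$ gives $x_j(t+2hT)=x_j(t)$ for all $t\in\R$, which is exactly the assertion that $x_j$ is $2hT$-periodic.

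The hard part is essentially nonexistent here: the statement is a purely algebraic consequence of the two reversibility relations built into the definition of a brake solution, and none of {\bf (SH0)}--{\bf (SH7)}, nor the differential equation, nor the period $jT$ enters beyond that. The only thing to watch is the bookkeeping of the substitutions $t\mapsto t-hT$ and $t\mapsto -t$; once the identity $x_j(hT-t)=Nx_j(hT+t)$ has been written down correctly, the conclusion follows in one line.
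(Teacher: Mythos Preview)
Your proof is correct and takes a genuinely different route from the paper's. The paper first uses the two brake symmetries only to obtain the single pointwise identity $x_j(2hT)=Nx_j(0)=x_j(0)$, and then invokes uniqueness of solutions of the ODE $\dot x=JH'(t,x)$ (using that $H$ is $T$-periodic in $t$) to conclude that $x_j$ and $(2h)*x_j$, having the same initial value, coincide for all $t$. You instead eliminate $N$ directly between the two reflection identities $x_j(2hT-t)=Nx_j(t)$ and $x_j(-t)=Nx_j(t)$, obtaining $x_j(t+2hT)=x_j(t)$ for every $t$ without any appeal to the differential equation or to the period $jT$. Your argument is more elementary and isolates that the lemma is a consequence of the brake symmetries alone; the paper's argument, while slightly less direct, exhibits the standard ``match initial conditions, invoke uniqueness'' template that recurs elsewhere in the theory.
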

\begin{proof}
	Since $x_j(t+hT)=Nx_j(-t+hT)$ and $x_j(t)=Nx_j(-t)$, we have
	\[x_j(2hT)=Nx_j(0)=x_j(0).\]
	Since $H$ is $T$-periodic about $t\in\R$, $x_j$ and  $(2h)*x_j$ are the solutions of \eqref{e:HS-nonauto} with the same initial value. Thus $x_j=(2h)*x_j$.
\end{proof}
\begin{lemma}\label{l:shift-period}
	Let $j,k\in\N$ and $j<k$. Assume that $(x_j,jT)$ and $(x_k,kT)$ are two brake solutions of $\rm (HS)_j$ and $\rm (HS)_k$ in \eqref{e:HS-jT} respectively. If there exist $h,l\in\Z$ such that
	\[x_j(t+hT)= x_k(t+lT) \text{ for all } t\in\R,\]
	then there exists an $m\in\N$ such that
	\begin{equation}\label{e:even-periodic-solution}
		x_j(t+mT)=x_k(t)\ \ \ \ \ x_j(t+2mT)=x_j(t) \text{ for all } t\in\R.
	\end{equation}
\end{lemma}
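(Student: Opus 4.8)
The plan is to first normalize the integer time shift to a \emph{positive} integer using only the $jT$-periodicity of $x_j$, and then to recognize $x_k$ as a $jT$-periodic brake solution in disguise so that Lemma \ref{l:even-periodic-solution} applies directly. Concretely, I would first substitute $t\mapsto t-lT$ in the hypothesis $x_j(t+hT)=x_k(t+lT)$ to obtain $x_j(t+m_0T)=x_k(t)$ for all $t\in\R$, with $m_0:=h-l\in\Z$. Since $x_j$ is $jT$-periodic, $m_0$ may be replaced by the unique representative $m\in\{1,\dots,j\}$ with $m\equiv m_0\pmod{j}$, and then $x_j(t+mT)=x_j(t+m_0T)=x_k(t)$ for all $t\in\R$. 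This already gives $m\in\N$ and the first identity in \eqref{e:even-periodic-solution}; equivalently $x_k=m*x_j$ in the phase-shift notation.

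Next I would check that $m*x_j=x_k$ is itself a $jT$-periodic brake solution of \eqref{e:HS-jT}: it is $jT$-periodic because $x_j$ is, since $(m*x_j)(t+jT)=x_j(t+mT+jT)=x_j(t+mT)=(m*x_j)(t)$; and it satisfies the Hamiltonian equation $\dot x=JH'(t,x)$ together with the brake symmetry $(m*x_j)(-t)=N(m*x_j)(t)$ precisely because $x_k$ solves $(\mathrm{HS})_k$, which shares the same ODE and the same brake condition as $(\mathrm{HS})_j$. Hence both $(x_j,jT)$ and $(m*x_j,jT)$ are $jT$-periodic brake solutions of \eqref{e:HS-jT}, and Lemma \ref{l:even-periodic-solution} (applied with $h=m$) yields that $x_j$ is $2mT$-periodic, i.e. $x_j(t+2mT)=x_j(t)$ for all $t\in\R$. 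That is the second identity in \eqref{e:even-periodic-solution}, and the proof is complete. As a byproduct, substituting $t\mapsto t+mT$ in $x_j(t+mT)=x_k(t)$ and using this periodicity gives $x_k(t+mT)=x_j(t)$ as well.

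If one prefers to avoid invoking Lemma \ref{l:even-periodic-solution}, the second identity can be obtained directly from the brake symmetries: replacing $t$ by $-t$ in $x_j(t+mT)=x_k(t)$ and using $x_j(-s)=Nx_j(s)$ and $x_k(-s)=Nx_k(s)$ gives $Nx_j(t-mT)=x_j(mT-t)=x_k(-t)=Nx_k(t)=Nx_j(t+mT)$; cancelling $N$ and shifting $t\mapsto t+mT$ yields $x_j(t+2mT)=x_j(t)$. Either way, this lemma is essentially routine: there is no genuine obstacle, and the only points that require a little care are the reduction of the shift to $m\in\N$ in the first step — which relies on the $jT$-periodicity of $x_j$ rather than the $kT$-periodicity of $x_k$, so that the hypothesis $j<k$ is used only to name the two distinct solutions — and the bookkeeping of which period and which symmetry of $m*x_j=x_k$ are inherited from $x_j$ and which from $x_k$.
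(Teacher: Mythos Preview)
Your proof is correct and follows essentially the same route as the paper: reduce the shift $h-l$ modulo $j$ using the $jT$-periodicity of $x_j$ to get $x_k=m*x_j$, observe that this makes $m*x_j$ a $jT$-periodic brake solution, and invoke Lemma~\ref{l:even-periodic-solution}. Your choice of $m\in\{1,\dots,j\}$ rather than the paper's $\{0,1,\dots,j-1\}$ is in fact slightly cleaner, since the statement asks for $m\in\N$; your optional direct derivation of the $2mT$-periodicity from the two brake symmetries is a pleasant bonus.
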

\begin{proof}
	According to the assumption, there exits an $m\in\{0,1,...,j-1\}$ such that
	$m*x_j=x_k$. Then $x_j$ and $m*x_j$ are two $jT$-periodic brake solutions of \eqref{e:HS-jT}.
	By Lemma \ref{l:even-periodic-solution}, we get \eqref{e:even-periodic-solution}.
\end{proof}

Now we can prove our important proposition.
\begin{proposition}\label{p:index-geometrical-same}
	Let $j,k\in\N$ and $j<k$. Let $(x_j,jT)$ and $(x_k,kT)$ be two brake solutions of ${\rm(HS)_j}$ and ${\rm(HS)_k}$ in \eqref{e:HS-jT} respectively. Assume that $x_j$ and $x_k$ are not geometrically distinct, i.e.,  there exist $h,l\in\Z$ such that
	\[x_j(t+hT)= x_k(t+lT) \text{ for all } t\in\R.\]
	Then we have:
	\begin{enumerate}
		\item If $j$ is odd, then $x_j(t)=x_k(t)$ for all $t\in\R$.
		\item If $j\in 4\N-2$, then
		\begin{equation}\label{e:iteration-not4N}
			i_{L_0}(x_j)=i_{L_0}(x_k|_{[0,jT]}),\ \ \ \nu_{L_0}(x_j)=\nu_{L_0}(x_k|_{[0,jT]}).
		\end{equation}
		\item If $j\in 4\N$, then
		\[|i_{L_0}(x_j)-i_{L_0}(x_k|_{[0,jT]})|\le n.\]
	\end{enumerate}
	Let $\gamma_j\in \mathcal P_{\frac{jT}{2}}(2n)$ be the associated symplectic path of $x_j|_{[0,\frac{jT}{2}]}$.
	When  $k=pj$, $p\in \N$,
	we have
	\begin{equation}\label{e:index-brake-iteration-even}
		|i_{L_0}(\gamma^p_j)-i_{L_0}(x_{pj})|\le n.
	\end{equation}
\end{proposition}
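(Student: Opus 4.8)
The plan is to reduce every case to a statement about the associated symplectic paths together with the iteration formulae of Section \ref{ss:Maslov-index-iteration}. First I would invoke Lemma \ref{l:shift-period}: since $x_j$ and $x_k$ are not geometrically distinct, there is an $m\in\{0,1,\dots,j-1\}$ with $m*x_j=x_k$ and $x_j(t+2mT)=x_j(t)$ for all $t$. So $x_k$ is, up to a time shift by $mT$, the $k/j$-fold iterate of $x_j$; and $x_j$ is $2mT$-periodic. The whole proposition is then a bookkeeping of how the Maslov-type index with the $L_0$ boundary condition behaves (a) under an integer time shift by $T$, and (b) under iteration of the brake path. Part (1), $j$ odd: here $2mT$-periodicity with $m\le j-1$ forces $x_j$ to be $jT$-periodic already — one checks $x_j(t)=x_j(t+2mT)$ combined with $jT$-periodicity and $j$ odd yields, via the Euclidean algorithm on $\gcd(2m,j)=\gcd(m,j)$, that $x_j$ has period dividing $j$ in a way that makes $x_j=x_k$; the cleanest route is to observe $m*x_j=x_k$ is $kT$-periodic and $jT$-periodic, hence $jT$-periodic, so $x_j$ and $x_k$ solve the same system with the same data after the shift, and the brake symmetry $x_j(-t)=Nx_j(t)$ pins $m$ so that in fact $x_j=x_k$.

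For parts (2) and (3) the key tool is Corollary \ref{c:shift-two-times-index} together with Definition \ref{d:iteration-brake}. Writing $\gamma=\gamma_{x_j}$ restricted to $[0,jT/2]$, the time shift $m*x_j$ has associated path $N\gamma(mT + \cdot)\gamma(mT)^{-1}N$-type expressions; because $x_j$ is $2mT$-periodic the shift by $mT$ corresponds precisely to the "half-iterate" reflection $\tilde\gamma(t)=N\gamma(S-t)\gamma(S)^{-1}N$ of Corollary \ref{c:shift-two-times-index} when $2m=j$, i.e. $j\in 2\N$ and $m=j/2$. When $j\in 4\N-2$, so $j/2$ is odd, one shows by Lemma \ref{l:semi-periodic-even-shift} (applied with the relevant period) that the shift by $(j/2)T$ leaves $i_{L_0}$ and $\nu_{L_0}$ unchanged, which gives \eqref{e:iteration-not4N}; the point is that $m$ must then be $0$ or $j/2$ and in the latter case Lemma \ref{l:semi-periodic-even-shift} applies verbatim. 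When $j\in 4\N$, the shift by $(j/2)T$ is exactly a shift realizing the reflected iterate $\tilde\gamma^2$ versus $\gamma^2$ of Corollary \ref{c:shift-two-times-index}, whence $|i_{L_0}(x_j)-i_{L_0}(\,\text{shifted}\,)|\le n$; combining with the identification $x_k = m*x_j$ and the fact that $i_{L_0}(x_k|_{[0,jT]})$ equals $i_{L_0}$ of the shifted path, one gets the claimed bound.

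For the final inequality \eqref{e:index-brake-iteration-even} with $k=pj$: by Lemma \ref{l:shift-period} again, $x_{pj}$ agrees with $m*(\gamma_j^p)$-type data, i.e. $x_{pj}$ is a time shift (by some multiple of $T$) of the $p$-th brake iterate $\gamma_j^p$ of $\gamma_j$. If the shift is trivial then $i_{L_0}(\gamma_j^p)=i_{L_0}(x_{pj})$ outright. If the shift is nontrivial it is again, because $x_j$ is $2mT$-periodic, a shift of the form handled by Corollary \ref{c:shift-two-times-index} applied to the path $\gamma_j^p$ (or to $\gamma_j$ before iterating, and then one uses Proposition \ref{p:iteration-brake} to transport the $\le n$ defect through the iteration), giving $|i_{L_0}(\gamma_j^p)-i_{L_0}(x_{pj})|\le n$.

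The main obstacle I expect is the careful matching in parts (2)--(3) and in \eqref{e:index-brake-iteration-even} between the abstract shift operation $m*x_j$ for an arbitrary $m\in\{0,\dots,j-1\}$ and the specific reflected-iterate operations for which Lemma \ref{l:semi-periodic-even-shift} and Corollary \ref{c:shift-two-times-index} are stated: one must use the $2mT$-periodicity to show that only $m\in\{0,j/2\}$ can occur (up to replacing $x_k$ by a time-translate, which is harmless since geometric distinctness is translation-invariant), and then verify that the shift by $(j/2)T$ on a $jT$-periodic brake solution is exactly the reflection $\gamma(S)\mapsto N\gamma(S)^{-1}N$ on the monodromy, so that Remark \ref{r:shift-index}'s phenomenon (the defect can genuinely be nonzero, bounded by $n$) is both possible and controlled. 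Getting the period-parity bookkeeping ($j$ odd vs. $j\in 4\N-2$ vs. $j\in 4\N$) exactly right is the delicate part; the index estimates themselves are then immediate from the cited lemmas.
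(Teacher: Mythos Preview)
Your overall strategy is the paper's, but the central reduction you lean on for parts (2), (3), and \eqref{e:index-brake-iteration-even} is not correct as stated, and this is a real gap rather than a bookkeeping detail. Concretely: from Lemma \ref{l:shift-period} you get $m*x_j=x_k$ and $x_j$ is $2mT$-periodic, hence $hT$-periodic with $h=\gcd(j,2m)$. Modulo this $hT$-periodicity the shift $m$ reduces to $0$ or $h/2$, not to $0$ or $j/2$. When $j\in 4\N-2$ one checks $j/h$ is odd, so $(h/2)*x_j=(j/2)*x_j$ and your direct appeal to Lemma \ref{l:semi-periodic-even-shift} at the $jT$ level does work (and is in fact slicker than the paper's detour through the odd Bott-type formula). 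But when $j\in 4\N$ the iterate count $l=j/h$ can be even, and then $(h/2)*x_j\neq(j/2)*x_j$; your claimed reduction to $m\in\{0,j/2\}$ fails, and since a $(j/2)$-shift always preserves $i_{L_0}$ by Lemma \ref{l:semi-periodic-even-shift}, it cannot be the source of the defect $\le n$. Corollary \ref{c:shift-two-times-index} is also misapplied: it compares $i_{L_0}(\gamma^2)$ with $i_{L_0}(\tilde\gamma^2)$ for $\gamma\in\mathcal{P}_S(2n)$, which corresponds to the $2S$-level, not to a $(j/2)$-shift of a $jT$-periodic solution.

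The fix is to work at the $hT$ level throughout. Set $\gamma,\tilde\gamma\in\mathcal P_{hT/2}(2n)$ to be the associated paths of $x_j$ and $(h/2)*x_j$ on $[0,hT/2]$; then $i_{L_0}(x_j)=i_{L_0}(\gamma^l)$ and $i_{L_0}(x_k|_{[0,jT]})=i_{L_0}(\tilde\gamma^l)$ with $l=j/h$. Now $i_{L_0}(\gamma)=i_{L_0}(\tilde\gamma)$ by \eqref{e:mas-index-define-R-gamma}, and $i_{e^{\sqrt{-1}\theta}}(\gamma^2)=i_{e^{\sqrt{-1}\theta}}(\tilde\gamma^2)$ by the homotopy invariance of Lemma \ref{l:homotopy-invar}. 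Feeding these into the Bott-type formulae of Proposition \ref{p:iteration-brake}, the only term that can differ between $i_{L_0}(\gamma^l)$ and $i_{L_0}(\tilde\gamma^l)$ is the single $i_{L_0\times L_1}$ summand, present only when $l$ is even; Lemma \ref{l:shift-index} bounds $|i_{L_0\times L_1}(\tilde\gamma)-i_{L_0\times L_1}(\gamma)|=|i_{L_1\times L_0}(\gamma)-i_{L_0\times L_1}(\gamma)|\le n$, which gives Case 3 and, by the same computation with $l$ replaced by $pl$, the inequality \eqref{e:index-brake-iteration-even}.
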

\begin{proof}
	By Lemma \ref{l:shift-period},  there exits an $m\in\{0,1,...,j-1\}$ such that
	\eqref{e:even-periodic-solution} holds.
	Let $h$ be the greatest common divisor of $j$ and $2m$, which is denoted by $h=(j,2m)$.
	Then $h*x_j=x_j$.
	
	{\bf Case 1}: $j$ is odd. Then $h$ must be odd and $h|m$.
	Since $x_j(t+hT)=x_j(t)$, we have
	 \[x_j(t)=x_j(t+mT)=x_k(t)  \text{ for all } t\in\R.\]
	
	{\bf Case 2}:  $j\in4\N-2$.
	
	If $h\mid m$, then $x_k=m*x_j=x_j$.
	
	If $h\nmid m$, then $h$ is even, $\frac{h}{2}\mid m$. So $m=(2t'-1)\frac{h}{2}$ for some $t'\in\N$, thus
	$h\mid (m-\frac{h}{2})$. Then $x_k=m*x_j=(\frac{h}{2})*x_j$, 
	 and $j=h(2l-1)$ for some $l\in\N$.
	By Lemma \ref{l:semi-periodic-even-shift},
	we have
	\[i_{L_0}(x_j|_{[0,hT]})=i_{L_0}(x_k|_{[0,hT]}), \ \ \ \nu_{L_0}(x_j|_{[0,hT]})=\nu_{L_0}(x_k|_{[0,hT]}).\]
	Let $\gamma_{x_a}$ be the associated symplectic path of $x_a$ for $a=j,k$.
	Define $\gamma_a\in \mathcal{P}_{\frac{hT}{2}}(2n)$ by $\gamma_a(t)=\gamma_{x_a}(t)$ for $t\in[0,\frac{hT}{2}]$ and $a=j,k$.
	Then  
	\begin{gather*}
		i_{L_0}(\gamma_j)=i_{L_0}(x_j|_{[0,hT]})=i_{L_0}(x_k|_{[0,hT]})=i_{L_0}(\gamma_k),\quad  \nu_{L_0}(\gamma_j)=\nu_{L_0}(\gamma_k),\\
i_{L_0}(x_j)=i_{L_0}(\gamma^{2l-1}_j), \nu_{L_0}(x_j)=\nu_{L_0}(\gamma^{2l-1}_j), i_{L_0}(x_k|_{[0,jT]})=i_{L_0}(\gamma^{2l-1}_k),\nu_{L_0}(x_k|_{[0,jT]})=\nu_{L_0}(\gamma^{2l-1}_k).
\end{gather*}
By the homotopy invariance property (cf. Lemma \ref{l:index-invariance}), we have \[i_{e^{\sqrt{-1}\theta}}(\gamma_j^2)=i_{e^{\sqrt{-1}\theta}}(x_j|_{[0,hT]})=i_{e^{\sqrt{-1}\theta}}(x_k|_{[0,hT]})=
i_{e^{\sqrt{-1}\theta}}(\gamma_k^2).\]
	Then by the Bott-type iteration formula  Proposition \ref{p:iteration-brake} for the odd times brake iteration,
	we get (\ref{e:iteration-not4N}).
	
	{\bf Case 3}: $j\in4\N$. Then $h$ is even, $\frac{h}{2}\mid m$ and
	$j=hl$ for some $l\in\N$.
	Similarly to Cases 1 and 2, if $h\mid m$, then $x_k=m*x_j=x_j$; thus 
	\[i_{L_0}(x_j)=i_{L_0}(x_k|_{[0,jT]}) \quad\text{and}\quad \nu_{L_0}(x_j)=\nu_{L_0}(x_k|_{[0,jT]}).\]
	
If $h\nmid m$, then $m=(2t'-1)\frac{h}{2}$ for some $t'\in\N$, thus
$h\mid (m-\frac{h}{2})$. Then $x_k=m*x_j=(\frac{h}{2})*x_j$.
	Let $\gamma_{x_j}$ be the associated symplectic path of $(x_j,jT)$.
	Set 
	\begin{equation}\label{e:shift-brake}
	\gamma(t)=\gamma_{x_j}(t) \text{ for } 0\le t\le \frac{hT}{2} \text { and }
	\tilde \gamma(t)=N\gamma(\frac{hT}{2}-t)\gamma(\frac{hT}{2})^{-1}N \text{ for } 0\le t\le \frac{hT}{2}.
	\end{equation}
	Then $\tilde \gamma(t)$ is the associated symplectic path of $x_k|_{[0,\frac{hT}{2}]}$.  By the $N$-brake iteration (cf. Definition \ref{d:iteration-brake}), we have
	$i_{L_0}(x_j)=i_{L_0}(\gamma^l)$ and $i_{L_0}(x_k|_{[0,jT]})=i_{L_0}(\tilde \gamma^l)$.
	Then by Lemma \ref{l:shift-index} and Corollary \ref{c:shift-two-times-index}, we have
	\begin{equation}\label{e:shift-half-period}
	i_{L_0}(\gamma)=i_{L_0}(\tilde \gamma) \text{\quad and\quad } |i_{L_0\times L_1}(\tilde\gamma)-i_{L_0\times L_1}(\gamma)|=|i_{L_1\times L_0}(\gamma)-i_{L_0\times L_1}(\gamma)|\le n.
	\end{equation}
	By the homotopy invariance property, we have $i_{e^{\sqrt{-1}\theta}}(\tilde\gamma^2)=i_{e^{\sqrt{-1}\theta}}(\gamma^2).$
	Thus by the Bott-type iteration formula  for the brake iteration, since $l$ may be even, we just have
	\[|i_{L_0}(x_j)-i_{L_0}(x_k|_{[0,jT]})|=|i_{L_0}(\gamma^l)-i_{L_0}(\tilde \gamma^l)|\le n.\]

Assume $k=pj$ for some $p\in\N$.
To prove \eqref{e:index-brake-iteration-even}, we only need to consider the last situation of Case 3, since the others are 
all trivial.
Since $j=lh$ and $\gamma,\tilde\gamma\in \mathcal{P}_{\frac{hT}{2}}(2n)$ (cf. \eqref{e:shift-brake}), we readily get
 \[i_{L_0}(\gamma_j^p)=i_{L_0}(\gamma^{pl})\quad {and}\quad i_{L_0}(x_{pj})=i_{L_0}(\tilde\gamma^{pl}).\]  By 
 \eqref{e:shift-half-period} and the Bott-type iteration formula Proposition \ref{p:iteration-brake}, we get
	\eqref{e:index-brake-iteration-even}.
\end{proof}

\begin{remark}\label{r:time-shift-change}
	For $j,k\in\N$ and $j<k$, let $(x_j,jT)$ and $(x_k,kT)$ be two brake solutions of ${\rm(HS)_j}$ and ${\rm(HS)_k}$ in \eqref{e:HS-jT} and assume that $x_j$ and $x_k$ are not geometrically distinct.
	If $j\in 4\N$, we cannot say $x_j|_{[0,jT]}$ and $x_k|_{[0,jT]}$ must have the same Maslov-type indices
	with the Lagrangian boundary condition $L_0$.
	For example, for $j=4$, if $(x_4,4T)$ is also $2T$-periodic, then
	$\tilde x(t):=x(t+T)$, $t\in\R$, is also a $2T$-periodic brake solution. By Proposition \ref{p:index-geometrical-same}.2, we have
	\[i_{L_0}(x_4|_{[0,2T]})=i_{L_0}(\tilde x|_{[0,2T]}),\] 
	but we cannot say $x_4|_{[0,4T]}$ and $\tilde x|_{[0,4T]}$ must have the same Maslov-type indices.
	See Remark \ref{r:shift-index} for a counterexample.

	\end{remark}
	\begin{proof}[Proof of Theorem \ref{t:super-quadratic-1}]
		According to \cite[Theorem 3.1]{LiLiu10}, 
	 for any $j\in\N$ and $1\le j< \frac{2\pi}{T\|\tilde B\|_{C^0}}$, the
		Hamiltonian system \eqref{e:HS-jT} possesses a non-constant $jT$-periodic solution $x_j$
		which satisfies
		\begin{equation}\label{e:index-solution}
			i_{L_0}(x_j)\leq 1\le  i_{L_0}(x_j)+\nu_{L_0}(x_j).
		\end{equation}
		In \cite[(30)]{LiLiu10},  {(\bf SH5)} is needed.
		We suppose that $(x_j,jT)$ and $(x_{pj},pjT)$  are not geometrically distinct, for some $j,p\in\N$ and
		$jp< \frac{2\pi}{T\|\tilde B\|_{C^0}}$.
		For $j\in \N$, we divide into two cases.

		{\bf Case 1} $j$ is odd or belongs to $4\N-2$.
		By Proposition \ref{p:index-geometrical-same},
		we have
		\begin{equation}\label{e:same-Maslov-index-shift}
			i_{L_0}(x_{pj}|_{[0,jT]})= i_{L_0}(x_j),\ \ \  \nu_{L_0}(x_{pj}|_{[0,jT]})= \nu_{L_0}(x_j).
		\end{equation}
		Let $\gamma_{x_{pj}}$
		be the associated symplectic path of $(x_{pj},pjT)$. Set $S:=\frac{jT}{2}$ and
		let $\gamma_S=\gamma_{x_{pj}}|_{[0,S]}$.
		Then $\gamma_{x_{pj}}|_{[0,pjT/2]}=\gamma_S^p$ and we have
		\begin{equation}\label{e:iteration-index}
			\begin{split}
				&
				i_{L_0}(\gamma_S^p)=i_{L_0}(\gamma_{x_{pj}}|_{[0,pjT/2]})=i_{L_0}(x_{pj}), \\ &i_{L_0}(\gamma_S)=i_{L_0}(\gamma_{x_{pj}}|_{[0,jT/2]})=i_{L_0}(x_{pj}|_{[0,jT]}))=i_{L_0}(x_j),
				\\
				&\nu_{L_0}(\gamma_S)=\nu_{L_0}(\gamma_{x_{pj}}|_{[0,jT/2]})=\nu_{L_0}(x_{pj}|_{[0,jT]}))=\nu_{L_0}(x_j).
			\end{split}
		\end{equation}
		{\bf Claim:} $ p\le n+1.$
		Indeed, by Corollary \ref{0-1}, we have
		\begin{equation}\label{p-n}
			i_{L_0}(\gamma_S^p) \ge p( i_{L_0}(\gamma_S)+\nu_{L_0}(\gamma_S))-n.
		\end{equation}
		By \eqref{e:index-solution}, \eqref{e:iteration-index} and \eqref{e:same-Maslov-index-shift}, we have
		$i_{L_0}(\gamma_S^p)\le1$ and $i_{L_0}(\gamma_S)+\nu_{L_0}(\gamma_S)\ge1$.
		Based on \eqref{p-n}, we have
		\begin{equation}\label{ge-n}
			1 \ge i_{L_0}(\gamma_{x_{pj}}) \ge p-n.
		\end{equation}
		Thus we have $p\le n+1$.
		
		By \eqref{e:iteration-ineq}, we have
		\begin{equation}\label{e:iteration-ineq-copy}
			i_{L_0}(\gamma^p_S)\ge pi_{L_0}(\gamma_S)+(p-1)\nu_{L_0}(\gamma_S).
		\end{equation}
		If $x_j$
		is nondegenerate, then $\nu_{L_0}(x_j)$=0. Thus
		 according to \eqref{e:index-solution}, we get $i_{L_0}(\gamma_S)\ge 1$. From \eqref{e:iteration-ineq-copy}, we have $p\le1$.

		Moreover, if we assume $H$ satisfies {\bf(SH7)}, then
		 $i_{L_0}(\gamma_S)\ge 0$. Together with \eqref{e:iteration-ineq-copy}, we obtain $p\le 2$.
		In fact, if $i_{L_0}(\gamma_S)=1$, we get $p\le 1$.
		If $i_{L_0}(\gamma_S)=0$, then by \eqref{e:index-solution}, we get $\nu_{L_0}(\gamma_S)\ge1$; by \eqref{e:iteration-ineq-copy}, 
		we obtain $p\le2$.

		{\bf Case 2} $j \in 4\N$.
		Let $\gamma_{x_j}$
		be the associated symplectic path of $(x_j,jT)$. Set $S:=\frac{jT}{2}$ and
		let $\gamma_S=\gamma_{x_j}|_{[0,S]}$.
		By definition, $i_{L_0}(x_j)=i_{L_0}(\gamma_S)$ and $\nu_{L_0}(x_j)=\nu_{L_0}(\gamma_S)$.
		By Proposition \ref{p:index-geometrical-same},
		we have
		\begin{equation}\label{e:n-Maslov-index-shift}
			|i_{L_0}(x_{pj})-i_{L_0}(\gamma^p_S)|\le n.
		\end{equation}
		{\bf Claim:} $ p\le 2n+1.$
		Indeed,
		together with \eqref{e:index-solution} and \eqref{e:n-Maslov-index-shift}, we have
		\begin{equation}\label{e:iter-4N}
		i_{L_0}(\gamma_{S}^p)\le i_{L_0}(x_{pj})+n\le n+1\quad \text{and}\quad i_{L_0}(\gamma_{S})+\nu_{L_0}(\gamma_{S})\ge1.
		\end{equation}
		Based on \eqref{p-n}, we have
		\begin{equation}
			n+1 \ge i_{L_0}(\gamma_{S}^p) \ge p-n.
		\end{equation}
		Thus we have $p\le 2n+1$.
		
		If $x_j$
		is nondegenerate, then $\nu_{L_0}(x_j)$=0. Thus according to \eqref{e:index-solution}, we get $i_{L_0}(x_j)\ge 1$. From \eqref{e:iteration-ineq-copy} and \eqref{e:iter-4N}, we have $p\le n+1$.
		
		If we assume $H$ satisfies {\bf(SH7)}, then
		$i_{L_0}(\gamma_S)\ge 0$. Together with \eqref{e:iteration-ineq-copy}, we get $p\le n+2$.
		In fact, if $i_{L_0}(\gamma_S)=1$,  then by \eqref{e:iter-4N} and \eqref{e:iteration-ineq-copy}, we get $p\le n+1$.
		If $i_{L_0}(\gamma_S)=0$, then by \eqref{e:iter-4N},  we get $\nu_{L_0}(\gamma_S)\ge1$; by \eqref{e:iteration-ineq-copy}, we obtain $p\le n+2$.
		
The proof of Theorem \ref{t:super-quadratic-1} is complete.
\end{proof}

\section{The Morse index, the Maslov-type indices and the relative Morse index}\label{s:Morse-Maslov-indices}
\subsection{Relation between the Morse and the Maslov-type index of a solution}\label{ss:Morse-Maslov-indices}
In Section \ref{ss:morse-index-topology}, for a non-constant $T$-periodic solution $\bar x$ of \eqref{e:HS-T-K}, we define the quadratic form $q_{T/2}$ on the Hilbert space $\hat L_{\circ}^2(I_T;\R^{2n})$ and its Morse index $m^-(\bar x)$. To understand the Morse index $m^-(\bar x)$ more sufficiently, we will prove a index theorem relating the conjugate points along the curve $\bar x$.
According to the \textit{a priori} estimate Proposition \ref{p:aprior-estimate}, we can choose $K$ large enough such that 
$\bar x$ is a solution of \eqref{e:HS-jT} and $\hat H_K^{''}(t,\bar x(t))=H''(t,\bar x(t))$ for $t\in\R$.

Thus in this section, given  a $T$-periodic brake solution of \eqref{e:HS-jT} $(\bar x,T)$ such that $H_{qq}(t,\bar x(t))$ is positive definite for all $t\in\R$, 
we write $B(t)= H^{''}(t,\bar x(t))$ for short.
Similarly to \cite{Lo-Z-Zhu}, for $s>0$, we define the Hilbert spaces
\[\hat L^2_s=\{u\in L^2([-s,s];\R^{2n})|  u(-t)=Nu(t) \text{ for } t\in[-s,s]\}, \quad \hat L^2_{s,\circ}=\{u\in  \hat L^2_s| \int_{-s}^s u(t)dt=0\},\]
and a symmetric bilinear form on $\hat L^2_{s,\circ}$ by
\begin{equation}\label{e:quadratic-qs}
q_s(u,v)=\int_{-s}^s\frac{1}{2}[ (-J\Pi_su(t)+J\Lambda J\Pi_s^2u,v(t))+((B(t)+\Lambda)^{-1} u(t),v(t))
]dt,\end{equation}
where $\Lambda=\begin{pmatrix}
	\lambda I_n&0\\
	0&0
\end{pmatrix}\in \R^{2n\times 2n}$ such that $B(t)+\Lambda$ is positive definite for $t\in\R$.
Here  $\Pi_s$
denotes the primitive of $u$ with mean value zero:
\[\frac{d}{dt}\Pi_s u=u \qquad\text{ and }\qquad \int_{-s}^s (\Pi_s u)(t)dt=0.\]
By the almost same reason as in \cite[Lemma I.4.1]{Ek90} (cf. \eqref{e:Pi-example}),
$J\Pi_s$ is a compact self-adjoint operator from  $\hat L^2_{s,\circ}$ to itself,
and $\|\Pi_s\|\le \frac{s}{\pi}$.
Thus $q_s$ induces a Fredholm operator from $\hat L^2_{s,\circ}$ to itself, more precisely, a compact perturbation of an invertible operator.
According to \cite[Proposition I.4.2]{Ek90},
there is an orthogonal decomposition with respect to the
bilinear form $q_s$ on
$\hat L^2_{s,\circ}=E_+(s)\oplus E_0(s)\oplus E_-(s)$ such that
$E_+(s)$, $E_0(s)$ and $E_-(s)$ are the positive definite, null and negative definite spaces of $q_s$, respectively;
and $E_-(s)$ and $E_0(s)$ are finite-dimensional.
Denote by $i_s=\dim E_-(s)$ and $\nu_s=\dim E_0(s)$.
We generalize  \cite[Lemma 8.1]{Lo-Z-Zhu} to
\begin{lemma}\label{l:nullity}
	The nullity $\nu_s$ is  the number of linearly independent solutions of
	the boundary-value problem
	\begin{equation}\label{e:linear-brake}
		\left\{
		\begin{array}{ll}
			\dot y=JB(t)y\\
			y(0)\in L_0 \text { and } y(s)\in L_0.
		\end{array}
		\right.
	\end{equation}
\end{lemma}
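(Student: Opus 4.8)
The plan is to identify $\ker q_s$ with the solution space of the boundary-value problem \eqref{e:linear-brake} by moving carefully between the dual (velocity) picture in $\hat L^2_{s,\circ}$ and the primitive (position) picture. First I would set up the correspondence $u \mapsto x := J\Pi_s u - J\Lambda J\Pi_s^2 u$, which solves $-J\dot x + \Lambda x = u$ and lies in $\hat W^{1,2}$ on $[-s,s]$ with $x(-t) = Nx(t)$ and $x(-s)=x(s)$, modulo the kernel direction $\{0\}\times\R^n$; this is the restriction to $[-s,s]$ of the construction around \eqref{e:u-inverse-x}. Since $B(t)+\Lambda$ is positive definite, $(B(t)+\Lambda)^{-1}u = (F^*)''$-type term equals the $x$ produced by the Legendre reciprocity when $u$ is a critical direction, so the identity $q_s(u,u) = \tfrac12\int_{-s}^s[-(u,x) + ((B+\Lambda)^{-1}u,u)]\,dt$ lets me translate the vanishing of $\langle q_s'(u),\cdot\rangle$ into an equation for $x$.

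Next I would compute the Euler–Lagrange equation: $u\in E_0(s)$ means $q_s(u,v)=0$ for all $v\in \hat L^2_{s,\circ}$, i.e. $-J\Pi_s u + J\Lambda J\Pi_s^2 u + (B(t)+\Lambda)^{-1}u = \xi$ for some constant $\xi\in\{0\}\times\R^n$ (the constant absorbs the mean-zero constraint, exactly as in \eqref{e:critical-point-explicit}). Writing $x = J\Pi_s u - J\Lambda J\Pi_s^2 u + \xi$, this becomes $(B(t)+\Lambda)^{-1}u = x$, hence $u = (B(t)+\Lambda)x$, and since $u = -J\dot x + \Lambda x$ we get $-J\dot x = B(t)x$, i.e. $\dot x = JB(t)x$. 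The boundary conditions come for free from the function space: $x(-t)=Nx(t)$ forces $x(0)\in L_0$ (the $N$-fixed subspace is $L_0$), and $x(-s)=x(s)$ together with $x(-s)=Nx(s)$ forces $Nx(s)=x(s)$, i.e. $x(s)\in L_0$. So every nonzero $u\in E_0(s)$ yields a nonzero solution of \eqref{e:linear-brake}, and conversely any solution $y$ of \eqref{e:linear-brake} gives $u := -J\dot y + \Lambda y = (B(t)+\Lambda)y \in \hat L^2_{s,\circ}$ (the mean-zero property following by integrating $\dot y = JB(t)y$ against the structure, or more simply: $u$ is the image under the isomorphism $-J\frac{d}{dt}+\Lambda$ of $y$, and $\int u = 0$ holds because $y$ satisfies the brake condition at both ends — I would check $\int_{-s}^s u\,dt$ directly using $y(-t)=Ny(t)$ and $N\Lambda N = \Lambda$, $NJ = -JN$). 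One must verify this correspondence is a linear bijection, which reduces to the fact that $-J\frac{d}{dt}+\Lambda$ is an isomorphism from $\hat W^{1,2}_s$-type spaces onto $\hat L^2_{s,\circ}$, already used repeatedly in the paper.

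The main obstacle, and the step to treat with care, is the bookkeeping of the constant vector $\xi$ and the kernel direction $\{0\}\times\R^n$: the map $u\mapsto x$ is only well-defined modulo $L_0$, and one must confirm that the freedom to add a constant in $L_0$ to $x$ does not change $u$ and is consistent with the boundary condition $x(0)\in L_0$, so that counting dimensions of $E_0(s)$ and of the solution space of \eqref{e:linear-brake} matches exactly (no off-by-$n$ error). I would also double-check that a solution $y$ of \eqref{e:linear-brake} need not satisfy $\int y\,dt = 0$, but its image $u = (B+\Lambda)y$ does satisfy $\int u\,dt=0$ — this is the one genuinely computational point, and it follows from the anti-invariance $NJN=-J$ combined with the brake symmetry of $y$ and the original Hamiltonian equation. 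Once the bijection $E_0(s) \leftrightarrow \{\text{solutions of } \eqref{e:linear-brake}\}$ is established, the equality $\nu_s = \dim\{\text{such solutions}\}$ is immediate. This mirrors the proof of \cite[Lemma 8.1]{Lo-Z-Zhu}, the only new ingredients being the extra term $J\Lambda J\Pi_s^2 u$ in $q_s$ and the resulting shift $B(t)\rightsquigarrow B(t)+\Lambda$, which cancels cleanly because $\Lambda$ appears on both sides.
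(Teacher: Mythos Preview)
Your proposal is correct and follows essentially the same route as the paper's proof: identify $u\in E_0(s)$ via the Euler--Lagrange condition $-J\Pi_s u + J\Lambda J\Pi_s^2 u + (B+\Lambda)^{-1}u = \xi\in L_0$, set $y = \xi + J\Pi_s u - J\Lambda J\Pi_s^2 u$ to get a solution of \eqref{e:linear-brake}, and for the converse extend a solution $y$ by brake symmetry and take $u = -J\dot y + \Lambda y$. The paper's write-up is terser (it asserts $y(0),y(s)\in L_0$ and $u\in\hat L^2_{s,\circ}$ without the verification you spell out), but the argument is the same; your extra bookkeeping on the constant $\xi$, the mean-zero check for $u$, and the dimension count is exactly what the paper leaves to the reader.
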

\begin{proof}
	Recall $L_0=\{0\}\times\R^n$, the subspace of constant functions in $\hat L^2_s$.
	We have the $L^2$-orthogonal decomposition
	\[\hat L^2_s=\hat L^2_{s,\circ}\oplus^{\perp} L_0.\]
	The kernel of $q_s$ consists of all $u\in \hat L^2_{s,\circ}$ such that $q_s(u,v)=0$ for all
	$v\in \hat L^2_{s,\circ}$. This means there is some constant $\xi\in L_0$ such that
	\begin{equation*}
		-J\Pi_su+J\Lambda J\Pi_s^2u+(B(t)+\Lambda)^{-1} u=\xi,
	\end{equation*}
	which we rewrite as 
	\begin{equation}\label{e:nullity-equation}
		u=(B+\Lambda)(\xi+J\Pi_s u-J\Lambda\Pi_s J\Pi_su).
	\end{equation}
	Now define $y$ to be $\xi+J\Pi_s u-J\Lambda \Pi_sJ\Pi_su$.
	Then $y(0)\in L_0$, $y(s)\in L_0$, and 
	\eqref{e:nullity-equation} reads as 
	$\dot y=JB(t)y$, which is precisely the system \eqref{e:linear-brake}.
	
	On the other hand, for any  solution $y$ of \eqref{e:linear-brake}, we can extend $y$ into $\hat L^2_s$ as follows:
	\[y(t)=Ny(-t),\quad  \forall t\in [-s,0].\]
	Then $u=-J\dot y+\Lambda y\in  \hat L_{s,\circ}^2$ belongs to the kernel of $q_s$. Thus the lemma is proved.
\end{proof}

Then we have the following index theorem, which is a direct
consequence of \cite[Theorem 1.11]{Uhlen73}.
\begin{lemma}\label{l:morse-index}
	Fix a number $S\in(0,\frac{T}{2}]$. We have
	\[i_S=\sum_{0<s<S}\nu_s,\]
	where $\{s\in (0,S);\nu_s\neq 0\}$ consists of finite points.
\end{lemma}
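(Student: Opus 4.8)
The plan is to reduce the identity $i_S=\sum_{0<s<S}\nu_s$ to an application of the abstract index theorem of Uhlenbeck, exactly as the statement announces, and the main work is to set up a one-parameter family of quadratic forms to which that theorem applies and to identify the resulting ``conjugate point'' data with the numbers $\nu_s$ already computed in Lemma \ref{l:nullity}. First I would introduce, for each $s\in(0,S]$, the Hilbert space $\hat L^2_{s,\circ}$ and the Fredholm quadratic form $q_s$ of \eqref{e:quadratic-qs}, and then transport all of these onto a single fixed space by rescaling time $t\mapsto (s/S)t$ (or $t\mapsto t/s$), so that one obtains a continuous — indeed real-analytic in $s$ — path of self-adjoint Fredholm operators $Q(s)$ on one fixed Hilbert space $\hat L^2_{S,\circ}$, each $Q(s)$ being a compact perturbation of a fixed invertible operator (the compactness of $J\Pi_s$ and the estimate $\|\Pi_s\|\le s/\pi$, already noted in the excerpt, give this). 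The Morse negative index of $Q(s)$ is $i_s$ and its nullity is $\nu_s$; monotonicity of the path in $s$ (the quadratic form $q_s$ grows with $s$ in the appropriate sense, because lengthening the interval only adds negative directions) is what makes $i_s$ nondecreasing and makes the finiteness of $\{s:\nu_s\neq0\}$ on the compact interval $(0,S]$ plausible.

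Next I would verify the two hypotheses needed to invoke \cite[Theorem 1.11]{Uhlen73}: (i) that $i_s\to 0$ as $s\to 0^+$, i.e.\ $q_s$ is positive definite for small $s$, and (ii) that the path $Q(s)$ has only finitely many degeneracy parameters in $(0,S)$ and that the jump of $i_s$ across each such $s_0$ equals $\nu_{s_0}$. For (i), when $s$ is small the operator $J\Pi_s$ has norm at most $s/\pi$, while $(B(t)+\Lambda)^{-1}$ is uniformly positive definite on the compact time interval by assumption, so the sum is positive definite — this is the short-interval positivity that anchors the count. For (ii), the degeneracy set is precisely $\{s:\nu_s\neq 0\}$ by Lemma \ref{l:nullity}; I would check that the derivative of the quadratic form $q_s$ in the parameter $s$, restricted to the (finite-dimensional) kernel $E_0(s_0)$, is negative definite — a crossing-form computation which, after the rescaling, reduces to evaluating a boundary term coming from differentiating $\Pi_s$ and the interval endpoints. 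This nondegenerate-crossing condition is exactly what \cite[Theorem 1.11]{Uhlen73} requires to conclude that each crossing contributes $\dim E_0(s_0)=\nu_{s_0}$ to the index and that there are only finitely many of them. Summing the jumps from $0$ to $S$ then yields $i_S=\sum_{0<s<S}\nu_s$.

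The step I expect to be the main obstacle is the crossing-form (monotonicity) computation: showing that as $s$ increases through a value $s_0$ with $\nu_{s_0}>0$, the index $i_s$ strictly increases by exactly $\nu_{s_0}$ rather than, say, first increasing and then partly decreasing. Concretely, after the time-rescaling I must compute $\frac{d}{ds}q_s(u_s,u_s)$ along a smooth family $u_s\in E_0(s)$ near $s=s_0$ and show it has a definite sign; the delicate point is that $q_s$ depends on $s$ both through the operator $\Pi_s$ (whose kernel/averaging changes with the interval length) and through the domain of integration, so the two contributions must be combined carefully, using the defining relations $\frac{d}{dt}\Pi_su=u$ and $\int_{-s}^s\Pi_su=0$ together with the boundary conditions $y(0),y(s)\in L_0$ from \eqref{e:linear-brake}. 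Once this sign is pinned down, the remaining pieces — Fredholmness, analyticity of the path, and the small-$s$ positivity — are routine, and the conclusion follows immediately from Uhlenbeck's theorem. I would also remark that this is the reversible, $L_0$-boundary-condition analogue of the classical relation between the Morse index of the dual action and the count of conjugate points (as in \cite[Chapter I]{Ek90}), so the reader familiar with that setting will recognize the structure.
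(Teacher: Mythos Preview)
Your approach could be made to work, but it is not the route the paper takes, and it is considerably more laborious. You propose to rescale time so as to get a continuous one-parameter family of Fredholm forms on a \emph{fixed} Hilbert space and then count index jumps via a crossing-form (spectral-flow) computation; you correctly identify that computation as the delicate step. The paper instead applies Uhlenbeck's theorem in its original ``increasing family of subspaces'' formulation (Theorem~\ref{t:morse-index-theorem}): one fixes the single quadratic form $q_S$ on $H=\hat L^2_{S,\circ}$ and takes $H_t=\hat L^2_{t,\circ}$, embedded in $H$ by extension by zero outside $[-t,t]$. The hypotheses of that theorem are not the two you list; what must be checked, besides Fredholmness and the continuity condition (iii) on the nested subspaces, is the \emph{unique continuation property} (Definition~\ref{d:UCP}): $\ker(q_S|_{H_{2s_1}})\cap\ker(q_S|_{H_{2s_2}})=0$ for $s_1\neq s_2$. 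This replaces your crossing-form argument entirely.

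The UCP verification is short and is where the hypothesis $H_{qq}>0$ actually enters. If $u$ lies in both kernels with $s_1<s_2$, then $u$ vanishes on $[s_1,s_2]$ and, writing $y=J\Pi_{s_2}u-J\Lambda J\Pi_{s_2}^2u+\xi_2$ as in Lemma~\ref{l:nullity}, one checks that $y$ solves $\dot y=JB(t)y$ on $[-s_2,s_2]$ and is a \emph{constant} $\xi\in L_0$ on $[s_1,s_2]$ (the constancy of $\Pi_{s_2}u$ and $\Pi_{s_2}^2u$ on that subinterval requires a small computation using evenness/oddness and the mean-zero conditions). The ODE then forces $B(t)\xi=0$ there; since $\xi\in\{0\}\times\R^n$ and $H_{qq}$ is positive definite, $\xi=0$, hence $y\equiv0$ by ODE uniqueness, hence $u=0$. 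This bypasses the monotonicity/crossing-form calculation you were worried about, and it makes transparent why the partial convexity assumption (H0)$^{q+}$ is exactly what is needed.
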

\begin{proof}
Since  $\left(\frac{\partial^2 H}{\partial q_i \partial q_j}(t,\bar x)\right)$ is positive definite, for $\xi\in L_0=\{0\}\times \R^n$,
	$ H''(t,\bar x(t))\xi=0$ implies $\xi=0$.
	The detailed proof will be given in Appendix 
	\ref{a:morse}.
\end{proof}
By definition $i_{T/2}=m^-(q_{T/2})=m^-(\bar x)$. 
Now we give the relation of the Maslov-type index and the Morse index for a brake solution of the partially positive definite reversible Hamiltonian system.
\begin{proposition}\label{p:morse-maslov-index}
	Let $(\bar x,T)$ be a $T$-periodic brake solution of \eqref{e:HS-jT} such that $H_{qq}(t,\bar x(t))$ is positive definite for all $t\in\R$. Then we have
	\[i_{L_0}(\bar x)=m^-(q_{T/2}).\]
\end{proposition}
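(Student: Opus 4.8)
The plan is to identify both indices with the same counting quantity, namely the accumulated nullities $\sum_{0<s<T/2}\nu_s$, where $\nu_s$ is the dimension of the null space of the bilinear form $q_s$ on $\hat L^2_{s,\circ}$ from \eqref{e:quadratic-qs}. On the Morse side this is immediate: by Lemma \ref{l:morse-index} with $S=T/2$ we have $i_{T/2}=\sum_{0<s<T/2}\nu_s$, and by definition $i_{T/2}=m^-(q_{T/2})$. So the whole content is to show that $i_{L_0}(\bar x)$ equals this same sum, i.e. that the Maslov-type index with boundary condition $L_0$ of the associated symplectic path $\gamma=\gamma_{\bar x}|_{[0,T/2]}$ counts, with the appropriate sign, exactly the $s\in(0,T/2)$ at which the boundary value problem \eqref{e:linear-brake} has a nontrivial solution.

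First I would recall, via Lemma \ref{l:nullity}, that $\nu_s$ equals the number of linearly independent solutions $y$ of $\dot y=JB(t)y$ with $y(0)\in L_0$ and $y(s)\in L_0$; equivalently $\nu_s=\dim(\gamma(s)L_0\cap L_0)=\nu_{L_0}(\gamma(s))$, using $B(t)=H''(t,\bar x(t))$ and the fact that $\gamma$ is the matrizant. Thus the crossing instants of the path $s\mapsto \gamma(s)$ relative to the Lagrangian $\widetilde\alpha_0$ (in the $\Graph$ formulation of Definition \ref{d:Maslov-type-index}) are precisely the $s$ with $\nu_s\neq 0$, and each contributes its multiplicity $\nu_s$ to the crossing-form count. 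The heart of the argument is therefore a \emph{signature} computation: I must show that at each such crossing the crossing form (in the sense of the Maslov index $\Mas\{\Graph(\gamma),\widetilde\alpha_0\}$, shifted by $n$ as in \eqref{e:Maslov-index-L_0}) is positive definite, so that $i_{L_0}(\bar x)=i_{\widetilde\alpha_0}(\gamma)-n=\sum_{0<s<T/2}\nu_s$ with every crossing counted positively and no cancellation. The positivity comes exactly from the hypothesis: $H_{qq}(t,\bar x(t))$ positive definite forces $B(t)+\Lambda$ to have positive-definite lower-right block structure after adding $\Lambda=\mathrm{diag}(\lambda I_n,0)$, and this is the classical mechanism (as in Lemma \ref{l:partial-positive-L0-L1}(a), which already gives $i_{L_0}(\gamma)\ge 0$) by which the Legendre-type convexity makes the crossing form positive. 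Concretely I would set up the crossing form at a crossing $s_0$ using the standard formula $\Gamma(\gamma,\widetilde\alpha_0,s_0)(v)=\tfrac{d}{ds}\big|_{s_0}\omega(\gamma(s)v,\dot\gamma(s_0)v)$ on $\ker(\gamma(s_0)L_0\cap L_0)$, substitute $\dot\gamma=JB\gamma$, and reduce it — using $y(s_0)\in L_0$, i.e. the $p$-component of $y(s_0)$ vanishes — to a quadratic form in the surviving $q$-component governed by $H_{qq}(s_0,\bar x(s_0))$, hence positive. This is essentially the Morse index theorem for the dual/reduced quadratic form, and it is the step that deserves care; I would relegate the detailed linear-algebra identity to Appendix \ref{a:morse}, which the text already signposts.

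There remain two bookkeeping points. One is the behaviour at the \emph{endpoints} $s=0$ and $s=T/2$: at $s=0$ the path starts at $\gamma(0)=I_{2n}$ with $\gamma(0)L_0\cap L_0=L_0$, which is why the definition subtracts $n$ in \eqref{e:Maslov-index-L_0}, so that no spurious contribution from the initial degeneracy is counted; and the sum $\sum_{0<s<T/2}\nu_s$ is a \emph{strictly interior} sum, consistent with the half-open convention for the Maslov index crossing count. I would check that the normalization in Definition \ref{d:Maslov-type-index} matches the half-open convention used in Lemma \ref{l:morse-index}'s reference \cite{Uhlen73}. The other is that I should confirm $q_{T/2}$ as defined in \eqref{e:quadratic-form-T} genuinely coincides with $q_{S}$ of \eqref{e:quadratic-qs} at $S=T/2$ under the identification $\bar u=-J\dot{\bar x}+\Lambda\bar x$ and $(F^*)''(t;\bar u)=(F''(t,\bar x))^{-1}=(B(t)+\Lambda)^{-1}$, which the paragraph before Lemma \ref{l:nullity} and the discussion around \eqref{e:quadratic-form-T} have already established.

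The main obstacle, as indicated, is the crossing-form positivity computation: verifying that partial convexity in $q$ alone (rather than full convexity of $H$) still forces every $\widetilde\alpha_0$-crossing of $\gamma$ to be positive-definite, so that the alternating-sum Maslov index collapses to a plain sum of nullities. Everything else is assembling already-proved facts: Lemma \ref{l:nullity}, Lemma \ref{l:morse-index}, the definition of $i_{L_0}$ in \eqref{e:Maslov-index-L_0}, and the standard crossing-form description of the Maslov index. I would therefore structure the proof as: (i) recall $m^-(q_{T/2})=i_{T/2}=\sum_{0<s<T/2}\nu_s$; (ii) recall $\nu_s=\nu_{L_0}(\gamma(s))$; (iii) express $i_{L_0}(\bar x)$ as a sum of crossing-form signatures; (iv) prove each crossing form is positive definite using $H_{qq}>0$ (the work, pushed to Appendix \ref{a:morse}); (v) conclude equality.
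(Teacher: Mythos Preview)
Your proposal is correct and follows essentially the same route as the paper: identify both indices with the interior crossing count $\sum_{0<s<T/2}\nu_s$ via Lemma~\ref{l:nullity} and Lemma~\ref{l:morse-index} on the Morse side, and via positivity of all $\widetilde\alpha_0$-crossings of $\Graph(\gamma)$ on the Maslov side. The only difference is that for the crossing positivity the paper cites \cite[Proposition~2.5]{zhou-zhu-wu} and \cite[Lemma~3.1]{duistermaat} directly (the hypothesis being that $B(t)|_{L_0}=H_{qq}(t,\bar x(t))>0$) rather than computing the crossing form by hand; note also two small slips in your write-up---adding $\Lambda$ modifies the upper-left block, not the lower-right, and your crossing-form expression should read $\omega_0(\gamma(s_0)v,\dot\gamma(s_0)v)=(B(s_0)\gamma(s_0)v,\gamma(s_0)v)$ rather than having $\dot\gamma$ in both slots.
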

\begin{proof}
	Let $\gamma_{\bar x}$ be the associated symplectic path of $(\bar x,T)$ and write $\gamma=\gamma_{\bar x}|_{[0,T/2]}$ for short.
	Then by definition, we have $i_{L_0}(\bar x)=i_{L_0}(\gamma)$.
Then the restriction of $B(t)=H''(t,\bar x(t))$ on $L_0=\{0\}\times \R^n$ is positive definite for any $t\in\R$.
	Together with \cite[Proposition 2.5]{zhou-zhu-wu} and \cite[Lemma 3.1]{duistermaat}, we obtain
	\begin{equation*}
		\begin{split}
			i_{L_0}(\gamma)&=\Mas\{\Graph(\gamma),\widetilde\alpha_0\}-n,\\
			&=\dim L_0 +\sum_{0<t<T/2}\dim_{\C} (\Graph(\gamma(t))\cap\widetilde \alpha_0)-n\\
			&=\sum_{0<t<T/2}\dim (\gamma(t)L_0\cap L_0)\\
			&=\sum_{0<t<T/2} \nu_{L_0}(\gamma(t))
		\end{split}
	\end{equation*}
	Since every solution of \eqref{e:linear-brake} is of the form
	$\gamma(t)y(0)$ where $y(0)\in L_0$ and $\gamma(s)y(0)\in L_0$.
	By Lemma \ref{l:nullity}, we have
	$\nu_{L_0}(\gamma(s))=\nu_s$.
	Then together with Lemma \ref{l:morse-index}, we get our proposition.
\end{proof}
\begin{remark}
	We can also use the relative Morse index to prove Proposition \ref{p:morse-maslov-index}; please see the following Subsection \ref{ss:relative-morse-index}.
\end{remark}
For short write and recall
\[\hat W_{2T}:=\hat W^{1,2}_{2T}(I_{2T};\R^{2n})=\{x\in W^{1,2}(\R;\R^{2n}); x(-t)=Nx(t), x(t+2T)=x(t), \;\text{ for }\; t\in \R\}.\]
Let $(e_i)$, $i=1,2,\cdots,2n$, be the standard basis for $\R^{2n}$.
Define $\eta_{jk}=\sin \frac{\pi jt}{T}e_k+\cos \frac{\pi jt}{T}e_{k+n}$, for $j\in\Z$ and $k\in\{1,2,\cdots,n\}$.
Then we represent $x\in\hat W_{2T}$ by its Fourier series
\[x=\sum^n_{k=1}\sum_{j\in\Z}a_{jk}\eta_{jk},\]
where $a_{jk}\in\R$ and $\sum^n_{k=1}\sum_{j\in\Z}(1+j^2)|a_{jk}|^2<\infty$ and
write
\[\hat W_{2T}=\Span_{\R}\{\eta_{j1},\eta_{j2},\cdots,\eta_{jn}; j\in \Z\}.\]
Let
\begin{equation*}
	\begin{split}
		\hat W_{T}&=\Span_{\R}\{\eta_{j1},\eta_{j2},\cdots,\eta_{jn}; j\in 2\Z\}\subset\hat W_{2T},\\
		\hat W^{-1}_T&=\Span_{\R}\{\eta_{j1},\eta_{j2},\cdots,\eta_{jn}; j\in 2\Z+1\}\subset\hat W_{2T}.
	\end{split}
\end{equation*}
Given a $T$-periodic solution $x^*$ of \eqref{e:HS-jT},
define $B^*(t)=(H^{''}(t, x^*(t))+\Lambda)^{-1}$, where 
$\Lambda=\begin{pmatrix}
	\lambda I_n&0\\
	0&0
\end{pmatrix}$ and we choose suitable $\lambda\ge0$ such that 
$B^*$ is positive definite.
We define the quadratic form on $\hat W_{2T}$ by
\[Q_T(y,y)=\frac{1}{2}\int_{-T}^T[(J\dot y-\Lambda y,y)+(B^*(t)(J\dot y-\Lambda y),(J\dot y-\Lambda y))]dt, \text{ for } y\in \hat W_{2T}. \]
Then we readily have
\begin{lemma}\label{l:decomposition}
	We have the following $L^2$-orthogonal decomposition with respect to both the inner product and the quadratic form $Q_{T}$ on $\hat W_{2T}$ 
	\[\hat W_{2T}=\hat W_T\oplus \hat W^{-1}_T. \]
\end{lemma}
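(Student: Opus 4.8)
The plan is to recognize $\hat W_T$ and $\hat W^{-1}_T$ as the $T$-periodic and the $T$-antiperiodic parts of $\hat W_{2T}$, and then to let the $T$-periodicity of the coefficient matrix $B^{*}$ do the rest.

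First I would note that for $j\in 2\Z$ the mode $\eta_{jk}(t)=\sin\frac{\pi j t}{T}e_k+\cos\frac{\pi j t}{T}e_{k+n}$ is $T$-periodic, whereas for $j\in 2\Z+1$ it satisfies $\eta_{jk}(t+T)=-\eta_{jk}(t)$; since moreover $\eta_{jk}(-t)=N\eta_{jk}(t)$, both $\hat W_T$ and $\hat W^{-1}_T$ are subspaces of $\hat W_{2T}$. Using uniqueness of the Fourier expansion of an element of $\hat W_{2T}$, one sees that $\hat W_T$ is exactly $\{y\in\hat W_{2T}:\ y(t+T)=y(t)\}$ and $\hat W^{-1}_T$ is exactly $\{y\in\hat W_{2T}:\ y(t+T)=-y(t)\}$. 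For the direct-sum decomposition, given $y\in\hat W_{2T}$ I would set
\[
y_+(t)=\frac{1}{2}\big(y(t)+y(t+T)\big),\qquad y_-(t)=\frac{1}{2}\big(y(t)-y(t+T)\big);
\]
a short check using $y(t+2T)=y(t)$ and $y(-t)=Ny(t)$ — in particular $y(-t+T)=Ny(t+T)$, which follows from the $2T$-periodicity — shows $y_\pm\in\hat W_{2T}$, $y_+\in\hat W_T$, $y_-\in\hat W^{-1}_T$, and $y=y_++y_-$. Uniqueness is immediate: if $y_++y_-=0$, then evaluating at $t$ and at $t+T$ gives $y_+(t)\pm y_-(t)=0$, so $y_+=y_-=0$.

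Next I would establish the orthogonality of $\hat W_T$ and $\hat W^{-1}_T$ with respect to \emph{both} pairings by a single observation: the product of a $T$-periodic $\R^{2n}$-valued function with a $T$-antiperiodic one is $T$-antiperiodic, and the integral over $[-T,T]$ of any $T$-antiperiodic function vanishes (split $\int_{-T}^{T}=\int_{-T}^{0}+\int_{0}^{T}$ and substitute $t\mapsto t-T$ in the first integral). For the $L^2$ inner product this applies directly to $t\mapsto (y_1(t),y_2(t))$ with $y_1\in\hat W_T$, $y_2\in\hat W^{-1}_T$. For $Q_T$, the key point is that because $x^{*}$ is $T$-periodic the matrix $B^{*}(t)=(H''(t,x^{*}(t))+\Lambda)^{-1}$ is $T$-periodic; hence in the polarized bilinear form $Q_T(y_1,y_2)$ the operation $y\mapsto J\dot y-\Lambda y$ sends $\hat W_T$ into $T$-periodic vector fields and $\hat W^{-1}_T$ into $T$-antiperiodic ones, so each term of the integrand — $(J\dot y_1-\Lambda y_1,y_2)$ together with its symmetric counterpart, and $\big(B^{*}(t)(J\dot y_1-\Lambda y_1),J\dot y_2-\Lambda y_2\big)$ — is $T$-antiperiodic and integrates to zero. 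This yields $Q_T(y_1,y_2)=0$, completing the decomposition.

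There is no genuine difficulty here; the statement is essentially parity bookkeeping. The only step deserving care is confirming that the brake symmetry $u(-t)=Nu(t)$ survives the projection $y\mapsto y_\pm$, which is where the rewriting $y(-t+T)=Ny(t+T)$ via $2T$-periodicity is used; once that is in place, the $T$-antiperiodicity of all the relevant products is automatic.
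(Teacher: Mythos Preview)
Your proof is correct and follows essentially the same approach as the paper: the paper's proof is a two-line sketch noting that elements of $\hat W_T$ are $T$-periodic, elements of $\hat W^{-1}_T$ are $T$-antiperiodic, and $B^{*}(t+T)=B^{*}(t)$, which is exactly the parity argument you have spelled out in detail. Your explicit verification that the projections $y_\pm$ preserve the brake symmetry via the $2T$-periodicity is a nice addition the paper leaves implicit.
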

\begin{proof}
Note that for $x\in \hat W_T$, $x(t+T)=x(t)$; for $x\in \hat W^{-1}_T$, $x(t+T)=-x(t)$.
$B^*(t+T)=B^*(t)$ for $t\in\R$.
\end{proof}
By $j_{-1}(x^*)$ we denote the Morse index of the quadratic form $Q_T$ restricted to $\hat W^{-1}_T$, i.e., the 
maximal dimension of a subspace of  $\hat W^{-1}_T$ on which $Q_T$ is negative definite. Then we have
\begin{lemma}\label{l:morse-decomposition}
	\[m^-(Q_T)=i_{L_0}(x^*)+j_{-1}(x^*).\]
\end{lemma}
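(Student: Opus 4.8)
The plan is to reduce the statement to Proposition~\ref{p:morse-maslov-index} by means of the $Q_T$-orthogonal splitting supplied by Lemma~\ref{l:decomposition}. Since $\hat W_{2T}=\hat W_T\oplus\hat W_T^{-1}$ is orthogonal for both the ambient inner product and $Q_T$, the bounded self-adjoint operator associated with $Q_T$ is block diagonal, so
\[m^-(Q_T)=m^-\big(Q_T|_{\hat W_T}\big)+m^-\big(Q_T|_{\hat W_T^{-1}}\big).\]
By definition $m^-(Q_T|_{\hat W_T^{-1}})=j_{-1}(x^*)$, so it remains only to identify $m^-(Q_T|_{\hat W_T})$ with $i_{L_0}(x^*)$.

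For this I would carry out the change of variables $u=-J\dot y+\Lambda y$ already used in the dual action formalism. Every $y\in\hat W_T$ is $T$-periodic (and $B^*$ is $T$-periodic), so $\int_{-T}^{T}=2\int_{-T/2}^{T/2}$, and, since $y(-t)=Ny(t)$ forces the first $n$ components of $y$ to be odd, one checks that $u$ has zero mean, i.e.\ $u\in\hat L^2_\circ(I_T;\R^{2n})$. The operator $-J\tfrac{d}{dt}+\Lambda$ has kernel exactly the constant subspace $L_0=\{0\}\times\R^n\subset\hat W_T$; on $L_0$ the form $Q_T$ vanishes identically and is $Q_T$-orthogonal to everything else (because $-J\dot\xi+\Lambda\xi=0$ for $\xi\in L_0$), while on the complementary subspace $-J\tfrac{d}{dt}+\Lambda$ is a bounded isomorphism onto $\hat L^2_\circ(I_T;\R^{2n})$ with bounded inverse $u\mapsto J\Pi u-J\Lambda J\Pi^2u$, cf.\ \eqref{e:u-inverse-x}. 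Using $\Lambda J\Lambda=0$, the symmetry of the $L^2$ inner product, and the identification $F''(t,x^*(t))=\hat H_K''(t,x^*(t))+\Lambda=H''(t,x^*(t))+\Lambda=B^*(t)^{-1}$ (valid after the \emph{a priori} estimate, which forces $\hat H_K''(t,x^*(t))=H''(t,x^*(t))$), a direct substitution gives
\[Q_T|_{\hat W_T}(y,y)=\int_{-T/2}^{T/2}\Big[\big(-J\Pi u+J\Lambda J\Pi^2u,\,u\big)+\big(B^*(t)u,u\big)\Big]\,dt=2\,q_{T/2}(u,u),\]
with $q_{T/2}$ the quadratic form of \eqref{e:quadratic-form-T}. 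Since $m^-$ is unchanged by a linear isomorphism, by a positive rescaling of the form, and by adjoining a subspace on which the form vanishes, this yields $m^-(Q_T|_{\hat W_T})=m^-(q_{T/2})$, and Proposition~\ref{p:morse-maslov-index} (whose hypotheses hold because $x^*$ is a brake solution along which $H_{qq}$ is positive definite) identifies this with $i_{L_0}(x^*)$. Assembling the three displays gives the lemma.

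The substance is all in that middle step, and the only delicate points there are soft-analysis bookkeeping: verifying that $u$ genuinely has zero mean, that $-J\tfrac{d}{dt}+\Lambda$ is a topological isomorphism between the relevant Hilbert spaces (so that it truly transports the Morse index), and separating off the finite-dimensional null direction $L_0$. The algebraic identity between $Q_T|_{\hat W_T}$ and $q_{T/2}$ is then a short computation, and everything else is either a definition or already established (Lemma~\ref{l:decomposition}, Proposition~\ref{p:morse-maslov-index}, \eqref{e:u-inverse-x}).
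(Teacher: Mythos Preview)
Your proposal is correct and follows essentially the same route as the paper: split via Lemma~\ref{l:decomposition}, pass from $Q_T|_{\hat W_T}$ to $q_{T/2}$ through the change of variables $u=-J\dot y+\Lambda y$, and invoke Proposition~\ref{p:morse-maslov-index}. The paper's proof is a two-line sketch that asserts $-J\tfrac{d}{dt}+\Lambda$ is an isomorphism from $\hat W_T$ onto $\hat L^2_\circ(I_T;\R^{2n})$ without separating off the constant kernel $L_0$; your treatment of that null direction is in fact more careful than the paper's.
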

\begin{proof}
	Note $-J\frac{d}{dt}+\Lambda$ induces isomorphisms from $\hat W_{2T}$ to $\hat L^2_{\circ}(I_{2T};\R^{2n})$ and
	from $\hat W_{T}$ to $\hat L^2_{\circ}(I_T;\R^{2n})$.
	Together with Lemma \ref{l:decomposition} and Proposition \ref{p:morse-maslov-index}, we obtain
	\[m^-(Q_T)=m^-(q_{T/2})+j_{-1}(x^*)=i_{L_0}(x^*)+j_{-1}(x^*).\]
\end{proof}
\subsection{The relative Morse index}\label{ss:relative-morse-index}
Now we prove the following two lemmas, which will be used in the proof of Lemma \ref{l:i+nu-ge-1}.
In this subsection, let $X$ be a Hilbert space with an inner product $\langle \cdot,\cdot\rangle$ and the norm $\|x\|\equiv\langle x,x\rangle ^{1/2}$ for all $x\in X$.
For any bounded linear operator $P$ on $X$, we also denote the norm of $P$ by $\|P\|$.
\begin{lemma}\label{l:relative-morse-index}
	Let $X$ be a Hilbert space.
	Let $A$ be a closed self-adjoint Fredholm operator in $X$, and $B$ be a bounded selfadjoint operator on $X$ relatively compact with respect to $A$. 
	Then the relative Morse index $I(A,A-B)$ is well-defined (cf. \cite[Definition 2.8]{Zhu-Long99}).
	Assume that $B$ is semi-positive definite on $X$, i.e., $\langle Bx,x\rangle\ge 0$ for any $x\in X$. Then we have
	\begin{enumerate}
		\item[$1^{\circ}$.] $I(A,A-B)=\displaystyle\sum_{0\le s<1}(\dim \ker (A-sB)-\lim_{t\to s^+}\dim \ker (A-tB))$.
		\item[$2^{\circ}$.]
		\begin{equation*}
			I(A,A-B)+\dim \ker (A-B)-\dim \ker A=\sum_{0< s\le 1}(\dim \ker (A-sB)-\lim_{t\to s^-} \dim \ker (A-tB)).
		\end{equation*}
	\end{enumerate}
\end{lemma}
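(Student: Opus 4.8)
\textbf{Proof proposal for Lemma \ref{l:relative-morse-index}.}

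The plan is to reduce everything to a finite-dimensional spectral count by continuity of eigenvalues of the one-parameter family $A_s := A - sB$, $s\in[0,1]$. First I would set up the family: since $B$ is bounded, self-adjoint and relatively compact with respect to the closed self-adjoint Fredholm operator $A$, each $A_s$ is again a closed self-adjoint Fredholm operator with the same domain $\dom A$, and $s\mapsto A_s$ is a continuous (indeed affine) path in the Fredholm operators in the gap metric. Hence the relative Morse index $I(A,A-B)=I(A_0,A_1)$ is well-defined and, by the standard properties of the spectral flow (cf.\ \cite{Zhu-Long99} and the self-contained eigenvalue material promised in Appendix \ref{app:continuity-eigemvalue}), equals minus the spectral flow of $\{A_s\}_{s\in[0,1]}$, i.e.\ the net number of eigenvalues of $A_s$ crossing $0$ from $+$ to $-$ as $s$ increases. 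The crucial structural input is the \emph{monotonicity}: because $B\ge 0$, the quadratic forms $\langle A_s x,x\rangle$ are (weakly) decreasing in $s$ on $\dom A$, so each individual eigenvalue curve of $A_s$, tracked by the min-max characterization, is nonincreasing in $s$. Consequently a crossing can only occur downward; there is no cancellation, and the spectral flow is literally the total number (with multiplicity) of eigenvalues that pass through $0$.

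Second, I would make the crossing count explicit. Fix $s\in[0,1]$. By relative compactness and the Fredholm property, $0$ is at most an isolated eigenvalue of finite multiplicity for each $A_s$, and by lower semicontinuity of the spectral projections the function $s\mapsto \dim\ker A_s$ is upper semicontinuous; the monotonicity forces, for the eigenvalues sitting near $0$, that $\dim\ker A_s$ can only jump \emph{up} at $s$ relative to its value just to the right, and can only jump \emph{up} at $s$ relative to its value just to the left only in the reverse direction. Concretely: for each $s$, the number of eigenvalue branches that are $=0$ at $s$ but $>0$ for parameters slightly less than $s$ equals $\dim\ker A_s-\lim_{t\to s^-}\dim\ker A_t$ (these branches arrive at $0$ from above as $s$ increases through that value), while the number that are $=0$ at $s$ but $<0$ slightly beyond $s$ equals $\dim\ker A_s-\lim_{t\to s^+}\dim\ker A_t$ (these branches depart downward). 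Summing the departing branches over the half-open interval $[0,1)$ gives exactly the eigenvalues that have crossed $0$ strictly before reaching $s=1$, which is $I(A,A-B)$ up to the sign/normalization convention of \cite{Zhu-Long99}; this is assertion $1^\circ$. For $2^\circ$, I would instead count arriving branches: summing $\dim\ker A_s-\lim_{t\to s^-}\dim\ker A_t$ over $(0,1]$ records every branch that reaches $0$ from above at some $s\in(0,1]$, i.e.\ the crossings counted from the other endpoint; the bookkeeping identity relating ``crossings counted from the left endpoint'' and ``crossings counted from the right endpoint'' of the same monotone path is precisely $I(A,A-B)+\dim\ker(A-B)-\dim\ker A$, because the eigenvalues in $\ker A_1$ that have \emph{not} yet become negative, minus those in $\ker A_0$, is the correction between the two conventions.

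Third, I would verify the two finiteness/regularity facts the argument silently uses: (i) the set $\{s\in[0,1]:\ker A_s\neq 0\}$ is finite, and (ii) near each such $s$ only finitely many eigenvalue branches are involved, so the sums in $1^\circ$ and $2^\circ$ are finite. Both follow from analytic perturbation theory applied to the family $A_s$ after a Riesz-projection reduction to the finite-dimensional spectral subspace for a small disc around $0$: on that subspace $A_s$ becomes a finite matrix depending real-analytically (affinely) on $s$, its eigenvalues are piecewise-analytic functions of $s$, each strictly monotone by the $B\ge 0$ computation $\frac{d}{ds}\lambda_i(s)=-\langle B e_i(s),e_i(s)\rangle\le 0$, so each eigenvalue branch hits $0$ at most once and the zero set is finite. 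This is exactly the place where Appendix \ref{app:continuity-eigemvalue} is needed, and I expect \textbf{the main obstacle to be precisely this step}: carefully justifying the reduction to a locally finite-dimensional, analytically varying problem for a \emph{closed unbounded} Fredholm $A$ with only a relatively compact perturbation (as opposed to the simpler bounded case), and pinning down the exact sign convention so that the $[0,1)$ versus $(0,1]$ half-open sums in $1^\circ$ and $2^\circ$ come out with the stated $+\dim\ker(A-B)-\dim\ker A$ correction rather than its negative. Once the monotone eigenvalue picture is in hand, $1^\circ$ and $2^\circ$ are just two ways of tallying the same set of downward zero-crossings from the two ends of the interval.
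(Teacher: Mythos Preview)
Your approach is essentially the same as the paper's: reduce via Riesz projections to a finite-rank family, establish that each eigenvalue branch $\lambda_l(s)$ is nonincreasing because $B\ge 0$, and then read off $1^\circ$ and $2^\circ$ as two half-open countings of the same monotone crossings. The paper carries out the monotonicity step slightly differently from your min-max/derivative argument: it fixes $t$, picks unit eigenvectors $x(t_n)\in\ker(A-t_nB-\lambda_l(t_n)I)$ along a sequence $t_n\to t$, extracts a convergent subsequence using the finite rank of the spectral projection, and computes directly that $\liminf_{s\to t}\frac{\lambda_l(s)-\lambda_l(t)}{s-t}=-\langle Bx(t),x(t)\rangle\le 0$, then concludes nonincreasing by elementary real analysis. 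This avoids invoking analytic perturbation theory for the derivative formula.

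One point to correct: your finiteness claim (i), that $\{s\in[0,1]:\ker A_s\neq 0\}$ is finite, is false in general. An eigenvalue branch can sit identically at $0$ on a subinterval (precisely when the corresponding eigenvector lies in $\ker B$; this is what the companion Lemma~\ref{l:ker-A-sB-constant} describes). What is true, and what both $1^\circ$ and $2^\circ$ actually need, is that $s\mapsto\dim\ker A_s$ is \emph{piecewise constant} with only finitely many jump points; this follows from continuity plus monotonicity of the finitely many eigenvalue branches on each subinterval of the partition. Your ``each branch hits $0$ at most once'' should therefore be weakened to ``each branch is either identically $0$ on a closed subinterval or crosses $0$ at a single point,'' which is enough for the bookkeeping. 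With that correction the argument goes through as you outlined.
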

\begin{proof}
	1:
	According to the definitions of the spectral flow and the relative Morse index $I(A,A-B)=-\SF\{A-sB\}$, $0\le s\le1$,
	we can choose a partition, $\{0=s_0<s_1<\cdots <s_k=1\}$ of the interval $[0,1]$ and positive real numbers $\varepsilon_j$, $j=1,2,\ldots,k$, such that for each $j=1,2,\ldots,k$, the function $s\mapsto E_j(s):=1_{[-\varepsilon_j,\varepsilon_j]}(A-sB)$ is continuous and of finite rank on $[s_{j-1},s_j]$,
	where $1_{[-\varepsilon_j,\varepsilon_j]}(A-sB)=\frac{1}{2\pi i}\int_{\Gamma_j}(z-(A-sB))^{-1}dz$, $\Gamma_j$ is the positively oriented circle with center at $0$ and radius $\varepsilon_j$. Then
	\begin{equation}\label{e:def-relative-morse-index}
		I(A,A-B)=\sum_{j=1}^k (m^-(E_j(s_j)(A-s_jB)E_j(s_j))-m^-(E_j(s_{j-1})(A-s_{j-1}B)E_j(s_{j-1}))).
	\end{equation}
	
	Now we only need to prove for each $j=1,2,\ldots,k$,
	\begin{equation}\label{e:relative-morse-index-subinterval}
		\begin{split}
			&\quad \quad \  m^-(E_j(s_j)(A-s_jB)E_j(s_j))-m^-(E_j(s_{j-1})(A-s_{j-1}B)E_j(s_{j-1}))\\
			&=\sum_{s_{j-1}\le s <s_j}(\dim \ker (A-sB)-\lim_{t\to s^+}\dim \ker (A-tB)).
		\end{split}
	\end{equation}
	
	Fix a $j\in \{1,2,\ldots,k\}$.
	By the definition of the spectral flow and spectral theory,
	for $s\in[s_{j-1},s_j]$, $\sigma(A-sB)\cap[-\varepsilon_j,\varepsilon_j]$ consists of the eigenvalues
	\[-\varepsilon_j<\lambda_1(s)\le\lambda_2(s)\le\cdots \le\lambda_m(s)< \varepsilon_j,\]
	where $m=\dim \im E_j(s_j)=\dim \im E_j(s_{j-1})$.
	By Theorem \ref{t:continuity-eigemvalue}, for each $l=1,2,\ldots,m$, $\lambda_l(s)$ is continuous in
	$s\in[s_{j-1},s_j]$.
	
	To prove \eqref{e:relative-morse-index-subinterval},
	we prove that for each $l=1,2,\ldots,m$, $\lambda_l(s)$ is  monotone non-increasing in $s\in[s_{j-1},s_j]$.
	Fix an $l\in\{1,2,\ldots,m\}$. For any $t\in [s_{j-1},s_j]$, let $\{t_n\in [s_{j-1},s_j]; n\in\N\}$ be a sequence converging to $t$ such that
	\begin{equation*}
		a_l(t):=\liminf_{s\to t}\frac{\lambda_l(s)-\lambda_l(t)}{s-t}=\lim_{n\to+\infty}\frac{\lambda_l(t_n)-\lambda_l(t)}{t_n-t}.
	\end{equation*}

	For each $n\in\N$, we pick up an $x(t_n)\in \ker(A-t_nB-\lambda_l(t_n)I)$ with $\|x(t_n)\|=1$. Then we have
	$x(t_n)\in \im E_j(t_n)$. So $\{x(t_n);n\in \N\}$ has a convergent subsequence still denoted by $x(t_n)$ such that
	$\displaystyle\lim_{n\to +\infty}x(t_n)=x(t)$. (In fact, one way to deduce that is: We have $x(t_n)=E_j(t_n)x(t_n)$ and $E_j(t)x(t_n)\in \im E_j(t)$, thus $\|x(t_n)-E_j(t)x(t_n)\|\le\|E_j(t_n)-E_j(t)\|\|x(t_n)\|$.
	 Since $\displaystyle \lim_{n\to+\infty}\|E_j(t_n)-E_j(t)\|\to0$  and $\im E_j(t)$ is finite-dimensional, we get the convergent subsequence.) 
	 Since $\displaystyle \lim_{n\to+\infty}\lambda_l(t_n)=\lambda_l(t)$, by the Closed Graph Theorem,  $x(t)\in \ker(A-tB-\lambda_l(t)I)$. So we have
	\begin{equation*}
		\begin{split}
			a_l(t) & =\lim_{n\to+\infty}\frac{\lambda_l(t_n)-\lambda_l(t)}{t_n-t} \\
			& =\lim_{n\to+\infty}\frac{\lambda_l(t_n)-\lambda_l(t)}{t_n-t}\langle x(t_n),x(t)\rangle\\
			&=\lim_{n\to+\infty}\frac{\langle \lambda_l(t_n)x(t_n),x(t)\rangle-\langle x(t_n),\lambda_l(t)x(t)\rangle}{t_n-t}\\
			&=\lim_{n\to+\infty}\frac{\langle (A-t_nB)x(t_n),x(t)\rangle-\langle x(t_n),(A-tB)x(t)\rangle}{t_n-t}\\
			&=\lim_{n\to+\infty}\frac{\langle(A-t_nB-(A-tB))x(t_n),x(t)\rangle}{t_n-t}\\
			&=\lim_{n\to+\infty} -\langle Bx(t_n),x(t)\rangle\\
			&=-\langle Bx(t),x(t)\rangle\\
			&\le 0,
		\end{split}
	\end{equation*}
	where in the last inequality we used $B$ is semi-positive definite.
	According to the knowledge of real analysis, since $\displaystyle\liminf_{s\to t}\frac{\lambda_l(s)-\lambda_l(t)}{s-t}\le 0$,
	we have $\lambda_l(s)\le \lambda_l(t)$ for $s_{j-1}\le s\le t\le s_j$.
	Thus \eqref{e:relative-morse-index-subinterval} is followed. Then by \eqref{e:def-relative-morse-index},
	we get $1^{\circ}$.
	
	2: $2^{\circ}$ follows from $1^{\circ}$. In fact, since $B$ is semi-positive and relatively compact with respect to $A$, according to the proof of $1^{\circ}$,
	there exists a partition, $\{0= t_0<t_1<\cdots<t_m=1\}$ of $[0,1]$ such that for each $j=1,2,\ldots,m$, $\dim \ker(A-sB)$ is constant on $t_{j-1}<s<t_j$. Then $\displaystyle\lim_{s\to t^+_{j-1}}\dim \ker (A-sB)=\lim_{s\to t^-_j}\dim \ker (A-sB)$ for $j=1,2,\ldots,m$.
	So
	\begin{equation*}
		\begin{split}
			&\quad\  I(A,A-B)+\dim \ker(A-B)-\dim \ker A   \\
			&= \sum_{j=1}^{m}(\dim \ker (A-t_{j-1}B)-\lim_{s\to t^+_{j-1}}\dim \ker (A-sB))+\dim \ker(A-B)-\dim \ker A\\
			&= \sum_{j=1}^{m}(\dim \ker (A-t_jB)-\lim_{s\to t^-_j}\dim \ker (A-sB))\\
			&= \sum_{0< s\le 1}(\dim \ker (A-sB)-\lim_{t\to s^-} \dim \ker (A-tB)). 
		\end{split}
	\end{equation*}
\end{proof}

\begin{lemma}\label{l:ker-A-sB-constant}
	Let $X$ be a Hilbert space.
	Let $A$ be a closed self-adjoint operator in $X$, and $B$ be a bounded selfadjoint operator on $X$. Assume that $B$ is semi-positive definite on $X$, i.e., $\langle Bx,x\rangle\ge 0$ for any $x\in X$.
	Let $U$ be a non-trivial interval in $\R$ and assume that  $A-sB$ is Fredholm for any $s\in U$.
	If $\dim \ker(A-sB)=\const$ for all $s\in U$,
	then $\ker (A-sB)=\ker A\cap \ker B$ for $s\in U$.
\end{lemma}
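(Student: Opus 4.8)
The inclusion $\ker A\cap\ker B\subseteq\ker(A-sB)$ holds for every $s$ and is immediate, so the whole point is the reverse inclusion, and the plan is to extract it from the constancy of $\dim\ker(A-sB)$. If this dimension is $0$ throughout $U$ there is nothing to prove, so I would fix $s_0\in U$, set $k:=\dim\ker(A-s_0B)\ge 1$, and first localise the spectrum: since $A-s_0B$ is self-adjoint and Fredholm, $0\notin\sigma_{\mathrm{ess}}(A-s_0B)$, hence $0$ is isolated in $\sigma(A-s_0B)$, and I may pick $\varepsilon>0$ with $\sigma(A-s_0B)\cap[-\varepsilon,\varepsilon]=\{0\}$. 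Because $A-sB$ differs from $A-s_0B$ only by the bounded term $(s_0-s)B$, a Neumann-series estimate keeps the circle $\{|z|=\varepsilon\}$ in the resolvent set of $A-sB$ for $s$ near $s_0$, so that the Riesz projection
\[
E(s):=-\frac{1}{2\pi i}\oint_{|z|=\varepsilon}(A-sB-z)^{-1}\,dz=\mathbf{1}_{[-\varepsilon,\varepsilon]}(A-sB)
\]
is well defined and depends analytically, hence $C^{1}$, on $s$ in a neighbourhood of $s_0$.

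The next step is to identify $E(s)$ with the projection onto the kernel for $s$ near $s_0$ in $U$. Since $\sigma(A-s_0B)\cap[-\varepsilon,\varepsilon]=\{0\}$ one has $\operatorname{ran}E(s_0)=\ker(A-s_0B)$, so $\dim\operatorname{ran}E(s_0)=k$; by norm continuity of the finite-rank projection $\dim\operatorname{ran}E(s)=k$ for $s$ near $s_0$. On the other hand $\ker(A-sB)\subseteq\operatorname{ran}E(s)$ always, while the hypothesis gives $\dim\ker(A-sB)=k$ for $s\in U$; comparing dimensions forces $\operatorname{ran}E(s)=\ker(A-sB)$, i.e.\ $E(s)=\mathbf{1}_{\{0\}}(A-sB)$ is the orthogonal projection onto $\ker(A-sB)$ and in particular $(A-sB)E(s)=0$, for all $s$ in some relative neighbourhood $V$ of $s_0$ in $U$. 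This is the only place the constancy assumption enters, and it is in effect the continuity-of-eigenvalues statement of Appendix~\ref{app:continuity-eigemvalue} in its crudest form.

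Then I would differentiate the identity $(A-sB)E(s)=0$ at $s_0$. For $s\in V$ the key computation is
\[
(A-s_0B)\,\frac{E(s)-E(s_0)}{s-s_0}=\frac{(A-sB)E(s)+(s-s_0)BE(s)}{s-s_0}=BE(s),
\]
and letting $s\to s_0$ (from within $V$) the right-hand side tends to $BE(s_0)$ while $(E(s)-E(s_0))/(s-s_0)\to E'(s_0)$ in operator norm; since $A-s_0B$ is closed this gives $(A-s_0B)E'(s_0)=BE(s_0)$. Multiplying on the left by $E(s_0)$ and using that the spectral projection $E(s_0)$ commutes with $A-s_0B$ together with $(A-s_0B)E(s_0)=0$, the left side collapses to $0$, so $E(s_0)BE(s_0)=0$. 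As $B\ge 0$, writing $B=B^{1/2}B^{1/2}$ turns this into $\bigl(B^{1/2}E(s_0)\bigr)^{*}B^{1/2}E(s_0)=0$, whence $B^{1/2}E(s_0)=0$ and $BE(s_0)=0$. Therefore every $x\in\ker(A-s_0B)=\operatorname{ran}E(s_0)$ has $Bx=BE(s_0)x=0$ and hence $Ax=(A-s_0B)x+s_0Bx=0$, i.e.\ $x\in\ker A\cap\ker B$; since $s_0\in U$ was arbitrary, $\ker(A-sB)=\ker A\cap\ker B$ on $U$.

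The steps about the resolvent staying invertible on $|z|=\varepsilon$, the analyticity of $E(\cdot)$, and the isolatedness of $0$ in the spectrum of a self-adjoint Fredholm operator are all routine and partly already present in the proof of Lemma~\ref{l:relative-morse-index}. The one point deserving care is the limit that yields $(A-s_0B)E'(s_0)=BE(s_0)$: one needs the difference quotients of $E$ to land in $\operatorname{dom}(A)$ after composition and to converge together with their images under $A-s_0B$, which is precisely what closedness of $A-s_0B$ provides. I expect that — essentially a single application of the closed-graph property — to be the main, and rather mild, obstacle.
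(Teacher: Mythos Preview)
Your argument is correct, but it takes a heavier route than the paper's. Both proofs begin the same way: use the constancy of $\dim\ker(A-sB)$ to identify the Riesz projection with the kernel projection (the paper phrases this as gap-continuity of $s\mapsto\ker(A-sB)$). The divergence is in how the key identity $Bu=0$ is extracted. You differentiate the operator equation $(A-sB)E(s)=0$, invoke closedness of $A-s_0B$ to justify the limit, use spectral commutation, and then the square-root trick on $E(s_0)BE(s_0)=0$. The paper instead works pointwise: given $u\in\ker(A-sB)$, pick $u_k\to u$ with $u_k\in\ker(A-s_kB)$ and $s_k\ne s$, and observe via self-adjointness that
\[
0=\langle(A-s_kB)u_k,u\rangle=\langle u_k,Au\rangle-s_k\langle Bu_k,u\rangle=(s-s_k)\langle Bu_k,u\rangle,
\]
which immediately gives $\langle Bu,u\rangle=0$ and hence $Bu=0$. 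This one-line inner-product trick sidesteps all the operator-level differentiation, domain issues, and closedness arguments you flagged as the ``main obstacle''. Your approach has the virtue of being a clean operator-theoretic calculation that would generalise to settings where the sesquilinear trick is unavailable, but for this lemma the paper's route is noticeably shorter.
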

\begin{proof}
	It is obvious that $\ker (A-sB)\supset\ker A\cap \ker B$.
	Now we prove that $\ker (A-sB)\subset\ker A\cap \ker B$.
	Since $\dim \ker(A-sB)=\const$ on $s\in U$.
	Then the family of subspaces of $X$, $\ker (A-sB)$ is continuous in $s\in U$ in the gap topology (cf. \cite[Appendixes A.2 and A.3]{BoZh14}). 
	
	Let $u\in \ker (A-sB)$ for some $s\in U$.
	Then there exists a sequence $s_k\in U$, $k\in \N $, such that $s_k\neq s$, $\displaystyle \lim_{k\to +\infty}s_k=s$, and a sequence
	$u_k\in \ker (A-s_kB)$, $k\in \N $ such that $\displaystyle \lim_{k\to +\infty}u_k =u$ in X.
	Then
	\begin{equation*}
		\begin{split}
			0&=\langle (A-s_kB)u_k,u\rangle \\
			&=\langle Au_k,u\rangle-\langle s_kBu_k,u\rangle\\
			&=\langle sBu_k,u\rangle-\langle s_kBu_k,u\rangle.
		\end{split}
	\end{equation*}
	Thus $\langle Bu_k,u\rangle=0$, then $\langle Bu,u\rangle=0$.
	Since for any $v$, $t\in\R$, $2t\langle Bu,v\rangle+t^2\langle Bv,v\rangle=\langle B(u+tv),u+tv\rangle\ge0$, we get $Bu=0$. Thus $Au=0$ and we are done.
\end{proof}

\section{Proof of the main theorems }\label{s:partial-convex-symmetric}
\subsection{Brake subharmonics for partially convex reversible nonautonomous Hamiltonian systems}
\begin{lemma}\label{l:simple-period}
	Let $\bar u$ be a mountain-pass essential point of $\psi$ on $\hat L^{\beta}_{\circ}(I_{2^kT};\R^{2n})$ (cf. \eqref{e:reduced-functional}), where $k\ge1$, i.e.,
	\[\psi(u)=\int_{-2^{k-1}T}^{2^{k-1}T}\left[ \frac{1}{2}(-J\Pi u+J\Lambda J\Pi^2u,u)+F^*(t;u)\right]dt\]
	for $u\in \hat L^{\beta}_{\circ}(I_{2^kT};\R^{2n})$, here $\Pi u$ denote the primitive of $u$ whose mean over $I_{2^kT}$ is zero and $\Lambda=\begin{pmatrix}
		\lambda I_n&0\\
		0&0
		\end{pmatrix}\in \R^{2n\times 2n}$.
	 Suppose $\bar x\in \hat W_{2^kT}$ satisfying $-J\frac{d}{dt}\bar x+\Lambda \bar x=\bar u$ is a brake solution of the corresponding Hamiltonian system. Assume
	$i_{L_0}(\bar x)=1$. Then $2^kT$ is $\bar x$'s simple period.
\end{lemma}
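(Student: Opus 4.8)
The plan is to argue by contradiction using Proposition \ref{p:morse-index--1}. Suppose $2^kT$ is not a simple period of $\bar x$; then $j*\bar x=\bar x$ for some $j$ with $1\le j\le 2^k-1$, and with $h=\gcd(j,2^k)$ a B\'ezout argument gives $\bar x(t+hT)=\bar x(t)$. Since $h$ is a power of two with $h<2^k$, so $h\mid 2^{k-1}$, this means $\bar x$ is $2^{k-1}T$-periodic. Writing $\tau=2^{k-1}T$ (so $2^kT=2\tau$ and the integer occurring in Proposition \ref{p:morse-index--1} is $j=2^{k-1}$), the periodicity of $\bar x$ gives $\bar u(t)=\bar u(t-\tau)$ for $\bar u=-J\dot{\bar x}+\Lambda\bar x$, which is hypothesis (i) there. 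Note also that $\bar x$ is non-constant (since $\bar u\neq 0$), hence nowhere zero by {\bf (SH2)} and uniqueness, so $H_{qq}(t,\bar x(t))$ is positive definite by {\bf (SH7)}, while $\hat H_K=H$ along $\bar x$ by the \textit{a priori} estimate. Proposition \ref{p:morse-maslov-index} (with period $2^kT$) then yields $m^-(\bar u)=m^-(q_\tau)=i_{L_0}(\bar x)=1$. Hence it suffices to verify hypothesis (ii) of Proposition \ref{p:morse-index--1}, i.e.\ to exhibit $e\in\hat L_{\circ}^2(I_{2^kT};\R^{2n})$ with $q_\tau(e,e)<0$ and $e(t)=-e(t-\tau)$, which then forces $m^-(\bar u)\neq 1$ and gives the contradiction.

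To construct such an $e$, I would set $S=2^{k-2}T$ and $\gamma=\gamma_{\bar x}|_{[0,S]}$. Using the brake symmetry $\bar x(2S-t)=N\bar x(t)$ together with the identity $B(2S-t)=NB(t)N$ for $B(t)=H''(t,\bar x(t))$ — a consequence of {\bf (SH6)} and the $T$-periodicity of $H$ in $t$ — one checks by uniqueness of solutions that $\gamma_{\bar x}|_{[0,2S]}$ equals the brake $2$-iteration $\gamma^2$ of Definition \ref{d:iteration-brake}, so $i_{L_0}(\bar x)=i_{L_0}(\gamma^2)=1$. Since $H_{qq}(t,\bar x(t))$ is positive definite, Lemma \ref{l:partial-positive-L0-L1}(a) gives $i_{L_0}(\gamma)\ge 0$, and the iteration inequality \eqref{e:iteration-ineq} then forces $1=i_{L_0}(\gamma^2)\ge 2i_{L_0}(\gamma)+\nu_{L_0}(\gamma)$, hence $i_{L_0}(\gamma)=0$.

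Next, viewing $\bar x$ as a $\tau$-periodic brake solution $\bar x'$, we have $i_{L_0}(\bar x')=i_{L_0}(\gamma)=0$. Applying Lemma \ref{l:morse-decomposition} with $T$ replaced by $\tau$, and using that $-J\frac{d}{dt}+\Lambda$ is an isomorphism $\hat W_{2\tau}\to\hat L_{\circ}^2(I_{2\tau};\R^{2n})$ carrying $Q_\tau$ to $q_\tau$ (cf.\ \eqref{e:u-inverse-x}; here $\hat H_K=H$ along $\bar x$), one gets $1=m^-(q_\tau)=m^-(Q_\tau)=i_{L_0}(\bar x')+j_{-1}(\bar x')=j_{-1}(\bar x')$. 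Thus $Q_\tau$ is negative on some nonzero $y$ in the antiperiodic subspace $\hat W^{-1}_\tau\subset\hat W_{2\tau}$, and $e:=-J\dot y+\Lambda y$ lies in $\hat L_{\circ}^2(I_{2^kT};\R^{2n})$, satisfies $e(t)=-e(t-\tau)$, and has $q_\tau(e,e)=Q_\tau(y,y)<0$ — exactly hypothesis (ii). The resulting contradiction shows $2^kT$ is $\bar x$'s simple period.

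The hard part is the last passage, pinning the unique negative direction of $q_\tau$ into the antiperiodic subspace: this is precisely where the full-strength hypothesis $i_{L_0}(\bar x)=1$ (not merely $\le 1$) is essential, via the iteration inequality forcing $i_{L_0}(\gamma)=0$ and hence $j_{-1}(\bar x')=1$. I also need to be careful with conventions — that $m^-(\bar u)$ in Proposition \ref{p:morse-index--1} denotes the Morse index of $q_\tau$, and that the auxiliary $\Lambda$-term does not alter the relevant indices compared with the genuine linearization, so that Proposition \ref{p:morse-maslov-index} and Lemma \ref{l:morse-decomposition} apply verbatim along $\bar x$.
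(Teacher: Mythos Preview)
Your proposal is correct and follows essentially the same route as the paper's proof: both argue by contradiction, reduce to $\bar x$ being $2^{k-1}T$-periodic, use Proposition~\ref{p:morse-maslov-index} to identify $m^-(\bar u)=i_{L_0}(\bar x)=1$, apply the iteration inequality \eqref{e:iteration-ineq} together with $i_{L_0}(\gamma)\ge 0$ to force $i_{L_0}(\gamma)=0$, invoke Lemma~\ref{l:morse-decomposition} to get $j_{-1}=1$, and then feed the resulting antiperiodic negative direction into Proposition~\ref{p:morse-index--1} for the contradiction. Your write-up is in fact a bit more explicit than the paper's in the initial reduction (the B\'ezout/gcd step from the phase-shift definition of simple period) and in justifying positive definiteness of $H_{qq}$ along $\bar x$, but the underlying argument is the same.
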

\begin{proof}
	Arguing  by contradiction, we 
	assume there is a positive integer $j<k$ such that $\bar x$ is $2^jT$-periodic. Then it must be $2^{k-1}T$-periodic.
	Let $\gamma_{\bar x}$ be the associated symplectic path of $\bar x$.
Set $S=2^{k-2}T$ and define $\gamma\in\mathcal{P}_S(2n)$ by
$\gamma(t)=\gamma_{\bar x}(t)$ for $t\in[0,S]$.
Then \[1=i_{L_0}(\bar x)=i_{L_0}(\gamma^2).\]
	By Proposition \ref{p:iteration-brake-inequality}, we get $i_{L_0}(\gamma)=0$.
Define the $2^{k-1}T$-periodic brake solution by $x^*=\bar x|_{[-S,S]}$.
Then $i_{L_0}(x^*)=i_{L_0}(\gamma)$.
In view of Lemma \ref{l:morse-decomposition},
$j_{-1}(x^*)=1$, for $m^-(Q_{2S})=i_{L_0}(\bar x)=1$.
Thus the Morse index of the corresponding $q_{2S}$ defined in Section \ref{s:Morse-Maslov-indices} is one; and $\Span_{R}\{e\}$ is  one of the maximal negative definite subspace of the quadratic form $q_{2S}$,
where $e(t+2S)=-e(t)$.
According to Proposition \ref{p:morse-index--1}, we get $m^-(\bar x)\neq 1$.
Since $m^-(\bar x)=i_{L_0}(\bar x)=1$, we get a contradiction.
Thus we get this lemma.
\end{proof}

Now we give the \textit{a priori} estimate of the solutions we found by the mountain-pass theorem.
\begin{proposition}\label{p:aprior-estimate}
	Let $\bar u$  be the critical point of $\psi$ (cf. \eqref{e:reduced-functional}) we found by Mountain Pass Theorem \ref{t:critical-point-mp} and $\bar x\in \hat W_{T}$ with $-J\frac{d}{dt}\bar x+\Lambda \bar x=\bar u$ be the solution of \eqref{e:HS-T-K} we found by Proposition \ref{p:dual-critical-solution}. 
	Then there is a constant $C>0$ independent of $K$ and $\lambda$, such that
	\[\|\bar x\|_{L^{\infty}}\le C.\]
	\end{proposition}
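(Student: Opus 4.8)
The plan is to bound the mountain-pass level $d=\psi(\bar u)$ from above by a constant independent of $K$ and $\lambda$, to convert this into a bound on the action $\Phi_K(\bar x)$, and then to carry out the classical superquadratic \emph{a priori} argument; every intermediate constant will turn out to depend only on $\bar r$, $\bar R_1$, $\bar\theta$, $\alpha$ (equivalently $\mu$), $T$ and the suprema of $H$ and of $|H'|\,|x|$ on $\{(t,x)\in I_T\times\R^{2n};|x|\le\max\{\bar r,\bar R_1\}\}$.

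First I would estimate $d$. With $u\in\hat L_{\circ}^{\beta}(I_T;\R^{2n})$ as in \eqref{e:psi-small-0} and $e=hu$ the point produced in Theorem \ref{t:mountain-condition}(ii), the straight segment $c(s)=se$, $0\le s\le1$, lies in $\Gamma$; applying \eqref{e:psi-upper-bound} with $h$ replaced by $sh$ gives $\psi(c(s))\le -\pi(sh)^2|y_0|^2+\sqrt2\,c_3\,\frac{(2\pi)^{\beta}}{T^{\beta-1}}(sh)^{\beta}|y_0|^{\beta}+a_4T$ for $s\in(0,1]$, and $\psi(c(0))=0$. Since $1<\beta<2$ the right-hand side is bounded above on $[0,\infty)$ by a finite number $M_0$, and by \eqref{e:inf-convolute} and \eqref{e:H-super-quadratic} the constants $c_3,a_4$, hence $M_0$, are independent of $K$ and $\lambda$. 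Therefore $d\le\sup_{0\le s\le1}\psi(c(s))\le M_0$. Next, by \eqref{e:func-dual-func} together with Propositions \ref{p:dual-action-principle} and \ref{p:dual-critical-solution} (and the invariance of $\Phi_K$ under translations in $\{0\}\times\R^n$), we get $\Phi_K(\bar x)=-\Psi_K(\bar x)=-\psi(\bar u)=-d$, so $|\Phi_K(\bar x)|\le M_0$. I also note that $\Phi_K(x)=\int_{I_T}\bigl[\frac12(J\dot x,x)+\hat H_K(t,x)\bigr]\,dt$ because the $\Lambda$-terms cancel, so that neither $\Phi_K$ nor the equation $\dot{\bar x}=J\hat H_K'(t,\bar x)$ satisfied by $\bar x$ involves $\lambda$ any longer.

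For the remaining estimate I may assume $K\ge\max\{\bar r,\bar R_1\}$, since only large $K$ is relevant and then $\hat H_K=H$ on $\{|x|\le\max\{\bar r,\bar R_1\}\}$; recall that $\hat H_K$ satisfies {\bf (SH1)}, {\bf (SH3)} with $\alpha$ in place of $\mu$, {\bf (SH5)} with $\bar\theta$ and $\bar R_1$, and the super-quadratic lower bound \eqref{e:H-super-quadratic}, all with $K,\lambda$-independent constants. From $\dot{\bar x}=J\hat H_K'(t,\bar x)$ we get $(J\dot{\bar x},\bar x)=-(\hat H_K'(t,\bar x),\bar x)$ and hence the identity $2\Phi_K(\bar x)=\int_{I_T}\bigl[2\hat H_K(t,\bar x)-(\hat H_K'(t,\bar x),\bar x)\bigr]\,dt$. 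Splitting $I_T$ into $\{|\bar x|\le\bar r\}$ and its complement and using {\bf (SH1)} and {\bf (SH3)} (so that $0\le\alpha\hat H_K\le(\hat H_K',\bar x)$ where $|\bar x|\ge\bar r$) together with $|\Phi_K(\bar x)|\le M_0$, this identity yields first $\int_{I_T}\hat H_K(t,\bar x)\,dt\le C_1$ and then $\int_{I_T}(\hat H_K'(t,\bar x),\bar x)\,dt\le C_2$. Splitting now according to $|\bar x|\le\bar R_1$, $\bar R_1<|\bar x|\le\max\{\bar r,\bar R_1\}$, $|\bar x|>\max\{\bar r,\bar R_1\}$, and applying {\bf (SH5)} on the last set (where {\bf (SH3)} also gives $(\hat H_K',\bar x)\ge0$), I obtain $\|\dot{\bar x}\|_{L^1}=\int_{I_T}|\hat H_K'(t,\bar x)|\,dt\le C_3$. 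On the other hand \eqref{e:H-super-quadratic} gives $a_3\int_{I_T}|\bar x|^{\alpha}\,dt\le\int_{I_T}\hat H_K(t,\bar x)\,dt+a_4T\le C_1+a_4T$, so $\|\bar x\|_{L^\alpha}$, and a fortiori $\|\bar x\|_{L^1}$, is bounded by a constant $C_4$ of the same type. Finally, integrating $\bar x(t)=\bar x(s)+\int_s^t\dot{\bar x}$ over $s\in I_T$ gives $|\bar x(t)|\le\frac1T\|\bar x\|_{L^1}+\|\dot{\bar x}\|_{L^1}\le\frac1T C_4+C_3=:C$ for all $t$, which is the desired bound with $C$ independent of $K$ and $\lambda$.

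The only delicate point is the uniform upper bound on the minimax value $d$: one must be certain that \eqref{e:psi-upper-bound} is available with constants $c_3,a_4$ genuinely independent of the truncation parameter $K$ and of the convexification shift $\lambda$, which is exactly why those constants were isolated through the inf-convolution estimate \eqref{e:inf-convolute} and the $K$-independent super-quadratic lower bound \eqref{e:H-super-quadratic}. Once $d\le M_0$ is in hand, the rest is the standard Rabinowitz-type bootstrap and only uses that $\hat H_K$ retains {\bf (SH1)}, {\bf (SH3)}, {\bf (SH5)} and \eqref{e:H-super-quadratic} with uniform constants, which it does by construction.
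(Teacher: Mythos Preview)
Your proof is correct and follows essentially the same strategy as the paper: bound the minimax level $d$ by testing along the explicit ray $s\mapsto s e$ and invoking \eqref{e:psi-upper-bound} with its $K,\lambda$-free constants, convert this via \eqref{e:func-dual-func} into a bound on $\Phi_K(\bar x)$, and then run the standard superquadratic bootstrap using {\bf (SH3)}, {\bf (SH5)} and \eqref{e:H-super-quadratic}. The paper merely reverses the presentation order (first writing $\|\bar x\|_{L^\infty}\le a_1 d+a_2$ and then bounding $d$) and closes with a Sobolev-type inequality in place of your integrated mean-value estimate, but the content is the same.
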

\begin{proof}
	First, we choose $K>\max\{\bar r,\bar R_1\}$, where $\bar R_1=\max\{R_1,1\}$.
By Proposition \ref{p:dual-action-principle}, we obtain $\Phi_K(\bar x)+\psi(\bar u)=0$.
By {\bf(SH3)} and \eqref{e:H-super-quadratic},
we have
\[ x\cdot \hat H'_K(t, x)\ge \alpha \hat H_K(t,x)-a_5\ge a_7|x|^{\alpha}-a_6\]
for all $x\in\R^{2n}$,
where $a_5,a_6,a_7$ are constants independent of $K$ and $a_7>0$. Moreover $a_5,a_6,a_7$ depend only on $\bar r,\alpha, T$ and $C_1,C_2$, where $C_1=\max\{H(t,x);t\in I_T;|x|\le \bar r\}$ and 
$C_2=\max\{|H'(t,x)|;t\in I_T;|x|\le \bar r\}$.
Together with the  fact that $\bar x$ is the solution of the nonautonomous system \eqref{e:HS-T-K}, we have
\begin{equation}\label{e:critical-value-big}
	\begin{split}
		d &= \psi (\bar u)=-\Phi_K(\bar x)=\int_{T/2}^{T/2}-\frac{1}{2}(J\dot {\bar x},\bar x)-\hat H_K(t,\bar x)\ dt \\
		&=\int_{-T/2}^{T/2} \frac{1}{2}(\hat H_K'(t,\bar x),\bar x)-\hat H_K(t,\bar x)\ dt\\
		&\ge \int_{-T/2}^{T/2} (\frac{1}{2}-\frac{1}{\alpha})(\hat H_K'(t,\bar x),\bar x)dt-\frac{a_5}{\alpha}T.
	\end{split}
\end{equation}
Hence 
\[\|\bar x\|_{L^{\alpha}}\le \left[ \frac{2\alpha}{(\alpha-2) a_7}(d+\frac{a_5}{\alpha}T)+\frac{a_6}{a_7}T\right]^{1/\alpha}.\]

By {\bf(SH5)} for $\hat H_K$ and \eqref{e:critical-value-big}, we obtain 
\begin{equation*}
	\begin{split}
		\|\dot {\bar x}\|_{L^1}&= \|J\hat H'_K(t,\bar x)\|_{L^1}\\
		&\le \bar \theta  \|(\hat H'_K(t,\bar x),\bar x)\|_{L^1}+C_3T\\
		&\le \frac{2\alpha \bar \theta}{\alpha-2}(d+\frac{a_5T}{\alpha})+C_3(\bar \theta\bar r+1)T,
\end{split}
\end{equation*}
where $C_3=\max\{|H'(t, x)|;t\in I_T, |x|\le \max \{\bar r,\bar R_1\}\}$.
Together with the Sobolev embedding inequality, we obtain
\begin{equation}\label{e:L-infity-estimate}
	\|\bar x\|_{L^{\infty}}\le \|\dot {\bar x}\|_{L^1}+a_{11}\|\bar x\|_{L^{\alpha}}\le a_1d+a_2,
\end{equation}
where $a_1,a_2,a_{11}$ are constants independent of $K$.

Now we estimate the critical value $d$. As shown in \eqref{e:psi-small-0}, 
we can choose $u_1(t)=\frac{2\pi}{T}\begin{pmatrix}
	\sin \frac{2\pi t}{T} y_0\\
	-\cos \frac{2\pi t}{T} y_0
\end{pmatrix}+\lambda \begin{pmatrix}
	\sin \frac{2\pi t}{T} y_0\\
	0
\end{pmatrix}\in \R^{2n}$ such that $\psi(u_1)<0$, where $y_0\in\R^{n}$ is a constant vector independent of $\lambda $ and $K$.
By \eqref{e:mountain-pass}, we have
\begin{equation*}\label{e:critical-value-less}
d\le \max_{0\le s\le 1}\psi(su_1).
\end{equation*}
By \eqref{e:psi-upper-bound},
\[\psi(su_1)\le -\pi s^2|y_0|^2
+\sqrt{2}c_3\frac{(2\pi)^{^{\beta}}}{T^{\beta-1}}s^{\beta}|y_0|^{\beta}+
a_4T.\]
Similar to \cite[page 479 (21)]{AubinEke84},
the right-hand is maximal for $s=c_4\frac{\beta^{\frac{1}{2-\beta}}}{|y_0|}$, where $c_4$ is a constant independent of $K$ and $\lambda$.
Hence 
\begin{equation}\label{e:critical-value-estimate}
	d\le (c_5-c_6\beta)\beta^{\frac{\beta}{2-\beta}}+a_4T,
	\end{equation}
where $c_5,c_6,a_4$ are constants independent of $K$ and $\lambda$.
Combing  \eqref{e:L-infity-estimate} and \eqref{e:critical-value-estimate}, we get
\[\|\bar x\|_{L^{\infty}}\le C,\]
where $C$ is a constant independent of $K$ and $\lambda$.
\end{proof}
\begin{proof}[Proof of Theorem \ref{t:super-quadratic-2}]
	We first modify $H$ to $\hat H_K$, for  sufficient large $K$ such that $K>C$ in Proposition \ref{p:aprior-estimate}. 
	Then according to Theorems \ref{t:mountain-condition} and  \ref{t:critical-point-mp}, Proposition \ref{p:morse-maslov-index} and 
	Theorem \ref{t:two-path-components}, for any $T>0$ and $j\in\N$, there exists a non-constant $jT$-periodic solution $x_j$ of {$\rm(HS)_j$} \eqref{e:HS-jT} with Maslov-type indices satisfying 
	\begin{equation}\label{e:index-estimate}
	i_{L_0}(x_j)\le 1\le i_{L_0}(x_j)+\nu_{L_0}(x_j),
	\end{equation}
which reformulates the Morse index \eqref{e:morse-index-less1-greater1}.
	Note $-J\frac{d}{dt}x_j+\Lambda x_j$ are the mountain-pass essential points of the corresponding dual functions
	$\psi$.
	Assume $j\nmid4$. If $x_j$ and $x_{jp}$ are not  geometrically distinct, then 
	according to the partially strictly convex case of Theorem \ref{t:super-quadratic-1}, 
	we get $p\le 2$. Assume $p=2$.
	Let $\gamma_{x_{2j}}$ be the associated symplectic path of $x_{2j}$.
	Set $S=\frac{jT}{2}$ and define $\gamma\in\mathcal{P}_{S}(2n)$ by $\gamma(t)=\gamma_{x_{2j}}(t)$ for $t\in[0,S]$.
	Then we have $i_{L_0}(x_j)=i_{L_0}(\gamma)$, $i_{L_0}(x_{2j})=i_{L_0}(\gamma^2)$.
According to Proposition \ref{p:iteration-brake-inequality} and \eqref{e:index-estimate}, we get $i_{L_0}(\gamma^2)=1$ and 
	$i_{L_0}(\gamma)=0$. By the same argument in Lemma \ref{l:simple-period}, we get a contradiction.
	Thus $p\neq2$, and $p=1$.
	The proof of Case 1 of Theorem \ref{t:super-quadratic-2} is complete.
	
	Assume $j=2^l$ for some $l\in \N$. That $x_j$ is nondegenerate means $\nu_{L_0}(x_j)=0$.
According to  \eqref{e:index-estimate}, we get $i_{L_0}(x_j)=1$.
Then this case also follows from Lemma \ref{l:simple-period}.
	
	
	\end{proof}
\subsection{Rabinowitz minimal period conjecture for brake solutions of partially convex Hamiltonian systems}
Given  a non-constant $T$-periodic solution of \eqref{e:HS-T} and define
\[T_0=\inf\{a>0| \bar{x}(t+a)=\bar{x}(t), \forall t\in\R\}.\]
Then $T_0>0$ and $T_0=\frac{T}{k}$ for some $k\in\N$.
$T_0$ is the minimal positive period of $\bar x$.
\begin{proposition}\label{p:critical}
	Suppose $H$ satisfies {\rm (H1)-(H4)} and {\rm (H0)$^{q+}$}.
	Let \[\psi(u)=\int_{-\frac{T}{2}}^{\frac{T}{2}}\left[ \frac{1}{2}(-J\Pi u+J\Lambda J\Pi^2u,u)+F^*(u)\right]dt\]
	as in \eqref{e:reduced-functional} for our autonomous system, where $F(x)=\hat H_K(x)+\frac{1}{2}(\Lambda x,x)$ is strictly convex.
	Then $\psi$ has a nonzero critical point $\bar u$; and 
	 there is a non-constant period brake solution $\bar x$
	 of the following \eqref{e:HS-T-K-auto} such that $-J\frac{d}{dt}\bar x +\Lambda \bar x=\bar u$ and
	\begin{equation}\label{e:a-priori-estimate-solution}
		|\bar x(t)|\le \max\{C,\bar r\} \text{\ \ \  for all } t\in\R,
	\end{equation}
	where $C$ is a constant depending only on $T,\alpha,\bar r$, $C_1=\displaystyle \max_{|x|\le \bar r}|H'(x)|$ and $C_2= \displaystyle\max_{|x|\le \bar r}H(x)$.
\end{proposition}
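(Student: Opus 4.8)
The plan is to follow the now-familiar Ekeland--Hofer strategy, but carried out for the \emph{reduced} dual functional $\psi$ on $\hat L^{\beta}_{\circ}(I_T;\R^{2n})$ and in the reversible, only-partially-convex setting, so that the mountain-pass machinery of Theorem~\ref{t:critical-point-mp} applies. First I would set up the truncated autonomous problem: replace $H$ by the convex $C^2$ truncation $\hat H_K$ as in \eqref{e:HS-T-K}, pick $\lambda>0$ so large that (F0) holds, and form $F(x)=\hat H_K(x)+\tfrac12(\Lambda x,x)$ together with its Fenchel conjugate $F^*$. One then has the autonomous analogue \eqref{e:HS-T-K-auto} and, via Propositions~\ref{p:dual-action-principle} and \ref{p:dual-critical-solution}, the dictionary between critical points of $\psi$ and $T$-periodic brake solutions $\bar x$ with $-J\dot{\bar x}+\Lambda\bar x=\bar u$. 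Note that in the autonomous case the conditions {\bf(SH1)}--{\bf(SH6)}, {\bf(SH7)} reduce to (H1)--(H4), (H0)$^{q+}$, so all the estimates proved in Theorem~\ref{t:mountain-condition} hold verbatim: $\psi(0)=0$, $\psi$ has a strict local minimum at the origin with a sphere of radius $\rho$ on which $\psi\ge a>0$, there is an $e$ with $\|e\|_{\beta}>\rho$ and $\psi(e)<0$, and $\psi$ satisfies (PS), hence condition (C). Applying Theorem~\ref{t:critical-point-mp} with $u_0=0$, $u_1=e$ and $d=\inf_{c\in\Gamma}\sup_t\psi(c(t))\ge a>0=\max\{\psi(u_0),\psi(u_1)\}$ produces a mountain-pass essential critical point $\bar u\in\Cr(\psi,d)$ with $0\xrightarrow[\psi]{}\bar u$. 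Since $d>0$, $\bar u\neq 0$, and the corresponding $\bar x$ is non-constant (a constant solution would give $\bar u$ constant, hence $\bar u=0$ in $\hat L^{\beta}_{\circ}$).

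The second half is the \textit{a priori} estimate \eqref{e:a-priori-estimate-solution}, which is essentially the autonomous specialization of Proposition~\ref{p:aprior-estimate}. I would reproduce that argument: from Proposition~\ref{p:dual-action-principle} we have $\Phi_K(\bar x)+\psi(\bar u)=0$, i.e.\ $d=\psi(\bar u)=-\Phi_K(\bar x)$. Using that $\bar x$ solves the autonomous $\dot x=J\hat H_K'(x)$ so that $(J\dot{\bar x},\bar x)=(\hat H_K'(\bar x),\bar x)$ pointwise, one gets
\[
d=\int_{-T/2}^{T/2}\Bigl[\tfrac12(\hat H_K'(\bar x),\bar x)-\hat H_K(\bar x)\Bigr]dt\ge \Bigl(\tfrac12-\tfrac1\alpha\Bigr)\int_{-T/2}^{T/2}(\hat H_K'(\bar x),\bar x)\,dt-\tfrac{a_5}{\alpha}T,
\]
where $(\hat H_K'(x),x)\ge \alpha\hat H_K(x)-a_5\ge a_7|x|^{\alpha}-a_6$ with $a_5,a_6,a_7>0$ depending only on $\bar r,\alpha,T,C_1,C_2$ (integrate (H3) and use \eqref{e:H-super-quadratic}). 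This bounds $\|\bar x\|_{L^{\alpha}}$ in terms of $d$ and of constants independent of $K,\lambda$. For the $L^\infty$ bound I would use (H3)-type growth of $H'$ to control $\|\dot{\bar x}\|_{L^1}=\|J\hat H_K'(\bar x)\|_{L^1}$ by $\|(\hat H_K'(\bar x),\bar x)\|_{L^1}$ up to constants (here, since the system is autonomous and superquadratic, $|H'(x)|\le \text{const}(1+(H'(x),x))$ follows from convexity plus (H3), cf.\ the estimates giving \eqref{e:bound-H-gradient}), and then invoke the Sobolev embedding $\hat W^{1,1}\hookrightarrow L^\infty$ to conclude $\|\bar x\|_{L^\infty}\le a_1d+a_2$ with $a_1,a_2$ independent of $K,\lambda$.

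The remaining point is to bound $d$ itself by a constant independent of $K$ and $\lambda$. For this I would use the explicit comparison curve: by \eqref{e:psi-small-0} one can take $u_1$ of the form $\tfrac{2\pi}{T}\bigl(\sin\tfrac{2\pi t}{T}y_0,\ -\cos\tfrac{2\pi t}{T}y_0\bigr)^{T}+\lambda\bigl(\sin\tfrac{2\pi t}{T}y_0,\ 0\bigr)^{T}$ with $y_0\in\R^n$ fixed independently of $\lambda,K$, and then $d\le\max_{0\le s\le 1}\psi(su_1)$. The key is the \emph{inf-convolution} estimate \eqref{e:inf-convolute}: writing $F(x)\ge F_1(x)+F_2(x)-a_4$ with $F_1(x)=a_3|x|^{\alpha}$, $F_2(x)=\tfrac12(\Lambda x,x)$ gives $F^*(-J\dot x+\Lambda x)\le c_3|\dot x|^{\beta}+\tfrac{\lambda}{2}|p|^2+a_4$ with $c_3,a_4$ independent of $\lambda,K$; the $\tfrac{\lambda}{2}|p|^2$ term exactly cancels the $+\tfrac{\lambda T}{4}|y_0|^2$ coming from the quadratic part $-\tfrac12\int(\Lambda x,x)$ in $\Phi_K$ (this is the point of the $\Lambda$-bookkeeping in \eqref{e:psi-upper-bound}), so that $\psi(su_1)\le -\pi s^2|y_0|^2+\sqrt2\,c_3\tfrac{(2\pi)^{\beta}}{T^{\beta-1}}s^{\beta}|y_0|^{\beta}+a_4T$, whose maximum over $s>0$ is a constant depending only on $T,\alpha,\bar r,C_1,C_2$. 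Combining the three displays yields $\|\bar x\|_{L^\infty}\le C$ with $C$ of the asserted form; enlarging $C$ to $\max\{C,\bar r\}$ if necessary gives \eqref{e:a-priori-estimate-solution}, and since then $|\bar x(t)|\le K$ (having chosen $K\ge\max\{C,\bar r\}$ from the start), $\hat H_K(\bar x)=H(\bar x)$ and $\bar x$ solves the original \eqref{e:HS-T}. \emph{The main obstacle} I anticipate is tracking the $\lambda$-dependence carefully: both $F^*$ and the quadratic term $\tfrac12(\Lambda x,x)$ carry $\lambda$, and one must verify that after the inf-convolution splitting and the cancellation the surviving constants $c_3,a_4,c_5,c_6$ are genuinely uniform in $\lambda$ (and $K$) — this is exactly what makes the estimate usable later when $\lambda$ is sent to make later index arguments work, and it is where \eqref{e:inf-convolute} and the homogeneity of $F_1^*$ do the real work.
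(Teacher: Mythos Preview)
Your existence argument (first paragraph) matches the paper's: the autonomous hypotheses (H1)--(H4), (H0)$^{q+}$ feed into Theorem~\ref{t:mountain-condition}, so Theorem~\ref{t:critical-point-mp} produces a mountain-pass essential $\bar u$ with $d>0$, and Proposition~\ref{p:dual-critical-solution} yields the non-constant brake solution $\bar x$.

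For the a priori estimate you take a different route from the paper, and your version has a gap. You propose to mimic the nonautonomous Proposition~\ref{p:aprior-estimate}: bound $\|\bar x\|_{L^\alpha}$, then $\|\dot{\bar x}\|_{L^1}$ via an (SH5)-type inequality $|\hat H_K'(x)|\le\text{const}\,(1+(\hat H_K'(x),x))$, then use Sobolev embedding. The problem is that {\bf (SH5)} is \emph{not} among the autonomous hypotheses, and your justification ``follows from convexity plus (H3)'' does not give it with $K$-independent constants. The function $\hat H_K$ itself is not convex (only $F=\hat H_K+\tfrac12(\Lambda\cdot,\cdot)$ is), and the estimate \eqref{e:bound-H-gradient} you cite, $|F'(x)|\le c_1(1+|x|^{\alpha-1})$, has $c_1$ coming from the upper growth constant in \eqref{e:H-big-small-alpha}, which depends on $R$ and $M_K$, hence on $K$. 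On the range $\bar r<|x|\le K$ where $\hat H_K=H$, nothing in (H1)--(H4), (H0)$^{q+}$ controls the ratio $|H'(x)|/(H'(x),x)$, so any (SH5)-type constant must see the behaviour of $H$ out to radius $K$. The resulting $L^\infty$ bound then depends on $K$, defeating the bootstrap that lets you choose $K\ge\max\{C,\bar r\}$.

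The paper exploits the \emph{autonomous} structure instead: $\hat H_K(\bar x(t))$ is an integral of motion, hence constant along the orbit. From $d=-\Phi_K(\bar x)$ one obtains (splitting the $t$-integral over $\{|\bar x|\le\bar r\}$ and $\{|\bar x|>\bar r\}$ and using (H3) only on the second set)
\[
d\ \ge\ -\tfrac12 C_1\bar r\,T-\tfrac{\alpha}{2}C_2\,T+\Bigl(\tfrac{\alpha}{2}-1\Bigr)\int_0^T\hat H_K(\bar x)\,dt
\ =\ -\tfrac12 C_1\bar r\,T-\tfrac{\alpha}{2}C_2\,T+\Bigl(\tfrac{\alpha}{2}-1\Bigr)T\,\hat H_K(\bar x(t_0))
\]
for \emph{every} $t_0\in\R$, yielding a pointwise bound on $\hat H_K(\bar x)$ in terms of $d,T,\alpha,\bar r,C_1,C_2$ only. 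Combined with the $K$-independent lower growth \eqref{e:H-super-quadratic} and your (correct) $K,\lambda$-independent bound on $d$ via the inf-convolution estimate and \eqref{e:critical-value-estimate}, this gives the $L^\infty$ bound directly---no Sobolev embedding, no (SH5). So the fix is simple: replace your $\|\dot{\bar x}\|_{L^1}$ step by the energy-conservation argument.
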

\begin{proof}
	According to Theorems \ref{t:mountain-condition} and  \ref{t:critical-point-mp},
	$\psi$
	has a nonzero critical point. Denote the critical point by $\bar u$.
	We have $d=\psi(\bar u)>0$ and $0 \xrightarrow[\psi]{} \bar u$.
	Then Proposition \ref{p:dual-action-principle} shows that
	there is a constant $\xi\in \{0\}\times \R^{n}$
	such that $\bar x(t)=J(\Pi \bar u)(t)-J\Lambda J(\Pi^2\bar u)(t)+\xi$, $t\in \R$, solves \eqref{e:HS-T-K-auto}.
	
	By similar and careful computations in \cite{Ek79},
	we have $\hat H_K(\bar x)\le c(T,\bar r)$, where $c(T,\bar r)$ is a constant depending only on
	the period $T$,  $\bar r$ in condition {\rm (H3)} and also the Hamiltonian function $H$.
	
	Now we deduce that. 
	By \cite[Theorem II.4.2]{Ek90}, we obtain $\Phi_K(\bar x)+\psi(\bar u)=0$. Together with {\rm(H3)} and that fact $\bar x$ is a $T$-periodic solution of the autonomous system \eqref{e:HS-T-K-auto}, we have
	\begin{equation*}
		\begin{split}
			d &= \int_0^T-(\frac{1}{2}J\dot {\bar x},\bar x)-\hat H_K(\bar x)\ dt \\
			&=\int_0^T \frac{1}{2}(\hat H_K'(\bar x),\bar x)-\hat H_K(\bar x)\ dt\\
			&= \int_{\{t\in[0,T];|\bar x(t)|\le \bar r\}} \frac{1}{2}(\hat H_K'(\bar x),\bar x) dt+
			\int_{\{t\in[0,T];|\bar x(t)|> \bar r\}} \frac{1}{2}(\hat H_K'(\bar x),\bar x)dt
			-\int_0^T\hat H_K(\bar x)dt\\
			&\ge \int_{\{t\in[0,T];|\bar x(t)|\le \bar r\}} \frac{1}{2}(\hat H_K'(\bar x),\bar x) dt+
			\frac{\alpha}{2}\int_{\{t\in[0,T];|\bar x(t)|> \bar r\}} \hat H_K(\bar x)dt
			-\int_0^T \hat H_K(\bar x)dt\\
			&= \int_{\{t\in [0,T]; |\bar x(t)|\le \bar r\}}\frac{1}{2}(\hat H_K'(\bar x),\bar x)dt-
			\frac{\alpha}{2}\int_{\{t\in [0,T];\bar x(t)\le \bar r\}}\hat H_K(\bar x)dt
			+
			(\frac{\alpha}{2}-1)\int_0^T \hat H_K(\bar x)dt\\
			&\ge -\frac{1}{2}C_1\bar rT-\frac{\alpha}{2} C_2T +(\frac{\alpha}{2}-1)T \hat H_K(\bar x(t))
		\end{split}
	\end{equation*}
	for $t\in\R$,
	where $C_1=\displaystyle\max_{|x|\le \bar r}|H'(x)|$ and $C_2= \displaystyle\max_{|x|\le \bar r}H(x)$, since
	$H(x)=H_K(x)$ for $|x|\le K+1$ and $K>\bar r$.
	Then we get for $t\in\R$
	\begin{equation}\label{e:critical-value-s}
		\hat H_K(\bar x(t))\le \frac{2}{\alpha-2}(\frac{d}{T}+\frac{1}{2}C_1\bar r+\frac{\alpha}{2} C_2).
	\end{equation}
	
	We recall \eqref{e:H-super-quadratic}, that is, 
	\begin{equation}\label{e:H-bigger-r}
		\hat H_K(x)\ge a_3|x|^{\alpha}-a_4 \text{ for any } x\in\R^{2n},
	\end{equation}
	where $a_3,a_4>0$ and are independent of $K$.
	
	Combining \eqref{e:critical-value-s}, \eqref{e:H-bigger-r} and 
\eqref{e:critical-value-estimate},
there exists a positive constant $C$ depending on $\bar r$,
	$C_1$, $C_2$ and $\alpha,T$, such that the solution $\bar x$ satisfies \eqref{e:a-priori-estimate-solution}. 
\end{proof}

\begin{lemma}\label{l:i+nu-ge-1}
	Let $(\bar x,T)$ be a a non-constant $T$-periodic
	brake solution of \eqref{e:HS-T} with $\Phi(\bar x)<0$ and let $\gamma_{\bar x}$ be the associated symplectic path of $\bar x$.
	For $t\in [0,T/2]$, set $\gamma(t)=\gamma_{\bar x}(t)$ and $\gamma\in\mathcal P_{\frac{T}{2}}(2n)$. 
	Then we have
	\begin{gather}
		i_{L_1}(\gamma)+\nu_{L_1}(\gamma) \ge 1 \label{e:i-L1-semipositve},\\
		i^{\Lo}_1(\gamma^2)+\nu_1(\gamma^2)\ge n+1
		\label{e:i-long-semipositve}
	\end{gather}
	if
	$H_{pp}(\bar x(t))$ is semi-positive definite and $H_{qq}(\bar x(t))$ is positive definite for $t\in\R$.
\end{lemma}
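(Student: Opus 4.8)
The plan is to first deduce \eqref{e:i-long-semipositve} from \eqref{e:i-L1-semipositve}. Indeed, \eqref{e:Maslov-index-identity-L_0} gives $i^{\Lo}_1(\gamma^2)+\nu_1(\gamma^2)=\bigl(i_{L_0}(\gamma)+\nu_{L_0}(\gamma)\bigr)+\bigl(i_{L_1}(\gamma)+\nu_{L_1}(\gamma)\bigr)+n$, and since $H_{qq}(\bar x(t))=B_{22}(t)$ is positive definite, Lemma \ref{l:partial-positive-L0-L1}(a) yields $i_{L_0}(\gamma)\ge0$, hence $i_{L_0}(\gamma)+\nu_{L_0}(\gamma)\ge0$; so it suffices to prove \eqref{e:i-L1-semipositve}, which is the inequality \eqref{e:i-L1+nu-L1-ge1}.

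For \eqref{e:i-L1-semipositve} I would first establish $\nu_{L_1}(\gamma)\ge1$ using autonomy. The curve $y(t):=\dot{\bar x}(t)$ solves the linearized system $\dot y=JH''(\bar x(t))y$; differentiating the brake relations $\bar x(-t)=N\bar x(t)$ and $\bar x(\tfrac T2+s)=N\bar x(\tfrac T2-s)$ shows $\dot{\bar x}(0),\dot{\bar x}(\tfrac T2)\in L_1$, while $\dot{\bar x}(0)=JH'(\bar x(0))\ne0$ because otherwise $\bar x$ would be a constant solution. Hence $\dot{\bar x}(\tfrac T2)=\gamma(\tfrac T2)\dot{\bar x}(0)$ is a nonzero vector of $\gamma(\tfrac T2)L_1\cap L_1$, so $\nu_{L_1}(\gamma)\ge1$; it then suffices to show $i_{L_1}(\gamma)\ge0$ (and more carefully, to rule out the degenerate possibility $i_{L_1}(\gamma)+\nu_{L_1}(\gamma)=0$).

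For the lower bound on $i_{L_1}(\gamma)$ I would decompose $B_2(t):=H''(\bar x(t))=B_1(t)+\Delta(t)$ with
\[
B_1(t)=\begin{pmatrix}0&H_{pq}(\bar x(t))\\ H_{qp}(\bar x(t))&H_{qq}(\bar x(t))\end{pmatrix},\qquad \Delta(t)=\begin{pmatrix}H_{pp}(\bar x(t))&0\\ 0&0\end{pmatrix}\ge0
\]
by {\rm(H0)}$^{p\ge0}$. The path $\gamma_1$ attached to $B_1$ has the feature that its $q$-component obeys the closed equation $\dot q=H_{pq}(\bar x(t))q$, so $\gamma_1(t)L_1=L_1$ for every $t$; this lets one compute that $i_{L_1}(\gamma_1)\ge0$ (and, together with the $(L_0,L_1)$-concavity formula \eqref{e:i-nu-L0-L1} for $\gamma_1$, where $B_{22}=H_{qq}>0$ forces $i_{L_0}(\gamma_1)\ge0$, controls $i_{L_1}(\gamma_1)+\nu_{L_1}(\gamma_1)$). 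Then, regarding $s\mapsto -J\tfrac{d}{dt}-B_1-s\Delta$ with $L_1$ boundary conditions as a path of self-adjoint Fredholm operators whose perturbation $\Delta$ is semi-positive, Lemmas \ref{l:relative-morse-index} and \ref{l:ker-A-sB-constant} show the relative Morse index is monotone, i.e. passing from $B_1$ to $B_2=B_1+\Delta$ does not decrease $i_{L_1}$; hence $i_{L_1}(\gamma)\ge i_{L_1}(\gamma_1)\ge0$. In the borderline case where all these inequalities are equalities and $i_{L_1}(\gamma)+\nu_{L_1}(\gamma)$ could still equal $0$, I would invoke $\Phi(\bar x)<0$: writing $\gamma(\tfrac T2)=\left(\begin{smallmatrix}A&B\\ C&D\end{smallmatrix}\right)$, \eqref{e:i-nu-L0-L1} reduces the claim to $m^+\!\left(\begin{smallmatrix}C^TA&C^TB\\ B^TC&D^TB\end{smallmatrix}\right)\ge n$ (using $\dim\ker C=\nu_{L_1}(\gamma)\ge1$ and $i_{L_0}(\gamma)+\nu_{L_0}(\gamma)\ge0$), and this symmetric matrix is the Hessian of a restriction of the second variation of the direct action $\Phi$ in \eqref{e:phi-autonomous}; the negativity of the critical value $\Phi(\bar x)$ combined with $\Phi(0)=0$ and $H\ge0$ (from {\rm(H1)}, {\rm(H2)}) should supply the needed $n$-dimensional subspace on which that form is negative.

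The \emph{main obstacle} is the careful bookkeeping of nullities and orientations when the relative Morse index passes through degeneracies — $\Phi$ has infinite Morse index, so all index comparisons must be made relatively via Lemmas \ref{l:relative-morse-index}–\ref{l:ker-A-sB-constant}, and the distinction between $i_{L_1}$ and $i_{L_1}+\nu_{L_1}$ corresponds exactly to the two counting conventions ($1^{\circ}$ vs.\ $2^{\circ}$) there. Relatedly, the delicate point is making precise the link between the sign of $\Phi(\bar x)$ and the positive index $m^+$ of the matrix in \eqref{e:i-nu-L0-L1}, and getting the constant $n$ there exactly (not $n-1$); this is where {\rm(H1)} and the partial convexity {\rm(H0)}$^{q+}$, {\rm(H0)}$^{p\ge0}$ must be used in a coordinated way.
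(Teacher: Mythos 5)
Your overall architecture agrees with the paper's: decompose $H''(\bar x)$ into $B_1+\Delta$ with $\Delta$ semi-positive, compare $i_{L_1}(\gamma)$ with $i_{L_1}(\gamma_1)$ through the relative Morse index and Lemmas \ref{l:relative-morse-index}--\ref{l:ker-A-sB-constant}, and use autonomy plus $\Phi(\bar x)<0$ to rule out the borderline case. Your observation that $\dot{\bar x}(0),\dot{\bar x}(T/2)\in L_1$ gives $\nu_{L_1}(\gamma)\ge 1$ is an attractive boundary-value restatement of the paper's use of $\dot{\bar x}\in\ker(\check A-\check B_2)$, and the derivation of \eqref{e:i-long-semipositve} from \eqref{e:i-L1-semipositve} via \eqref{e:Maslov-index-identity-L_0} and Lemma \ref{l:partial-positive-L0-L1}(a) is correct (the paper leaves it implicit).

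But two steps break down. First, $i_{L_1}(\gamma_1)\ge 0$ is false in general. Since $\gamma_1(t)L_1=L_1$, the block $C$ of $\gamma_1(\tfrac T2)$ vanishes and $\nu_{L_1}(\gamma_1)=n$; applying \eqref{e:i-nu-L0-L1} to $\gamma_1$ (where the $-n$ and $\dim\ker C$ cancel) gives $i_{L_1}(\gamma_1)=i_{L_0}(\gamma_1)+\nu_{L_0}(\gamma_1)+m^+(d^Tb)-n$, which can be as low as $-n$: for $n=1$ and $B_1(t)=\left(\begin{smallmatrix}0&0\\0&c\end{smallmatrix}\right)$ with $c>0$ constant one has $\gamma_1(t)=\left(\begin{smallmatrix}1&-ct\\0&1\end{smallmatrix}\right)$, so $i_{L_0}(\gamma_1)=\nu_{L_0}(\gamma_1)=m^+(d^Tb)=0$ and $i_{L_1}(\gamma_1)=-1$. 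Hence the monotonicity $i_{L_1}(\gamma)\ge i_{L_1}(\gamma_1)$ from Lemma \ref{l:relative-morse-index}$\,1^{\circ}$ only gives $i_{L_1}(\gamma)\ge-n$, which together with $\nu_{L_1}(\gamma)\ge 1$ proves nothing. The paper instead works throughout with the coupled quantity $i_{L_1}+\nu_{L_1}$, using counting convention $2^{\circ}$ of Lemma \ref{l:relative-morse-index}, and obtains $i_{L_1}(\gamma)+\nu_{L_1}(\gamma)\ge i_{L_1}(\gamma_1)+\nu_{L_1}(\gamma_1)\ge 0$ plus a strict improvement from the borderline analysis.

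Second, your borderline argument — that $\Phi(\bar x)<0$, $\Phi(0)=0$ and $H\ge 0$ should force $m^+\!\left(\begin{smallmatrix}C^TA&C^TB\\B^TC&D^TB\end{smallmatrix}\right)\ge n$ — is not a valid deduction; the sign of the critical value $\Phi(\bar x)$ places no constraint on the positive index of that matrix (the critical value is not the Hessian). The paper's mechanism is different and depends essentially on the specific decomposition. It takes $B_1(t)=\left(\begin{smallmatrix}0&H_{pq}\\H_{qp}&H_{qq}/2\end{smallmatrix}\right)$, so that $B_2-B_1=\left(\begin{smallmatrix}H_{pp}&0\\0&H_{qq}/2\end{smallmatrix}\right)$ retains a \emph{positive definite} lower-right block. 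When the sum in Lemma \ref{l:relative-morse-index}$\,2^{\circ}$ vanishes, Lemma \ref{l:ker-A-sB-constant} forces $\ker(\check A-\check B_2)=\ker(\check A-\check B_1)\cap\ker(\check B_2-\check B_1)$; combining $\dot{\bar x}\in\ker(\check A-\check B_2)$ with $H_{qq}/2>0$ gives $\dot q\equiv0$, whence the skew term in $\Phi(\bar x)$ integrates to zero and $\Phi(\bar x)=\int H(\bar x)\,dt\ge0$ by (H1), contradicting $\Phi(\bar x)<0$. Your choice $\Delta=\left(\begin{smallmatrix}H_{pp}&0\\0&0\end{smallmatrix}\right)$ discards exactly this: $\dot{\bar x}\in\ker\Delta$ only gives $H_{pp}\dot p=0$, which constrains nothing when $H_{pp}$ is merely semi-positive. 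The halving of $H_{qq}$ inside $B_1$ is not cosmetic; it is the pivot on which the borderline contradiction turns.
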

\begin{proof}
Denote $B_2(t)=\begin{pmatrix}
	H_{pp}(\bar x(t))&H_{pq}(\bar x(t))\\
	H_{qp}(\bar x(t))&H_{qq}(\bar x(t))
	\end{pmatrix}$ and $B_1(t)=\begin{pmatrix}
0&H_{pq}(\bar x(t))\\
H_{qp}(\bar x(t))&\frac{H_{qq}(\bar x(t))}{2}
	\end{pmatrix}$ for $t\in\R$.
	Then $B_2(t)-B_1(t)=\begin{pmatrix}
			H_{pp}(\bar x(t))&0\\
		0&\frac{H_{qq}(\bar x(t))}{2}
	\end{pmatrix}$ is semi-positive definite for $t\in\R$.

Let
$\check L^2(I_T;\R^{2n})=\{u\in L^2(I_T;\R^{2n});u(-t)=-Nu(t)  \text{ for  }t\in I_T\}$  and
\[\check W_T^{1,2}(I_T;\R^{2n})=\{x\in W^{1,2}(I_T;\R^{2n});x(-t)=-Nx(t) \text{ for } t\in I_T \text{ and } x(\frac{T}{2})=x(-\frac{T}{2})\}.\]
We
define a closed self-adjoint Fredholm operator $\check A=-J\frac{d}{dt}$ in $\check L^2(I_T;\R^{2n})$ with domain 
$\dom(\check A)=\check W_T^{1,2}(I_T;\R^{2n})$ and the bounded self-adjoint operators  $\check B_i$ on $\check L^2(I_T;\R^{2n})$ by
\[(\check B_ix,y)_{L^2}=\int_{-\frac{T}{2}}^{\frac{T}{2}}(B_i(t)x(t),y(t))dt,  \quad \text{ for }  x,y\in\check L^2(I_T;\R^{2n}),\ i=1,2.\]
Then $\check B_i$ is relatively compact with respect to $\check A$ for $i=1,2$.
According to the definition of the relative Morse index and the path-additive property of the spectral flow, we have
	\begin{equation}\label{e:relative-B1-B2}
	I(\check A,\check A-\check B_2)=I(\check A,\check A-\check B_1)+I(\check A-\check B_1,\check A-\check B_2).
	\end{equation}

Let $\gamma_1\in \mathcal{P}_{\frac{T}{2}}(2n)$ be the matrizant of linear system \eqref{e:linear-path} with $B(t)=B_1(t)$.
By the relation between the Maslov-type index and the relative Morse index (cf. \cite[the proof of Lemma 3.1]{FZZZ22}), we have
\begin{gather*}
i_{L_1}(\gamma)+n=I(\check A,\check A-\check B_2), \quad \nu_{L_1}(\gamma)=\dim \ker (\check A-\check B_2);\\
i_{L_1}(\gamma_1)+n=I(\check A,\check A-\check B_1), \quad \nu_{L_1}(\gamma_1)=\dim \ker (\check A-\check B_1).
\end{gather*}
Together with \eqref{e:relative-B1-B2}, we get
	\begin{equation}\label{e:iL1-nuL1-gamm}
	i_{L_1}(\gamma)+\nu_{L_1}(\gamma)=I(\check A-\check B_1,\check A-\check B_2)+\dim\ker (\check A-\check B_2)-\dim \ker (\check A-\check B_1)+i_{L_1}(\gamma_1)+\nu_{L_1}(\gamma_1).
	\end{equation}

	According to the form of $B_1$, by the theory of linear differential equations, we obtain  $\gamma_1(t)=\begin{pmatrix}a(t)&b(t)\\
	0&d(t)
	\end{pmatrix}$, where $a(t),b(t),d(t)\in\R^{n\times n}$. In fact, 
\begin{equation*}
	\begin{cases}
		\dot{\gamma_1}(t)=JB_1(t)\gamma_1(t)=\begin{pmatrix}
			-H_{qp}(\bar x(t))&-\frac{H_{qq}(\bar x(t))}{2}\\
				0&H_{pq}(\bar x(t))
		\end{pmatrix}\gamma_1(t), \\
		\gamma_1(0)=I_{2n}.
	\end{cases}
\end{equation*}
	According to  \eqref{e:i-nu-L0-L1}, 
	we obtain \begin{equation*}
		i_{L_1}(\gamma_1)+\nu_{L_1}(\gamma_1)=i_{L_0}(\gamma_1)+\nu_{L_0}(\gamma_1)+m^+\begin{pmatrix}
			0&0\\
			0&d^T(\frac{T}{2})b(\frac{T}{2})
			\end{pmatrix}-n+n\ge i_{L_0}(\gamma_1)+\nu_{L_0}(\gamma_1).
	\end{equation*}
	Together with Lemma \ref{l:partial-positive-L0-L1}.(a), 
	we get 
	\begin{equation}\label{e:gamma_1-i+nu-ge0}
		i_{L_1}(\gamma_1)+\nu_{L_1}(\gamma_1)\ge 0.
	\end{equation}

	Note that $\check A-\check B_2=\check A-\check B_1-(\check B_2-\check B_1)$.
	Since $\check B_2-\check B_1$ is semi-positive definite on $\check L^2(I_T;\R^{2n})$, by Lemma \ref{l:relative-morse-index}, we obtain
	\begin{equation}\label{e:relative-morse-index-ker}
		\begin{split}
		I(\check A-\check B_1&,\check A-\check B_2)+\dim\ker(\check A-\check B_2)-\dim\ker(\check A-\check B_1)
		\\
		&=\sum_{0<s\le1}(\dim\ker(\check A-\check B_1-s(\check B_2-\check  B_1))-\lim_{u\to s^-}\dim\ker(\check A-\check B_1-u(\check B_2-\check B_1))).
		\end{split}
	\end{equation}

	Now we prove that $I(\check A-\check B_1,\check A-\check B_2)+\dim\ker(\check A-\check B_2)-\dim\ker(\check A-\check B_1)\ge 1$. According to \eqref{e:relative-morse-index-ker},
	the left hand of \eqref{e:relative-morse-index-ker} is greater than or equal to $0$.
	If the left hand of \eqref{e:relative-morse-index-ker} is equal to $0$,
	then by \eqref{e:relative-morse-index-ker}, $\dim\ker(\check A-\check B_2)=\dim\ker(\check A-\check B_1-s(\check B_2-\check B_1))$ for $s$ in a neighborhood of $1\in(0,1]$; thus by Lemma \ref{l:ker-A-sB-constant}, $\ker (\check A-\check B_2)=\ker(\check A-\check B_1)\cap \ker(\check B_2-\check B_1)$. Since $\bar x$ is a non-constant periodic solution of the  \textit{autonomous} system \eqref{e:HS-T}, we have $\dot {\bar x} \in\ker (\check A-\check B_2)$; thus $\dot {\bar x}\in\ker (\check B_2-\check B_1)$.
	Write $\bar x(t)=\begin{pmatrix}
		p(t)\\
		q(t)
	\end{pmatrix}$, where $p(t),q(t)\in\R^n$. Since $H_{qq}(\bar x(t))$ is positive definite for $t\in\R$, we have
	$\dot q=0$, thus $q(t)\equiv C\in\R^n$.
	Then \eqref{e:phi-autonomous} becomes
	\[\Phi(\bar x)=\int_{-\frac{T}{2}}^{\frac{T}{2}}\frac{1}{2}\begin{pmatrix}0&-I_n\\
	I_n&0\end{pmatrix}\begin{pmatrix}\dot p(t)\\ \dot q(t)\end{pmatrix}\cdot \begin{pmatrix}p(t)\\q(t)\end{pmatrix}dt+\int_{-\frac{T}{2}}^{\frac{T}{2}}H(\bar x(t))dt=0+\int_{-\frac{T}{2}}^{\frac{T}{2}}H(\bar x(t))dt\ge0,\] which contradicts the assumption
	 $\Phi(\bar x)<0$.
	 Thus the right hand of \eqref{e:relative-morse-index-ker} is strictly greater than $0$.
	 Together with \eqref{e:iL1-nuL1-gamm} and \eqref{e:gamma_1-i+nu-ge0}, we get
	 \eqref{e:i-L1-semipositve}.

	
\end{proof}
Now we give the proof of our main result.
\begin{proof}[Proof of Theorem \ref{t:super-mini-brake}]
	Step 1: To apply Proposition \ref{p:critical}, $H$ will be modified as in Section \Ref{ss:dual-action}.
	We recall:
		choose the integer $m$ such that $2<2+\frac{1}{2m}\le\mu$ and
	denote by $\alpha =2+\frac{1}{2m}$.
	Let $K>\bar r$ and $\chi\in C^{\infty}(\R,\R)$ such that
	$\chi(y)=1$ if $y\le K+1$, $\chi(y)=0$ if $y\ge K+2$, and $\chi'(y)<0$
	if $y\in (K+1,K+2)$.
	Set
	\[H_K(x)=\chi(|x|)H(z)+(1-\chi(|x|))R|x|^{\alpha},\]
	where $R\ge \displaystyle\max_{K+1\le|x|\le K+2}\frac{H(x)}{|x|^{\alpha}}$.
	Then $H_K$ satisfies {\rm (H1)--(H4)} with the same $\bar r$ and $\mu$ replaced by $\alpha$
	in {\rm(H3)}.
	Note that there exists a positive constant $M_K$ such that
	$H''_K(x)y\cdot y\ge -M_K |y|^2$ for $K+1\le |x|\le K+2$.
	Define a special convex $C^2$ function $l$ on $\R^{2n}$ by
	\begin{equation*}
		l(x)=\left\{
		\begin{array}{ll}
			(|x|^2-2K|x|+K^2)^{\frac{\alpha}{2}}, & \hbox{for $|x|\ge K$;} \\
			0, & \hbox{for $|x|\le K$.}
		\end{array}
		\right.
	\end{equation*}
	Then we have
	\[l''(x)y\cdot y\ge \alpha (|x|^2-2K|x|+K^2)^{\frac{\alpha}{2}-1}\frac{|x|-K}{|x|}|y|^2 \text{ for } |x|\ge K.\]
	For $|x|\ge K+1$, we have
	$l''(x)y\cdot y\ge \alpha\frac{1}{K+1}|y|^2$.
	Let $\hat H_K(x)=H_K(x)+\frac{K+1}{\alpha}(M_K+1)l(x)$ for $x\in\R^{2n}$.
	Then $\hat H_K\in C^2(\R^{2n},\R)$ satisfies  {\rm (H1)--(H4)} with $\mu=\alpha$ and {\rm (H0)$^{q+}$}, the following {\rm(H5)},
	and $\hat H_K=H$
	for $|x|\le K$.
	
	Set a strictly convex function
	\begin{equation}\label{e:def-F-convex}
	F(x)=\hat H_K(x)+\frac{1}{2}(\Lambda x,x), \quad \text{where}\quad \Lambda=\begin{pmatrix}
		\lambda I_n&0\\
		0&0
	\end{pmatrix}\in \R^{2n\times 2n}.
	\end{equation}
	In fact, since the Hessian of $\hat H_K$ is positive definite for $|x|\ge K+1$ and {\rm (H0)$^{q+}$}, we can choose sufficient large $\lambda$ as in
	Lemma \ref{l:positive-definite-lambda} such that
	the Hessian of $F$ in \eqref{e:def-F-convex} is positive definite for $x\neq 0$.
	Let $\psi(u)=\int_{-T/2}^{T/2}\left[ \frac{1}{2}(-J\Pi u+J\Lambda J\Pi^2u, u)+F^*(u)\right]dt$ be the reduced dual action functional
	on the space $\hat L^{\beta}(I_T;\R^{2n})$ where $\beta\in(1,2)$ satisfies $\frac{1}{\beta}+\frac{1}{\alpha}=1$ as in Section \ref{ss:dual-action}.
	By Theorems \ref{t:critical-point-mp} and  \ref{t:mountain-condition},  we found a nonzero mountain-pass essential point $\bar u$
	with $\psi(\bar u)>0$.  This leads to a  non-constant $T$-period brake solution $\bar x$ of
	\begin{equation}\label{e:HS-T-K-auto}
		\begin{split}
			&\dot x(t)=J\hat H_K'(x(t)) \\
			&x(-t)=Nx(t),\ \  t\in\R
		\end{split}
	\end{equation}
	such that $-J\frac{d}{dt}\bar x+\Lambda \bar x=\bar u$ and $\bar x$ satisfies \eqref{e:a-priori-estimate-solution}. 
	
	Denote by $K=\max\{C,\bar r\}$,
	where $C$ is a positive number depending only on $\bar r,\alpha,T$ and the maximal value of $H(x)$ and $|H'(x)|$ on $\{x\in\R^{2n};|x|\le \bar r\}$ as in \eqref{e:a-priori-estimate-solution}.
	Then $|\bar x(t)|\le K$ for $t\in \R$. 
	Thus  $\hat H_K(\bar x)=H(\bar x)$ and $\bar x$ solves the original \eqref{e:HS-T}.
	By \eqref{e:func-dual-func}, $\Phi(\bar x)=\Phi_K(\bar x)=-\Psi_K(\bar x)=-d<0$.

	Step 2: We will prove that the mimimal period of $\bar x$ is $T$.
	
	By Theorem \ref{t:two-path-components}, we have $m(\bar x)\le 1$.
	We know that $\bar u\neq 0$; so $\bar x$ is non-constant. Suppose $\bar x$ has period $T/k$. We first claim that:
	if $m(\bar x)=0$, then $k=1$ and $\bar x$ has minimal period $T$. In fact, by Proposition \ref{p:morse-maslov-index}, we have $i_{L_0}(\bar x)=0$. 
	Since $\bar x$ satisfies {\rm (H0)$^{p\ge0}$} or $n=1$, by 	Lemma
	\ref{l:i+nu-ge-1} or Proposition \ref{p:C=0} and Proposition \ref{p:index=0}, we get $k=1$.	
	
	If $m(\bar x)=1$, by Lemma \ref{l:i+nu-ge-1}  or Proposition \ref{p:C=0} and Corollary \ref{c:index=1}, we get $k\le 2$.
Assume $k=2$. Then $\bar x$ is $T/2$-periodic.
	Similar to that in the proof of Lemma \ref{l:simple-period}, 
	 there exists
	an eigenvector $e$ associated with the negative eigenvalue
	of $\psi_{\infty}''(\bar u)$
	such that $e(t+\frac{T}{2})=-e(t)$.
	In fact, let $(x^*,T/2)$ be the $\frac{T}{2}$-periodic brake solution induced by $(\bar x,T)$, i.e., $x^*=\bar x|_{[-T/4,T/4]}$.
	By Proposition \ref{p:morse-maslov-index}, we have $i_{L_0}(\bar x)=1$.
	 By Proposition \ref{p:iteration-brake-inequality},
	\[i_{L_0}(\bar x)\ge 2i_{L_0}(x^*)+\nu_{L_0}(x^*).\]
	 Thus we get $i_{L_0}(x^*)=0$.
	By Lemma \ref{l:morse-decomposition}, we have
	$j_{-1}(x^*)=1$.
	If $y\in \hat W_{T}$, then $-J\dot y+\Lambda y\in \hat L_{\circ}^2(I_T;\R^{2n})$ and
	$\psi_{\infty}''(\bar u)(-J\dot y+\Lambda y,-J\dot y+\Lambda y)=Q_{\frac{T}{2}}(y,y)$.
	Since 	$j_{-1}(x^*)=1$, we get a $y\in \hat W^{-1}_{\frac{T}{2}}$. Denote by  $e=-J\dot y+\Lambda y$. Then
	$e\in \hat L_{\circ}^2(I_T;\R^{2n})$ is an
	eigenvector  associated with the negative eigenvalue
	of $\psi_{\infty}''(\bar u)$ such that $e(t+\frac{T}{2})=-e(t)$.
	According to Proposition \ref{p:morse-index--1}, 
	we get a contradiction. So $\bar x$ has mimimal period $T$.
\end{proof}
In the proof of 
Theorem 
\ref{t:super-mini-brake}, we have proved that
under the assumptions of {\rm (H1)-(H4)}, {\rm (H0)$^{q+}$} and
\begin{enumerate}
	\item [\rm (H5)] $\displaystyle \limsup_{|x|\to \infty}\frac{H(x)}{|x|^{\mu}}<\infty$, where $\mu>2$,
\end{enumerate}
for any $T>0$, \eqref{e:HS-T} has a periodic brake solution $\bar x$ with minimal period $T$ provided
this solution $\bar x$  further satisfies either {\rm (H0)$^{p\ge 0}$} or $n=1$.

\begin{proof}[Proof of Theorem \ref{t:potential-well1}]
	We divide into two steps to prove that: under the conditions of {\rm (H1)-(H2), (H3)$'$, (H4), (H5)} and {\rm (H0)$^{q+}$}, 
	for any $T>0$, \eqref{e:HS-T} has a periodic brake solution $\bar x$ with minimal period $T$ provided
	this solution $\bar x$  further satisfies either {\rm (H0)$^{p\ge 0}$} or $n=1$.

Step 1: 
Under the assumptions of {\rm (H1)-(H5)}, {\rm (H0)$^{q+}$},
for any $T>0$,  we have found  a periodic brake solution $\bar x$ of \eqref{e:HS-T}  with minimal period $T$ provided
this solution $\bar x$  further satisfies either {\rm (H0)$^{p\ge 0}$} or $n=1$.
	Note that the Morse index of $\bar x$ is less than $1$.
	We {\bf claim} for some $t_0\in\R$, 
	\begin{equation}\label{e:H''-barx-t0}
	H''(\bar x(t_0))\le \frac{4\pi}{T}I_{2n}.
	\end{equation}
	If \eqref{e:H''-barx-t0} did not hold, then for any $t\in\R$,
	$H''(\bar x(t))>\frac{4\pi}{T}I_{2n}$, and equivalently 
	\[ \text{ for any } t\in \R,\quad  H''(\bar x(t))^{-1} <\frac{T}{4\pi}I_{2n}.\]
	Since $\bar x$ is $T$-periodic, by the compactness of $I_T=[-T/2,T/2]$, there is some $\epsilon >0$ such that
	\[\text{ for any } t\in I_T, H''(\bar x(t))^{-1} <(\frac{T}{4\pi}-\epsilon)I_{2n}.\]
Consider the quadratic form  on $\hat L^2_{\circ}(I_T;\R^{2n})$
\[q_{T/2}(u,v)=\int_{-T/2}^{T/2}[ (-J(\Pi u)(t),v(t))+ [H''(\bar x(t))]^{-1} u(t),v(t))
]dt,\] where $\Pi\colon L^2_{\circ}(I_T;\R^{2n}) \to L^2_{\circ}(I_T;\R^{2n})$ is the primitive.
Let $u_1=\begin{pmatrix}
	\frac{2\pi}{T}\sin \frac{2\pi t}{T} y_0\\
	-\frac{2\pi}{T}\cos \frac{2\pi t}{T} y_0
\end{pmatrix}
$ and $u_2=\begin{pmatrix}
	\frac{4\pi}{T}\sin \frac{4\pi t}{T} y_1\\
	-\frac{4\pi}{T}\cos \frac{4\pi t}{T} y_1
\end{pmatrix}
$, where $y_0,y_1\in\R^n$. Then we have $u_1$ and $u_2$ are linear independent and
$q(u_1,u_1)<0,q(u_2,u_2)<0$.
So $m^-(\bar x)\ge 2n$. We have a contradiction.

Step 2: 
By virtue of {\rm (H3)$'$}, when $|x|\to \infty $ or $x\to \partial \Omega$, we have $H(x)\to +\infty$.
Set $\Omega_1=\{x\in \Omega; H''(x)\le \frac{4\pi}{T}I_{2n}\}$
and denote $h_0=\sup\{H(x);x\in \Omega_1\}$.
Then $\Omega_1$ and $h_0$ are both bounded and  
\[H''(x)>\frac{4\pi}{T}I_{2n} \text{\ for \ } x\in \Omega \text{ and } H(x)>h_0.\]
Set $\Omega_0=\{x\in\Omega; H(x)\le h_0\}$.
Then $\Omega_0$ is a bounded set with brake symmetry and
\[H''(x)>\frac{4\pi}{T}I_{2n} \quad \text{ for } x\in \Omega\setminus  \Omega_0.\]
There exists a $K>0$ such that $\Omega_0\subset \{x\in \R^{2n};|x|\le K\}$.
Set 
\begin{equation}\label{e:tilde-H}
\tilde H(x)=\chi(|x|)H(x)+(1-\chi(|x|))R|x|^4+\frac{K+1}{4}(M_K+1+\frac{4\pi}{T})l(x),\text{ for } x\in\R^{2n},
\end{equation}
where $\chi$, $R$, $M_K$ and $l$ are defined in the Step 1 of the proof of Theorem \ref{t:super-mini-brake} with $\alpha=4$.
Then  $\tilde H$ satisfies {\rm(H1)--(H5)} with $\mu=4$, {\rm (H0)$^{q+}$} and
\begin{gather*}
	\tilde H(x)=H(x) \text{ for } x\in \Omega_0,\\
	\tilde H''(x)>\frac{4\pi}{T}I_{2n} \text{ for } x\notin \Omega_0.
\end{gather*}
In fact, for $|x|\ge K$, $\tilde H''(x)>\frac{4\pi}{T}I_{2n}$; for $x\in\Omega $ and $|x|\le K$, $\tilde H(x)=H(x)$.

By Step 1, for any $T>0$, we get a $T$-periodic brake solution $\bar x$ of \begin{equation}\label{e:HS-T-tilde}
	\begin{split}
		&\dot x(t)=J\tilde H'(x(t)) \\
		&x(-t)=Nx(t),\ \  t\in\R
	\end{split}
\end{equation}such that the minimal period of
$\bar x$ is $T$ and
\[ \tilde H''(\bar x(t_0))\le \frac{4\pi}{T}I_{2n} \text{ for some } t_0\in\R.\]

This implies $\bar x(t_0)\in\Omega_0$, thus  $\tilde H(\bar x(t))=\tilde H(\bar x(t_0))\le h_0$ for $t\in \R$, for 
$\title H$ is an integral of the motion.
Since $\lim\limits_{x\to \partial \Omega }H(x)=+\infty$, 
$\bar x(t)\in \Omega$ for $t\in\R$; thus $\bar x(t)\in \Omega_0$.
Then 
$H(\bar x )=\tilde H(\bar x)$.
We get that $\bar x$ solves the original problem \eqref{e:HS-T}.
\end{proof}
\begin{appendices}
\section{Appendix}\label{app}
\subsection{The Morse index theory}\label{a:morse}
We will use the following Morse index theorem (cf. \cite[Theorem 1.11]{Uhlen73}) to prove Lemma \ref{l:morse-index}.
\begin{theorem}\label{t:morse-index-theorem}
	Let $Q$ be a bilinear form on a Hilbert space $H$, and
	$\{0\}=H_0\subset H_t\subset H_1=H$, $0\le t \le 1$, an increasing family of closed Hilbert spaces. If 
	\begin{enumerate}
		\item [(i)] $Q$ satisfies the unique continuation property,
		\item [(ii)] the Morse negative index of $Q$ is finite and the associated linear transformation \underline{Q}$\colon H\to H$ is a Fredholm map,
		\item [(iii)] $\overline{\cup_{t<k} H_t }=H_k=\cap_{t>k}H_t$,
	\end{enumerate}
	then there is only a finite number of conjugate points where $n(t)\neq 0$ and the Morse negative index
	 \[m^-(Q) =\sum_{0< t<1}n(t).\]
\end{theorem}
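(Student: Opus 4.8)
The plan is to study the integer-valued function $f(t)=m^-(Q|_{H_t})$ on $[0,1]$, where $Q|_{H_t}$ is the restriction of $Q$ to the closed subspace $H_t$, and to show that $f$ is a non-decreasing, left-continuous step function whose jump at every point $k$ equals $n(k)$, the nullity of $Q|_{H_k}$. Monotonicity is immediate: if $s<t$ then $H_s\subset H_t$, so any subspace of $H_s$ on which $Q$ is negative definite also lies in $H_t$, whence $f(s)\le f(t)$. By hypothesis (ii), $f(1)=m^-(Q)<\infty$, so $f$ takes only finitely many values, is a step function with finitely many jump points, and $f(0)=m^-(Q|_{\{0\}})=0$; consequently all jump points lie in $(0,1)$ and $m^-(Q)=f(1)-f(0)=\sum_k(\text{jump of }f\text{ at }k)$. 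The theorem is then reduced to identifying each jump with $n(k)$ and to showing that $n(t)=0$ away from this finite set, so that $\sum_k(\text{jump at }k)=\sum_{0<t<1}n(t)$.

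First I would prove left-continuity of $f$. Given a subspace $V\subset H_k$ of dimension $d=f(k)$ on which $Q$ is negative definite, the union part of hypothesis (iii), namely $H_k=\overline{\bigcup_{t<k}H_t}$, lets me approximate $V$ in the (finite-dimensional) Grassmannian by a $d$-dimensional subspace of $\bigcup_{t<k}H_t$; since negative definiteness of a symmetric form on a finite-dimensional subspace is an open condition, and since a finite-dimensional subspace contained in an increasing union lies in a single member of that union, the approximating subspace sits inside some $H_s$ with $s<k$ and is still negative definite for $Q$. Hence $f(s)\ge d=f(k)$, and together with monotonicity this gives $f(s)\to f(k)$ as $s\to k^-$.

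The jump analysis at a conjugate point $k$ is the step I expect to be the main obstacle, since it is where all three hypotheses must be combined. For the upper bound $\lim_{s\to k^+}f(s)\le f(k)+n(k)$ I would use the intersection part of (iii), $H_k=\bigcap_{s>k}H_s$, together with the Fredholm property of the operator $\underline{Q}$ associated with $Q$: for $s$ just above $k$ the restricted forms $Q|_{H_s}$ differ from $Q|_{H_k}$ only by a small perturbation relative to a fixed finite-dimensional subspace carrying the negative and null directions of $Q|_{H_k}$, so their negative index cannot exceed $f(k)+n(k)$. For the reverse bound $\lim_{s\to k^+}f(s)\ge f(k)+n(k)$ the unique continuation property (i) is decisive: a nonzero $v\in\ker(Q|_{H_k})$ cannot be null for any $Q|_{H_s}$ with $s<k$, and unique continuation forces the null directions at $k$ to be genuinely new and to become negative directions immediately past $k$, so that the index picks up the full nullity $n(k)$ there; the same mechanism shows that $f$ cannot jump at a $t$ where $Q|_{H_t}$ is nondegenerate, i.e. $n(t)=0$ off the jump set.

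Assembling the pieces, $f$ is a non-decreasing, left-continuous step function on $[0,1]$ with $f(0)=0$, $f(1)=m^-(Q)$, finitely many jumps, all in $(0,1)$, and jump exactly $n(k)$ at each such $k$; hence the set of conjugate points is finite and $m^-(Q)=\sum_{0<t<1}n(t)$. The only genuinely delicate point, and the one I would write out in full, is the two-sided jump estimate at a conjugate point; the rest is bookkeeping with monotone step functions and finite-dimensional approximation.
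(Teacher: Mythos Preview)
The paper does not give its own proof of this theorem: it is quoted verbatim as \cite[Theorem 1.11]{Uhlen73} and used as a black box to prove Lemma~\ref{l:morse-index}. So there is no ``paper's proof'' to compare against; your proposal is an attempt to reconstruct Uhlenbeck's argument.

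Your overall architecture --- study $f(t)=m^-(Q|_{H_t})$, show it is a monotone left-continuous step function, and identify each jump with $n(k)$ --- is exactly the standard route and is sound. One point in your sketch is imprecise, though. You write that unique continuation says ``a nonzero $v\in\ker(Q|_{H_k})$ cannot be null for any $Q|_{H_s}$ with $s<k$''. For $s<k$ the vector $v$ need not even lie in $H_s$, so the statement as written is vacuous. The correct use of (i) is the contrapositive: if $0\neq v\in N_k$ happened to lie in some $H_s$ with $s<k$, then since $H_s\subset H_k$ and $Q(v,\cdot)=0$ on $H_k\supset H_s$, we would have $v\in N_s$ as well, contradicting $N_s\cap N_k=0$. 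Thus $N_k\cap H_s=\{0\}$ for all $s<k$, which is what makes the null directions at $k$ genuinely ``new''. This, combined with the finite-dimensional perturbation argument you indicate (using Fredholmness of $\underline{Q}$ and the intersection half of (iii)), is what forces the jump to equal $n(k)$ and shows $n(t)=0$ off the finite jump set. With that correction your plan matches Uhlenbeck's proof.
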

We recall the Morse negative index of $Q$ is the dimension of any maximal subspace of $H$ on which $Q$ is negative definite and
\begin{definition}\label{d:UCP}
	$Q$ satisfies the \textit{unique continuation property} with respect to the family $H_t$ if $N_t\cap N_s=0$ for $t\neq s$, where $N_t=\ker Q|_{H_t}=\{u\in H_t; Q(u,v)=0 \text{ for any } v\in H_t\}$ and $n(t)=\dim N_t$.
\end{definition}
\begin{proof}[Proof of Lemma \ref{l:morse-index}]
Let  
$H_t=\hat L^2_{\circ}(I_t;\R^{2n})$, where $I_t=[-t/2,t/2]$ and $0\le t\le T$, and $H=H_T$. In fact, we identify the element of $H_t$ as its zero extension on $I_T\setminus I_t$, which belongs to $H$.
That is for $u\in H_t$, the restriction on $I_T\setminus I_t$ is zero.
We only need to prove the family of bilinear forms $q_s$ defined in \eqref{e:quadratic-qs} ($s\in[0,\frac{T}{2}]$) satisfies the unique continuation property.
That is $\ker q_s\cap\ker q_t=N_{2s}\cap N_{2t}=0$ for $t\neq s$.
Let $s_1<s_2$ and $u\in  \ker q_{s_1}\cap\ker q_{s_2}=N_{2s_1}\cap N_{2s_2}$. 
This means for $t\in[s_1,s_2]$, $u(t)=0$ and 
there is some constant $\xi_2\in L_0=\{0\}\times \R^n$ such that
	\begin{equation*}
		 -J\Pi_{s_2}u+J\Lambda J\Pi_{s_2}^2u+(B(t)+\Lambda)^{-1}u=\xi_2.
	\end{equation*}
	Let $y=J\Pi_{s_2}u-J\Lambda J \Pi^2_{s_2}u+\xi_2 \in \hat W_{2s_2}^{1,2}(I_{2s_2};\R^{2n}).$
	Then
	\begin{equation}\label{e:linear-equa-a}
		\dot y=JH''(t,\bar x)y.
	\end{equation}
Since the restriction of $u$ on $[-s_2,-s_1]\cup[s_1,s_2]$ is zero,
by the definition of $\Pi_{s_2}$, $J(\Pi_{s_2}u)(t)\in L_0$ is a constant  for $t\in[s_1,s_2]$. 

{\bf Claim:} There is a constant $\xi\in L_0$ such that $y(t)=\xi$ for $t\in [s_1,s_2]$.

Together with \eqref{e:linear-equa-a}, we get
$H''(t,\bar x)\xi=0$. Since $H_{qq}(t,\bar x)$ is positive definite and $\xi\in L_0=\{0\}\times \R^n$, we get $\xi=0$.
By the uniqueness of solutions of ODEs for the initial-value problem, we get $y(t) \equiv 0$ for $t\in I_{2s_2}$. Since $u=(H''(t,\bar x)+\Lambda)y$, we get $u=0$.

Now we prove the claim.
Note that for $u=\begin{pmatrix}
	p\\q
\end{pmatrix}\in H_{2s_2}$, 
 $-J\Lambda J\Pi_{s_2}^2u(t)=\begin{pmatrix}
 	0\\ \lambda \Pi^2q
 	\end{pmatrix}$.
 Since $q(-t)=q(t)$ and $\int_{-s_2}^{s_2}q(t)dt=0$, we have $(\Pi q)(t)=\int^t_{-s_2}q(s')ds'$.
 $\Pi^2q$ equals to $\int^s_{-s_2}\int^t_{-s_2} q(s') ds' dt$ plus a constant belonging to $\R^n$.
 Since $\int_{-s_1}^{s_1}q(t)dt=0$ and the restriction $q|_{[-s_2,-s_1]\cup[s_1,s_2]}=0$, we have
 for $s\in[s_1,s_2]$
 \begin{equation*}
 	\begin{split}
 		\int^s_{-s_2}\int^t_{-s_2} q(s') ds' dt&=\int^{s_1}_{-s_2}\int^t_{-s_2} q(s') ds' dt+\int^{s}_{s_1}\int^t_{-s_2} q(s') ds' dt\\
 		&=\int^{s_1}_{-s_2}\int^t_{-s_2} q(s') ds' dt \in \R^n.
 	\end{split}
 	\end{equation*}
\end{proof}

\subsection{Calculation of the H\"{o}rmander index}\label{a:hormander}
To calculate the H\"{o}rmander index via the triple index, we recall the Hermitian form $Q(\cdot,\cdot;\cdot)$ in \cite[\S 3.1]{zhou-zhu-wu}.
	Let $(V,\omega)$ be a complex symplectic vector space, where $\omega$ is a symplectic
	form on $V$, i.e., a non-degenerate skew-Hermitian form.
	Let $\lambda$ be a linear subspace of $V$.
	The annihilator $\lambda^{\omega}$ of $\lambda$ is defined by
	\[\lambda^{\omega}=\{u\in V;\omega(u,v)=0, \text{ for any } v\in \lambda\}.\]
	A linear subspace $\lambda$ of $V$ is called Lagrangian if $\lambda^{\omega}=\lambda$.
	\begin{definition}\label{d:hermitian-form}
	Given three Lagrangian subspaces $\alpha,\beta,\gamma$ of $V$, we define an Hermitian form
	$Q:=Q(\alpha,\beta;\gamma)$ on
	$\alpha\cap (\beta+\gamma)$ by
	\[Q(x_1,x_2):=\omega(x_1,y_2)=\omega(z_1,y_2)=\omega(x_1,z_2)\]
	for all $x_j=-y_j+z_j\in \alpha\cap (\beta+\gamma)$, where $y_j\in\beta$, $z_j\in\gamma$, $j=1,2$.
	\end{definition}
	Now we calculate some Hermitian forms $Q(\cdot,\cdot;\cdot)$, which can be seen as symmetric matrices.
	\begin{lemma}\label{l:quadratic-forms-calculation}
		Let $(V,\omega)=(\C^{2n}\oplus\C^{2n},(-\omega_0)\oplus \omega_0)$, where \[\omega_0(x_1,x_2)=J\bar x_1\cdot x_2\] for $x_1,x_2\in \C^{2n}$ and $\bar x_1$ means the complex conjugate of the vector $x_1$. Let $\alpha_0=\{0\}\times \C^n$, $\alpha_1=\C^n\times \{0\}$, and  $\tilde \alpha_j=\alpha_j\times\alpha_j$ for $j=0,1$.
		Let $M=\begin{pmatrix}
			A&B\\
			C&D
		\end{pmatrix}\in \Sp(2n)$, where $A,B,C,D\in \R^{n\times n}$. Then
		\begin{enumerate}
			\item [\rm(i)] The matrix of the Hermitian form $Q(\Graph(M),\alpha_0\times \alpha_1;\widetilde\alpha_0)$ is $D^TB$.
			\item [\rm(ii)] The matrix of the Hermitian form $Q(\Graph(M),\widetilde\alpha_1;\widetilde\alpha_0)$ is $\begin{pmatrix}
				C^TA &C^TB\\
				B^TC&D^TB
			\end{pmatrix}$.
			\item [\rm(iii)] The matrix of the Hermitian  form $Q(\Graph(M),\alpha_0\times \alpha_1;\alpha_1\times \alpha_0)$ is $\begin{pmatrix}
				C^TA &A^TD\\
				D^TA&D^TB
			\end{pmatrix}$.
		\end{enumerate}
Furthermore assume $C=0$.
\begin{enumerate}
	\item [\rm(iv)] The matrix of the Hermitian form $Q(\Graph(I_{2n}),\widetilde \alpha_0,M\alpha_0\times \alpha_0)$ is $0$.
	\item [\rm(v)]Let $M_1=NM^{-1}NM$.
	Then \begin{equation}\label{e:NM-1NM}
	M_1=\begin{pmatrix}
		I_n& 2B^TD\\
		0&I_n
	\end{pmatrix}.\end{equation}
	If $B$ is invertible, then the matrix of the Hermitian form $Q(\Graph(M),\widetilde \alpha_0,M_1\alpha_0\times \alpha_0)$ is $-\frac{3}{2}(D^TB)^{-1}$.
\end{enumerate}
\end{lemma}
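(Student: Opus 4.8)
The plan is to compute each of the five Hermitian forms directly from Definition \ref{d:hermitian-form}. Write vectors of $\C^{2n}$ as $\left(\begin{smallmatrix}a\\ b\end{smallmatrix}\right)$ with $a,b\in\C^n$, and recall that for $M=\left(\begin{smallmatrix}A&B\\ C&D\end{smallmatrix}\right)\in\Sp(2n)$ the conditions $M^TJM=J$ and $MJM^T=J$ give $A^TC=C^TA$, $B^TD=D^TB$, $A^TD-C^TB=I_n$, $AB^T=BA^T$, $CD^T=DC^T$, $AD^T-BC^T=I_n$, together with $M^{-1}=\left(\begin{smallmatrix}D^T&-B^T\\ -C^T&A^T\end{smallmatrix}\right)$; also the explicit form $\omega_0\!\left(\left(\begin{smallmatrix}a_1\\ b_1\end{smallmatrix}\right),\left(\begin{smallmatrix}a_2\\ b_2\end{smallmatrix}\right)\right)=-\bar b_1^{T}a_2+\bar a_1^{T}b_2$ and $\omega=(-\omega_0)\oplus\omega_0$ on $V$. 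For each item the recipe is the same: (1) compute the sum $\beta+\gamma$ and hence identify the subspace $\Graph(P)\cap(\beta+\gamma)$ on which $Q$ lives ($P=M$ or $I_{2n}$), parametrizing it explicitly; (2) for a general element $X=(x,Px)$ of this subspace solve $X=-Y+Z$ with $Y\in\beta$, $Z\in\gamma$, making a convenient choice — this is the only place $P$ enters nontrivially; (3) evaluate $Q(X_1,X_2)=\omega(X_1,Y_2)$; (4) read off the matrix with respect to the chosen parametrization. In (ii) and (iii) one has $\beta+\gamma=V$, so $Q$ lives on all of $\Graph(M)$ and is parametrized by $x\in\C^{2n}$; in (i) one has $\beta+\gamma=\alpha_0\times\C^{2n}$ and $Q$ lives on $\{(x,Mx):x\in\alpha_0\}$, parametrized by the $\alpha_0$-component of $x$. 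Carrying out steps (2)--(4), and using the symplectic relations only to combine leftover terms (e.g. $-x_1^Tx_2'$ with $C^TB+I_n=A^TD$ in (iii)) and to check symmetry, produces $D^TB$, $\left(\begin{smallmatrix}C^TA&C^TB\\ B^TC&D^TB\end{smallmatrix}\right)$ and $\left(\begin{smallmatrix}C^TA&A^TD\\ D^TA&D^TB\end{smallmatrix}\right)$ in (i), (ii), (iii) respectively.

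For (iv) and (v) I would first dispatch \eqref{e:NM-1NM}: when $C=0$ the relation $A^TD=I_n$ forces $A$ and $D$ to be invertible, and a one-line multiplication gives $M_1=NM^{-1}NM=\left(\begin{smallmatrix}I_n&2B^TD\\ 0&I_n\end{smallmatrix}\right)$, which also equals $\left(\begin{smallmatrix}I_n&2D^TB\\ 0&I_n\end{smallmatrix}\right)$ since $B^TD=D^TB$. Under the additional hypothesis that $B$ is invertible one checks $\widetilde\alpha_0+(M\alpha_0\times\alpha_0)=\C^{2n}\times\alpha_0=\widetilde\alpha_0+(M_1\alpha_0\times\alpha_0)$, so in (iv) the form lives on $\Graph(I_{2n})\cap(\C^{2n}\times\alpha_0)=\{(x,x):x\in\alpha_0\}$ and in (v) on $\Graph(M)\cap(\C^{2n}\times\alpha_0)=\{(x,Mx):Ax_1+Bx_2=0\}$, which is $n$-dimensional and is naturally parametrized by $u:=x_1$ via $x=\left(\begin{smallmatrix}u\\ -B^{-1}Au\end{smallmatrix}\right)$. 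Running the recipe gives $Q\equiv0$ in (iv); in (v) it gives $Q(X_1,X_2)=-\tfrac12 u_1^{T}(B^TD)^{-1}u_2-u_1^{T}B^{-1}Au_2$, and then $AB^T=BA^T$ shows $B^{-1}A$ is symmetric and $A^TD=I_n$ (so $A^T=D^{-1}$) gives $B^{-1}A=A^TB^{-T}=(B^TD)^{-1}$, which collapses the expression to $-\tfrac32 u_1^{T}(D^TB)^{-1}u_2$, i.e.\ the matrix $-\tfrac32(D^TB)^{-1}$.

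I expect (v) to be the only genuine difficulty, for two reasons. First, the exact coefficient $\tfrac32=1+\tfrac12$ has to be tracked precisely: the $1$ comes from the $u_1^{T}B^{-1}Au_2$ term and the $\tfrac12$ from the $2B^TD$ entry of $M_1$ when one solves the decomposition $X=-Y+Z$. Second, the answer is only pinned down up to a signature-preserving change of basis: other equally natural parametrizations of $\Graph(M)\cap(\C^{2n}\times\alpha_0)$ yield matrices such as $-\tfrac32 D^TB$ or $-\tfrac32 BD^{-1}$, which are congruent to $-\tfrac32(D^TB)^{-1}$ (hence give the same positive index, which is all that Proposition \ref{p:C=0} needs) but are not literally the stated matrix — parametrizing by $u=x_1$ is what makes the answer come out as $-\tfrac32(D^TB)^{-1}$ on the nose. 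The algebraic identities turning $B^{-1}A$ into $(B^TD)^{-1}$ are elementary consequences of $C=0$ and the symplectic relations; the rest is bookkeeping.
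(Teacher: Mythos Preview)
Your proposal is correct and follows essentially the same approach as the paper: both identify the subspace $\alpha\cap(\beta+\gamma)$ explicitly, choose a decomposition $X=-Y+Z$, and evaluate $\omega(X_1,Y_2)$ (or equivalently $\omega(X_1,Z_2)$) directly. Your computation for (v), including the identity $B^{-1}A=(B^TD)^{-1}$ from $AB^T=BA^T$ and $A^T=D^{-1}$, matches the paper's on the nose with the parametrization $u=x_1$. One minor point: in (iv) the statement assumes only $C=0$, not $B$ invertible; without $B$ invertible the sum $\widetilde\alpha_0+(M\alpha_0\times\alpha_0)$ may be strictly smaller than $\C^{2n}\times\alpha_0$, but this is harmless since the second $\alpha_0$-factor already forces $x_1=0$ when you intersect with $\Graph(I_{2n})$, so your identification of the domain and the conclusion $Q\equiv 0$ still stand.
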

	\begin{proof}
		Since $M^TJM=J$, we have
		\[C^TA=A^TC, \qquad D^TB=B^TD, \qquad D^TA-B^TC=I_n.\]
		And we can work in the complex symplectic space
		$(\C^{2n}\oplus\C^{2n},\omega)$, where  \[\omega(u_1,u_2)=\begin{pmatrix}
			-J&0\\
			0&J
		\end{pmatrix}\begin{pmatrix}
		\bar x_1\\
		\bar y_1
		\end{pmatrix}\cdot \begin{pmatrix}
		x_2\\
		y_2
		\end{pmatrix},\] for $u_i=\begin{pmatrix}
		x_i\\y_i
		\end{pmatrix}$, $x_i,y_i\in \C^n$, $i=1,2$.
		
	{\rm(i)}  \[\Graph(M)\cap (\alpha_0\times \alpha_1+\widetilde \alpha_0)=\Graph(M)\cap(\alpha_0\times \C^{2n}),\] whose complex dimension is $n$ and elements are $\begin{pmatrix}
		0\\
		y\\
		By\\
		Dy
	\end{pmatrix}$, $y\in\C^n$.
	For $y_i\in\C^n$ and $u_i=\begin{pmatrix}
		0\\
		y_i\\
		By_i\\
		Dy_i 
	\end{pmatrix}$, $i=1,2$, by Definition \ref{d:hermitian-form} we have
	\[Q(u_1,u_2)=\begin{pmatrix}
		0&I_n&0&0\\
		-I_n&0&0&0\\
		0&0&0&-I_n\\
		0&0&I_n&0
	\end{pmatrix}\begin{pmatrix}
0\\
\bar y_1\\B \bar y_1\\
D \bar y_1
	\end{pmatrix}\cdot \begin{pmatrix}
	0\\
	y_2\\0\\
	Dy_2
	\end{pmatrix}=D^TB\bar y_1\cdot y_2.\]
	
{\rm(ii)}  \[\Graph(M)\cap (\widetilde \alpha_1+\widetilde \alpha_0)=\Graph(M)\cap(\C^{2n}\times \C^{2n}),\] whose complex dimension is $2n$ and elements are $\begin{pmatrix}
		x\\
		y\\
		Ax+By\\
		Cx+Dy
	\end{pmatrix}$, $x,y\in\C^n$.
	For $x_i,y_i\in\C^n$ and $u_i=\begin{pmatrix}
		 x_i\\
		 y_i\\
		A x_i+B y_i\\
		C x_i+D y_i
	\end{pmatrix}$, $i=1,2$, by Definition \ref{d:hermitian-form} we have
	\[Q(u_1,u_2)=\begin{pmatrix}
		0&I_n&0&0\\
		-I_n&0&0&0\\
		0&0&0&-I_n\\
		0&0&I_n&0
	\end{pmatrix}\begin{pmatrix}
		\bar x_1\\
		\bar y_1\\A\bar x_1+B\bar y_1\\
		C\bar x_1+D\bar y_1
	\end{pmatrix}\cdot \begin{pmatrix}
		0\\
		y_2\\0\\
		Cx_2+Dy_2
	\end{pmatrix}=\begin{pmatrix}
	C^TA&C^TB\\
	D^TA-I_n&D^TB
	\end{pmatrix}\begin{pmatrix}
	\bar x_1\\
	\bar y_1
	\end{pmatrix}\cdot \begin{pmatrix}
	x_2\\
	y_2
	\end{pmatrix}.
\]

{\rm(iii)}  \[\Graph(M)\cap (\alpha_0\times \alpha_1+\alpha_1\times \alpha_0)=\Graph(M)\cap(\C^{2n}\times \C^{2n}),\] whose complex dimension is $2n$ and elements are $\begin{pmatrix}
	x\\
	y\\
	Ax+By\\
	Cx+Dy
\end{pmatrix}$, $x,y\in\C^n$.
For $x_i,y_i\in\C^n$ and $u_i=\begin{pmatrix}
	 x_i\\
 y_i\\
	A x_i+B y_i\\
	C x_i+Dy_i
\end{pmatrix}$, $i=1,2$, by Definition \ref{d:hermitian-form} we have
\[Q(u_1,u_2)=\begin{pmatrix}
	0&I_n&0&0\\
	-I_n&0&0&0\\
	0&0&0&-I_n\\
	0&0&I_n&0
\end{pmatrix}\begin{pmatrix}
	\bar x_1\\
	\bar y_1\\A\bar x_1+B\bar y_1\\
	C\bar x_1+D\bar y_1
\end{pmatrix}\cdot \begin{pmatrix}
	x_2\\
	0\\0\\
	Cx_2+Dy_2
\end{pmatrix}=\begin{pmatrix}
	C^TA&C^TB+I_n\\
	D^TA&D^TB
\end{pmatrix}\begin{pmatrix}
	\bar x_1\\
	\bar y_1
\end{pmatrix}\cdot \begin{pmatrix}
	x_2\\
	y_2
\end{pmatrix}.
\]
{\rm(iv)}
\[\Graph(I_{2n})\cap (\widetilde\alpha_0+M\alpha_0\times \alpha_0)=\Graph(I_{2n})\cap\widetilde\alpha_0,\]
whose complex dimension is $n$ and elements are $\begin{pmatrix}
	0\\
	y\\
	0\\
	y
\end{pmatrix}$, $y\in\C^n$.
For $y_i\in\C^n$ and $u_i=\begin{pmatrix}
	0\\
	y_i\\
0\\
y_i
\end{pmatrix}$, $i=1,2$, by Definition \ref{d:hermitian-form} we have
\[Q(u_1,u_2)=\begin{pmatrix}
	0&I_n&0&0\\
	-I_n&0&0&0\\
	0&0&0&-I_n\\
	0&0&I_n&0
\end{pmatrix}\begin{pmatrix}
	 0\\
	\bar y_1\\0\\ \bar y_1
\end{pmatrix}\cdot \begin{pmatrix}
	0\\
	0\\0\\
	0
\end{pmatrix}=0.\]
(v)
Since $M^{-1}=-JM^TJ$, by direct computation we get \eqref{e:NM-1NM}.
\[\Graph(M)\cap (\widetilde \alpha_0+M_1\alpha_0\times \alpha_0)=\Graph(M)\cap(\R^{2n}\times \alpha_0),\]
whose elements are
\[\begin{pmatrix}
	x\\-B^{-1}Ax\\0\\-DB^{-1}Ax
\end{pmatrix}=
\begin{pmatrix}
0\\-B^{-1}Ax-\frac{1}{2}(B^TD)^{-1}x\\0\\-DB^{-1}Ax
\end{pmatrix}
+\begin{pmatrix}
x\\\frac{1}{2}(B^TD)^{-1}x\\0\\0
\end{pmatrix}, \quad x\in\R^n.\]
For $x_i\in\C^n$ and $u_i=\begin{pmatrix}
	x_i\\
	-B^{-1}Ax_i\\
	0\\
-DB^{-1}Ax_i
\end{pmatrix}$, $i=1,2$, by Definition \ref{d:hermitian-form} we have
\[Q(u_1,u_2)=\begin{pmatrix}
	0&I_n&0&0\\
	-I_n&0&0&0\\
	0&0&0&-I_n\\
	0&0&I_n&0
\end{pmatrix}\begin{pmatrix}
	\bar x_1\\
	-B^{-1}A \bar x_1\\0\\ -DB^{-1}A\bar x_1
\end{pmatrix}\cdot \begin{pmatrix}
	x_2\\
	\frac{1}{2}(B^TD)^{-1}x_2\\0\\
	0
\end{pmatrix}=-\frac{3}{2}(D^TB)^{-1}\bar x_1\cdot x_2.\]

The proof is complete.
	\end{proof}
Together with  \eqref{e:i-index-calculation-a} of Proposition \ref{p:i-index-calculation}, we have	
\begin{corollary}\label{c:i-index-cal-I}
	\begin{enumerate}[\rm(I)]
\item  $i(\Graph(I_{2n}),\alpha_0\times \alpha_1;\widetilde\alpha_0)=n.$
\item $i(\Graph(I_{2n}),\tilde \alpha_1,\tilde\alpha_0)=n.$
\item $i(\Graph(I_{2n}),\alpha_0\times \alpha_1;\alpha_1\times \alpha_0)=n$.
		\end{enumerate}
\end{corollary}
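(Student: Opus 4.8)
The plan is to obtain all three equalities as a direct consequence of the calculation formula \eqref{e:i-index-calculation-a} of Proposition \ref{p:i-index-calculation}, fed with the explicit matrices of the Hermitian forms $Q(\cdot,\cdot;\cdot)$ computed in Lemma \ref{l:quadratic-forms-calculation} specialized to $M=I_{2n}$, that is, to $A=D=I_n$ and $B=C=0$. Recall that $\eqref{e:i-index-calculation-a}$ reads $i(\alpha,\beta,\delta)=m^+(Q(\alpha,\beta,\delta))+\dim_{\C}(\alpha\cap\delta)-\dim_{\C}(\alpha\cap\beta\cap\delta)$, so in each case it remains only to evaluate a Morse positive index and two intersection dimensions.

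First I would record the relevant intersection dimensions for $\alpha=\Graph(I_{2n})=\{(x,x);x\in\C^{2n}\}$. Its intersection with $\widetilde\alpha_0=\alpha_0\times\alpha_0$ consists exactly of the pairs $(x,x)$ with $x\in\alpha_0$, hence has complex dimension $n$; on the other hand $\Graph(I_{2n})\cap(\alpha_1\times\alpha_0)$, $\Graph(I_{2n})\cap\widetilde\alpha_1\cap\widetilde\alpha_0$, and $\Graph(I_{2n})\cap(\alpha_0\times\alpha_1)\cap\widetilde\alpha_0$ all force the common vector into $\alpha_0\cap\alpha_1=\{0\}$ and therefore vanish. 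These are immediate. Next I would read off the Morse positive indices. For (I), Lemma \ref{l:quadratic-forms-calculation}(i) gives $Q(\Graph(I_{2n}),\alpha_0\times\alpha_1;\widetilde\alpha_0)=D^TB=0$, so $m^+=0$ and \eqref{e:i-index-calculation-a} yields $i=0+n-0=n$. For (II), Lemma \ref{l:quadratic-forms-calculation}(ii) gives $Q(\Graph(I_{2n}),\widetilde\alpha_1;\widetilde\alpha_0)=\left(\begin{smallmatrix}C^TA&C^TB\\B^TC&D^TB\end{smallmatrix}\right)=0$, hence again $m^+=0$ and $i=0+n-0=n$. For (III), Lemma \ref{l:quadratic-forms-calculation}(iii) gives $Q(\Graph(I_{2n}),\alpha_0\times\alpha_1;\alpha_1\times\alpha_0)=\left(\begin{smallmatrix}C^TA&A^TD\\D^TA&D^TB\end{smallmatrix}\right)=\left(\begin{smallmatrix}0&I_n\\I_n&0\end{smallmatrix}\right)$, and since the substitution $x\mapsto(x+y)/\sqrt2$, $y\mapsto(x-y)/\sqrt2$ turns the associated Hermitian form into $\|\cdot\|^2-\|\cdot\|^2$, its Morse positive index is exactly $n$; as both intersection terms vanish, \eqref{e:i-index-calculation-a} gives $i=n+0-0=n$.

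There is no genuine obstacle here: each identity reduces to the matrix formulas of Lemma \ref{l:quadratic-forms-calculation} evaluated at $M=I_{2n}$ together with elementary linear algebra over $\C$. The one point that demands care is (III), where the Hermitian form is indefinite rather than zero, so one must verify that the anti-diagonal block matrix $\left(\begin{smallmatrix}0&I_n\\I_n&0\end{smallmatrix}\right)$ has $n$ positive and $n$ negative eigenvalues; miscounting this signature is the only way the argument could fail.
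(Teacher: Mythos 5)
Your proposal is correct and follows exactly the paper's approach: apply formula \eqref{e:i-index-calculation-a} with the Hermitian-form matrices of Lemma \ref{l:quadratic-forms-calculation} specialized to $M=I_{2n}$. The only material you add is spelling out the elementary steps the paper leaves implicit, namely the intersection-dimension computations ($\dim_{\C}(\Graph(I_{2n})\cap\widetilde\alpha_0)=n$ and the vanishing of the other intersections since $\alpha_0\cap\alpha_1=\{0\}$) and, for (III), the verification that $\left(\begin{smallmatrix}0&I_n\\I_n&0\end{smallmatrix}\right)$ has signature $(n,n)$, which you handle correctly.
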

\begin{proof}
	{\rm (I)} By \eqref{e:i-index-calculation-a} and Lemma \ref{l:quadratic-forms-calculation}.(i), we have
	\begin{equation*}
		\begin{split}
	&\quad\;i(\Graph(I_{2n}),\alpha_0\times\alpha_1,\widetilde\alpha_0)\\
	&=m^+(Q(\Graph(I_{2n}),\alpha_0\times \alpha_1;\widetilde\alpha_0)+\dim_{\C}(\Graph(I_{2n})\cap\widetilde\alpha_0)-\dim_{\C}(\Graph(I_{2n})\cap(\alpha_0\times \alpha_1)\cap \widetilde\alpha_0)\\
	&=n.
	\end{split}
	\end{equation*}
	{\rm(II)} Similarly by \eqref{e:i-index-calculation-a} and Lemma \ref{l:quadratic-forms-calculation}.(ii), we have
	\begin{equation*}
		\begin{split}
			i(\Graph(I_{2n}),\widetilde \alpha_1,\widetilde\alpha_0)&=m^+(Q(\Graph(I_{2n}),\tilde \alpha_1;\widetilde\alpha_0)+\dim_{\C}(\Graph(I_{2n})\cap\widetilde\alpha_0)-\dim_{\C}(\Graph(I_{2n})\cap\widetilde \alpha_1\cap \widetilde\alpha_0)\\
			&=n.
		\end{split}
	\end{equation*}
		{\rm(III)} By \eqref{e:i-index-calculation-a} and Lemma \ref{l:quadratic-forms-calculation}.(iii)
		\begin{equation*}
			\begin{split}
				 i(\Graph(I_{2n}),\alpha_0\times  \alpha_1,\alpha_1\times\alpha_0)
				&=m^+(Q(\Graph(I_{2n}),\alpha_0\times  \alpha_1;\alpha_1\times\alpha_0))
					 +\dim_{\C}(\Graph(I_{2n})\cap(\alpha_1\times\alpha_0))\\&\quad\
			 -\dim_{\C}(\Graph(I_{2n})\cap(\alpha_0\times  \alpha_1)\cap(\alpha_1\times\alpha_0))\\
				&=n.
			\end{split}
		\end{equation*}
	
\end{proof}
\subsection{Appendix: Continuity of eigenvalues}\label{app:continuity-eigemvalue}
First we recall two well-known facts about the eigenvalues of a symmetric operator on a finite-dimensional inner space with an inner product $\langle\cdot,\cdot\rangle$.
\begin{proposition}\label{p:maxmin-eigenvalues}
	(cf. \cite[Section I.6.10]{Ka95})
	Let $T$ be a symmetric operator on an $n$-dimensional vector space $V$ with an inner product $\langle\cdot,\cdot\rangle$.
	Let
	\[\mu_1\le \mu_2\le\cdots \le\mu_n\]
	be the repeated eigenvalues of $T$ arranged in the ascending order.
	For each subspace $M$ of $V$, set
	\[\mu[M]=\mu[T,M]=\min_{u\in M,\|u\|=1}\langle Tu,u\rangle.\]
	The maximin principle asserts that for $j\in\{1,2,...,n\}$
	\[\mu_j=\max_{\codim M=j-1}\mu[M].\]
\end{proposition}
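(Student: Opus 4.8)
The plan is to reproduce the classical Courant–Fischer argument, which is available here because $T$ is symmetric on the finite-dimensional inner product space $V$ and hence diagonalizable in an orthonormal basis. First I would fix an orthonormal eigenbasis $e_1,\dots,e_n$ with $Te_i=\mu_ie_i$ and record the elementary identity that for any unit vector $u=\sum_i c_ie_i$ one has $\langle Tu,u\rangle=\sum_i\mu_i|c_i|^2$, a convex combination of the eigenvalues with weights $|c_i|^2$ summing to $1$. This reduces the whole statement to bookkeeping about which eigenvalues such a combination can reach.

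For the inequality $\max_{\codim M=j-1}\mu[M]\le\mu_j$, I would take an arbitrary subspace $M$ with $\dim M=n-j+1$ and intersect it with $W_j:=\Span\{e_1,\dots,e_j\}$. Since $\dim M+\dim W_j=n+1>n$, the intersection $M\cap W_j$ contains a unit vector $u=\sum_{i\le j}c_ie_i$, and then $\langle Tu,u\rangle=\sum_{i\le j}\mu_i|c_i|^2\le\mu_j$, so $\mu[M]\le\langle Tu,u\rangle\le\mu_j$. Taking the maximum over all such $M$ gives the desired upper bound.

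To see the supremum is attained and equals $\mu_j$, I would exhibit the explicit subspace $M_0:=\Span\{e_j,e_{j+1},\dots,e_n\}$, which has codimension $j-1$. For any unit $u=\sum_{i\ge j}c_ie_i\in M_0$ we get $\langle Tu,u\rangle=\sum_{i\ge j}\mu_i|c_i|^2\ge\mu_j$, hence $\mu[M_0]\ge\mu_j$; testing against $u=e_j$ gives $\mu[M_0]\le\langle Te_j,e_j\rangle=\mu_j$, so $\mu[M_0]=\mu_j$. Combined with the previous paragraph, this proves $\max_{\codim M=j-1}\mu[M]=\mu_j$.

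Since this is precisely the statement of \cite[Section I.6.10]{Ka95}, there is no genuine obstacle; the only point deserving a word of care is the dimension count guaranteeing $M\cap W_j\neq\{0\}$, together with the tacit fact that $\mu[M]$ is a genuine minimum (the unit sphere of a finite-dimensional subspace is compact and $u\mapsto\langle Tu,u\rangle$ is continuous), which legitimizes plugging in the specific witnesses above.
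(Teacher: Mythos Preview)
Your argument is the standard Courant--Fischer proof and is correct. Note that the paper does not actually prove this proposition: it is simply recalled from \cite[Section I.6.10]{Ka95} as a known result and then applied in the proof of the next proposition, so there is no ``paper's own proof'' to compare against.
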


\begin{proposition}(cf. \cite[page 139]{BoWo93})\label{p:continuous-eigenvalue}
	Let $T,T'$ be two symmetric operators on an $n$-dimensional vector space $V$ with an inner product $\langle\cdot,\cdot\rangle$.
	Let
	\[\mu_1\le \mu_2\le\cdots \le\mu_n \ \ \text{ and }\ \ \mu'_1\le \mu'_2\le\cdots \le\mu'_n\]
	be the repeated eigenvalues of $T$ and $T'$ respectively, arranged in the ascending order.
	Then one has
	\[|\mu_j-\mu'_j|\le \|T-T'\|\ \ \ \ \text{for each }\ \ j\in\{1,2,...,n\}.\]
\end{proposition}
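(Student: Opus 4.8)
The plan is to deduce Proposition~\ref{p:continuous-eigenvalue} directly from the maximin principle stated in Proposition~\ref{p:maxmin-eigenvalues}. The one elementary ingredient needed is that for every unit vector $u\in V$ one has the uniform bound $|\langle (T-T')u,u\rangle|\le \|T-T'\|$, which follows from the Cauchy--Schwarz inequality $|\langle Su,u\rangle|\le \|S\|\,\|u\|^2$ applied to the bounded operator $S=T-T'$. Consequently
\[
\langle Tu,u\rangle \;\ge\; \langle T'u,u\rangle-\|T-T'\| \qquad \text{for all }u\in V\text{ with }\|u\|=1 .
\]

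First I would fix $j\in\{1,2,\dots,n\}$ and let $M\subset V$ be an arbitrary subspace with $\codim M=j-1$. Taking the minimum over unit vectors $u\in M$ on both sides of the displayed inequality yields, in the notation of Proposition~\ref{p:maxmin-eigenvalues}, the monotonicity relation $\mu[T,M]\ge \mu[T',M]-\|T-T'\|$. Next I would take the maximum over all subspaces $M$ of codimension $j-1$: by the maximin principle the left-hand side becomes $\mu_j$ and the right-hand side becomes $\mu'_j-\|T-T'\|$, so that $\mu_j\ge \mu'_j-\|T-T'\|$, i.e.\ $\mu'_j-\mu_j\le \|T-T'\|$. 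Since $\|T-T'\|=\|T'-T\|$, interchanging the roles of $T$ and $T'$ in the same argument gives $\mu_j-\mu'_j\le\|T-T'\|$, and combining the two inequalities produces $|\mu_j-\mu'_j|\le\|T-T'\|$, as claimed.

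I do not expect any genuine obstacle here: this is the classical Weyl perturbation inequality for eigenvalues of symmetric operators on a finite-dimensional space, and once the maximin characterization of the ordered eigenvalues is in hand, the proof is a short two-step monotonicity argument (first $\min$ over unit vectors in $M$, then $\max$ over subspaces $M$ of the prescribed codimension). The only point worth stating explicitly is that the perturbation estimate $|\langle (T-T')u,u\rangle|\le\|T-T'\|$ holds uniformly in the unit vector $u$, which is precisely what allows the constant $\|T-T'\|$ to pass unchanged through both the minimization and the maximization.
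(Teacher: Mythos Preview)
Your proof is correct and follows essentially the same approach as the paper: both deduce the inequality from the maximin principle of Proposition~\ref{p:maxmin-eigenvalues} together with the uniform pointwise bound $|\langle (T-T')u,u\rangle|\le\|T-T'\|$ for unit vectors. The only cosmetic difference is that the paper phrases the comparison $\mu[T',M]\le\mu[T,M]+\|T-T'\|$ via an $\varepsilon$-approximation of the minimizer, whereas you pass the constant directly through the min and then the max; the two arguments are equivalent.
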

\begin{proof}
	We follow the lines in \cite[page 139]{BoWo93}.
	According to Proposition \ref{p:maxmin-eigenvalues},
	for $j\in\{1,2,\ldots,n\}$, each $(j-1)$-dimensional subspace $M$ of $V$,
	\[\mu_j=\max_{\codim M=j-1}\mu[T,M] \ \ \ \text{ and }\ \ \ \mu'_j=\max_{\codim M=j-1}\mu[T',M].\]
	For each $\varepsilon >0$ choose a vector $w\in M$ with $\|w\|=1$ such that
	\[\langle Tw,w\rangle-\mu[T,M]<\varepsilon.\]
	It follows that
	\begin{equation*}
		\begin{split}
			\mu[T',M]& \le \langle T'w,w\rangle \\
			& =\langle Tw,w\rangle+\langle(T'-T)w,w\rangle\\
			&\le \mu[T,M]+\|T-T'\|+\varepsilon;
		\end{split}
	\end{equation*}
	hence
	\[\mu[T',M]\le \mu[T,M]+\|T-T'\|.\]
	In the same way we get
	\[\mu[T,M]\le \mu[T',M]+\|T-T'\|.\]
Thus the proposition follows.
\end{proof}

Now we consider the infinite-dimensional Hilbert space.
Let $X$ be a Hilbert space.
Let $A$ be a closed operator in $X$.
Let $\mathcal{L}(X)$ denote the set of bounded linear operators on $X$.
Denote by 
$\rho(A)=\{\lambda\in \C; A-\lambda\  \text{ is invertible with }\ (A-\lambda)^{-1}\in\mathcal{L}(X)\}$ the resolvent set of $A$. Denote by $\sigma(A)=\C\setminus \rho(A)$ the spectrum of $A$.
Let $(A_s)_{s\in[0,1]}$ be a family of closed operator in $X$.
We recall: $A_s$ converges to $A_t$ as $s\to t$ in the gap topology (cf. \cite[Theorem IV.2.23]{Ka95})
is equivalent to that $\|(A_s-\lambda)^{-1}-(A_t-\lambda)^{-1}\|\to0$  as $s\to t$ for $\lambda\in \sigma(A_t)$.
\begin{theorem}\label{t:continuity-eigemvalue}
	Let $(A_s)_{s\in[0,1]}$ be a continuous family of closed self-adjoint Fredholm operators in $X$. Fix a $t\in[0,1]$. Let $a\in \rho(A_t)$ be a positive real number such that the intersection $\sigma(A_t)\cap[-a,a]$ consists only of a finite system of eigenvalues
	\[-a<\lambda_k(t)\le \lambda_{k+1}(t)\le\cdots\le \lambda_m(t)<a\]
	(all the eigenvalues repeated according to their multiplicity). Then,
	for all $s\in\R$ and $|s-t|$ sufficiently small, the intersection of
	$\sigma(A_s)\cap[-a,a]$ consists of the same number of eigenvalues
	\begin{equation}\label{e:eigenvalue-ascending}
		-a<\lambda_k(s)\le \lambda_{k+1}(s)\le\cdots\le \lambda_m(s)<a,
	\end{equation}
	and one has
	\begin{equation}\label{e:eigenvalue-estimate}
		\lim_{s\to t}|\lambda_j(s)-\lambda_j(t)|= 0, \text{ \rm for each } j\in\{k,\ldots,m\}.
	\end{equation}
\end{theorem}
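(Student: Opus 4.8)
\textbf{Proof plan for Theorem \ref{t:continuity-eigemvalue}.}
The plan is to reduce the statement about the infinite-dimensional family $(A_s)$ to the finite-dimensional estimate of Proposition \ref{p:continuous-eigenvalue} by means of the Riesz spectral projections. First I would choose, since $a\in\rho(A_t)$ and $\sigma(A_t)\cap[-a,a]$ is a finite system of eigenvalues, a positively oriented rectifiable Jordan curve $\Gamma$ in $\C$ enclosing exactly $\sigma(A_t)\cap[-a,a]$ and lying in $\rho(A_t)$ (for instance the boundary of a thin rectangle around $[-a,a]$). Because $\Gamma$ is a compact subset of $\rho(A_t)$, the resolvent $(A_t-z)^{-1}$ is uniformly bounded on $\Gamma$; using that $A_s\to A_t$ in the gap topology (equivalently $\|(A_s-z)^{-1}-(A_t-z)^{-1}\|\to 0$, and this convergence is uniform for $z$ in the compact set $\Gamma$ by a standard Neumann-series argument), there is $\delta>0$ so that $\Gamma\subset\rho(A_s)$ for all $s$ with $|s-t|<\delta$. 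Then I would define the spectral projections
\[
P_s=\frac{1}{2\pi i}\int_{\Gamma}(z-A_s)^{-1}\,dz,
\]
observe $\|P_s-P_t\|\to 0$ as $s\to t$, hence $\operatorname{rank}P_s=\operatorname{rank}P_t=:N=m-k+1$ for $|s-t|$ small (a projection close in norm to a finite-rank projection has the same rank), which gives that $\sigma(A_s)\cap[-a,a]$ consists of exactly $N$ eigenvalues counted with multiplicity, listed as in \eqref{e:eigenvalue-ascending}; here one also uses that $A_s$ is self-adjoint so the part of $\sigma(A_s)$ inside $\Gamma$ is real, discrete and consists of eigenvalues of finite multiplicity, the range of $P_s$ being $A_s$-invariant of dimension $N$.

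Next I would pass to the finite-dimensional restrictions. Set $T_s:=A_s|_{\operatorname{ran}P_s}$, a self-adjoint operator on the $N$-dimensional space $X_s:=\operatorname{ran}P_s$, whose eigenvalues are precisely $\lambda_k(s)\le\cdots\le\lambda_m(s)$. To compare $T_s$ with $T_t$ on a common space, I would use the standard device: for $|s-t|$ small the operator $U_s:=\bigl(I-(P_s-P_t)^2\bigr)^{-1/2}\bigl(P_sP_t+(I-P_s)(I-P_t)\bigr)$ is invertible, maps $X_t$ onto $X_s$, and is close to the identity in the sense $\|U_s-I\|\to 0$ as $s\to t$ (this is the classical Sz.-Nagy / Kato transformation function, cf. \cite[Section I.4.6]{Ka95}). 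Then $\widetilde T_s:=U_s^{-1}T_sU_s$ is a self-adjoint operator on the fixed finite-dimensional space $X_t$ with the same eigenvalues $\lambda_k(s)\le\cdots\le\lambda_m(s)$, and I would estimate $\|\widetilde T_s-T_t\|$ on $X_t$, showing it tends to $0$ as $s\to t$: this follows from $\|P_s-P_t\|\to 0$, $\|U_s-I\|\to 0$, the uniform bound $\|T_s\|\le a$, and the identity $\widetilde T_s-T_t=U_s^{-1}\bigl(T_sU_s-U_sT_t\bigr)=U_s^{-1}\bigl((T_s-T_t)U_s+T_t(U_s-I)-(U_s-I)T_t\bigr)$ after expressing $T_s-T_t$ via the resolvent-contour formula $T_s-T_t=\frac{1}{2\pi i}\int_\Gamma z\bigl((z-A_s)^{-1}-(z-A_t)^{-1}\bigr)dz$ restricted appropriately. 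Finally, applying Proposition \ref{p:continuous-eigenvalue} to the pair $\widetilde T_s, T_t$ on $X_t$ gives $|\lambda_j(s)-\lambda_j(t)|\le\|\widetilde T_s-T_t\|\to 0$ for each $j\in\{k,\ldots,m\}$, which is \eqref{e:eigenvalue-estimate}.

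The main obstacle I anticipate is the bookkeeping around the transformation function $U_s$ and the verification that $\widetilde T_s$ really has the same spectrum as $T_s$ while $\|\widetilde T_s-T_t\|\to 0$; this is where one must be careful that all the norm estimates are genuinely uniform in $z\in\Gamma$ and that the various finite-rank reductions are compatible. A cleaner alternative, which I would fall back on if the transformation-function route gets cumbersome, is to avoid $U_s$ entirely and instead argue directly with the min-max characterization: one shows that for any $\varepsilon>0$ and $|s-t|$ small, every eigenvalue of $T_s$ lies within $\varepsilon$ of $\sigma(A_t)\cap[-a,a]$ and conversely, by a resolvent-norm estimate $\operatorname{dist}(\lambda,\sigma(A_t))\le\|(A_t-\lambda)^{-1}\|^{-1}$ combined with $\|(A_s-\lambda)^{-1}-(A_t-\lambda)^{-1}\|$ small, and then matches them in order using that the counting (with multiplicity, via $\operatorname{rank}P_s=\operatorname{rank}P_t$) is exact. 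Either way, the self-adjointness of the $A_s$ and the norm-continuity of the spectral projections carry all the weight, and no new ideas beyond those already in \cite{Ka95} and \cite{BoWo93} are needed.
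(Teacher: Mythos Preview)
Your proposal is correct and follows essentially the same route as the paper: Riesz projections $P_s$ via a contour integral, the Kato--Sz.-Nagy transformation function to transport $T_s$ onto the fixed finite-dimensional range $X_t$, the identity $A_sP_s=\frac{1}{2\pi i}\int_\Gamma z(z-A_s)^{-1}\,dz$ to control $\|\widetilde T_s-T_t\|$, and finally Proposition~\ref{p:continuous-eigenvalue}. The only cosmetic differences are that the paper takes $\Gamma$ to be the circle of radius $a$ rather than a rectangle, and simply cites \cite[Lemma~I.4.10]{Ka95} for the existence of a unitary $U(s)$ rather than writing out the explicit formula; your observation that $U_s$ is unitary (needed for $\widetilde T_s$ to be self-adjoint so that Proposition~\ref{p:continuous-eigenvalue} applies) relies on $P_s,P_t$ being orthogonal projections, which holds here since each $A_s$ is self-adjoint.
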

\begin{proof}
	Let $E(t)=\frac{1}{2\pi i}\int_{\Gamma}(\lambda-A_t)^{-1}d\lambda$,
	where $\Gamma=\{ae^{i\theta}; 0\le \theta\le 2\pi\}$ with positive direction. 
	For $s\in[0,1]$ being close enough to $t$, $\Gamma\subset \rho(A_s)$ and there is a unitary operator $U(s)$ maps $\im E(t)$ onto $\im E(s)$ (cf. \cite[Lemma I.4.10]{Ka95}). Then we consider the two maps on the finite dimensional space
	$\im E(t)$: $E(t)A_tE(t)$ and $E(t)U(s)^{-1}A_sU(s)E(t)$. 
	By the closedness of  $A_s$ (cf. \cite[Theorem III.6.17]{Ka95}),
	\[E(s)A_sE(s)=\frac{1}{2\pi i}\int_{\Gamma}A_s(\lambda-A_s)^{-1}d\lambda=\frac{1}{2\pi i}\int_{\Gamma}\lambda(\lambda-A_s)^{-1}d\lambda;\]
	thus
	\begin{equation*}
		\begin{split}
			&\quad\quad E(s)A_sE(s)-E(t)A_tE(t) \\
			&=\frac{1}{2\pi i}\int_{\Gamma}\lambda((\lambda-A_s)^{-1}-(\lambda-A_t)^{-1})d\lambda.
		\end{split}
	\end{equation*}
	Since $(\lambda-A_t)^{-1}-(\lambda-A_s)^{-1}\to 0\in \mathcal{L}(X)$ as $s\to t$ and
	$\Gamma$ is compact, we get \[\|E(t)A_tE(t)-E(s)A_sE(s)\|\to0 \text{ as } s\to t.\]
	Since
	$E(t)U(s)^{-1}A_sU(s)E(t)=E(t)U(s)^{-1}E(s)A_sE(s)U(s)E(t)$ and $U(s)\to \Id$ (cf. loc. cit.) as $s\to t$, finally we get
	\begin{equation}\label{e:EU-1-AUE-converge}
		\|E(t)U(s)^{-1}A_sU(s)E(t)-E(t)A_tE(t)\|\to0 \text{ as } \ s\to t.
	\end{equation}

	Since $E(s)U(s)=U(s)E(t)$, $E(t)U(s)^{-1}A_sU(s)E(t)=U(s)^{-1}E(s)A_sE(s)U(s)$.
	Noting that $U(s)$ is unitary, we have
	\[\sigma(A_s)\cap[-a,a]=\sigma(E(t)U(s)^{-1}A_sU(s)E(t))\cap[-a,a],\]
	for $s\in\R$ and $|s-t|$ sufficient small.
	Applying Proposition \ref{p:continuous-eigenvalue} 
	for the finite-rank self-adjoint operators $E(t)U(s)^{-1}A_sU(s)E(t)$ on $\im E(t)$, we have
	\begin{equation}\label{e:lambda-j-continuous}
		|\lambda_j(s)-\lambda_j(t)|
		\le \|E(t)U(s)^{-1}A_sU(s)E(t)-E(t)A_tE(t)\|, \ \text{ for } j\in\{k,\ldots,m\},
	\end{equation}
	where $\{\lambda_j(s); \lambda_j(s)\in \sigma(A_s)\cap[-a,a]\}$ satisfy \eqref{e:eigenvalue-ascending}.
	By \eqref{e:lambda-j-continuous} and \eqref{e:EU-1-AUE-converge},
	we get (\ref{e:eigenvalue-estimate}).
\end{proof}
\subsection{Symmetric matrices}
\begin{lemma}\label{l:positive-definite-lambda}
	Let $B=\begin{pmatrix}
		A&C^T\\
		C&D
	\end{pmatrix}$ be a symmetric $2n\times 2n$-matrix with $D\in \GL(n,\R)$.
	Assume that $D$ is positive definite.
	Then there is a $\lambda>0$ such that $B+\Lambda$ is positive definite,
	where $\Lambda=\begin{pmatrix}
		\lambda I_n&0\\
		0&0
	\end{pmatrix}$.
\end{lemma}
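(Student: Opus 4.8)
The plan is to reduce positive definiteness of $B+\Lambda=\begin{pmatrix}A+\lambda I_n & C^T\\ C & D\end{pmatrix}$ to a statement about a single fixed symmetric matrix by completing the square in the $y$-variable. Writing a general vector of $\R^{2n}$ as $z=\begin{pmatrix}x\\y\end{pmatrix}$ with $x,y\in\R^n$, I would compute
\[
(B+\Lambda)z\cdot z = (A+\lambda I_n)x\cdot x + 2(Cx)\cdot y + (Dy)\cdot y.
\]
Since $D$ is symmetric and positive definite it is invertible, so the terms involving $y$ can be grouped as $D\bigl(y+D^{-1}Cx\bigr)\cdot\bigl(y+D^{-1}Cx\bigr) - (C^TD^{-1}Cx)\cdot x$, giving
\[
(B+\Lambda)z\cdot z = D\bigl(y+D^{-1}Cx\bigr)\cdot\bigl(y+D^{-1}Cx\bigr) + \bigl((A-C^TD^{-1}C+\lambda I_n)x\bigr)\cdot x .
\]
(This is exactly the Schur complement decomposition of $B+\Lambda$.)

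The first summand is nonnegative because $D$ is positive definite, and it vanishes precisely when $y=-D^{-1}Cx$. Hence it suffices to choose $\lambda>0$ so that the fixed symmetric matrix $S:=A-C^TD^{-1}C$ satisfies $S+\lambda I_n>0$; then the second summand is strictly positive whenever $x\neq0$, and the whole quadratic form is positive unless $x=0$ and $y+D^{-1}Cx=0$, i.e.\ unless $z=0$. To pick such a $\lambda$ I would simply take any $\lambda$ larger than the operator norm $\|S\|$ (equivalently, larger than $-\mu_{\min}(S)$, the negative of the least eigenvalue of $S$); since $S$ is symmetric, $(S+\lambda I_n)x\cdot x \ge (\lambda-\|S\|)|x|^2>0$ for $x\neq0$. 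This $\lambda$ works, and it is automatically positive.

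There is essentially no obstacle here: the only point requiring a word is the invertibility and positivity of $D$, which is given by hypothesis, and the elementary fact that shifting a symmetric matrix by $\lambda I_n$ shifts its spectrum by $\lambda$. I would present the argument as the short computation above followed by the choice of $\lambda$, without invoking the Schur complement theorem explicitly, so the proof stays self-contained; alternatively one could cite the standard Schur complement criterion for positive definiteness of block matrices and conclude immediately. Either way the proof is a few lines.
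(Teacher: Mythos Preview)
Your proof is correct and takes essentially the same approach as the paper: both arguments are the Schur complement decomposition, with the paper writing it as the congruence $P(B+\Lambda)P^T=\begin{pmatrix}A-C^TD^{-1}C+\lambda I_n&0\\0&D\end{pmatrix}$ via $P=\begin{pmatrix}I_n&-C^TD^{-1}\\0&I_n\end{pmatrix}$ (using $P\Lambda P^T=\Lambda$), while you compute the same thing directly on the quadratic form by completing the square. The choice of $\lambda$ is identical in substance.
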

\begin{proof}
	Let $P=\begin{pmatrix}
		I_n&-C^TD^{-1}\\
		0&I_n
	\end{pmatrix}$.
	Then $PBP^T=\begin{pmatrix}
		A-C^TD^{-1}C&0\\
		0&D
	\end{pmatrix}$.
	Then we can choose $\lambda >0$ sufficient large, e.g., 
	\[\lambda > \lambda_{\max}(C^TD^{-1}C-A),\]
	where $\lambda_{\max}(C^TD^{-1}C-A)$ denotes the maximum eigenvalue of $C^TD^{-1}C-A$.
	Then
	$PBP^T+\Lambda$ is positive definite.
	Since $P\Lambda P^T=\Lambda$,
	$B+\Lambda$ is positive definite.
\end{proof}
\subsection{Homotopy invariance of the Maslov index}
We recall
\begin{definition}[cf.{\cite[Definition 5.1.2]{Lo}}]\label{d:omega-homotopic}
For $\tau>0$ and $e^{\sqrt{-1}\theta}\in\U(1)$, where $\U(1)$ is the unitary group of dimension $1$.
Given two paths $\gamma_0,\gamma_1\in \mathcal{P}_{\tau}(2n)$, we say $\gamma_0$ and $\gamma_1$ are $e^{\sqrt{-1}\theta}$-{\bf homotopic}, if there exists a $f\in C([0,1]\times [0,\tau],\Sp(2n))$ such that
\begin{gather*}
	f(0,t)=\gamma_0(t),\quad f(1,t)=\gamma_1(t),\quad \text{for}\quad t\in [0,\tau],\\
\text{and}\quad	f(s,0)=I_{2n},\quad \nu_{e^{\sqrt{-1}\theta}}(f(s,\tau))\, \text{is constant for } s\in[0,1].
\end{gather*}
We write $\gamma_0\sim_{e^{\sqrt{-1}\theta}}\gamma_1$.
\end{definition}
Then we have
\begin{lemma}\label{l:index-invariance}\label{l:homotopy-invar}
Given a $\tau>0$, set $S_{\tau}=\R/\tau \Z$.
Let $B\in C(S_{\tau},\mathcal{L}_s(\R^{2n}))$ be a $\tau$-periodic path of symmetric matrices.
Given $t_0\in[0,\tau)$, define $B_1(t)=B(t+t_0)$ for $t\in\R$.
Let $\gamma,\gamma_1\in \mathcal{P}_{\tau}(2n)$ be the matrizant of the linear system \eqref{e:linear-path} with 
$B$ and $B_1$ respectively. Then we have
\[i_{e^{\sqrt{-1}\theta}}(\gamma)=i_{e^{\sqrt{-1}\theta}}(\gamma_1),\quad \nu_{e^{\sqrt{-1}\theta}}(\gamma)=\nu_{e^{\sqrt{-1}\theta}}(\gamma_1),\]
here $i_1(\gamma)$ means $i^{\Lo}_1(\gamma)$.
\end{lemma}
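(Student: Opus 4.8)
The plan is to reduce the claim to the standard homotopy invariance of the Maslov index, by interpolating between $B$ and its time shift through intermediate shifts. First I would record two elementary facts about the matrizant. Regarding $\gamma$ as a solution of $\dot\gamma=JB(t)\gamma$, $\gamma(0)=I_{2n}$, on all of $\R$ (legitimate since $B$ is $\tau$-periodic), uniqueness for linear ODEs gives the monodromy relation $\gamma(t+\tau)=\gamma(t)\gamma(\tau)$ for all $t\in\R$, as both sides solve the same equation with value $\gamma(\tau)$ at $t=0$. Likewise the path $t\mapsto\gamma(t+t_0)\gamma(t_0)^{-1}$ solves $\dot x=JB_1(t)x$ and equals $I_{2n}$ at $t=0$, so $\gamma_1(t)=\gamma(t+t_0)\gamma(t_0)^{-1}$; combining the two, $\gamma_1(\tau)=\gamma(\tau+t_0)\gamma(t_0)^{-1}=\gamma(t_0)\gamma(\tau)\gamma(t_0)^{-1}$, a conjugate of $\gamma(\tau)$ by the symplectic matrix $\gamma(t_0)$. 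Hence $\ker_{\C}(\gamma_1(\tau)-e^{\sqrt{-1}\theta}I_{2n})=\gamma(t_0)\,\ker_{\C}(\gamma(\tau)-e^{\sqrt{-1}\theta}I_{2n})$, which already gives $\nu_{e^{\sqrt{-1}\theta}}(\gamma)=\nu_{e^{\sqrt{-1}\theta}}(\gamma_1)$; the same identity with $e^{\sqrt{-1}\theta}$ replaced by $1$ gives $\nu_1(\gamma)=\nu_1(\gamma_1)$.

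For the indices I would build the homotopy explicitly. For $s\in[0,1]$ put $B_s(t)=B(t+st_0)$; this is a continuous family in $C(S_\tau,\mathcal{L}_s(\R^{2n}))$ running from $B_0=B$ to $B_1$, each member $\tau$-periodic, and by continuous dependence of solutions of linear ODEs on parameters the matrizants $\gamma_s\in\mathcal{P}_\tau(2n)$ depend jointly continuously on $(s,t)$. As before $\gamma_s(\tau)=\gamma(st_0)\gamma(\tau)\gamma(st_0)^{-1}$, so $\nu_{e^{\sqrt{-1}\theta}}(\gamma_s(\tau))$ is independent of $s$; thus $f(s,t)=\gamma_s(t)$ exhibits $\gamma\sim_{e^{\sqrt{-1}\theta}}\gamma_1$ in the sense of Definition \ref{d:omega-homotopic}. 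Writing $i_{e^{\sqrt{-1}\theta}}(\gamma_s)=\Mas\{\Graph(\gamma_s),W\}$ for the constant Lagrangian $W=\Graph(e^{\sqrt{-1}\theta}I_{2n})$, the two endpoint intersection dimensions $\dim_{\C}(\Graph(I_{2n})\cap W)=0$ and $\dim_{\C}(\Graph(\gamma_s(\tau))\cap W)=\nu_{e^{\sqrt{-1}\theta}}(\gamma_s(\tau))$ are both constant in $s$, so the homotopy invariance of the Maslov index (see the remark following Definition \ref{d:hormander}) yields $i_{e^{\sqrt{-1}\theta}}(\gamma)=i_{e^{\sqrt{-1}\theta}}(\gamma_1)$. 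The statement for $i^{\Lo}_1$ follows by the identical argument with $W=\Graph(I_{2n})$ (now the intersection dimension at $t=0$ is the constant $2n$), followed by subtracting the constant $n$ from $i_{\Graph(I_{2n})}$.

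The argument is essentially routine. The points that need care rather than genuine difficulty are: checking that the interpolating family $B_s(t)=B(t+st_0)$ remains $\tau$-periodic, so that each $\gamma_s$ lies in $\mathcal{P}_\tau(2n)$ and the monodromy relation is available; and invoking precisely the version of homotopy invariance that matches Definition \ref{d:omega-homotopic}, namely invariance under homotopies of symplectic paths starting at $I_{2n}$ along which the nullity $\nu_{e^{\sqrt{-1}\theta}}$ of the endpoint stays constant. I do not anticipate any real obstacle beyond these.
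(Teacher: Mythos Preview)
Your proof is correct and follows essentially the same route as the paper: both establish the conjugacy $\gamma_1(\tau)=\gamma(t_0)\gamma(\tau)\gamma(t_0)^{-1}$ to get equality of nullities, and both use the same homotopy $f(s,t)=\gamma(st_0+t)\gamma(st_0)^{-1}$ (your matrizant $\gamma_s$ of $B_s(t)=B(t+st_0)$ is exactly this by the same ODE argument you gave for $\gamma_1$). The only cosmetic difference is that the paper cites Long's book for the homotopy invariance of $i_{e^{\sqrt{-1}\theta}}$, while you spell out the endpoint-dimension check directly.
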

\begin{proof}
	By definition, 
	$\gamma_1(t)=\gamma(t+t_0)\gamma(t_0)^{-1}$.
	Note that since $B(t+\tau)=B(t)$ for $t\in\R$,
	\[\gamma(t+\tau)=\gamma(t)\gamma(\tau).\]
So $\gamma_1(\tau)=\gamma(\tau+t_0)\gamma(t_0)^{-1}=\gamma(t_0)\gamma(\tau)\gamma(t_0)^{-1}$.
Thus $\nu_{e^{\sqrt{-1}\theta}}(\gamma)=\nu_{e^{\sqrt{-1}\theta}}(\gamma_1)$.
We define the $e^{\sqrt{-1}\theta}$-homotopy $f\in C([0,1]\times [0,\tau],\Sp(2n))$ by
\[f(s,t)=\gamma(st_0+t)\gamma(st_0)^{-1}.\]
Then $\gamma\sim_{e^{\sqrt{-1}\theta}}\gamma_1$.
By \cite[Theorem 6.2.3]{Lo}, we get our lemma.
\end{proof}
\end{appendices}

\end{document}